\newcommand{\defi}{:=}
\newcommand{\st}{ : }
\newcommand{\R}{\mathbb{R}}
\newcommand{\E}{\mathbb{E}}
\newcommand{\norm}   [1] {\left\Vert #1 \right\Vert}
\newcommand{\muW}{\mu^{\mathbf{w}}}
\newcommand{\pW}{\mathbf{p}^{\mathbf{w}}}
\newcommand{\Vn}{V_{null}}
\newcommand{\te}[1]{ t_\infty(#1)}
\newcommand{\tee}{ t_\infty}
\newcommand{\Ew}{\mathbf E_w}
\renewcommand{\P}{\mathbb{P}}
\newcommand{\CP}{{\boldsymbol{\mathcal{P}}}}
\newcommand{\CPf}[2]{\bigl(\boldsymbol{\mathcal{P}}^{I}\bigr)_{I \in [#1,#2]}}
\newcommand{\CR}{\mathbf{R}}
\newcommand{\I}{\kappa}
\newcommand{\vr}  {\bar v}
\newcommand{\wb}  {\bar w}
\newcommand{\Gb}  {G^{\textrm{\upshape b}}}
\newcommand{\mU}{\mathbf U}
\newcommand{\vmin}{v_{\textrm{\tiny\upshape min}}}
\newcommand{\vsep}{v_{\textrm{\upshape\tiny sep}}}
\newcommand{\wsep}{w_{\textrm{\upshape\tiny sep}}}
\newcommand{\Iempty}{{\mathbf I_\emptyset}}
\newcommand{\Ireg}{{\mathbf I_{reg}}}
\renewcommand{\Phi}{\varphi}
\newcommand{\bfw}{\mathbf{w}}
\newcommand{\bfU}{\mathbf{U}}
\newcommand{\bfG}{\mathbf{G}}
\newcommand{\indic}{\mathbf 1}
\newcommand{\rmd}  {{\textrm{\upshape d}}}
\newcounter{lem}
\newtheorem*{asumptionstar}{Assumption}
\newtheorem{asumption}[lem]{Assumption}
\newtheorem{proposition}[lem]{Proposition}
\newtheorem{theorem}[lem]{Theorem}
\newtheorem{lemma}[lem]{Lemma}
\newtheorem{definition}[lem]{Definition}
\newtheorem{corollary}[lem]{Corollary}
\newtheorem{remark}[lem]{Remark}
\newcommand{\vertiii}[1]{{\left\vert\kern-0.25ex\left\vert\kern-0.25ex\left\vert #1
    \right\vert\kern-0.25ex\right\vert\kern-0.25ex\right\vert}}
\author{Romain Veltz} 
\address{Université Côte d’Azur, Inria, France}
\date{\today}
\title[Mean-field limit of interacting 2d integrate-and-fire neurons]{Analysis of a mean-field limit of interacting two-dimensional nonlinear integrate-and-fire neurons}
\begin{document}

\begin{abstract}
	We study the solutions of a McKean-Vlasov stochastic differential equation (SDE) driven by a Poisson process. In neuroscience, this SDE models the mean field limit of a system of $N$ interacting excitatory neurons with $N$ large. 
    Each neuron spikes randomly with rate depending on its membrane potential. At each spiking time, the neuron potential is reset to the value $\vr$, its adaptation variable is incremented by $\wb$ and all other neurons receive an additional amount
	$J/N$ of potential after some delay where $J$ is the connection strength. Between jumps, the neurons drift according to some two-dimensional ordinary differential equation with explosive behavior.
	We prove the existence and uniqueness of solutions of a heuristically derived mean-field limit of the system when $N\to\infty$. We then study the existence of stationary distributions and provide several properties (regularity, tail decay, etc.) based on a Doeblin estimate using a Lyapunov function. Numerical simulations are provided to assess the hypotheses underlying the results. 
	\\
	\noindent \textbf{Keywords} McKean-Vlasov SDE · Systems of interacting neurons · Invariant measure · Mean-field interaction · Piecewise deterministic Markov process\\
	\textbf{Mathematics Subject Classification} Primary: 60J75.  Secondary  60K35, 60G55, 45A05
	
\end{abstract}

\maketitle	
\section{Introduction}
Our goal is to establish existence and uniqueness of solutions to the heuristically derived mean-field limit of models of networks of stochastic spiking neurons introduced in \cite{aymard_mean-field_2019}. Since the dynamics of the neurons lack an analytical expression, we aim to provide testable assumptions at the level of the vector field.

\subsection{Description of the model}

For a given network size $N\in\mathbb N^*$, we consider a piecewise deterministic Markov process \cite{davis_markov_1993} (PDMP) $X_t^N=(V_t^{1,N},\cdots,V_t^{N,N},W_t^{1,N},\cdots,W_t^{N,N} )\in\mathbb R^{2N}$. For $i=1,\cdots,N$, $V_t^{i,N}$  represents the membrane potential of neuron $i$ and  $W_t^{i,N}$ its adaptation variable. Neuron $i$ emits spikes at random times with spiking rate $\lambda(V_t^{i,N})$. When it emits a spike at time $t_s$, its potential is reset
$V^{i,N}_{t_s^+} = \vr$, its adaptation variable is increased by an amount $\wb\geq0$, $W^{i,N}_{t_s^+}=W^{i,N}_{t_s^-}+\wb$, and the potential of the other neurons is incremented by an amount $\frac JN$ after a delay $D>0$:
\[
\forall j\neq i,\quad V^{j,N}_{(t_s+D)^+} = V^{j,N}_{(t_s+D)^-}+\frac J N.
\]
Between two spikes, the neuron variables $(V^{i,N}_{t}, W^{i,N}_{t})$  evolve according to the ordinary differential equation (ODE)

\begin{equation*}
	\left\{
	\begin{aligned}
		\dot V_t^{i,N} &= F(V_t^{i,N} )- W^{i,N}_t  + I \\
		\dot W^{i,N}_t &= V_t^{i,N} - W^{i,N}_t.
	\end{aligned}
	\right.
	\label{eq:ode}
\end{equation*}
The process $X$ can be re-written as a SDE driven by Poisson measures. Let $\left(\mathbf{N}^{i}(d u, d z)\right)_{i=1, \ldots, N}$ be a family of $N$ independent Poisson measures on $[-D,\infty)\times\mathbb R_+$ with intensity measure $du dz$. Let $\Big (V_0^{i,N},W_0^{i,N}\Big )_{i=1,\cdots,N}$ be a family of $N$ independent random variables in $\mathbb R^2$ with law $\mu_0$, and independent of the Poisson measures. Then $X^N$ is a \textit{c\`adl\`ag} process solution of the SDEs $\forall i=1,\cdots,N$:
\[
\begin{aligned}
	\quad V_{t}^{i, N}&= V_{0}^{i, N}+\int_{0}^{t} \left(F(V_u^{i,N} )- W^{i,N}_u  + I \right)d u+\frac{J}{N} \sum_{j \neq i} \int_{[-D,t-D]\times\mathbb{R}_{+}} \mathbf{1}_{\left\{z \leq \lambda\left(V_{u-}^{j, N}\right)\right\}} \mathbf{N}^{j}(d u, d z) \\
	&\qquad\qquad\qquad\qquad+\int_{[0,t]\times\mathbb{R}_{+}} \left(\vr -V_{u-}^{i, N}\right) \mathbf{1}_{\left\{z \leq \lambda\left(V_{u-}^{i, N}\right)\right\}} \mathbf{N}^{i}(d u, d z),\\
	\quad W_{t}^{i, N}&= W_{0}^{i, N}+\int_{0}^{t} \left(V_u^{i,N} - W^{i,N}_u\right) d u+\int_{[0,t]\times\mathbb{R}_{+}} \wb\, \mathbf{1}_{\left\{z \leq \lambda\left(V_{u-}^{i, N}\right)\right\}} \mathbf{N}^{i}(d u, d z) 
\end{aligned}
\]
and $X^N_t=(V_0,W_0)$ for $t\leq 0$.

When $N$ converges to infinity, we expect \cite{de_masi_hydrodynamic_2015,fournier_toy_2016} that $X_t^N$ converges in law to the solution of the McKean-Vlasov SDE:
\begin{equation}\label{eq:MKV}\tag{MKV}
	\begin{aligned}
		V_{t}^{nl}&= V_{0}+\int_{0}^{t} \left[F(V_u^{nl} )- W^{nl}_u  + I+J\E\lambda\left(V_{u-D}^{nl}\right) \right]d u+\\&\qquad\qquad\qquad\qquad\qquad\qquad\qquad\qquad
		\int_{[0,t]\times\mathbb{R}_{+}}\left(\vr -V_{u-}^{nl}\right) \mathbf{1}_{\left\{z \leq \lambda\left(V_{u-}^{nl}\right)\right\}} \mathbf{N}(d u, d z) ,\\
		\quad W_{t}^{nl}&= W_{0}+\int_{0}^{t} \left(V_u^{nl} - W^{nl}_u\right) d u+\int_{[0,t]\times\mathbb{R}_{+}}\wb\, \mathbf{1}_{\left\{z \leq \lambda\left(V_{u-}^{nl}\right)\right\}} \mathbf{N}(d u, d z)
	\end{aligned}
\end{equation}
where $\mathbf N$ is a Poisson measure on $\mathbb R_+\times\mathbb R_+$ with intensity measure $du dz$, $\mathcal L(V_0,W_0)=\mu_0$ and $X^{nl}_t=(V_0,W_0)$ for $t\leq 0$.  The variables $\mathbf N$ and $(V_0,W_0)$ are independent.

Equation \eqref{eq:MKV} is a mean-field equation and is the current object of interest. Note that \eqref{eq:MKV} involves the law of the solution in the term $\E\lambda\left(V_{u-D}^{nl}\right)$.
In addition to \eqref{eq:MKV}, for $\I\in C(\R,\R_+)$, we  consider the non-autonomous process $X^{s,\mu_0,\I}=(V^{s,\mu_0,\I},W^{s,\mu_0,\I})$ solution of the  ``linearized'' equation with initial condition $X_{s}^{s,\mu_0,\I}\sim \mu_0$ independent of $\mathbf N$:
\begin{equation}\label{eq:EDSL}\tag{I-SDE}
	\begin{aligned}
		V_{t}^{s,\mu_0,\I}&= V_{s}^{s,\mu_0,\I}+\int_{s}^{t} \left[F(V_u^{s,\mu_0,\I} )- W^{s,\mu_0,\I}_u  + I+ \I(u) \right]d u\quad+
		\\&\qquad\qquad\qquad\qquad\int_{[s,t]\times\mathbb{R}_{+}} \left(\vr -V_{u-}^{s,\mu_0,\I}\right) \mathbf{1}_{\left\{z \leq \lambda\left(V_{u-}^{s,\mu_0,\I}\right)\right\}} \mathbf{N}(d u, d z) ,\\
		\quad W_{t}^{s,\mu_0,\I}&= W_{s}^{s,\mu_0,\I}+\int_{s}^{t} \left(V_u^{s,\mu_0,\I} - W^{s,\mu_0,\I}_u\right) d u+\int_{[s,t]\times\mathbb{R}_{+}} \wb\, \mathbf{1}_{\left\{z \leq \lambda\left(V_{u-}^{s,\mu_0,\I}\right)\right\}} \mathbf{N}(d u, d z) .
	\end{aligned}
\end{equation}
When $\mu_0=\delta_x$, we write the associated process $X^{s,x,\I}$. We also drop the dependency on $s,\mu_0,\I$ when possible.

\subsection{Biological and modeling background}

Although the above particle system is a toy model, it is inspired by biology.
A neuron is a specialized cell type of the central nervous system \cite{kandel_principles_2013} comprised of subcellular domains: dendrite, soma and axon. The neurons are connected to each others by synapses which connect axons to dendrites of different neurons. The neurons communicate using electrical impulses encoded in their membrane electrical potential. When the difference of electrical potential $V_t$ across the membrane of their soma is high enough, a sequence of action potentials (or spikes) is produced and the somatic membrane potential returns to a resting value $\vr$. These electrical impulses are short $2-5ms$. The action potentials then propagate along the axons until they reach the synapses, connections to the other neurons. When an action potential reaches a synapse, it triggers a local change of the membrane potential of the postsynaptic side of the synapse which itself induces a local change of the somatic membrane potential of the postsynaptic neuron (we ignore propagation of electric signals in the dendrite). This process takes time and we model it with a small constant delay $D>0$ on the order of $5ms$. Also, we assume that all neurons are connected to each others and each spike produces the same local change $\frac JN$ in the postsynaptic neurons where $J$ is the connection strength. This is of course a strong idealization.

The generation of spikes at the soma originates from the fluxes of different ions species across the cellular membrane, a process described by the four dimensional deterministic Hodgkin-Huxley (HH) model \cite{hodgkin_quantitative_1952} which has been simplified to a two dimensional model \cite{gerstner_neuronal_2014,brette_adaptive_2005}. The noise in the spiking dynamics is modeled here with a \textit{firing intensity} $\lambda(V_t)$ which depends on the current membrane potential $V_t$ of the neuron. This is inspired by the one dimensional \textit{escape rate} models or the \textit{generalized integrate-and-fire} (GIF) models\index{GIF, generalized integrate-and-fire model} but it has never been studied with two dimensional model models except in \cite{aymard_mean-field_2019}. 

We refer to \cite{brillinger_maximum_1988,gerstner_time_1995,gerstner_neuronal_2014,schwalger_towards_2017} and the references therein for a description of the one dimensional GIF model. On a side note, the GIF model have been shown to be relevant for modeling real data \cite{mensi_parameter_2012}. See also \cite{jahn_motoneuron_2011} for fitting data to a jump diffusion integrate and fire.

\

One long term goal of this work is to link the macroscopic dynamics of the networks to the microscopic elements. To this end, we need a model of neuron with a rich set of dynamical (spiking) behaviors \cite{izhikevich_dynamical_2007}.
Most mean field models in the mathematical neuroscience community assume that the neurons have simple dynamics with affine scalar vector field ; neurons with 2d dynamics have rarely been studied (see recent review \cite{carrillo_nonlinear_2025}). This is the case of neurons modeled with Hawkes processes \cite{chevallier_microscopic_2015,ditlevsen_multi-class_2017}, with the integrate and fire model \cite{caceres_analysis_2011,carrillo_classical_2013,delarue_global_2015,delarue_particle_2015,dumont_mean-field_2020} and with PDMPs \cite{de_masi_hydrodynamic_2015, fournier_toy_2016, robert_dynamics_2016, cormier_long_2020,drogoul_exponential_2021}. 

Some notable exceptions are studies \cite{riedler_limit_2012, bossy_clarification_2015} with the 4d HH dynamics \cite{gerstner_neuronal_2014} or the 2d FitzHugh-Nagumo ones \cite{mischler_kinetic_2016,crevat_rigorous_2019,lucon_periodicity_2021}, the studies by Nicolas and Campbell \cite{nicola_mean-field_2013,nicola_one-dimensional_2015}, by Destexhe and collaborators \cite{zerlaut_modeling_2018, di_volo_biologically_2019} or the ones based on jump processes \cite{galves_system_2020,schmutz_mean-field_2022, duval_interacting_2022}. Finally, we mention the 2d voltage-conductance kinetic model \cite{perthame_voltage-conductance_2013, salort_convergence_2024,sanchez_voltage-conductance_2024}.

The HH model suffers from sophisticated dynamics which makes its parameter tuning difficult. This led to the development of the family of two dimensional nonlinear spiking neuron models \cite{brette_adaptive_2005,izhikevich_dynamical_2007,gerstner_neuronal_2014,touboul_spiking_2009} which reproduces the majority of observed spiking dynamics such as bursting, spike adaptation, etc. In essence, the two dimensional nonlinear spiking models trade the complexity of the HH model for a two dimensional explosive flow and a reset.
Several choices are possible for the nonlinearity $F$, corresponding to different
classical models, such as
\[
\begin{aligned}
F(v) &= v(v  - a),\ a\in \mathbb{R} &&\mbox{               (Izhikevich model \cite{izhikevich_dynamical_2007}),}   \\
F(v) &= e^v + av,\ a\in\R  &&\mbox{             (AdEx model \cite{brette_adaptive_2005})},\\
F(v) &= v^4 + {2\,a v},\ a\in \mathbb{R} &&\mbox{          (quartic model \cite{touboul_spiking_2009})}
\end{aligned}
\]
but they have more or less the same spiking dynamics repertoire \cite{touboul_spiking_2009}.
Our model is related to the ones in \cite{schwalger_towards_2017, di_volo_biologically_2019} which are used to study the dynamics of cortical columns.

One of the first studies of networks of neurons modeled with PDMPs is probably \cite{gerstner_time_1995} and \cite{gerstner_population_2000}. A more biologically relevant way to model the interaction between neurons is to consider conductance based models (see review on mean-field models of networks of neurons \cite{carlu_mean-field_2020} and next section~\ref{section:perspectives}).

\subsection{Modeling perspectives}\label{section:perspectives}
The model can be extended in may ways. A first extension would be to generalize the reset condition to $w\to \alpha w+\wb$ or to a more general distribution.
One could also use the more biologically relevant and recent neuron model \cite{gorski_conductance-based_2021}. One could also change the synapse model using a conductance based one \cite{di_volo_biologically_2019}, in our notations, this would give:
\begin{equation*}
\begin{aligned}
V_{t}^{nl}&= V_{0}+\int_{0}^{t} \left[F(V_u^{nl} )- W^{nl}_u + I+G_u^{nl}\cdot(V_u^{nl}-E) \right]d u+\int_{[0,t]\times\mathbb{R}_{+}}\left(\vr -V_{u-}^{nl}\right) \mathbf{1}_{\left\{z \leq \lambda\left(V_{u-}^{nl}\right)\right\}} \mathbf{N}(d u, d z) ,\\
W_{t}^{nl}&= W_{0}+\int_{0}^{t} \left(V_u^{nl} - W^{nl}_u\right) d u+\int_{[0,t]\times\mathbb{R}_{+}}\wb\, \mathbf{1}_{\left\{z \leq \lambda\left(V_{u-}^{nl}\right)\right\}} \mathbf{N}(d u, d z),\\
G_{t}^{nl}&= G_{0}+\int_{0}^{t} \left(-\tau^{-1} G_u^{nl} + g\E\lambda\left(V_{u-D}^{nl}\right)\right) d u .
\end{aligned}
\end{equation*}
for some constants $g, E, \tau$ and where $G^{nl}$ is the synaptic conductance \cite{gerstner_neuronal_2014}.

\subsection{Goal of the paper}

In the model \eqref{eq:MKV}, we have the freedom of the rate function $\lambda$ which controls the noise level in the network. Owing to the explosive behavior of the vector field, the rate function has to ensure that the neuron spikes before the deterministic explosion time which thus has the tendency to magnify this effect. As $\lambda$ arises in the drift of \eqref{eq:MKV}, there is a possibility that a blowup occurs for \eqref{eq:MKV} when the delays are neglected $D=0$. 

\noindent This can readily be seen for the following SDE, at least formally,
\begin{equation*}
	X_{t}= X_{0}+\int_{0}^{t} \left[F(X_u )+J\E\lambda\left(X_{u}\right) \right]d u+\int_{[0,t]\times\mathbb{R}_{+}}\left(\vr -X_{u-}\right) \mathbf{1}_{\left\{z \leq \lambda\left(X_{u-}\right)\right\}} \mathbf{N}(d u, d z) .
\end{equation*}
The Ito formula gives for $\lambda=F=\exp$
\[
\mathbb E(\lambda(X_t))-\mathbb E{\lambda(X_0)} = \int_0^t J\mathbb E\lambda(X_u)\mathbb E \lambda(X_u)+\lambda(\vr)\mathbb E\lambda(X_u)du
\]
which blows up in finite time.

The first part of the paper is thus to give conditions on $\lambda, F$ for \eqref{eq:MKV} to be well defined for all time $t\geq 0$. We then give conditions for the existence of a stationary distribution. This is done by studying the enclosed Markov chain and by relying on Doeblin estimates \cite{canizo_harris-type_2023} using a Lyapunov function. The invariant distribution of the process $X^{s,\mu_0,\I}$ is then lifted from the one of the enclosed chain, we recover the general expression from \cite{costa_stationary_1990} albeit in a different general setting. 

\subsection{Plan of the paper}
In the next sections~\ref{section:notations} and \ref{section:main-results}, we state our notations and main results. 
In section~\ref{section:ode}, we establish some properties of the deterministic flow.
In section~\ref{section:linear}, we analyze the solution of \eqref{eq:EDSL}  and in section~\ref{section:NL-DDE}, we solve \eqref{eq:MKV}.
In section~\ref{section:DI}, we study the invariant distributions of \eqref{eq:EDSL} and of the enclosed Markov chain.
In section~\ref{section:NLDI}, we study the invariant distributions of \eqref{eq:MKV}.
In section~\ref{section:applications}, we give applications of our results to classical models in neuroscience and discuss extensions of the results based on numerical simulations.
\section{Notations and assumptions}\label{section:notations}

In this work, we assume that the connections between the neurons are excitatory and that the reset condition increases the $w$ variable:

\begin{asumptionstar}
	\begin{equation}
		J>0 \text{ and } \wb>0.
	\end{equation}
\end{asumptionstar}

\subsection{Deterministic part}
The function $F$ in the drift term of the membrane potential dynamics \eqref{eq:MKV} is such that:
\begin{asumption}
	\label{hyp.F}
	
	\
	
	\begin{enumerate}
		\item
		$F$ is of class $C^1$, strictly convex, and $\lim\limits_{-\infty}F'<-3$.
		\item
		There are $\epsilon_F,\alpha_F>0$ such that $F(v)/v^{2+\epsilon_F}\geq\alpha_F$ as $v\to\infty$.
	\end{enumerate}
\end{asumption}
\noindent
This prevents us from using the Izhikevich model \cite{izhikevich_dynamical_2007}. This is not completely troublesome as this model provides a strict subset of the spiking behaviors of the quartic model \cite{touboul_spiking_2009}.

\noindent We write $\Phi_s^{t}(x)=\Phi_s^{t}(v,w)$  the flow of the ODE
\begin{equation}
	\left\{
	\begin{aligned}
		\dot v(t) &= F(v(t) )- w(t)  + I +\I(t)\\
		\dot w(t) &= v(t) - w(t)
	\end{aligned}
	\right.
	\label{eq:micro-unique}
\end{equation}
with initial condition $v(s)=x_1,w(s)=x_2$ and where $\I$ is continuous. For an initial condition $x\in\mathbb R^2$, \eqref{eq:micro-unique} admits a unique maximal solution $\Phi^{t}_s(x)$ for $t\in[t_{-\infty}(s,x),\te{s,x})$ with  $t_{\pm\infty}(s,x)\in\overline\R_{\pm}$.
When the current $\I(t) = \I$ is constant, the flow is homogeneous and only depends on $t-s$: we drop the $s$ dependency in $\Phi,\tee$ in this case.

We use indifferently the notations $x=(x_{1},x_{2})=(v,w)$. For the flow, we  also use the notations $(v_{x}(s,t),w_{x}(s,t))=\Phi^{t}_s(x)$. Finally, whenever the $v$ variable is not mentioned, it means that it is equal to the reset value $\vr$, \textit{e.g.} $v_{w_0}(t) \defi v_{(\vr,w_0)}(t)$.

\begin{definition}\label{def:I0}
	We denote by $\Iempty\subset\R$ the open set of $I$s for which \eqref{eq:micro-unique} with $\I=0$ has no equilibrium.
\end{definition}

\begin{definition}[Regularity set]
	\label{def:S}
	We define  the open set $$\Ireg \defi \{I\st (\vr, \vr) \text{ is not an equilibrium of \eqref{eq:micro-unique}}\text{ with }\I=0\}.$$
\end{definition}

\begin{definition}[Nullclines]
	The $v$-nullcline is the set \[V_{null}\defi\{(v,w)\in\mathbb R^2\st F(v)-w+I=0\}\] and the $w$-nullcline is \[W_{null}\defi\{(v,w)\in\mathbb R^2\st v-w=0\}.\] We note $\Vn^\pm(w,I)$ the v component of the $v$-nullcline which is on the left / right of the minimum of $F$ (which exists thanks to assumption~\ref{hyp.F}).
\end{definition}

We now define the separatrix which is a particular solution of \eqref{eq:micro-unique} which partitions the state space in two domains. It allows to ``remove'' the left part of the state space $\mathbb R^2$ from the analysis without altering the dynamics.

\begin{definition}[Separatrix]
	\label{def:separatrix}
	A separatrix $x_{sep}^I=(\vsep^I,\wsep^I)$ is a curve defined as follows. Consider a solution $(\tilde v,\tilde w)$ of \eqref{eq:micro-unique} \textcolor{black}{for $\I=0$} below the $v$-nullcline such that $\lim_{t\to t_{-\infty}}\tilde v(t)=-\infty$ and which intersects the $w$-nullcline at a point $(w^*(I),w^*(I))$ such that
	\[
		w^*(I)<\min F+I.
	\]
	 Assuming that $(w^*(I),w^*(I))$ is the leftmost intersection point with the $w$-nullcline for $t=t^*$, we define $\vsep^I(t) = w^*(I)$ and $\wsep^I(t) = w^*(I)$  for $t\geq t^*$ and $( \vsep^I(t),  \wsep^I(t))=(\tilde v(t), \tilde w(t))$ otherwise.
\end{definition}

We note that a separatrix is a $C^1$ curve and no trajectory can cross a separatrix from above. Thus, the separatrix produces a partition of the state space because the part above the separatrix is flow invariant (see  \ref{fig:setA}). By choosing the separatrix low enough, we can consider any initial condition and we thus restrict the study to the part $\CP^I$ above a given separatrix, see gray area in figure~\ref{fig:setA}.

\subsubsection{Partition}
Let us define the reset line

\begin{definition}[Reset line]
	It is the set \[\CR\defi\{x\in \R^2\st x_1=\vr\}.\]
\end{definition}

\begin{figure}[h!]
	\centering
	\includegraphics[width=0.6\textwidth]{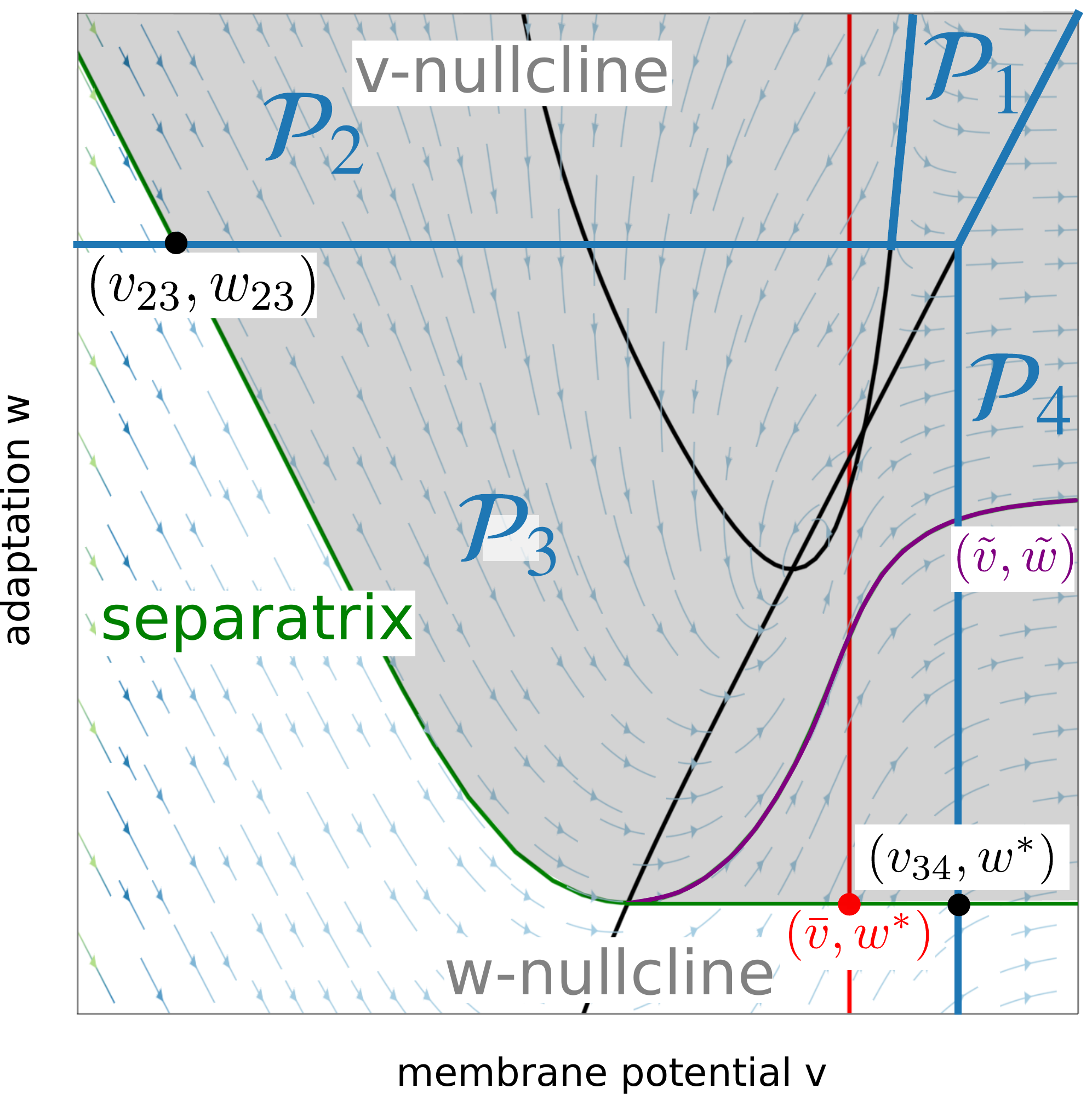}
	\caption{Plot of the vector field, the nullclines and a typical partition $\CP$ (gray region) in the case $\I=0$. The red line indicates the reset line $\{v=\vr\}$.  See text for explanations.}
	\label{fig:setA}
\end{figure}

We are now in position to provide a partition $\CP^I$ of the phase space of \eqref{eq:micro-unique} above a given separarix. This partition is built for $\I=0$. We choose a separatrix whose existence is guaranteed by proposition~\ref{prop:separatrix}. We write $\CP^I\defi \CP^I_{1}\cup\CP^I_{2}\cup\CP^I_{3}\cup\CP^I_{4}$, as in figure~\ref{fig:setA}. We first chose a horizontal boundary between  $\CP^I_{2}$ and $\CP^I_{3}$ which lies strictly above any equilibrium point or strictly above the minimum of the $v$-nullcline in case no equilibrium exists ; it is contained in $\{w=w_{23}^I\}$  and $v_{23}^I$ is the point on the separatrix with ordinate $w_{23}^I$. Additionally, we require that the reset line $\CR$ intersects the line $\{w=w_{23}^I\}$ strictly above the $v$-nullcline. The boundary between $\CP^I_{3}$ and $\CP^I_{4}$ is a vertical half-line extending up to the $w$-nullcline (which does not meet the $v$-nullcline); $v_{34}^I$ is the value corresponding to this boundary. Finally, the boundary between $\CP^I_{1}$ and $\CP^I_2$ is given by the right part of the $v$-nullcline. Additionally:
\begin{itemize}
	\item $\CP_1^I$ contains its boundary with $\CP_2^I$ but not the one with $\CP_3^I,\CP_4^I$.
	\item $\CP_2^I$ does not contain its boundary with $\CP^I_3$ and $\CP^I_1$.
	\item $\CP_3^I$ contains the corresponding part of the horizontal line $\{w=w_{23}^I\}$ but not $\{v=v_{34}^I\}$.
	\item $\CP_4^I$ contains the corresponding part of the line $\{v=v_{34}^I\}$.
\end{itemize}

\begin{remark}
	Note that $w_{23}^I=v_{34}^I$ by definition.
\end{remark}

\begin{remark}\label{rmq:CP}
	Note that given $w_{23}^I$ (resp. $v_{34}^I$), there is at most a single partition $\CP^I$ with associated boundary between $\CP_2^I$ and $\CP_3^I$ (resp. between $\CP_3^I$ and $\CP_4^I$).
\end{remark}

\begin{definition}[Partition]
	$(x_{sep}^I, w_{23}^I)$ uniquely defines a partition $\CP^I$. As such, we write
	\[\CP^I\defi (x_{sep}^I, w_{23}^I).\]
\end{definition}

We have some freedom in the choice of $w_{23}^I$ and the separatrix $x_{sep}^I$ which can be chosen respectively larger and lower.
The processes $W^{nl}$ and $W^{s,\mu_0,\I}$ live in the space
\begin{equation}\label{def:Ew}
\Ew^I \defi [w^*(I),\infty).
\end{equation}
It is also convenient to define:
\begin{equation}\label{def:RP}
	\CR_{\CP}^I \defi  \CP^I\cap\CR.
\end{equation}
$\Ew^I$ and $\CR_{\CP}^I$ are intrinsically bound to a partition hence to a separatrix.

\begin{remark}
	When the meaning is clear, we drop the $I$ dependency and write $\CP, \Ew,\CR_{\CP},\cdots$
\end{remark}

\subsection{Stochastic part}
The \emph{rate function} $\lambda(x)=\lambda(x_1)$ only depends on the first component of $x$ (we will use both notations $\lambda(x)$ and $\lambda(v)$) and may be subjected to the following conditions:

\begin{asumption}
	\label{as:ratepos-increasing}
	The rate function $\lambda$ is measurable, positive, non decreasing and such that:
	\[
	\int_{v_1}^{+\infty}\frac{\lambda}{F}=+\infty
	\]
	for any $v_1$ such that $F(v)>0$ for all $v\geq v_{1}$ (which exists according to assumption~\ref{hyp.F}--$(ii)$).
\end{asumption}
The monotony assumption is used quite intensively throughout the paper.

\begin{asumption}
	\label{as:C}
	$\forall \I\geq 0$, $\sup_{v\geq v_{1}} \lambda(v)\exp\left(-\int_{v_{1}}^v\frac{\lambda}{F+\I}\right)<\infty$ for any $v_1$ such that $F(v)>0$ for all $v\geq v_{1}$.
\end{asumption}

\begin{asumption}
	\label{as:C0}
	$\forall \I\geq 0$, $\lim\limits_{v\to\infty} \lambda(v)\exp\left(-\int_{v_{1}}^v\frac{\lambda}{F+\I}\right)=0$ for any $v_1$ such that $F(v)>0$ for all $v\geq v_{1}$.
\end{asumption}

\begin{asumption}
	\label{as:C2}
	$\forall \I\geq 0$, $\sup_{v\geq v_{1}} \lambda(v)^2\exp\left(-\int_{v_{1}}^v\frac{\lambda}{F+\I}\right)<\infty$ for any $v_1$ such that $F(v)>0$ for all $v\geq v_{1}$.
\end{asumption}

Note that assumption~\ref{as:C2} implies assumption~\ref{as:C0} which implies assumption~\ref{as:C}.

\begin{asumption}
	\label{as:r-lambda-F}
$\lambda$ belongs to $C^1(\R,\R_+^*)$ and for all $\alpha\geq 0$, there is $C_{\lambda,\alpha}>0$ such that \[\forall v\in\mathbb R,\quad \lambda'(v)(F(v)+\alpha)-\lambda^2(v)\leq C_{\lambda,\alpha}.\]
\end{asumption}

\noindent
 It is convenient to define the following function to describe the jumps of the individual neurons
\begin{equation}
	\label{eq:Delta}
	\Delta(v,w)\defi(\vr, w+\wb)
\end{equation}
onto the reset line $\CR$.

We define the \emph{survival function} (written $\Lambda^\I(x,t)$ in the autonomous case):
\begin{align}
\label{eq.survival.function}
  \Lambda^\I(s,x,t)
  \defi
  \int_s^{t\wedge\te{s,x,\I}} \lambda(\Phi_s^u(x))\,\rmd u.
\end{align}
Let $T_{0}=s$, the solution $X$ to \eqref{eq:EDSL} starting from $X_{0}$ follows the flow $\Phi_s^{t}(X_{T_{0}})$ until a time $T_{1}$ at which it jumps to $X_{T_{1}^+}=(\vr,X_{{T_{1}}^-}+\wb)=\Delta\Phi_s^{T_{1}}(X_{T_{0}})$ and so on.
The inter-jumps random variables 
\[S_{n}\defi T_{n}-T_{n-1}\]
have distribution
\begin{align}
\label{eq.Lambda}
  \P(S_{n}>u|X_{T_{n-1}}=x, T_{n-1}=s)
  =
  \exp\left(-\Lambda^\I(s,x,s+u)\right),\ u\geq 0
\end{align}
which, for each given $x$, have the density\footnote{with abuse of notations as the flow is not defined beyond $\tee$} \textit{w.r.t.} the Lebesgue measure
\begin{align}
\label{def.p}
p^\I(s,x,t)
  \defi
  \lambda(\Phi^{t}_s(x))\,e^{-\Lambda^\I(s,x,t)}\,\indic_{[s,\te{s,x})}(t)
\end{align}
under the assumption~\ref{as:ratepos-increasing} which ensures that $T_1<\tee$ \textit{a.s.}. 

We write $(\bfw_{n},S_{n})_n$ the enclosed Markov chain where 
\begin{equation}\label{eq:enclosed}
	(\vr,\bfw_n) \defi X^{s,x,\I}_{T_n^+}.
\end{equation}

We consider the canonical filtration $(\mathcal F_t^s)_{t\geq s}$ associated
to the Poisson measure $\mathbf N$ and to the initial condition $X_s^{s,\mu_0,\I}$, that is the completion of
\[
\sigma\left\{X_s^{s,\mu_0,\I}, \mathbf{N}([s, r] \times A): s \leq r \leq t, A \in \mathcal{B}\left(\mathbb{R}_{+}\right)\right\}.
\]

\begin{definition}\label{def:sol-sde-inhomo}
	Let $s\geq 0$ and $\I\in C(\R, \R_+)$.
	\begin{itemize}
		\item A process $(X_t^{s,\mu_0,\I})_{t\geq s}$ is said to be a solution of the non-homogeneous linear equation \eqref{eq:EDSL} with current $\I$ if the law of $X_s^{s,\mu_0,\I}$ is $\mu_0$, $(X_t^{s,\mu_0,\I})_{t\geq s}$ is $(\mathcal F_t^s)_{t\geq s}$-adapted, càdlàg, $t\to\lambda(X^{s,\mu_0,\I}_t)$ is locally integrable \textit{a.s.} and \eqref{eq:EDSL} holds \textit{a.s.}
		\item A $(\mathcal F^0_t)_{t\geq 0}$-adapted càdlàg process $(X^{nl}_t)_{t\geq 0}$ is said to solve the non-linear SDE \eqref{eq:MKV} if
		$t\to\E\lambda(X^{nl}_t)$ is locally integrable, if $(X^{nl}_t)_{t\geq -D}$ is a solution of \eqref{eq:EDSL} with $s=0,X^{nl}_t =X_0$ for $t\in[-D,0]$ and for all $t\geq 0$, $\I(t) = J\E\lambda(X_{0\wedge	t-D})$.
	\end{itemize}
\end{definition}

\section{Main results}\label{section:main-results}

We state the theorem of existence / uniqueness of a solution to \eqref{eq:MKV}. Note that the same results hold for \eqref{eq:EDSL} under the same assumptions.

\begin{theorem}\label{th-existence-MKV-delay}
	Grant assumptions~\ref{hyp.F},\ref{as:ratepos-increasing},\ref{as:C2},\ref{as:r-lambda-F} and consider a partition $\CP^I$. Assume that $X_0$, independent of $\mathbf N$, has law $\mu_0$ such that $\mu_0(\lambda)<\infty$ with $supp(\mu_0)\subset\CP^I$.
	Let us further assume that $D>0$. Then, there exists a unique $(\mathcal F_t^0)_{t\geq 0}$-adapted càdlàg process $X^{nl} = (V^{nl},W^{nl})$ solution to \eqref{eq:MKV}. In addition, $t\to\mathbb{E}\lambda(V_{t}^{nl})$ is continuous.
\end{theorem}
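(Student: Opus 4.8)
Since the delay satisfies $D>0$, the plan is to reduce \eqref{eq:MKV} to a sequence of \emph{linear} equations \eqref{eq:EDSL} solved block by block on the intervals $[nD,(n+1)D]$, $n\ge 0$, and then to glue these pieces into one càdlàg adapted process. The point is that in a solution of \eqref{eq:MKV} one has $X^{nl}_t=X_0$ for $t\in[-D,0]$, so on $[0,D]$ the nonlinear term is the \emph{constant} $J\,\E\lambda(V^{nl}_{t-D})=J\mu_0(\lambda)$, finite by hypothesis; and, inductively, once the laws $\mu_t\defi\mathcal L(X^{nl}_t)$ are known for $t\in[(n-1)D,nD]$, the map $t\mapsto J\,\E\lambda(V^{nl}_{t-D})=J\mu_{t-D}(\lambda)$ is a \emph{known} function on $[nD,(n+1)D]$. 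Hence on each block \eqref{eq:MKV} is an instance of \eqref{eq:EDSL} with a given continuous current $\I$, and I would invoke the well-posedness of \eqref{eq:EDSL} together with the finiteness and continuity of $t\mapsto\E\lambda(V_t)$ established in Section~\ref{section:linear} under assumptions~\ref{hyp.F}, \ref{as:ratepos-increasing}, \ref{as:C2}, \ref{as:r-lambda-F}.

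\textbf{Induction.} I would prove by induction the statement $\mathcal H_n$: there is a unique $(\mathcal F^0_t)_{t\in[0,nD]}$-adapted càdlàg process $(X^{nl}_t)_{t\in[-D,nD]}$, equal to $X_0$ on $[-D,0]$, solving \eqref{eq:EDSL} on $[0,nD]$ with current $\I(t)=J\mu_{t-D}(\lambda)$, and such that $\mu_t(\lambda)<\infty$, $\mathrm{supp}(\mu_t)\subset\CP^I$ and $t\mapsto\mu_t(\lambda)$ is continuous on $[0,nD]$. The base case $\mathcal H_1$ is the linear theory of Section~\ref{section:linear} applied on $[0,D]$ with initial law $\mu_0$ (for which $\mu_0(\lambda)<\infty$ and $\mathrm{supp}(\mu_0)\subset\CP^I$) and the constant current $J\mu_0(\lambda)$; the support statement uses that the region above the separatrix is flow invariant and that the reset map $\Delta$ sends $\CP^I$ into $\CR_{\CP}^I\subset\CP^I$. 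For the step $\mathcal H_n\Rightarrow\mathcal H_{n+1}$, set $\I(t)\defi J\mu_{t-D}(\lambda)$ for $t\in[nD,(n+1)D]$: by $\mathcal H_n$ this is continuous and nonnegative, hence bounded on this compact block, so Section~\ref{section:linear} yields a unique solution of \eqref{eq:EDSL} there, started from the $\mathcal F^0_{nD}$-measurable random variable $X^{nl}_{nD}$ and driven by $\mathbf N$ restricted to $[nD,\infty)\times\R_+$ (independent of $\mathcal F^0_{nD}$ by the independence property of the Poisson measure). Concatenating with $(X^{nl}_t)_{t\le nD}$ gives a càdlàg $(\mathcal F^0_t)$-adapted process on $[-D,(n+1)D]$, and summing the integral identities over the two sub-intervals shows \eqref{eq:EDSL} holds on $[0,(n+1)D]$; there is no ambiguity at $t=nD$ since the jump times of \eqref{eq:EDSL} are absolutely continuous (see \eqref{def.p}), so almost surely no jump occurs at $nD$. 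The new block again satisfies $\mu_t(\lambda)<\infty$, continuity of $t\mapsto\mu_t(\lambda)$ (the one-sided limits at $nD$ agreeing for the same reason), and $\mathrm{supp}(\mu_t)\subset\CP^I$, which is $\mathcal H_{n+1}$.

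\textbf{Uniqueness and continuity.} Because $D>0$, the processes given by the $\mathcal H_n$ are consistent and define a single $(\mathcal F^0_t)_{t\ge0}$-adapted càdlàg process $X^{nl}$ on $[0,\infty)$ solving \eqref{eq:MKV}. For uniqueness, given any solution $\tilde X^{nl}$, its restriction to $[0,D]$ solves \eqref{eq:EDSL} with the constant current $J\mu_0(\lambda)$ and initial law $\mu_0$, hence coincides with $X^{nl}$ on $[0,D]$ by uniqueness for \eqref{eq:EDSL}; in particular $\mathcal L(\tilde X^{nl}_t)=\mu_t$ there, so $\tilde X^{nl}$ has the same current as $X^{nl}$ on $[D,2D]$ and coincides with it there, and so on by induction. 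Finally $t\mapsto\E\lambda(V^{nl}_t)=\mu_t(\lambda)$ is continuous on $\R_+$, being continuous on each block by Section~\ref{section:linear} with matching one-sided limits at the junctions $nD$.

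\textbf{Main obstacle.} The substantial content is entirely in the input from Section~\ref{section:linear}: showing that \eqref{eq:EDSL} with a bounded continuous current is well-posed, that $\E\lambda(V_t)$ does not blow up in finite time, and that $t\mapsto\E\lambda(V_t)$ is continuous — this is where assumptions~\ref{as:ratepos-increasing} (forcing a jump before the deterministic explosion time $\tee$) and \ref{as:C2}, \ref{as:r-lambda-F} are crucial, and it is exactly the mechanism that fails in the formal blow-up example of the introduction. Granting those, the only delicate points in the present argument are the adaptedness of the glued process and the propagation of the $\lambda$-integrability and support constraints through the junctions, handled as above.
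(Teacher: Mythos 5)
Your proof is correct and follows essentially the same approach as the paper: exploiting $D>0$ to reduce the nonlinear equation to a sequence of linear instances of \eqref{eq:EDSL} on the blocks $[kD,(k+1)D]$, then invoking the linear theory (theorem~\ref{th:Knu}) block by block and concatenating. You spell out a few details the paper leaves implicit — the propagation of the $\lambda$-integrability and support constraints to the next block, adaptedness of the glued process, and the absence of a jump at the junctions — but the underlying argument is identical.
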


\noindent One important question remains. Is \eqref{eq:MKV} well posed for $D=0$?

\

The existence of stationary distributions for \eqref{eq:EDSL} in the case $\I=0$ is based on the study of the enclosed Markov chain $(\bfw_{n},S_{n})_n$. We note $\bfG$ its transition kernel and $\bfG^*$ its formal adjoint.

\begin{theorem}\label{th-mai-results-muW}
	Grant assumptions~\ref{hyp.F},~\ref{as:ratepos-increasing}. Assume that $\lambda$ is continuous and that 
	\begin{equation}\label{eq:Vnullhyp}
	\int^\infty_{w_{23}}\frac{\lambda(\Vn^-(w,I))}{w-\Vn^-(w,I)}dw<\infty.
	\end{equation}
for a partition $\CP$. Then, the following holds:
\begin{enumerate}
	\item $\bfG^{2k_D}$ satisfies a (global) Doeblin condition on $\Ew\times\R_+$ for some $k_D\in\mathbb N^*$ and $(\bfw_{n},S_{n})_n$ has a unique invariant distribution $\mu^{\mathbf w,S}$.
	\item There are $\alpha \in(0,1)$ and $C>0$ such that if $(w_0,S_0)$ has law $\nu_0$ with  $supp(\nu_0)\subset \Ew\times\R_+$, then:
	\[
	\norm{\bfG^{*n}\nu_0-\mu^{\mathbf w,S}}_{TV}\leq C\alpha^n\norm{\nu_0-\mu^{\mathbf w,S}}_{TV}.
	\]
	\item Denote by $\muW$ the invariant distribution of the Markov chain $(\bfw_{n})_n$. There is $r>0$ such that $\muW(e^{r\cdot})<\infty$ whence $\muW(w,\infty)\underset{w\to\infty}{=}O(e^{-rw})$.
	\item One has \[\E_{\muW}(T_1)<\infty.\]
		\item $\muW$  is not compactly supported, has density $\pW$ with respect to the Lebesgue measure when $I\in\Ireg$ which is continuous on $[w_{23}+\wb,\infty)$, is positive on $[w_{23},\infty)$ a.e. and has asymptotic behavior
	\[ \pW(u) =o\left(\frac1u\right).\]	
\end{enumerate}
\end{theorem}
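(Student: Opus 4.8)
The plan is to treat the five claims in sequence, using the bivariate chain $(\bfw_n, S_n)_n$ as the organizing object and deducing the properties of $\muW$ by marginalization. For claim (i), I would first identify the transition kernel $\bfG$ explicitly: starting from $(\vr, w)$ on the reset line, the flow $\Phi^{t}$ is followed for a time $S$ with density $p(x, s+S)$ as in \eqref{def.p}, after which we jump by $\Delta$. The Doeblin (minorization) condition amounts to showing $\bfG^{2k_D}((w,s), \cdot) \geq c\,\nu(\cdot)$ uniformly over a set that absorbs the whole state space in a bounded number of steps. Here is where the partition and the separatrix do their work: because the dynamics above the separatrix is trapped and because the reset always lands on $\CRP$ with a controlled $w$-increment $\wb$, after finitely many spikes the $w$-coordinate enters a compact set with positive probability, and on that compact set the densities $p$ are bounded below. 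The reason one needs $2k_D$ rather than $k_D$ is presumably a parity/periodicity obstruction in reaching every target $w$ in exactly $k_D$ steps; alternatively it could come from needing two independent lower bounds (one pushing $w$ down, one pushing it up) to sandwich the target. Hypothesis \eqref{eq:Vnullhyp} should be exactly what guarantees integrability/finiteness of the relevant quantities near the left $v$-nullcline so that the minorizing measure has positive mass. Once the Doeblin condition holds, existence and uniqueness of $\mu^{\bfw,S}$ is standard (Harris, or the version in \cite{canizo_harris-type_2023}), and claim (ii) is the quantitative convergence that comes packaged with it, with $\alpha = (1-c)^{1/(2k_D)}$.

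For claim (iii), the tail bound on $\muW$, I would build a Lyapunov function of the form $V(w) = e^{rw}$ and verify a geometric drift inequality $\bfG V \leq \gamma V + b\,\indic_{K}$ for the $w$-chain, with $\gamma < 1$ and $r$ small enough. The computation reduces to estimating $\E[e^{r(w + \wb + \text{flow displacement of } w \text{ over one inter-spike interval})}\mid \bfw_0 = w]$; since $\dot w = v - w$ contracts $w$ toward $v$, and since $\lambda$ is nondecreasing so spikes happen before $w$ can grow too large, one expects the multiplicative factor to beat $e^{-r\cdot(\text{something positive})}$ for appropriate $r$. Combining the drift inequality with the Doeblin condition (via \cite{canizo_harris-type_2023} or Meyn–Tweedie) yields $\muW(e^{r\cdot}) < \infty$, and the $O(e^{-rw})$ tail for $\muW(w,\infty)$ is then just Markov's inequality. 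Claim (iv), $\E_{\muW}(T_1) < \infty$, should follow from the same circle of ideas: $\E_{\muW}(T_1) = \int \E[S_1 \mid \bfw_0 = w]\,\muW(\rmd w)$, and $\E[S_1 \mid \bfw_0 = w]$ grows at most polynomially (in fact should be bounded, or grow slower than $e^{rw}$) in $w$ because a larger $w$ only delays the spike through the drift, which is controlled; integrating against the exponentially-tailed $\muW$ gives finiteness. One can alternatively invoke that $(\bfw_n, S_n)$ has invariant law $\mu^{\bfw,S}$ and that the $S$-marginal has finite mean, which should already be visible from the drift estimate applied to the joint chain.

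For claim (v) — the structural/regularity properties of $\muW$ — I would argue as follows. Non-compact support: if $\muW$ were supported in $[w_{23}, M]$, the drift would force, with positive probability, a spike arriving when $w$ is near $M$, producing $w + \wb$ plus a nonnegative flow contribution, landing above $M$, contradicting invariance; so the support is unbounded above. Absolute continuity and the density $\pW$: the one-step kernel from $(\vr, w)$ to $(\vr, w')$ has, when $I \in \Ireg$, a density in $w'$ obtained by the change of variables $w' = \wb + w_{(\vr,w)}(0, S)$ where $S$ is the inter-spike time — the condition $I \in \Ireg$ (i.e. $(\vr,\vr)$ not an equilibrium) ensures the map $S \mapsto w_{(\vr,w)}(0,S)$ is a genuine diffeomorphism onto its image so no atom is created, and the smoothness of $F, \lambda$ (assumptions \ref{hyp.F}, \ref{as:r-lambda-F}) transfers $C^1$ regularity. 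Continuity of $\pW$ on $[w_{23}+\wb, \infty)$: since every jump adds at least $\wb$, the invariant density is a convolution-type average of continuous kernels over that shifted half-line. Positivity a.e. on $[w_{23},\infty)$: from irreducibility/Doeblin, every point in the interior is reached. The asymptotics $\pW(u) = o(1/u)$: this is the delicate tail estimate — I would get it by combining $\muW(u,\infty) = O(e^{-ru})$ with a regularity/monotonicity control on $\pW$ (e.g. $\pW$ cannot oscillate wildly because it solves a fixed-point equation $\pW = \bfG^* \pW$ with a smoothing kernel), so that an exponentially small tail mass forces $u\,\pW(u) \to 0$.

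The main obstacle I expect is claim (i), specifically producing the explicit minorizing measure with the correct power $2k_D$ and verifying its mass is positive — this is where the geometry of the partition $\CP$, the behavior of the flow near the left $v$-nullcline, and hypothesis \eqref{eq:Vnullhyp} all have to be used together in a non-routine way; everything downstream (claims (ii)–(v)) is comparatively mechanical once the Doeblin condition and the exponential Lyapunov drift are in hand. A secondary difficulty is the sharp asymptotic $\pW(u) = o(1/u)$ in claim (v), which requires more than the crude tail bound: one needs some equicontinuity or monotone-envelope argument on $\pW$ itself.
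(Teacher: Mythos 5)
Your overall strategy — Doeblin minorization for $(\bfw_n,S_n)_n$, exponential Lyapunov function $V(w)=e^{rw}$, then regularity/tail of $\muW$ via the kernel — is the one the paper uses (via Proposition~\ref{prop-doebolin2}, Theorem~\ref{thm:existence-uniq-inv2}, Lemma~\ref{lem:er}, Lemma~\ref{lemma:ET1} and Proposition~\ref{prop:mu-support}). Two points in your plan, however, would send you down the wrong road.

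First, the reason the Doeblin exponent is $2k_D$ is neither a parity obstruction nor a pair of "up/down" lower bounds. Proposition~\ref{prop-doebolin2} produces two \emph{separate} minorizations against the \emph{same} measure $\nu_D$: one of the form $\bfG^{2}h(w_0,\cdot)\ge\beta_D\nu_D(h)$ valid for $w_0\ge 2w_{23}$ (where the flow geometry in $\CP_2$ is clean and hypothesis~\eqref{eq:Vnullhyp} gives a uniform lower bound on $p^I$ along the crossing of $[w_{23},2w_{23}]$), and another of the form $\bfG^{k_D}h(w_0,\cdot)\ge\beta_D\nu_D(h)$ for $w_0\in[w^*,2w_{23}]$, obtained by inserting $k$ short, deterministic-size jumps ($\wb$ each) to push $w$ above $2w_{23}$ and then appealing to the first bound — that is where $k_D=k+2$ comes from. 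Theorem~\ref{thm:existence-uniq-inv2} then upgrades this to a \emph{global} Doeblin condition for $\bfG^{2k_D}$ by iterating the minorization from $w_0\ge2w_{23}$ through the set $\tilde\CP_2\defi[2w_{23},\infty)\times\R_+$ and using that $\nu_D(\tilde\CP_2)>0$; the exponent $2k_D$ is the composition count needed to make the two partial statements into one uniform one. So the structural work is in gluing the two regimes together with a common $\nu_D$, not in hitting a parity target.

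Second, you significantly overestimate the difficulty of $\pW(u)=o(1/u)$ and propose machinery (exponential tail plus equicontinuity/monotone envelope) that is not needed. The paper writes the stationary density via an explicit one-step change of variables $w=w_{w_0}(t)$ along the $\CP_2$ branch, yielding the integral equation
\[
\pW(w+\wb)=\int_w^\infty \pW(w_0)\,r(w_0,w)\,\rmd w_0,\qquad
r(w_0,w)=\frac{\lambda(V_{w_0}(w))}{w-V_{w_0}(w)}\exp\Bigl(-\int_w^{w_0}\frac{\lambda(V_{w_0}(u))}{u-V_{w_0}(u)}\,\rmd u\Bigr),
\]
for $w$ above a finite threshold. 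Because $\lambda$ is nondecreasing and $V_{w_0}(w)\le\vr$ in $\CP_2$, one has the crude but decisive bound $r(w_0,w)\le\lambda(\vr)/(w-\vr)$, so $\pW(w+\wb)\le\frac{\lambda(\vr)}{w-\vr}\int_w^\infty\pW$, and the mere integrability of $\pW$ already forces $u\,\pW(u)\to 0$. The same kernel inequality gives continuity on $[w_{23}+\wb,\infty)$ by dominated convergence. No Lyapunov tail bound is invoked at this step, and notice that $\lambda\in C^1$ (Assumption~\ref{as:r-lambda-F}) is \emph{not} among the hypotheses of this theorem — only continuity of $\lambda$ — so your reference to that smoothness should be dropped. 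Finally, the non-compact-support claim in the paper is not your drift-contradiction but a direct support argument: the minorizing measure satisfies $(w_{23}+2\wb,2w_{23}+2\wb)\subset\mathrm{supp}(\tilde\nu_D)\subset\mathrm{supp}(\muW)$, and $w_{23}$ may be taken arbitrarily large; both arguments work, but the paper's is shorter and ties directly to the Doeblin construction.
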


Actually, the results of section~\ref{section:DI} are more general than the ones of theorem~\ref{th-mai-results-muW} in that they are locally uniform in the parameter $I$ in \eqref{eq:micro-unique}.

\begin{theorem}
	\label{thm:existence-uniq-inv-c0}
	Grant assumptions of theorem~\ref{th-existence-MKV-delay}and consider a partition $\CP^I$. Assume that $\lambda$ is continuous and that \eqref{eq:Vnullhyp} is satisfied. 
	Then, the solution $X$ of \eqref{eq:EDSL} is ergodic on $\CP^I$ with unique invariant distribution
	\begin{equation}\label{eq:muinv}
	\mu_I^{inv}(h)
	=	\frac{1}{\E_{\muW_I}(T_1)}
	\E_{\muW_I}\int_0^{T_1} h\circ\Phi^s\,ds
	\end{equation}
	and $supp(	\mu_I^{inv})\subset\CP^I\setminus\CP^I_1$.
\end{theorem}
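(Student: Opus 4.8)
The plan is to transfer the ergodicity of the enclosed chain $(\bfw_n, S_n)_n$ — established in Theorem \ref{th-mai-results-muW} — to the continuous-time process $X$ solving \eqref{eq:EDSL} with $\I = 0$, using the classical semi-regenerative / Markov-renewal structure of a PDMP. The jump times $(T_n)_n$ form a renewal-type sequence, with $X_{T_n^+} = (\vr, \bfw_n)$ governed by the kernel $\bfG$ and inter-jump times $S_n$ having the conditional law \eqref{eq.Lambda}. Between jumps $X$ follows the deterministic flow $\Phi^s$. This is precisely the setting of Costa's formula \cite{costa_stationary_1990}: if the embedded chain at the jumps has invariant law $\muW_I$ and $\E_{\muW_I}(T_1) < \infty$, then the time-average along trajectories yields an invariant measure of the form \eqref{eq:muinv}.

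First I would verify that $\mu_I^{inv}$ as defined by \eqref{eq:muinv} is indeed a well-defined probability measure: finiteness of $\E_{\muW_I}(T_1)$ is Theorem \ref{th-mai-results-muW}(iv), and $\mu_I^{inv}(\mathbf 1) = 1$ is immediate from the normalization; measurability of $(w, \omega) \mapsto \int_0^{T_1} h\circ\Phi^s\,ds$ follows from continuity of the flow in time together with assumption \ref{hyp.F}. Second, I would check invariance: apply the semigroup of $X$ to $h$, decompose over whether a jump has occurred in $[0,t]$ or not (i.e., condition on $\{T_1 > t\}$ vs. $\{T_1 \le t\}$ and iterate), and use the strong Markov property at $T_1$ together with $\bfG$-invariance of $\muW_I$ to telescope the series. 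The key algebraic identity is that for the stationary process the "overshoot" structure reproduces $\mu_I^{inv}$; this is standard but needs the locally-integrable control on $t \mapsto \lambda(\Phi^t(x))$ which assumption \ref{as:ratepos-increasing} supplies (so that $T_1 < \tee$ a.s. and the flow is not run past its explosion time). Third, uniqueness and ergodicity: any invariant measure $\nu$ of $X$ induces, by sampling at jump times, an invariant measure of $(\bfw_n, S_n)_n$, which by Theorem \ref{th-mai-results-muW}(i) must be $\mu^{\mathbf w, S}$; pushing this back through Costa's correspondence forces $\nu = \mu_I^{inv}$. The exponential convergence in Theorem \ref{th-mai-results-muW}(ii) additionally upgrades this to ergodicity (convergence of the law of $X_t$), via a renewal argument controlling the residual time to the next regeneration, using $\E_{\muW_I}(T_1) < \infty$ and the aperiodicity implicit in the Doeblin condition on $\bfG^{2k_D}$.

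Finally, the support statement $supp(\mu_I^{inv}) \subset \CP^I \setminus \CP^I_1$: since $\CP^I$ is flow-invariant (above the separatrix) and $supp(\muW_I) \subset \CR^I_{\CP} = \CP^I \cap \CR$, the orbits $\{\Phi^s(\vr, w) : 0 \le s < T_1\}$ all lie in $\CP^I$, giving $supp(\mu_I^{inv}) \subset \CP^I$. To exclude $\CP^I_1$, recall that $\CP^I_1$ is the region bounded on one side by the right branch of the $v$-nullcline and that it contains its boundary with $\CP^I_2$ but not with $\CP^I_3, \CP^I_4$; a trajectory started on the reset line at $(\vr, w)$ lies, by the partition construction (the reset line meets $\{w = w_{23}^I\}$ strictly above the $v$-nullcline), on the $\dot v > 0$ side and cannot enter $\CP^I_1$ before the next reset — indeed entering $\CP^I_1$ would require crossing the right $v$-nullcline branch from the region where $\dot v > 0$, which is impossible, or entering from below, which is ruled out by the separatrix. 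Hence the occupation measure charges only $\CP^I \setminus \CP^I_1$.

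The main obstacle I anticipate is the invariance verification in the non-autonomous-looking but actually time-homogeneous setting with an \emph{explosive} flow: one must be careful that all the renewal manipulations are justified despite $\Phi^t$ being defined only on $[0, \tee)$, and that the measure $\mu_I^{inv}$ does not leak mass "at infinity" near blow-up — this is exactly where assumption \ref{as:ratepos-increasing} (forcing $T_1 < \tee$ a.s., so the flow is always killed before explosion) and the tail bound Theorem \ref{th-mai-results-muW}(iii) on $\muW_I$ must be combined to control $\E_{\muW_I}\int_0^{T_1} |h\circ\Phi^s|\,ds$ for the relevant test functions and to guarantee the series in the invariance computation converges.
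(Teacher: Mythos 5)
Your plan is a legitimate but genuinely different route from the paper's. The paper does not verify semigroup invariance of $\mu_I^{inv}$ at all; instead it sandwiches the continuous-time occupation average $\frac{1}{t}\int_0^t h(X_s)\,ds$ between the $T_{N_t}$- and $T_{N_t+1}$-terminated integrals, writes $\int_0^{T_{N_t+1}} h(X_r)\,dr = \sum_{k\le N_t} H(X_{T_k^+}, S_k)$ with $H(w,s) = \int_0^s h(\Phi^r(w))\,dr \in \mathrm L^1(\mu^{\bfw,S})$, and invokes the Birkhoff ergodic theorem for the Harris-recurrent embedded chain $(\bfw_n, S_n)_n$ (from Theorem~\ref{thm:existence-uniq-inv2}) to obtain a.s.\ convergence of time averages to $\mu_I^{inv}(h)$; invariance and uniqueness fall out simultaneously. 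Your Costa-formula route — verify invariance via the pre/post-$T_1$ renewal decomposition of the semigroup, then uniqueness by sampling an arbitrary invariant law at jump times — is also viable but heavier: the renewal expansion must be justified near the explosion boundary, and the uniqueness step is a Palm-type correspondence that you state but do not carry out. The paper's route is tighter precisely because it never manipulates the semigroup, only the embedded chain and occupation times.

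Two substantive issues. First, you assert that the Doeblin contraction on $\bfG^{2k_D}$ plus a renewal argument upgrades the conclusion to ``convergence of the law of $X_t$.'' That is a stronger statement than the theorem's ``ergodic'' (which, as used in the proof, means a.s.\ Birkhoff time averages), and the paper explicitly leaves Harris recurrence of the continuous-time process $X$, hence convergence in law, as an \emph{open conjecture} in the paragraph immediately following Theorem~\ref{thm:existence-uniq-inv-c0}; your renewal argument would need to be supplied and is not clearly available with the paper's tools. Second, in the support argument you place the post-reset point $(\vr,w)$ with $w\ge w_{23}$ ``on the $\dot v>0$ side.'' This is backwards: the partition condition that $\CR$ meets $\{w=w_{23}\}$ strictly above the $v$-nullcline means $w_{23}>F(\vr)+I$, so $\dot v=F(\vr)-w+I<0$ there, and the trajectory drifts left, away from $\CP_1^I$. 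The conclusion that $\CP_1^I$ is not reachable from $\CR$ by the deterministic flow is still correct — and that is the entirety of the paper's argument for the support claim — but your stated sign of the vector field is wrong.
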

We conjecture that $X$ is Harris recurrent on some subdomain of the partition $\CP^I$, this would be quite easy to prove if $\bfG$ satisfied a global Doeblin estimate. This question is linked to the general problem of equivalence between the stability properties of the enclosed and continuous time processes \cite{costa_stationary_1990,benaim_qualitative_2015}.
We now look at the invariant distributions of the SDE \eqref{eq:MKV}. 

\begin{theorem}\label{thm-nlDI}
	Grant assumptions of theorem~\ref{th-mai-results-muW}.
	For $J\geq 0$ small enough, there is a (nonlinear) invariant distribution to \eqref{eq:MKV} when $I\in \Iempty$.
\end{theorem}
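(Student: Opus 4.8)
The plan is to set up a fixed-point argument on the interaction current. Fix $I\in\Iempty$ and a partition $\CP^I$. For a candidate constant current value $a\ge 0$, theorem~\ref{thm:existence-uniq-inv-c0} (applied with the constant current $\I\equiv a$, which is the situation covered by the autonomous flow $\Phi^t$ after incorporating $a$ into $I$; note $I\in\Iempty$ guarantees no equilibrium, so the hypotheses on the flow and the integrability condition \eqref{eq:Vnullhyp} persist for small $a$ by the local uniformity in $I$ remarked after theorem~\ref{th-mai-results-muW}) gives a unique invariant distribution $\mu^{inv}_{I+a}$ of the linearized equation \eqref{eq:EDSL} with that current. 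Define the map
\[
\Psi(a) \defi J\,\mu^{inv}_{I+a}(\lambda) = \frac{J}{\E_{\muW_{I+a}}(T_1)}\,\E_{\muW_{I+a}}\int_0^{T_1}\lambda(\Phi^s)\,ds.
\]
A fixed point $a^\star=\Psi(a^\star)$ produces a stationary solution of \eqref{eq:MKV}: indeed, if the law of $X_0$ is $\mu^{inv}_{I+a^\star}$, then $\E\lambda(V^{nl}_t)\equiv\mu^{inv}_{I+a^\star}(\lambda)=a^\star/J$ for all $t$ (using that the invariant law is stationary under \eqref{eq:EDSL} with current $a^\star$, and that $X_0$ is used on $[-D,0]$), so the self-consistency term $J\E\lambda(V^{nl}_{t-D})=a^\star$ is exactly the constant current we started from, and \eqref{eq:MKV} reduces to \eqref{eq:EDSL} with $\I\equiv a^\star$, whose stationary solution it is.

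First I would show $\Psi$ maps a compact interval $[0,M]$ into itself for $J$ small. The key input is a uniform-in-$a$ bound $\mu^{inv}_{I+a}(\lambda)\le K$ for $a\in[0,M]$: by the expression \eqref{eq:muinv} this is $\E_{\muW_{I+a}}\!\big(\int_0^{T_1}\lambda\circ\Phi^s\,ds\big)\big/\E_{\muW_{I+a}}(T_1)$, and the numerator is controlled because $\int_0^{T_1}\lambda(\Phi^s)\,ds=\Lambda^{a}(x,T_1)$ is an $\mathrm{Exp}(1)$ variable under the jump law \eqref{eq.Lambda}–\eqref{def.p}, hence has expectation $1$ conditionally on the starting point, so the numerator equals $1$; thus $\mu^{inv}_{I+a}(\lambda)=1/\E_{\muW_{I+a}}(T_1)$, and theorem~\ref{th-mai-results-muW}(iv) together with the locally-uniform-in-$I$ refinement gives a uniform lower bound $\E_{\muW_{I+a}}(T_1)\ge \tau_0>0$ on $a\in[0,M]$. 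Hence $\Psi(a)\le J/\tau_0$, and choosing $J\le M\tau_0$ gives $\Psi([0,M])\subset[0,M]$.

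Then I would establish continuity of $a\mapsto\Psi(a)$ and invoke Brouwer (here just the intermediate value theorem on $[0,M]$, or directly Brouwer on the interval). Continuity follows from continuous dependence of the flow $\Phi^t$ on the current parameter (standard ODE theory, using $F\in C^1$), hence of the inter-jump density $p^a(x,t)$ and of the transition kernel $\bfG=\bfG_{I+a}$ of the enclosed chain, combined with the exponential ergodicity in theorem~\ref{th-mai-results-muW}(i)–(ii) which is locally uniform in the parameter: the Doeblin constant and contraction rate $\alpha$ can be taken uniform over $a\in[0,M]$, so $\muW_{I+a}$ depends continuously on $a$ in total variation, and the continuous functionals $T_1$ (with the $O(e^{-rw})$ tail of theorem~\ref{th-mai-results-muW}(iii) providing uniform integrability) pass to the limit. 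Applying the fixed-point theorem yields $a^\star\in[0,M]$ with $\Psi(a^\star)=a^\star$, and by theorem~\ref{th-existence-MKV-delay} the resulting stationary process is the genuine (and unique-in-law-given-its-initial-distribution) solution of \eqref{eq:MKV}.

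The main obstacle I expect is the uniformity in the parameter $a$ (equivalently in $I$): I need the Doeblin/Harris constants, the Lyapunov function, the tail rate $r$, and the lower bound on $\E_{\muW}(T_1)$ all to be uniform on a neighborhood of $I$ in $\Iempty$, and I must verify that \eqref{eq:Vnullhyp} and the partition data $(x^I_{sep},w^I_{23})$ can be chosen to vary continuously (or at least remain admissible) as $I$ varies — this is precisely the "locally uniform in $I$" strengthening alluded to after theorem~\ref{th-mai-results-muW}, so the work is to check the constants in section~\ref{section:DI} genuinely are uniform, rather than to prove anything new. A secondary, purely technical point is checking that the stationary solution constructed on $[0,\infty)$ with the correct initialization on $[-D,0]$ indeed satisfies Definition~\ref{def:sol-sde-inhomo}; this is immediate once $\E\lambda(V^{nl}_t)$ is shown constant.
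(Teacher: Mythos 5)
Your proposal is correct and takes essentially the same route as the paper: the reduction $\mu^{inv}_{I+a}(\lambda)=1/\E_{\muW_{I+a}}(T_1)$ via the $\mathrm{Exp}(1)$ observation is exactly Lemma~\ref{lemma:kstat}, and the intermediate-value/fixed-point argument on $\I\mapsto \I\,\E_{\muW_\I}(T_1)$, with continuity supplied by Proposition~\ref{prop:di-c0} (in the weighted norm $\norm{\cdot}_V$, $V=e^{r\cdot}$, precisely because $T_\I$ is unbounded) together with Lemmas~\ref{lem:er} and~\ref{lem-TkC0}, is the paper's proof. The one small caveat is to insist on convergence of $\muW_{I_n}$ in the $V$-weighted total variation norm rather than ordinary total variation, which your remark on the $O(e^{-rw})$ tail already anticipates.
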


This result is not satisfying in at least two ways. First, we have to assume that $I\in\Iempty$ in order to prove the technical result that $I\to\muW_I$ is continuous (see proposition~\ref{prop:di-c0}). However, when $J=0$, there is a unique invariant distribution even when $I\notin\Iempty$ by theorem~\ref{th-mai-results-muW} and we'd expect this to hold for $J$ small. Then, we have not answered the question of what happens for large $J$. This is linked to the asymptotic behavior of $\mathbb R\ni\I\to \E_{\muW_\I}(T_1)$, or basically to the boundedness of $\I\to \I\cdot\E_{\muW_\I}(T_1) $.

\section{Preliminary results on the deterministic flow}\label{section:ode}

We derive some results about the flow of the ODE  \eqref{eq:micro-unique} which are used repeatedly throughout the paper.
The basic assumption concerning the vector field is assumption~\ref{hyp.F}.
Strict convexity implies that the vector field has at most two equilibrium points when $\I$ is constant. Negative derivative $\limsup\limits_{-\infty}F'$ provides the existence of a separatrix.
Assumption~\ref{hyp.F}-$(ii)$  gives finite time explosion.

We now prove the existence of $\CP^I$.
\begin{proposition}
\label{prop:separatrix}
Grant assumption~\ref{hyp.F}-$(i)$. Then, there is a separatrix $x^I_{sep}$ (see definition~\ref{def:separatrix}) such that 
\[\lim\limits_{-\infty} w^I_{sep}=+\infty.\] 
As a consequence, $\CP^I$ exists.
\end{proposition}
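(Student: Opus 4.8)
The goal is to construct a solution $(\tilde v, \tilde w)$ of \eqref{eq:micro-unique} with $\I=0$, lying below the $v$-nullcline, with $\lim_{t\to t_{-\infty}}\tilde v(t)=-\infty$, that intersects the $w$-nullcline at a point $(w^*,w^*)$ with $w^*<\min F+I$, and moreover whose $w$-component blows up to $+\infty$ as $t\to t_{-\infty}$. The natural strategy is to build this trajectory backwards in time: pick an initial point $x_0=(v_0,w_0)$ with $v_0$ very negative and $w_0$ chosen so that the point is strictly below the $v$-nullcline (i.e.\ $F(v_0)-w_0+I>0$) but above the $w$-nullcline (i.e.\ $v_0-w_0<0$, equivalently $w_0>v_0$), and run the ODE both forward (to get the intersection with the $w$-nullcline, giving $w^*$) and backward (to get the behavior at $t_{-\infty}$). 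First I would verify that such a starting region is nonempty: since $\lim_{-\infty}F'<-3<0$, $F(v)\to+\infty$ as $v\to-\infty$, so for $v_0$ negative enough the strip $\{v_0<w<F(v_0)+I\}$ is a nonempty interval, and I can take $w_0$ in it.

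Next I would establish that the region below the $v$-nullcline and above the $w$-nullcline — call it the "wedge" $\mathcal{W}=\{(v,w): v-w<0 < F(v)-w+I\}$ — has the right invariance properties under backward flow. On this wedge $\dot v = F(v)-w+I>0$ and $\dot w = v-w<0$, so forward in time $v$ increases and $w$ decreases; backward in time $v$ decreases and $w$ increases. The only way to leave the wedge backward in time is to cross one of its two boundaries: the $w$-nullcline $\{v=w\}$ (where $\dot w=0$) or the $v$-nullcline $\{w=F(v)+I\}$ (where $\dot v=0$). One checks the sign of the transverse component on each boundary to see that backward-in-time the trajectory cannot exit through the $w$-nullcline below a suitable point — this uses $\lim_{-\infty}F'<-3$ to control how the two nullclines compare, since along the $v$-nullcline $\frac{dw}{dv}=F'(v)$ and we need this to be steeper than the $w$-nullcline's slope $1$ for $v$ very negative, which holds precisely because $F'<-3<1$ there. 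So backward in time from $x_0$ the trajectory stays in the wedge, $v(t)$ is monotonically decreasing, and it either reaches $-\infty$ in finite or infinite backward time. Since $v$ is decreasing and bounded solutions would have to converge to an equilibrium inside the (closed) wedge, and there is no equilibrium on the relevant branch for $v$ sufficiently negative (again by $F'<-3$, the two nullclines separate), $v(t)\to-\infty$ as $t\to t_{-\infty}$; and then from $\dot w = v-w$ with $v\to-\infty$ one gets $w(t)\to+\infty$ as well, which is exactly the extra conclusion $\lim_{-\infty}w^I_{sep}=+\infty$.

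Then I would handle the forward side: starting from $x_0$ in the wedge, $w$ is strictly decreasing and $v$ strictly increasing, so the trajectory must eventually hit the boundary $\{v=w\}$ (it cannot hit the $v$-nullcline $\{w=F(v)+I\}$ first because $\dot v>0$ keeps $v$ moving toward that curve's lower branch only if... — more carefully, one argues $w$ decreases while $v$ increases so the quantity $w-v$ strictly decreases and must reach $0$ in finite time unless the trajectory blows up first, but in the wedge $v$ stays below the $v$-nullcline so $v$ is bounded on bounded $w$-intervals, precluding blowup before the crossing). Let $(w^*,w^*)$ be that first crossing point. To get $w^*<\min F+I$: choosing $v_0$ very negative forces $w_0<F(v_0)+I$ but I need the crossing value $w^*$, which is $\le w_0$ since $w$ decreases, to actually be below $\min F + I$; since $w^* \le w_0$ and I am free to choose $w_0$ as small as I like (any value in $(v_0, F(v_0)+I)$, and $v_0\to-\infty$ lets me push the lower end down), I take $w_0<\min F+I$ from the start. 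Define $x^I_{sep}$ to be this trajectory for $t\le t^*$ and the constant $(w^*,w^*)$ for $t\ge t^*$, matching Definition~\ref{def:separatrix}; it is $C^1$ because at $t^*$ the vector field has $\dot w=0$ and the piecewise definition glues a solution to its limiting equilibrium-on-the-nullcline value. Finally, with the separatrix in hand and $\lim_{-\infty}w^I_{sep}=+\infty$, the construction of the partition $\CP^I$ described in Section~\ref{section:notations} goes through: the separatrix sits low enough (its $w$-component is unbounded below... actually unbounded above as $t\to t_{-\infty}$, but the point is the region above it is flow-invariant and contains any prescribed compact set of initial conditions once we also choose $w^I_{23}$ large), so $\CP^I=(x^I_{sep},w^I_{23})$ is well defined.

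**Main obstacle.** The delicate point is the backward-in-time invariance and non-escape through the $w$-nullcline, together with ruling out convergence to an equilibrium: this is where assumption~\ref{hyp.F}-$(i)$, and specifically the quantitative bound $\lim_{-\infty}F'<-3$, is genuinely used — it controls the relative slopes of the two nullclines for $v\to-\infty$ and guarantees that the branch of the $v$-nullcline on the far left lies strictly above the diagonal $\{v=w\}$, so the wedge is a genuine backward-invariant funnel with no trapped equilibrium and $v\to-\infty$ along it. Getting the geometry of these two curves and the sign conditions on the wedge boundary exactly right (and confirming the value $3$, rather than merely $1$, is what is needed — presumably to also control the $w$-nullcline crossing and the eigenvalue structure ensuring the relevant equilibrium, when it exists, is a saddle whose stable manifold is this very separatrix) is the crux of the argument; the rest is routine ODE monotonicity bookkeeping.
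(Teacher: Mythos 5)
Your construction hinges on the claim that the ``wedge'' $\mathcal W=\{(v,w):v-w<0<F(v)-w+I\}$ (the region strictly between the $w$-nullcline and the $v$-nullcline for $v$ very negative) is backward-flow invariant, and you only check the transverse sign on the $w$-nullcline boundary. But the wedge is \emph{not} backward invariant: on its upper boundary $\{w=F(v)+I\}$ the backward flow is $(-\dot v,-\dot w)=(0,\,w-v)$ with $w-v>0$, i.e.\ it points straight up, which takes you across the $v$-nullcline and out of $\mathcal W$. The issue is that on the $v$-nullcline the $v$-component of the flow vanishes, so there is no inward component to push along, and the residual $w$-component is outward. Thus ``the trajectory stays in the wedge'' does not follow from your sign check, and the whole backward-time monotonicity argument (and the limits $v\to-\infty$, $w\to+\infty$) collapses.

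This is precisely why the paper does \emph{not} use the region between the nullclines. The actual invariant set (lemma~\ref{lemma:separatrix}) is $B_{\alpha_-,x_n}=\{v\le v_-,\ w\le\alpha_-(v-v_-)+v_-\}$, whose upper boundary is an \emph{affine} line of slope $\alpha_-<0$, sitting strictly below the $v$-nullcline. Along that line the flow has a nonzero $v$-component, and the inwardness condition becomes, after linearizing the convex $F$ at $v^*$, the inequality $\alpha_-^2-\alpha_-+1-\alpha_- F'(v^*)<0$; since $\sup_{\alpha<0}\frac{\alpha^2-\alpha+1}{\alpha}=-3$ (attained at $\alpha=-1$), this is satisfiable exactly when $F'(v^*)<-3$. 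This is the computation that explains the threshold $-3$. You explicitly flag that you are unsure why the constant is $3$ rather than $1$ and ``presume'' a reason; this uncertainty is the symptom of the missing step, since the right boundary (an affine line, not the nullcline) and the slope optimization are where $-3$ enters. To repair the proof you need to replace $\mathcal W$ by an invariant funnel of the type $B_{\alpha_-,x_n}$, establish its backward invariance via the normal-cone/inward-field computation above, and only then run your monotonicity argument inside it. The forward-time part of your sketch (decrease of $w-v$, hitting $\{v=w\}$, choosing $w_0<\min F+I$) is essentially fine once the backward-invariant set is in place.
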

\begin{proof}
By lemma~\ref{lemma:separatrix}, there is a set $B_{\alpha_-,x_n}$ with $x_n\defi(v_-,v_-)$, which is invariant by the backward dynamics of \eqref{eq:micro-unique}. 
Any point in $B_{\alpha_-,x_n}$ yields a separatrix using the backward flow.
To conclude on the existence, we observe that the $w$-nullcline intersects $B_{\alpha_-,x_n}$ and that $B_{\alpha_-,x_n}$ is located strictly below the $v$-nullcline.

Let us write $\tilde x(t)$ the solution for backward flow. In $B_{\alpha_-,x_n}$, $\tilde v$ decreases, $\tilde w$ increases and $\tilde v$ is unbounded. $\tilde x(t_0)$ is strictly above the $w$-nullcline for some $t_0$. It yields for $t\geq t_0$ 
\[\tilde w(t)\geq v(t_0)+e^{t-t_0}(\tilde w(t_0)-\tilde v(t_0))\]
which is unbounded.
\end{proof}

We define $V_x$, away from the $w$-nullcline, the $v$-component of the flow, as the solution of
\begin{equation}\label{eq:Vx}
\left\{
\begin{aligned}
\frac{dV_x(w)}{dw}&= \frac{F(V_x(w))-w+I}{V_x(w)-w},\\
V_x(x_2) &= x_1.
\end{aligned}
\right.
\end{equation}
It follows that $(v_x(t),w_x(t)) = (V_x(w_x(t)),w_x(t))$.
Similarly,  away from the $v$-nullcline, $W_x$ is the $w$-component of the flow, solution of
\begin{equation}\label{eq:Wx}
\left\{
\begin{aligned}
\frac{dW_x(v)}{dv}&= \frac{v-W_x(v)}{F(v)-W_x(v)+I},\\
W_x(x_1) &= x_2.
\end{aligned}
\right.
\end{equation}
It follows that $(v_x(t),w_x(t)) = (v_x(t),W_x(v_x(t)))$.

\begin{theorem}
\label{th:jtb1}
Grant assumption~\ref{hyp.F}. Let $\I\in C^0(\R,\R_+)$ and consider a partition $\CP^I = (x_{sep}^I, w_{23}^I)$, then the solution of \eqref{eq:micro-unique} satisfy the following properties.
\begin{enumerate}
\item The solutions stay in $\CP^I_4$ and blow up in finite time. The adaptation variable $w$ is finite at the time of explosion.
\item The solutions from $\CP^I_1$ hit $\CP^I_3$ or $\CP^I_4$ in finite time.
\item The solutions stay above the separatrix $x_{sep}^I$.
\item The solutions from $\CP^I_2$ hit $\CP^I_3$ or $\CP^I_1$  in finite time.
\item The solutions from $\CP^I_3$ either hit $\CP^I_4$ (in finite time) or stay trapped in $\CP^I_3$.
\end{enumerate}
In particular, the solutions are either bounded (and trapped in $\CP^I_3$) or explode in finite time (through $\CP^I_4$).
\end{theorem}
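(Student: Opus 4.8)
The plan is to analyze the vector field region by region, using the geometry of the nullclines together with the convexity and growth hypotheses in Assumption~\ref{hyp.F}. Throughout, I would work with the reparametrizations \eqref{eq:Vx} and \eqref{eq:Wx}: in regions where the flow stays away from the $w$-nullcline I track $V_x(w)$ as a function of $w$, and where it stays away from the $v$-nullcline I track $W_x(v)$ as a function of $v$; this converts the ODE into a scalar comparison problem. The case $\I$ constant and $\I$ merely continuous are handled identically since $\I\geq0$ only helps push $\dot v$ up; I would state a monotone comparison lemma at the outset (if $\I_1\leq \I_2$ then the corresponding $v$-components are ordered) and then do the estimates with $\I=0$ whenever an upper bound on escape time suffices.

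The steps, in order. (i) In $\CP^I_4$, which sits to the right of $\{v=v_{34}^I\}$ and below the $w$-nullcline but the $v$-nullcline does not reach it, we have $F(v)-w+I+\I(t)>0$ there (that is exactly how the $\CP^I_3/\CP^I_4$ boundary was chosen), so $\dot v>0$ and $v$ increases; since also $v>w$ below the $w$-nullcline we get $\dot w=v-w>0$, and the region is forward invariant. Then I would compare $\dot v = F(v)-w+I+\I \geq F(v)-w_{\max}(t)+I$; but to get finite-time blowup I instead bound $w$ first: along the trajectory $w$ grows at most like $\dot w = v-w\leq v$, and because $v$ is increasing while the $v$-nullcline $\Vn^+(w,I)$ also grows, a Gronwall-type argument shows $w$ stays below an affine function of time on any finite interval, hence $F(v)-w\geq F(v)-(a+bt)$; feeding this into $\dot v\geq F(v)-(a+bt)$ and using Assumption~\ref{hyp.F}-$(ii)$ ($F(v)\geq \alpha_F v^{2+\epsilon_F}$) gives a Riccati-type lower bound forcing $v\to\infty$ in finite time, with $w$ bounded at that time. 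This is statement~(i). (ii) For $\CP^I_1$ — the region between the right branch of the $v$-nullcline and... actually $\CP^I_1$ is to the right of the right $v$-nullcline branch — there $\dot v>0$, so $v$ increases; one checks the trajectory cannot stay in $\CP^I_1$ forever because $v$ increasing past $v_{34}^I$ (or crossing the horizontal boundary $w=w_{23}^I$) forces it into $\CP^I_3$ or $\CP^I_4$; here I would use $W_x(v)$ and show it cannot blow up before $v$ reaches the exit, using Assumption~\ref{hyp.F}-$(ii)$ to bound $\int dv/(F(v)-w+I)$. (iii) Non-crossing of the separatrix is essentially Definition~\ref{def:separatrix} plus the remark after it ("no trajectory can cross a separatrix from above"); I would spell out the comparison: on the sloped part of $x^I_{sep}$, which lies below the $v$-nullcline with $\dot v<0$, a standard uniqueness/comparison argument for \eqref{eq:Vx} (the right-hand side is locally Lipschitz off the $w$-nullcline) prevents crossing, and on the horizontal part the trajectory is pinned from below by the equilibrium-like segment. (iv) For $\CP^I_2$ (between the separatrix, below the right $v$-nullcline, left of the relevant boundaries), $\dot v<0$ only below the $v$-nullcline, but $\dot w=v-w$; I would argue $w$ cannot decrease indefinitely because of the separatrix floor, and track $V_x(w)$ to show the trajectory either crosses the right $v$-nullcline (entering $\CP^I_1$) or rises above $w=w_{23}^I$ (entering $\CP^I_3$) in finite time — finiteness again from a compactness argument on the closure minus the exit neighborhoods, where the vector field is bounded away from zero in the relevant component. (v) For $\CP^I_3$ (the "central box" above $w=w_{23}^I$, left of $v=v_{34}^I$), the flow either exits rightward through $\{v=v_{34}^I\}$ into $\CP^I_4$ in finite time, or stays forever in the bounded-in-$v$ strip $\CP^I_3$; I would note $\CP^I_3$ need not be bounded in $w$, but once $v<v_{34}^I$ is maintained, $\dot w=v-w<v_{34}^I-w$ keeps $w$ bounded above, so $\CP^I_3$ is effectively a compact trapping region and "stays trapped" is the only alternative to exit. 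Combining (i)–(v) with the transition graph $\CP^I_2\to\CP^I_1\cup\CP^I_3$, $\CP^I_1\to\CP^I_3\cup\CP^I_4$, $\CP^I_3\to\CP^I_4$ or trapped, $\CP^I_4\to$ blowup, and the fact that $\CP^I_4$ and the trapped part of $\CP^I_3$ are the only terminal nodes, gives the final dichotomy.

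The main obstacle is step~(i): extracting a genuine finite-time blowup rather than just unboundedness, because $w$ feeds back negatively into $\dot v$ and a priori could grow as fast as $v$. The delicate point is to show $w$ grows strictly slower — essentially at most linearly in $t$ while $v$ blows up superquadratically — so that the $-w$ term is dominated by $F(v)$ near the explosion. I expect this requires a careful two-sided bootstrap: first a crude bound $v(t)\leq$ (something finite) on a small interval to control $w$, then the Riccati comparison $\dot v\geq \alpha_F v^{2+\epsilon_F} - (a+bt)$ which, since $\int^\infty dv/v^{2+\epsilon_F}<\infty$, yields blowup before any fixed horizon provided the initial $v$ is large enough — and one reaches arbitrarily large $v$ in finite time because $\dot v$ stays positive and bounded below by a positive constant on the region $\{v\leq V_0\}\cap\CP^I_4$. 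A secondary annoyance is making the "finite time" claims in (ii),(iv),(v) rigorous when the regions are unbounded; the fix is that the exit can only fail to happen if the relevant coordinate is monotone and bounded, which contradicts the strict sign of the vector field away from the nullclines, quantified on compact sub-level sets.
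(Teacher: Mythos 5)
Your region-by-region plan is the right skeleton and your treatment of items~(ii)--(v) is essentially the paper's argument: in $\CP^I_1$ note that $w$ decreases so $\dot v\geq F(v)-x_2+I$ and apply Gronwall; for~(iii) a comparison on \eqref{eq:Wx} plus the definition of the separatrix; in~(iv) use that $w$ is strictly decreasing in $\CP^I_2$ away from the $w$-nullcline; for~(v) use~(iii). So far so good.

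The problem is item~(i), which you correctly identify as the crux, but then handle with a bootstrap that doesn't close. You note that $v>w$ in $\CP^I_4$ (below the $w$-nullcline) --- and that single observation is all you need --- but then you abandon it and try instead to show $w(t)\leq a+bt$ on a finite interval via ``$\dot w\leq v$ and a Gronwall-type argument.'' That estimate is circular: $\dot w\leq v$ only lets you bound $w$ by $w_0+\int_0^t v$, and you have no control on $\int_0^t v$ precisely because you are in the middle of proving $v$ blows up. There is no natural affine-in-$t$ bound on $w$ that you can extract without having already bounded $v$. The paper avoids this entirely: since $w\leq v$ throughout $\CP^I_4$, one has $\dot v\geq F(v)-v+I$ directly, and Assumption~\ref{hyp.F}-$(ii)$ makes $\int^\infty dv/(F(v)-v+I)$ converge, giving finite-time explosion from a scalar comparison with no bootstrap whatsoever. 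Separately, to show $w$ stays finite at the explosion time, the paper switches to the $v$-parametrization~\eqref{eq:Wx} and bounds $\tfrac{d}{dv}W_{s,x}(v)\leq \tfrac{v-x_2}{F(v)-v+I}$, whose integral is finite again by Assumption~\ref{hyp.F}-$(ii)$. You should replace your ``main obstacle'' discussion with these two one-line comparisons; the ``delicate two-sided bootstrap'' you describe is both unnecessary and, as stated, not a valid argument.
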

\begin{proof}
	When $\I\in C^0(\R,\R_+)$, the existence / uniqueness of a solution to \eqref{eq:micro-unique} is classical. We note that for $\I\geq 0$, the $v$-nullcline is translated upwards at each time $t$ compared to the case $\I=0$.
	\begin{enumerate}
		\item Hence, the $v$-component of the vector field is positive in $\CP^I_4$. It implies that the solutions cannot escape $\CP^I_4$. 
		In $\CP^I_4$: $w_x(s,t)\leq v_x(s,t)$ and thus $\dot v_x(s,t)\geq F(v_x(s,t)) - v_x(s,t)+I$. By Gronwall's lemma for scalar ODEs, the solution explodes in finite time.
		Then, by a nonlinear time change for  $x\in\CP_4^I$ and $s$ real, we find that $W_{s,x}(v_{s,x}(t)) = w_{s,x}(t)$ for $t\geq s$ where
		\[
		\forall v\geq x_1,\quad \frac{d}{dv}W_{s,x}(v) = \frac{v-W_{s,x}(v)}{F(v)-W_{s,x}(v)+I+\I(\tau_s(v))},\quad W_{s,x}(x_1) = x_2,
		\]
		with $\tau_s(v) \defi \inf\{t\geq s,\ v_x(s,t)=v\}$.
		This implies that \[\frac{d}{dv}W_{s,x}(v)\leq \frac{v-x_2}{F(v)-v+I}.\]
	 Assumption~\ref{hyp.F}~(ii) implies that $W_{s,x}$ is upper bounded. This concludes the item.
		\item Given the vector field, the only way to escape $\CP^I_1$ is by $\CP^I_3$ or $\CP^I_4$. As long as the solution stays in $\CP^I_1$, we have $w_x(s,t)\leq w_x(s,s) = x_2$. It implies that $\dot v_x(s,t)\geq F(v_x(s,t)) -x_2 +I$ and by Gronwall's lemma, that the solution crosses the $w$-nullcline in finite time. This proves the point.
		\item Recall that the separatrix is defined for $\I=0$. First, the solution from $x\in\CP^I$ and the separatrix cannot intersect (from above) below the $w$-nullcline because the vector field is upward. We now consider the first intersection point $x_0\defi(v_0,w_0)$, if it exists, for some $w_0 > w^*$ and $s$. From \eqref{eq:Wx} for $v\geq v_0$,
		\[
		0\geq\frac{d}{dv}W_{s,x_0}(v) = \frac{v-W_{s,x_0}(v)}{F(v)-W_{s,x_0}(v)+I+\I(\tau_{s}(v))}\geq \frac{v-W_{s,x_0}(v)}{F(v)-W_{s,x_0}(v)+I}
		\]
		where we used that $v-W_{s,x_0}(v)< 0 < F(v)-W_{s,x_0}(v)+I$.
		Let $W^I_{sep}$ be the solution of \eqref{eq:Wx} corresponding to the separatrix $x_{sep}^I$.
		Gronwall's lemma for scalar ODEs implies that $W^I_{sep}(v)\leq W_{s,x_0}(v)$ for $v\geq v_0$ and the trajectory stays above the separatrix.
		\item Since $\CP^I_2$ is above the $w$-nullcline, there is a constant $C_{x}>0$ so that $\dot w_x(s,t)\leq -C_{x}$ in $\CP^I_2$. Hence, the solutions leave $\CP^I_2$ in finite time while staying above the separatrix.
		\item If a solution escapes $\CP^I_3$, it can only occur at the boundary with $\CP^I_4$ because the solution stays above the separatrix.
	\end{enumerate}
\end{proof}

Next, we provide a simple lemma concerning the partitions. Basically, we can chose a ``single'' partition that is invariant by the stochastic flow for all $I$ in a bounded interval.

\begin{lemma}\label{lemm_partition}
	Grant assumption~\ref{hyp.F}.
	Given an interval $[\underline{I}, \bar I]$, there is a family of partitions $\CPf{\underline{I}}{\bar I}$ for which $w_{23}$, $v_{34}$ and $\Ew=[w^*,\infty)$ are independent of $I$. 
\end{lemma}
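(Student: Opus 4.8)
The plan is to build the family $\CPf{\underline I}{\bar I}$ by choosing, once and for all, a single horizontal level $w_{23}$, a single vertical value $v_{34}=w_{23}$, and a single separatrix level $w^*$ that simultaneously satisfy all the constraints in the definition of the partition for every $I\in[\underline I,\bar I]$. The key observation is that all the relevant geometric data depend continuously (indeed monotonically) on $I$: the minimum of $F+I$ is $\min F + I$; the $v$-nullcline $\{w = F(v)+I\}$ is the graph $w=F(v)+I$ which simply translates upward with $I$; the equilibria (when they exist) lie in a bounded region as $I$ ranges over a compact interval; and $w^*(I)$, the leftmost intersection of a chosen separatrix with the $w$-nullcline, can be taken as low as we please by proposition~\ref{prop:separatrix} (the separatrix can be chosen lower).

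First I would handle $w_{23}$. The requirements are: (a) $\{w=w_{23}\}$ lies strictly above every equilibrium of \eqref{eq:micro-unique} with $\I=0$, or strictly above $\min F + I$ when no equilibrium exists; and (b) the reset line $\CR=\{v=\vr\}$ meets $\{w=w_{23}\}$ strictly above the $v$-nullcline, i.e. $w_{23} > F(\vr)+I$. For (a): over $I\in[\underline I,\bar I]$ the equilibria solve $F(v)-w+I=0$, $v=w$, i.e. $F(v)+I=v$; strict convexity of $F$ with $\lim_{-\infty}F'<-3<1$ and assumption~\ref{hyp.F}-$(ii)$ forces the solution set $\{(v,v): F(v)+I=v,\ I\in[\underline I,\bar I]\}$ to be bounded, so its $w$-coordinates are bounded above by some $M_{eq}$; likewise $\min F + I \le \min F + \bar I$. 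For (b), $\sup_{I\in[\underline I,\bar I]} (F(\vr)+I) = F(\vr)+\bar I$. So any $w_{23} > \max\{M_{eq},\ \min F+\bar I,\ F(\vr)+\bar I\}$ works for all $I$ simultaneously. Set $v_{34}\defi w_{23}$ as required by the remark following the partition definition; the vertical half-line $\{v=v_{34}\}$ extending up to the $w$-nullcline does not meet the $v$-nullcline precisely because $w_{23}>\min F + I$ lies above the nullcline's minimum height but — one must check — also that at $v=v_{34}$ the $v$-nullcline value $F(v_{34})+I$ is either undefined on the relevant branch or stays below; this follows by choosing $w_{23}$ slightly larger if needed so that the vertical line sits to the right of $\Vn^+(w_{23},I)$ for all $I$, again using monotonicity in $I$ and continuity of $\Vn^+$.

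Next I would fix the separatrix level. By proposition~\ref{prop:separatrix} applied for each $I$ there is a separatrix $x_{sep}^I$ with $w^*(I)$ arbitrarily small; moreover inspection of its proof (via $B_{\alpha_-,x_n}$, which depends on $\limsup_{-\infty}F'$ and hence is essentially $I$-uniform on a compact set) shows the constructions can be carried out with a common lower bound. Concretely, pick any $w^* < \min\{\inf_{I\in[\underline I,\bar I]}(\min F + I),\ w_{23}\} = \min F + \underline I$ (shrinking further if convenient); then for each $I$ there is a trajectory of \eqref{eq:micro-unique} with $\I=0$ below the $v$-nullcline, coming from $v=-\infty$, whose leftmost crossing of the $w$-nullcline occurs at height exactly $w^*$ — one slides along the one-parameter family of such backward trajectories, whose crossing heights sweep an interval reaching down to $-\infty$, and picks the member hitting $w^*$. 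This gives $x_{sep}^I$ with $w^*(I)=w^*$ independent of $I$, hence $\Ew^I=[w^*,\infty)$ independent of $I$. Finally, remark~\ref{rmq:CP} guarantees that $(x_{sep}^I, w_{23})$ determines $\CP^I$ uniquely, so the family $\CPf{\underline I}{\bar I}\defi (x_{sep}^I, w_{23})_{I\in[\underline I,\bar I]}$ is well defined with $w_{23}$, $v_{34}$ and $\Ew$ all $I$-independent, as claimed.

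**Main obstacle.** The one genuinely delicate point is producing, for each $I$, a separatrix with the \emph{same} prescribed crossing height $w^*$ — i.e. that the map $I\mapsto w^*(I)$ can be pinned to a constant rather than merely being pushed below a threshold. I would address this by noting that the construction of the separatrix in proposition~\ref{prop:separatrix}/lemma~\ref{lemma:separatrix} actually yields a whole continuum of admissible separatrices for fixed $I$ (different starting points in $B_{\alpha_-,x_n}$ give different backward trajectories), that the resulting $w$-nullcline-crossing heights depend continuously on the starting point and are unbounded below, and hence by the intermediate value theorem the value $w^*$ is attained; the required condition $w^* < \min F + I$ holds since $w^* \le \min F + \underline I \le \min F + I$. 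Everything else is a routine compactness-plus-monotonicity argument in the parameter $I$.
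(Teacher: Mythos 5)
Your proof is correct and follows essentially the same strategy as the paper's: take $w_{23}$ from a partition valid for the largest current $\bar I$ (its admissibility for smaller $I$ follows from the $v$-nullcline translating downward), set $v_{34}=w_{23}$, and then exploit the $I$-uniform backward-invariant region $B_{\alpha_-,x_n}$ from lemma~\ref{lemma:separatrix} to fix a single separatrix crossing height $w^*$. The only cosmetic difference is that you phrase the pinning of $w^*$ via an intermediate-value argument on a family of backward trajectories, whereas the paper simply takes the backward flow from a fixed point $(w^*,w^*)$ in $B_{\alpha_-,x_n}\cap W_{null}$; these are the same construction.
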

\begin{proof}
	We consider a partition $\CP^{\bar I}$ for the current $\bar I$: its $w_{23}$ can be used for all the partitions for $I\in[\underline{I}, \bar I]$. From the proof of lemma~\ref{lemma:separatrix}, we can use the same $B_{\alpha_-,x_n}$ for all $I\in [\underline{I}, \bar I]$ by choosing $v_-$ small enough where $x_n\defi(v_-,v_-)$. Indeed, the condition on $v^*$ in the quoted proof is independent of $I$. We can thus choose the same point $(w^*,w^*)$, intersection of the separatrix with $w$-nullcline, for all $I$: this provides a single $\Ew$ for all $I$.
\end{proof}

\begin{remark}
	Note that the separatrix for each $\CP^I$ are different in the previous lemma: $\CP^I\defi (x_{sep}^I, w_{23})$.
\end{remark}

\begin{corollary}\label{coro:wbound}
	Grant assumption~\ref{hyp.F}. Let $\underline I\leq \bar I$ and assume that $\I=0$.
	Consider a family of partitions $(\CP^I)_{I\in [\underline{I}, \bar I]}$ from lemma~\ref{lemm_partition}. Then, there is $C>0$ such that
	\[\forall w_0\in\Ew,\ \forall t\geq 0, \ \forall I\in[\underline I, \bar I],\quad w^I_{(\vr,w_0)}(t)\leq C+w_0.\]
\end{corollary}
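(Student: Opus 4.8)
The plan is to track the adaptation variable $w$ along the trajectory started from the reset line at height $w_0$, and to show its growth over any time interval is controlled by a constant independent of both the initial height $w_0$ and the current $I$ in the bounded range $[\underline I,\bar I]$. By Theorem~\ref{th:jtb1}, for $\I=0$ any trajectory from $(\vr,w_0)\in\CR_{\CP}^I$ is either trapped in $\CP^I_3$ forever or it explodes in finite time through $\CP^I_4$; in either case we must bound $\sup_{t\geq 0} w^I_{(\vr,w_0)}(t)$. The key observation is that $w$ can only increase while the trajectory is \emph{below} the $w$-nullcline (where $\dot w = v - w > 0$), and on that region we will bound $v$ from above by a quantity that depends only on the partition geometry, not on $w_0$.

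First I would split the phase plane by the $w$-nullcline $\{v=w\}$. Above it ($v<w$), $\dot w<0$, so $w$ is non-increasing there and contributes nothing to the supremum. Below it ($v\geq w$), we have $0\leq \dot w = v-w$; I would bound $v$ here. In $\CP^I_4$ and the part of $\CP^I_3$ below the $w$-nullcline, the trajectory lies to the right of $v_{34}^I = w_{23}$ and to the left of the $w$-nullcline, but more usefully: by Theorem~\ref{th:jtb1}\,(i) the function $W_{s,x}$ is upper-bounded along trajectories in $\CP^I_4$, with bound obtained by comparison with $\frac{d}{dv}W \le \frac{v-x_2}{F(v)-v+I}$ and Assumption~\ref{hyp.F}\,(ii); since the right-hand side is integrable at $+\infty$, the total increase of $w$ from the moment the trajectory enters $\CP^I_4$ until explosion is at most $\int_{v_{34}}^\infty \frac{v}{F(v)-v+I}\,dv$, which is finite and, using monotonicity in $I$ (the integrand decreases as $I$ increases, so $\underline I$ gives the worst case) and the $I$-independence of $v_{34}=w_{23}$ from Lemma~\ref{lemm_partition}, can be bounded by a single constant $C_4$ for all $I\in[\underline I,\bar I]$.

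It remains to bound the increase of $w$ while the trajectory is below the $w$-nullcline but still in $\CP^I_1\cup\CP^I_2\cup\CP^I_3$ before reaching $\CP^I_4$. Starting from $(\vr,w_0)$ on $\CR_{\CP}^I$, which by construction of the partition sits above the $v$-nullcline, the $v$-component satisfies $\dot v = F(v)-w+I \le F(\vr)-w_0+I \le F(\vr)+I \le F(\vr)+\bar I$ as long as $v$ stays in the relevant compact $v$-range and $w\ge w_0$ (monotonicity of $w$ while below the nullcline). Here the trajectory is confined to $v\in[\vr, w_{23}]$ on the portion relevant to $\CP^I_3$, and $v$ is bounded by the (partition-determined, $I$-independent) geometry elsewhere; combined with the fact that while below the nullcline $v-w \le v \le \max(w_{23}, \vsep^{I}\text{-range})$, a constant $M$ independent of $w_0$ and $I$, and that the sojourn time below the nullcline before entering $\CP^I_4$ is itself bounded — the Gronwall arguments in Theorem~\ref{th:jtb1}\,(ii),(iv) give crossing times depending only on $F,I,w_{23}$ and not on $w_0$ (larger $w_0$ only speeds up the escape) — we get that this contribution is at most a constant $C_{12}$. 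Setting $C \defi C_4 + C_{12} + M\cdot(\text{bound on sojourn time})$ and taking a supremum of the resulting finite quantities over the compact interval $[\underline I,\bar I]$ yields $w^I_{(\vr,w_0)}(t)\le C + w_0$.

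The main obstacle is the bookkeeping in the middle region: making the bound on $v$ (hence on $\dot w$) and on the sojourn time genuinely uniform in $w_0$ simultaneously. The clean way around it is to note that increasing $w_0$ is \emph{monotonically favorable} — it lowers $\dot v$ where $\dot v = F(v)-w+I$ and it cannot make $w$ grow faster since the extra height only pushes the trajectory toward the $w$-nullcline sooner — so it suffices to obtain the bound for, say, $w_0 = w^*$ (the smallest admissible value), where all quantities are manifestly finite, and then argue the general case by this comparison. The uniformity in $I$ is then immediate from Lemma~\ref{lemm_partition} ($w_{23}$, $v_{34}$, $\Ew$ do not move) together with the monotone dependence of the relevant integrals and Gronwall constants on $I$ over the compact range $[\underline I,\bar I]$.
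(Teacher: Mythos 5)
The overall skeleton of your plan (split along the $w$-nullcline, use the Theorem~\ref{th:jtb1}\,(i) comparison to bound the growth of $w$ in $\CP^I_4$, take the worst current $\underline I$ in the denominator) is aligned with the paper, and your $\CP_4$ estimate is essentially the one the paper uses. However, the middle part of your argument---the sojourn-time bound below the $w$-nullcline in $\CP^I_3$---is both unnecessary and, as stated, incorrect, and the ``monotonically favorable in $w_0$'' reduction you propose to patch it is an unproved claim.

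The key simplification you are missing is a property of the partition itself: by construction, $\CP^I_3 \subset \{w \le w_{23}\}$, and $w_{23}$ is the \emph{same} for every $I \in [\underline I,\bar I]$ by Lemma~\ref{lemm_partition}. So as soon as the trajectory is in $\CP^I_3$ you have $w(t) \le w_{23}$ with no work at all; there is nothing to estimate about $\dot w$ or about how long the trajectory lingers there. Combined with the fact that in $\CP^I_2$ (which lies entirely above the $w$-nullcline) $w$ is strictly decreasing, so $w(t) \le w_0$ there, and with the $\CP^I_4$ bound $w(t) \le w(\tau_4) + \int_{v_{34}}^{\infty}\frac{u-w^*}{F(u)-u+\underline I}\,du$ where $w(\tau_4)\le w_{23}$, one gets $\sup_t w^I_{(\vr,w_0)}(t)\le \max(w_0,\ w_{23}+C')\le w_0 + C$ using $w_0\ge w^*$. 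This is the whole proof; the symbolic dynamics from Corollary's proof, $\CP_2^I\cap\CR\to\CP_3^I\stackrel{\text{possibly}}{\to}\CP_4^I$, is what tells you $\CP^I_1$ is never visited and why you need not consider it.

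Concretely, here is why the step you propose would fail: you want to bound the increase of $w$ in $\CP^I_3$ by (bound on $v-w$) $\times$ (sojourn time below the $w$-nullcline in $\CP^I_3$), claiming the latter is controlled by the Gronwall estimates of Theorem~\ref{th:jtb1}. But Theorem~\ref{th:jtb1}\,(v) explicitly allows the trajectory to stay \emph{trapped} in $\CP^I_3$ for all time (e.g.\ when there is a stable equilibrium or limit cycle inside $\CP^I_3$); the sojourn time is then infinite, and your product estimate gives nothing. Even when the trajectory does escape to $\CP^I_4$, the time to do so is governed by $\dot v = F(v)-w+I$, which can be arbitrarily small near the $v$-nullcline, so no $w_0$- and $I$-uniform bound on the sojourn time is available in general. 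The geometric fact $\CP^I_3\subset\{w\le w_{23}\}$ sidesteps all of this. Finally, the reduction to $w_0=w^*$ via ``increasing $w_0$ is monotonically favorable'' would need a genuine comparison lemma for the flow, which you neither state nor prove; in the paper's argument no such reduction is needed because the $w_0$-dependence enters trivially through the $\CP^I_2$ phase where $w$ decreases.
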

\begin{proof}
We have the symbolic dynamics $\CP_2^I\cap\CR\to\CP^I_3\stackrel{(possibly)}{\to}\CP^I_4$ because $\CP^I_2$ cannot hit $\CP_1^I$ from the reset line $\CR$ when $\I=0$. In $\CP_2^I$, $w^I$ decreases and thus $w^I(t)\leq w^I(0)$ for all time $t\geq 0$. In $\CP^I_3$, the trajectory is either bounded and $w^I(t)\leq w_{23}$ or it is unbounded and explodes through $\CP^I_4$. In $\CP^I_4$, the proof of theorem~\ref{th:jtb1}~(i) shows that 
\[
w^I_x(t)\leq w^I(0)+\sup\limits_{v\geq v_{34}}\int_{v_{34}}^v \frac{u-w^*}{F(u)-u+\underline I}du\leq w_{23}+\int_{v_{34}}^\infty \frac{u-w^*}{F(u)-u+\underline I}du<\infty.
\]
This provides the bound stated in the lemma.
\end{proof}

\section{Study of the linearised process $X^{s,\mu_0,\I}$ }\label{section:linear}
In this section, we analyze \eqref{eq:EDSL} as a proxy for \eqref{eq:MKV}. We first detail sufficient conditions for existence, uniqueness and regularity (almost sure finite number of jumps in bounded time intervals) of \eqref{eq:EDSL}. We then study the rate function $\E\lambda(X_t^{s,\delta_x,\I})$ and give sufficient conditions for it being finite and continuous in time. This will then be used in the proof of theorem~\ref{th-existence-MKV-delay} in section~\ref{section:NL-DDE}.

We first give a sufficient condition for the jumps to occur before the deterministic explosion time almost surely.
\begin{lemma}
\label{lemma.Lambda}
Grant assumptions~\ref{hyp.F},\ref{as:ratepos-increasing} and assume that $\I\in C^0(\R,\R_+)$. Let us consider a partition $\CP$. Then for all real $s$ and $x\in\CP$:
\begin{enumerate}
	\item $T^{s,x,\I}_1>0$ a.s. and $T^{s,x,\I}_1<\te{s,x,\I}$ \textit{a.s.}.
	\item $\inf\{t\geq s\st\P(T^{s,x,\I}_1\geq t)=0 \} = \te{s,x,\I}$.
	\item The density of $\mathcal L(T^{s,x,\I}_1)$ is given by \eqref{def.p}.
\end{enumerate}
\end{lemma}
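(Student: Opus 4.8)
The plan is to reduce everything to a one-dimensional analysis along the deterministic flow, using the survival function $\Lambda^\I(s,x,\cdot)$ as the natural time-change. First I would recall the general construction of a PDMP/SDE driven by a Poisson measure: conditionally on $X_{T_0}=x$ and $T_0=s$, the first jump time $T_1^{s,x,\I}$ has survival probability $\P(T_1>s+u) = \exp(-\Lambda^\I(s,x,s+u))$ as in \eqref{eq.Lambda}, since the thinning of $\mathbf N$ by $\indic_{\{z\le\lambda(\Phi_s^v(x))\}}$ produces, between $s$ and the explosion time, an inhomogeneous Poisson process with (locally integrable) rate $v\mapsto\lambda(\Phi_s^v(x))$. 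Local integrability of $v\mapsto\lambda(\Phi_s^v(x))$ on $[s,\te{s,x,\I})$ holds because $\lambda$ is measurable and non-decreasing (Assumption~\ref{as:ratepos-increasing}) and $\Phi_s^v(x)$ is continuous hence locally bounded in $v$ on the open interval $[s,\te{s,x,\I})$; this makes the construction and the formulas legitimate on that interval. Item (iii) is then immediate: differentiating $u\mapsto 1-\exp(-\Lambda^\I(s,x,s+u))$ gives the density \eqref{def.p}, with the caveat (already flagged in the paper's footnote) that the flow is only defined up to $\tee$, which is precisely why the indicator $\indic_{[s,\te{s,x})}(t)$ appears.

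For item (i), $T_1^{s,x,\I}>0$ a.s.\ follows because $\lambda$ is finite (indeed positive and, by Assumption~\ref{as:ratepos-increasing}, not identically $+\infty$) and $\Phi_s^v(x)$ is bounded for $v$ in a small right-neighbourhood of $s$, so $\Lambda^\I(s,x,s+u)\to 0$ as $u\downarrow 0$, whence $\P(T_1>s+u)\to 1$. The substantive part is $T_1^{s,x,\I}<\te{s,x,\I}$ a.s.\ (and equivalently that $\P(S_1>u\mid\dots)$ tends to $0$ as $s+u\uparrow\te{s,x,\I}$, which is item (ii)). By the displayed survival formula this amounts to showing
\[
\Lambda^\I\bigl(s,x,\te{s,x,\I}\bigr)=\int_s^{\te{s,x,\I}}\lambda(\Phi_s^u(x))\,\rmd u=+\infty.
\]
Here I would invoke Theorem~\ref{th:jtb1}: a solution starting in $\CP^I$ that does not stay trapped in $\CP^I_3$ enters $\CP^I_4$ in finite time and then blows up with $v\to+\infty$ in finite time (and $w$ stays finite, by Theorem~\ref{th:jtb1}(i)); if it stays trapped in $\CP^I_3$ it is bounded for all time and then $\te{s,x,\I}=+\infty$, in which case $\int_s^\infty\lambda(\Phi_s^u(x))\,\rmd u=+\infty$ simply because $\lambda$ is bounded below by a positive constant on the bounded trap region. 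So the only case needing work is explosion through $\CP^I_4$. There, near the explosion time the dynamics in the $v$-variable satisfies $\dot v_x = F(v_x)-w_x+I+\I(\tau_s(v_x))$ with $w_x$ bounded, so I would change variables from $t$ to $v=v_x(s,t)$: since $\rmd t = \rmd v/(F(v)-w_x+I+\I)$ and $F(v)-w_x+I+\I(\cdot)\ge F(v)-C$ for $v$ large (with $C$ a bound on $w_x-I$), one gets
\[
\int_s^{\te{s,x,\I}}\lambda(\Phi_s^u(x))\,\rmd u
=\int_{\cdot}^{\infty}\frac{\lambda(v)}{F(v)-w_x(\tau_s(v))+I+\I(\tau_s(v))}\,\rmd v
\ \ge\ \int_{v_1}^\infty\frac{\lambda(v)}{F(v)+\I_{\max}+\text{const}}\,\rmd v,
\]
for $v_1$ large enough that $F>0$ beyond $v_1$. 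Comparing $F(v)$ with $F(v)+\text{const}$ via Assumption~\ref{hyp.F}(ii) (so $F(v)\to\infty$ and $F(v)+c \le 2F(v)$ eventually), this last integral is bounded below by a constant times $\int_{v_1}^\infty \lambda/F=+\infty$ by Assumption~\ref{as:ratepos-increasing}. Hence $\Lambda^\I(s,x,\te{s,x,\I})=+\infty$, giving (i) and simultaneously (ii), since $\P(T_1^{s,x,\I}\ge t)=\exp(-\Lambda^\I(s,x,t))$ is strictly positive for every $t<\te{s,x,\I}$ and decreases to $0$ as $t\uparrow\te{s,x,\I}$.

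The main obstacle is the explosion case: making rigorous the time-change near $\te{s,x,\I}$ and controlling the $\I(\tau_s(v))$ term (which is only assumed continuous and nonnegative, hence locally bounded — one must restrict to a time interval $[s,\te{s,x,\I})$ of finite length, or note $\te{s,x,\I}<\infty$ in this case, so $\I$ is bounded on it) so that the comparison with $\int^\infty \lambda/F$ is valid. Everything else is bookkeeping with the Poisson-thinning construction and the monotonicity of $\lambda$.
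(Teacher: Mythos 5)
Your proof is correct and follows essentially the same route as the paper's: split on whether the trajectory stays trapped in $\CP_3$ (bounded, so $\lambda$ is bounded below and the infinite-time integral diverges) or explodes through $\CP_4$, where a change of variables $t\mapsto v$ together with the boundedness of $w$ and $\I$ on $[s,\te{s,x,\I})$ reduces the divergence of $\Lambda^\I(s,x,\cdot)$ to Assumption~\ref{as:ratepos-increasing}, and then derive (ii) and (iii) from the explicit survival function. The only difference is that you spell out the elementary comparison $F(v)+c\le 2F(v)$ for $v$ large, which the paper leaves implicit when invoking $\int\lambda/F=\infty$.
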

\begin{proof}
	\
	
\textbf{Point (i).}
We recall that a partition $\CP$ exists thanks to proposition~\ref{prop:separatrix} with assumption~\ref{hyp.F}.
We consider an initial condition $x\in\CP$. $T^{s,x,\I}_1>0$ a.s. is direct by \eqref{eq.Lambda}. To prove the point, we show that $\int_{s}^{\te{s,x}}\lambda(v_x(s,u))du=\infty$. We distinguish two cases. First, if $t\to v_x(s,t)$ is bounded then it is lower bounded by  some constant $\underline{v}$. In this case, the solution from $x$ is defined on  $[s,\infty)$ and $\int_s^{\te{s,x}}\lambda(v_x(s,u))du\geq \int_s^\infty \lambda(\underline{v})=\infty$ by assumption~\ref{as:ratepos-increasing} which provides the claim.

If $t\to v_x(s,t)$ is unbounded,  then from theorem~\ref{th:jtb1}, the solution from $x$ explodes in finite time by passing in $\CP_4$ at hitting time $\tau_4^{s,x,\I}<\infty$. In $\CP_4$,  a time change gives
\[
\int_{\tau_4^{s,x,\I}}^{\te{s,x}}\lambda(v_s^u(x))du=\int_{v_{34}}^\infty\frac{\lambda(v)}{F(v)-W_{s,x}(v)+I+\I(\tau_s(v))}dv\geq \int_{v_{34}}^\infty\frac{\lambda( v)}{F( v)- w_x(s,\tau_4)+I+\sup\limits_{t\in[s,t_\infty^{s,x,\I}]}\I(t)}d v.
\]
This last term is $+\infty$ according to assumption~\ref{as:ratepos-increasing}. The point follows.

\textbf{Point (ii).}
Next we consider, $\eta(s,x)\defi \inf\{t\geq s\st\P(T^{s,x,\I}_1\geq t)=0 \}$. The previous point implies that $\eta(s,x)\leq \te{s,x,\I}$. We further have $\eta(s,x)= \te{s,x,\I}$ because $\Lambda^\I(s,x,t)<\infty$ if $s<t<\te{s,x,\I}$. 

\textbf{Point (iii).} Direct consequence of the expression of $\P(T^{s,x,\I}_1\geq t)$ and (i).
\end{proof}

\subsection{Regularity}

In this section, we prove that the solutions of the SDE \eqref{eq:EDSL} are defined for all time. We recall \cite{jacod_jumping_1996} that $X^{s,x,\I}$, solution of \eqref{eq:EDSL} in some partition $\CP$, is regular if the jump times $T_n^{s,x,\I}$ satisfy $\lim_nT_n^{s,x,\I} = \infty$ $P_x$-\textit{a.s.} for all $x\in\CP$ with the notations of \cite{blumenthal_markov_1968}.

\begin{proposition}\label{prop:regularity}
Grant assumptions~\ref{hyp.F},\ref{as:ratepos-increasing}, assume that $\I\in\mathrm C^0(\R,\R_+)$ is bounded and consider a partition $\CP^I$. For $x\in\CP^I$, any solution $X^{s,x,\I}$ of \eqref{eq:EDSL} is regular.
\end{proposition}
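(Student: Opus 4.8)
The plan is to bound the inter-jump times $S_{n+1}\defi T_{n+1}^{s,x,\I}-T_n^{s,x,\I}$ from below in conditional probability, uniformly in $n$, so that almost surely infinitely many of them exceed a fixed $\delta>0$; this forces $\sum_n S_n=+\infty$ and hence $T_n^{s,x,\I}\to\infty$, which is exactly regularity. The point exploited is that right after a reset the $v$-coordinate equals the fixed value $\vr$, the flow cannot blow up instantaneously, and $\lambda$ is non-decreasing, so the accumulated rate over a short fixed window after any reset is uniformly bounded.

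First I would record that, by Lemma~\ref{lemma.Lambda}, from any point of $\CP^I$ one has $\int^{\te{\cdot}}\lambda\circ\Phi=+\infty$, so the next jump occurs a.s.\ before the (possibly infinite) explosion time; in particular on the event that the process jumps only finitely often we already have $T_n\to\infty$, so we may work on the event of infinitely many jumps. I would also use that the solution of \eqref{eq:EDSL} started in $\CP^I$ remains in $\CP^I$: between jumps this is Theorem~\ref{th:jtb1}(iii) (the flow stays above the separatrix), and the jump map $\Delta$ sends the reset line back into $\CP^I$ while increasing $w$ by $\wb>0$. Consequently $W^{s,x,\I}_t\ge w^*$ for all $t$, and for $n\ge 1$ the post-jump state is $X^{s,x,\I}_{T_n^+}=(\vr,\bfw_n)$ with $\bfw_n\ge w^*$.

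The core estimate is a uniform upper bound, on a fixed short time window, for the $v$-coordinate of the flow issued from a reset point. Since the flow stays in $\CP^I\subset\{w\ge w^*\}$ and $\I$ is bounded, the $v$-equation of \eqref{eq:micro-unique} gives $\dot v\le F(v)-w^*+I+\|\I\|_\infty$ along any such trajectory; comparing with the scalar ODE $\dot g=F(g)-w^*+I+\|\I\|_\infty$, $g(0)=\vr$, whose maximal solution (being $C^1$ in $g$) is defined on some interval $[0,t^*)$ with $t^*>0$, yields that the first coordinate of $\Phi^{T_n+r}_{T_n}(\vr,w_0)$ is $\le g(r)$ for every $r\in[0,t^*)$, every $w_0\ge w^*$ and every starting time (the comparison is self-consistent, since the true flow cannot explode before $g$ does). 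Fixing $\delta\in(0,t^*)$ and using that $\lambda$ is non-decreasing and locally bounded, we obtain, uniformly in $n\ge 1$,
\[
\Lambda^\I\bigl(T_n,(\vr,\bfw_n),T_n+\delta\bigr)=\int_{T_n}^{T_n+\delta}\lambda\bigl(\Phi^{u}_{T_n}(\vr,\bfw_n)\bigr)\,\rmd u\le\int_0^\delta\lambda\bigl(g(r)\bigr)\,\rmd r=:C^*<\infty,
\]
so that by \eqref{eq.Lambda},
\[
\P\bigl(S_{n+1}>\delta\mid\mathcal F^s_{T_n}\bigr)=\exp\bigl(-\Lambda^\I(T_n,(\vr,\bfw_n),T_n+\delta)\bigr)\ge e^{-C^*}>0.
\]

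To conclude, I would apply the conditional Borel--Cantelli lemma to the events $\{S_n>\delta\}$, $n\ge 2$, along the filtration $(\mathcal F^s_{T_n})_n$: each is $\mathcal F^s_{T_n}$-measurable and $\sum_{n\ge 2}\P(S_n>\delta\mid\mathcal F^s_{T_{n-1}})=+\infty$ a.s., hence $S_n>\delta$ for infinitely many $n$ a.s.; therefore $\sum_n S_n=+\infty$ and $T_n^{s,x,\I}\to\infty$ a.s., i.e.\ $X^{s,x,\I}$ is regular. The step I expect to require the most care is the uniformity of the bound: one must verify that the lower bound $w\ge w^*$ genuinely holds along the whole flow inside $\CP^I$ (this is where Theorem~\ref{th:jtb1}(iii) and the construction of the partition are used, together with $\Delta(\CP^I\cap\CR)\subset\CP^I$), and that $t^*$ and $C^*$ depend only on $F$, $I$, $\|\I\|_\infty$ and $\vr$ --- not on $w_0$, on $n$, nor on the starting time --- which is what makes the Borel--Cantelli argument go through.
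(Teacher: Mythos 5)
Your proof is correct and arrives at the same conclusion, but via a genuinely different route than the paper. Both arguments share the same key input: after a reset the state is at $v=\vr$ with $w\ge w^*$, and because $\I$ is bounded the accumulated rate over a fixed short window after any jump is uniformly bounded. You obtain this by comparing the $v$-component with the scalar ODE $\dot g=F(g)-w^*+I+\|\I\|_\infty$, $g(0)=\vr$, yielding $\Lambda^\I(T_n,(\vr,\bfw_n),T_n+\delta)\le \int_0^\delta\lambda(g)<\infty$ for $\delta<t^*$, and then you conclude with the conditional second Borel--Cantelli lemma along the filtration $(\mathcal F^s_{T_n})_n$: infinitely many $S_n>\delta$ a.s., hence $T_n\to\infty$. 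The paper instead first establishes a lower bound $\P(T_1^{s,x,\I}\ge s+\delta)\ge\alpha$ for $x\in(\tilde\CP_2\cap\CR)\cup\tilde\CP_3$ by working in a partition $\tilde\CP^I=(x^I_{sep},w_{23})$ adjusted for the current $I+2\|\I\|_\infty$ (so that the symbolic dynamics between the compartments $\CP_2,\CP_3,\CP_4$ is compatible with the time-dependent drift), and then invokes the resolvent criterion of Jacod's Theorem~12: writing $\hat\bfG_z f(x,s)=\E(f(\hat X_{\hat T_1})e^{-z\hat T_1})$ for the time-homogenized process, the bound $\hat\bfG_z\indic\le e^{-z\delta}+1-\alpha<1$ forces $(\hat\bfG_z)^n\indic\to 0$, which is equivalent to regularity in that framework. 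Your approach is more elementary and self-contained (no appeal to the PDMP machinery of Jacod), and the scalar comparison sidesteps the compartment-by-compartment case analysis of the partition; the paper's approach is formulated inside a general setting that is reused for the strong Markov property in the next proposition. One small caveat: you should make explicit that the conditional law of $S_n$ given $\mathcal F^s_{T_{n-1}}$ indeed takes the form \eqref{eq.Lambda}; this is a consequence of the strong Markov property of the Poisson measure at the stopping times $T_{n-1}$ and does not depend on the (later) Proposition~\ref{prop:JMP}, but it is worth stating so no circularity is suspected.
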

\begin{proof}
Let $\tau_4^{s,x,\I}$ be the deterministic hitting time of $\{v=v_{34}+1\}$ from $x\in\CP^I$ at initial time $s$.
Let us write $\CP^I = (x_{sep}^I,w_{23}^I)$. We consider another $w_{23}$ ``valid'' for the current $I+2\norm{\I}_\infty$ and define $\tilde \CP^I \defi (x_{sep}^I,w_{23})$. Since, $\CP^I = \tilde\CP^I$, we work with $\tilde\CP^I$.

\textbf{First step.} We prove that $\P(T_1^{s,x,\I}\geq s+\delta)\geq \alpha$ for all $s\in\R$ and $x\in (\tilde\CP_2\cap\CR)\cup\tilde\CP_3$ for some $\delta,\alpha>0$. From the vector field, for all $t\in[s,\tau_4)$ and $x\in(\tilde\CP_2\cap\CR)\cup\tilde\CP^I_3$, we find $\lambda(v_x(s,t))\leq \lambda(v_{34}+1)$. Because $\I$ is bounded, we find that
\[
\delta_1\defi\inf\limits_{s\in\R,x\in\tilde\CP_3}\tau_4^{s,x,\I}>0.
\]
Indeed, the hitting times are lower bounded by 
\[
\tau_4^{s,x,\I}\geq \int_{v_{34}}^{v_{34}+1} \frac{1}{F(\underline v)-w^*(I)+I+\norm{k}_\infty}d\underline v>0
\] which estimates the time it takes to reach $\{v=v_{34}+1\}$ from $\{v=v_{34}\}$, see proof of lemma~\ref{lemma.Lambda}.
Next, we observe that the solutions leave $\tilde\CP_2\cap\CR$ through $\tilde\CP_3\cap\{v\leq \vr\}$. Hence, for $x\in \tilde\CP_2\cap\CR$ we find $\tau_4^{s,x,\I}\geq\delta_1$ and $\lambda(v_x(s,t))\leq \lambda(v_{34}+1)$ for $s\leq t\leq \delta_1$.
Hence, if we define $\delta\defi\min(1,\delta_1)$ to cope with the case $\delta_1=\infty$, we obtain that $\Lambda^\I(s,x,s+\delta)\leq \lambda(v_{34}+1)\delta$ and thus $\alpha\defi\exp\left(- \lambda(v_{34}+1)\delta\right)$.

\textbf{Second step.} 
We introduce the time homogeneous process  $\hat X_t^{(s,x),\I}\defi(X^{s,x,\I}_{t+s},t+s)$ with initial condition $(x,s)$ and its embedded Markov chain $(\hat X_n^{(s,x),\I}, S_n^{s,x,\I})$.
The existence of $X^{s,x,\I}$ is proved in the next proposition.
We denote by $\hat T_1^{(s,x),\I}$ the first jump time of $\hat X$. One has $\hat T_1^{(s,x),\I}+s = T_1^{s,x,\I}$.

\textbf{Last step.}
Without loss of generality, we restrict to the case $x\in\CR_{\CP}^I$ to prove that $\hat X$ hence $X$ is regular.
For $z>0$, we introduce the transition operator $\hat\bfG_z$ as follows. For $f\in \mathrm L^\infty(\CR_{\CP}^I\times \R,\R_+)$:
\[
\forall x\in\CR_{\CP}^I,\ \forall s\in\R,\quad \hat\bfG_zf(x,s)\defi \mathbb E\left(f(\hat X_{\hat T_1}^{(s,x),\I})e^{-z \hat T_1}\right).
\]
We find that $\hat\bfG_z\mathbf 1(x,s)\leq e^{-z\delta}+1-\alpha<1$ for $z$ large enough using the first step. From the proof of \cite{jacod_jumping_1996}~theorem~12, if $\hat X$ is not regular, then $(\hat\bfG_z)^n\mathbf 1(x,s)$ decreases to the bounded non negative function $g(x,s) \defi\mathbb E\left(e^{-z\hat T_\infty^{(s,x),\I}}\right)$ and there is $(x,s)\in\CR_{\CP}^I\times\R$ such that $g(x,s)>0$. But this can't be by the bound on $\hat\bfG_z1(x,s)$.
\end{proof}

The solutions of \eqref{eq:EDSL} are PDMPs which have been studied intensively in the literature but mainly for globally defined deterministic flows. In order to shorten our proofs, we use the very general setting of \cite{jacod_jumping_1996}.

\begin{proposition}\label{prop:JMP}
	Grant assumptions~\ref{hyp.F},\ref{as:ratepos-increasing}. Let $\I\in C^0(\R,\R_+)$ bounded and $x$ in a partition $\CP$. Then there is a path-wise unique solution $X^{s,x,\I}$ to \eqref{eq:EDSL} on $[s,\infty)$ in the sense of definition~\ref{def:sol-sde-inhomo}. Additionally, the solution is strong Markov with values in $\CP$.
\end{proposition}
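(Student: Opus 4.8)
The plan is to construct the solution iteratively jump by jump, using the deterministic flow $\Phi_s^t$ between jumps and the inter-jump distribution \eqref{eq.Lambda}--\eqref{def.p}, and then invoke the regularity statement of Proposition \ref{prop:regularity} to glue the pieces into a solution defined on all of $[s,\infty)$. Concretely, I would first set $T_0 = s$ and $X_{T_0} = x$; given $(T_n, X_{T_n^+})$ with $X_{T_n^+}\in\CR$ (the reset line, for $n\geq 1$) lying in $\CP$, I would use the Poisson measure $\mathbf N$ restricted to the strip $(T_n,\infty)\times\R_+$ to read off the next candidate jump time $T_{n+1}$ as the first atom $(u,z)$ with $z\leq\lambda(\Phi_{T_n}^{u}(X_{T_n^+}))$; Lemma \ref{lemma.Lambda}(i)--(iii) guarantees $T_{n+1}\in(T_n,\te{T_n,X_{T_n^+}})$ a.s.\ with the correct conditional law \eqref{def.p}, so the flow is well-defined up to $T_{n+1}$ and one may set $X_t = \Phi_{T_n}^t(X_{T_n^+})$ on $[T_n,T_{n+1})$ and $X_{T_{n+1}^+} = \Delta\Phi_{T_n}^{T_{n+1}}(X_{T_n^+})$. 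Since $\Delta$ maps into $\CR$ and, by Theorem \ref{th:jtb1} together with the flow-invariance of $\CP$ above the separatrix, the flow never leaves $\CP$, the induction is consistent and produces a càdlàg process on $[s,\sup_n T_n)$; by construction $t\mapsto\lambda(X_t)$ is locally bounded on this interval, hence locally integrable, and \eqref{eq:EDSL} holds path-wise because the stochastic integrals against $\mathbf N$ reduce to the finite sums over $\{T_k\leq t\}$ while the drift integral is exactly the flow equation \eqref{eq:micro-unique}.

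Next I would record that $\sup_n T_n = +\infty$ a.s.: this is precisely the content of Proposition \ref{prop:regularity} (which is stated for the boundedness hypothesis on $\I$ assumed here), so the process just constructed is in fact defined on all of $[s,\infty)$ and is a genuine solution in the sense of Definition \ref{def:sol-sde-inhomo}. For uniqueness, I would argue path-wise: if $Y$ is another solution on $[s,\infty)$ adapted to $(\mathcal F_t^s)$ with $t\mapsto\lambda(Y_t)$ locally integrable, then on $[s, T_1^Y)$ (its first jump, i.e.\ first atom of $\mathbf N$ below the graph of $z\leq\lambda(Y_{u-})$) the jump part of \eqref{eq:EDSL} vanishes, so $Y$ solves \eqref{eq:micro-unique} and coincides with $\Phi_s^t(x)$ by uniqueness of the ODE, which forces $T_1^Y = T_1$ and $Y_{T_1^+} = \Delta\Phi_s^{T_1}(x)$; iterating and using $\lim_n T_n = \infty$ (which also holds for $Y$ since any solution's jump times, once they agree with the $T_n$, exhaust $\R_+$) gives $Y = X$ a.s.\ on $[s,\infty)$.

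Finally, the strong Markov property and the $\CP$-valuedness follow from the general PDMP machinery of \cite{jacod_jumping_1996}: the construction above exhibits $X^{s,x,\I}$ (equivalently the time-homogeneous lift $\hat X_t^{(s,x),\I} = (X_{t+s}^{s,x,\I}, t+s)$ used in the proof of Proposition \ref{prop:regularity}) as a jumping Markov process in the sense of that paper, whose regularity has just been established; \cite{jacod_jumping_1996} then yields that $\hat X$ is strong Markov, and $\CP$-invariance is Theorem \ref{th:jtb1}(iii) plus $\Delta(\CP)\subset\CR_{\CP}\subset\CP$. The main obstacle here is not any single step but the bookkeeping needed to make the iterative construction rigorous at the level of the Poisson measure --- in particular, checking that reading off $T_{n+1}$ from $\mathbf N$ is measurable and $(\mathcal F_t^s)$-adapted, and that the resulting inter-jump times have exactly the law \eqref{eq.Lambda} conditionally on the past --- together with the need to have Proposition \ref{prop:regularity} available, whose own proof (as written) already presupposes existence; so care must be taken that the logical order is: construct $X$ up to $T_\infty$, establish the non-explosion estimate of Proposition \ref{prop:regularity}'s first step directly on this partial process, conclude $T_\infty = \infty$, and only then appeal to \cite{jacod_jumping_1996} for the Markov property.
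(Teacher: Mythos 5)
Your proposal follows essentially the same route as the paper's proof: construct $X$ by induction on the jump times $T_n$ read off from the Poisson measure, noting that Lemma~\ref{lemma.Lambda} makes the construction consistent; invoke Proposition~\ref{prop:regularity} for $\sup_n T_n=\infty$; establish pathwise uniqueness by induction via ODE uniqueness before each jump; and obtain the strong Markov property from Corollary~3 of \cite{jacod_jumping_1996} (the paper additionally spells out the two hypotheses of that corollary, namely $\P(T_1>s)=1$ and $\eta(s,x)=\te{s,x,\I}$, established in Lemma~\ref{lemma.Lambda}). Your observation about the apparent circularity between Propositions~\ref{prop:regularity} and~\ref{prop:JMP} is fair, and your proposed ordering (construct up to $T_\infty$, prove non-explosion on the partial process, then conclude) is indeed the correct way to read the paper's argument.
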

\begin{proof}
	Let $s\in\R$ and $x\in\CP$.
	We give a direct proof by considering the jumps of $X^{s,x,\I}$ and by solving the equation between the jumps. Define by induction: $T_0\defi s$,
		\begin{align*}
	T_{1}&\defi\inf \left\{t \geq s: \int_{s}^{t} \int_{\mathbb{R}_{+}} \mathbf{1}_{\left\{z \leq \lambda\left(\Phi_{s}^{u}\left(X_s^{s,x,\I}\right)\right)\right\}} \mathbf{N}(d u, d z)>0\right\},\\
	X_t& \defi \Phi_{s}^t\left(X_s^{s,x,\I}\right),\quad t\in[s,T_1)
	\end{align*}
	and
	\begin{align*}
	\forall n \geq 0,\quad T_{n+1}&\defi\inf \left\{t \geq T_{n}: 
	\int_{T_{n}}^{t} \int_{\mathbb{R}_{+}} \mathbf{1}_{\left\{z \leq \lambda\left(\Phi_{ T_{n}}^u\left(\Delta X_{T_n^-}^{s,x,\I}\right)\right)\right\}} \mathbf{N}(d u, d z)>0\right\}\\
	X_t& \defi \Phi_{T_n}^t\left(\Delta X_{T_n}^{s,x,\I}\right),\quad t\in[T_n,T_{n+1}).
	\end{align*}
	Note that by lemma~\ref{lemma.Lambda} and equation \eqref{eq.Lambda}, $T_{n}<T_{n+1}<\infty$ \textit{a.s.} for $n\geq 0$ so that the sequence is well defined.	
	We can directly verify that $t\to X_t^{s,x,\I}$ is almost surely a solution of \eqref{eq:EDSL} which implies by proposition~\ref{prop:regularity} that $X^{s,x,\I}$ is regular, hence defined on $[s,\infty)$.
	
	Uniqueness of solutions to \eqref{eq:EDSL} follows from theorem~\ref{th:jtb1}: two solutions are almost surely identical before the first jump, implying that they must jump simultaneously. By induction on the number of jumps, we conclude that the two solutions are almost surely identical.
	
	The Markov property is a consequence of corollary 3 in \cite{jacod_jumping_1996}. In order to apply this corollary, we have to show two properties. First $\P(T^{s,x,\I}_1>s)=1$ which is true by \eqref{eq.Lambda}. Second, we consider $\eta(s,x)\defi \inf\{t\geq s\st\P(T^{s,x,\I}_1\geq t)=0 \}$. We have seen in lemma~\ref{lemma.Lambda} that $\eta(s,x)= \te{s,x,\I}$.  This implies that $\Phi_t^u(\Phi_s^t(x))=\Phi^{u}_s(x)$ holds for $s<t<u<\eta(s,x)$. Then $X$ is strong Markov from corollary 3 in \cite{jacod_jumping_1996} and the proof is complete.
\end{proof}

\subsection{Study of the jump rate}
We proved that the process $X^{s,x,\I}$ is defined on $[s,\infty)$. 
For any measurable and bounded $h$, the strong Markov property gives \cite{jacod_jumping_1996}:
\[
\mathbb E h\left(X^{s,x,\I}_t\right) = 	h(\Phi^{t}_s(x))\,e^{-\Lambda^\I(s,x,t)}\indic_{[s,\te{s,x})}(t)	 + \int_s^t p^\I(s,x,u)\ \mathbb E h\left(X_t^{u,\Delta\Phi_s^{u}(x),\I }\right)du.
\]
Hence, if the jump rate $r^\I(s,x,t)\defi \E\lambda(X_t^{s,x,\I})$ is finite, we expect it to satisfies the integral equation:
\begin{equation}\label{eq:ve}\tag{IE}
	r^\I(s,x,t) = p^\I(s,x,t) + \int_s^t p^\I(s,x,u)r^\I(u,\Delta\Phi_s^{u}(x),t)\,du,\quad s\leq t, \quad x\in \CP.
\end{equation}

This equation has a nice structure: when solved solely on the reset line $\CR_\CP$, it provides an explicit solution on the whole domain $\CP$. Hence, we start by solving the equation on $\CR_\CP$.

\begin{proposition}\label{prop:vel-R}
	Grant assumptions~\ref{hyp.F},\ref{as:ratepos-increasing},\ref{as:C}, consider $S<T<\infty$, $\I\in C^0([S,T],\R_+)$ and a partition $\CP$. Then, equation  \eqref{eq:ve} posed on the reset line $\CR_\CP$ has a unique non negative solution $\tilde r^\I$ in $\mathcal E_\CR\defi \mathrm L^{\infty}(J,\mathrm L^\infty(\CR_\CP))$ where $J\defi\{(s,t)\in\ [S,T]^2\st s\leq t\}$. 
	Additionally, under assumption~\ref{as:C0}, $\forall x\in \CR_\CP$ and $\forall s\in[S,T]$, $t\to\tilde r^\I(s,x,t)$ is continuous for $s\leq t\leq T$.
\end{proposition}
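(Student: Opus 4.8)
The equation \eqref{eq:ve} restricted to $\CR_\CP$ is a Volterra integral equation of the second kind with kernel $p^\I(s,x,u)$ and a "spatial" transport along the jump map $\Delta\Phi_s^u$. The plan is to set it up as a fixed point on $\mathcal E_\CR = \mathrm L^\infty(J,\mathrm L^\infty(\CR_\CP))$ and invoke a Banach fixed point argument after iterating the operator enough times so that it becomes a contraction. Concretely, for $r\in\mathcal E_\CR$ define
\[
(\mathcal T r)^\I(s,x,t) \defi p^\I(s,x,t) + \int_s^t p^\I(s,x,u)\, r^\I(u,\Delta\Phi_s^u(x),t)\,du,
\]
noting that $\Delta\Phi_s^u(x)\in\CR_\CP$ since $\Delta$ lands on the reset line and theorem~\ref{th:jtb1} guarantees the flow stays in $\CP$ (and in fact in $\CP\setminus\CP_1$ once reset, so the expression makes sense and lies in the admissible set). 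First I would check that $\mathcal T$ maps $\mathcal E_\CR$ into itself: for this I need $\sup_{x\in\CR_\CP}\sup_{s\leq t}\int_s^t p^\I(s,x,u)\,du \leq 1$ (which is immediate since $\int_s^{\te{}} p^\I(s,x,u)\,du = 1 - e^{-\Lambda^\I(s,x,\infty)} = 1$ by lemma~\ref{lemma.Lambda}, using that $T_1<\tee$ a.s.) and that $\sup_{x}\sup_{s}p^\I(s,x,t)$ is bounded — this is exactly where assumption~\ref{as:C} enters, controlling $\lambda(\Phi_s^t(x))e^{-\Lambda^\I(s,x,t)}$ uniformly. The separatrix/partition structure lets me reduce to the unbounded branch $\CP_4$ where the time-change from the proof of lemma~\ref{lemma.Lambda} turns $\Lambda^\I$ into $\int \lambda/(F+\cdots)$, so assumption~\ref{as:C} (with $\I$ replaced by a constant dominating $I+\|\I\|_\infty-w_x$) gives the uniform bound on $p^\I$.

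Next I would prove the contraction-after-iteration estimate. For $r_1,r_2\in\mathcal E_\CR$, a standard Volterra bound gives
\[
\|(\mathcal T^n r_1 - \mathcal T^n r_2)(s,\cdot,t)\|_\infty \leq \frac{(C(t-s))^n}{n!}\,\|r_1-r_2\|_{\mathcal E_\CR},
\]
where $C \defi \sup p^\I < \infty$ (from assumption~\ref{as:C} as above) and the factor $1/n!$ comes from the nested time integrals $\int_s^{t}\!\!\int_{u_1}^{t}\cdots$ over the simplex. Since $(C(T-S))^n/n! \to 0$, some iterate $\mathcal T^n$ is a strict contraction on the Banach space $\mathcal E_\CR$, hence $\mathcal T$ has a unique fixed point $\tilde r^\I$; nonnegativity is preserved because $p^\I\geq 0$ and $\mathcal T$ maps the (closed) positive cone into itself, so the fixed point is nonnegative. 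This gives existence and uniqueness of $\tilde r^\I$ in $\mathcal E_\CR$.

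For the continuity statement under assumption~\ref{as:C0}, I would argue that $\mathcal T$ also maps the closed subspace of functions that are continuous in $t$ into itself, and then the fixed point inherits continuity. The point $p^\I(s,x,t) = \lambda(\Phi_s^t(x))e^{-\Lambda^\I(s,x,t)}\indic_{[s,\te{s,x})}(t)$ is continuous in $t$ on $[s,\te{s,x})$ by continuity of the flow, of $\lambda$, and of $t\mapsto\Lambda^\I$; the delicate issue is the behavior at $t = \te{s,x}$ when $\te{s,x}<\infty$: there $\Phi_s^t(x)\to\infty$, but assumption~\ref{as:C0} forces $\lambda(\Phi_s^t(x))e^{-\Lambda^\I(s,x,t)}\to 0$, matching the value $0$ that $p^\I$ takes for $t\geq\te{s,x}$ via the indicator — so $p^\I(s,x,\cdot)$ is continuous on all of $[S,T]$. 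The integral term $\int_s^t p^\I(s,x,u)\,\tilde r^\I(u,\Delta\Phi_s^u(x),t)\,du$ is continuous in $t$ by dominated convergence (the integrand is bounded uniformly by $C\|\tilde r^\I\|_{\mathcal E_\CR}$), using in the $t$-variable inside $\tilde r^\I$ the continuity just established inductively along the fixed-point iteration, and using continuity in the upper limit. Passing to the limit $n\to\infty$ in $\mathcal T^n(0)$ with uniform convergence then yields continuity of $t\mapsto\tilde r^\I(s,x,t)$.

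**Main obstacle.** The one genuinely non-routine point is the uniform-in-$(s,x)$ boundedness of $p^\I$ on $\CR_\CP$ — i.e. extracting $C<\infty$ with $\sup_{s\in[S,T],x\in\CR_\CP}\sup_{t}p^\I(s,x,t)\leq C$ — because $\CR_\CP$ is unbounded (it extends up the reset line) and the explosion time $\te{s,x}$ degenerates as $x$ moves up. This is precisely handled by the time-change to the $\CP_4$ branch together with assumption~\ref{as:C} and the bound $w_x(s,t)\geq w^*$ from the partition being above the separatrix (and corollary~\ref{coro:wbound} controlling $w$ from above if needed); everything else is the standard Volterra fixed-point machinery.
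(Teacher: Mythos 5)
Your proposal matches the paper's proof essentially step for step: the Volterra/Picard iteration on $\mathcal E_\CR$ with the $1/n!$ bound on the $n$-fold iterate to obtain a contraction after finitely many iterations, the uniform bound on $p^\I$ over $\CR_\CP$ (which the paper isolates as lemma~\ref{lemma:p_on_vr}, using the time change in $\CP_4$ and assumption~\ref{as:C} exactly as you describe), nonnegativity from invariance of the positive cone, and continuity under assumption~\ref{as:C0} from continuity of $p^\I$ at the explosion time (the paper's lemma~\ref{lemma:p_C0}) combined with dominated convergence along the series. You have also correctly identified the uniform boundedness of $p^\I$ on the unbounded reset line as the only non-routine ingredient.
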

\begin{proof}
	We prove  that \eqref{eq:ve} has a unique solution for $x\in\CR_\CP$.
	We note that $p^\I$ belongs to $\mathcal E_\CR$ thanks to lemma~\ref{lemma:p_on_vr} and the three assumptions.
	Let us write \eqref{eq:ve} as $\tilde r= p^\I+\mathcal T_o(\tilde r)$. For $n\geq1$:
	\begin{multline*}
		\mathcal T^n_0(h)(s,x,t) = \int_s^tdu_1\int_{u_1}^tdu_2\cdots\int_{u_{n-1}}^tdu_n\\ 
		p^\I(s,x,u_1)\ 
		p^\I(u_1,\Delta\Phi_s^{u_1}(x),u_2)\cdots 
		p^\I(u_{n-1},\Delta\Phi_s^{u_{n-1}}\circ\cdots\circ\Delta\Phi_s^{u_{1}}(x),u_n)\\\times
		h(u_n,\Delta\Phi_s^{u_n}\circ\cdots\circ\Delta\Phi_s^{u_{1}}(x),t),
	\end{multline*}
	and we find:
	\[
	\mathcal T^n_0(h)(s,x,t)\leq \norm{h}_{\mathcal E_\CR}\norm{p}_{\mathcal E_\CR}^n\frac{(T-S)^n}{n!}<\infty.
	\]
	This implies that  $\mathcal T_0^{n_c}\in\mathcal L(\mathcal E_\CR)$ is contracting for some $n_c\geq1$ and that  \eqref{eq:ve} has a unique solution in $\mathcal E_\CR$ given by $\tilde r = \sum_{n\geq 0}\mathcal T_0^{n\cdot n_c}(p^\I)$. The solution is non negative because $p^\I$ is.
	Let us look at the continuity under assumption~\ref{as:C0}. Each term of the series is continuous in $t$ by lemma~\ref{lemma:p_C0} and bounded by $k^n\norm{p}_E^{n\cdot n_c}$ for some $k<1$. The discrete version of the theorem of dominated convergence implies that $t\to \tilde r^\I(s,x,t)$ is continuous for $t\geq s$.
\end{proof}
We can now state:

\begin{lemma}\label{lem:apriori-rate}
	Grant assumptions~\ref{hyp.F},\ref{as:ratepos-increasing},\ref{as:C}, consider $S<T<\infty$, $\I\in C^0([S,T],\R_+)$ and a partition $\CP$. Then the jump rate $r^\I$ is finite, is solution of \eqref{eq:ve} and there is a constant $M_{S,T}\geq0$ such that for all $x\in\CP$ and $\forall S\leq s\leq t\leq T$:
	\begin{equation}\label{eq:boundr}
	p^\I(s,x,t) \leq  r^\I(s,x,t)\leq  p^\I(s,x,t) + M_{S,T}.
	\end{equation}
	Additionally, under assumption~\ref{as:C0}, for $ s\in[S,T]$, $t\to r^\I(s,x,t)$ is continuous when $s\leq t\leq T$.	
\end{lemma}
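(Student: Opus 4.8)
The plan is to transfer the explicit solution on the reset line $\CR_\CP$ provided by proposition~\ref{prop:vel-R} to the whole partition $\CP$, using the integral renewal structure of \eqref{eq:ve}. First I would observe that for $x\in\CP$ the strong Markov property (from proposition~\ref{prop:JMP}) already gives the representation formula for $\E h(X_t^{s,x,\I})$ displayed just before \eqref{eq:ve}; applying it with $h=\lambda$ — or rather with $h=\lambda\wedge k$ and letting $k\to\infty$ by monotone convergence, to make everything legitimate — shows that $r^\I(s,x,t)$ is finite as soon as $r^\I(u,\Delta\Phi_s^u(x),t)$ is finite for $u\in[s,t]$. Since $\Delta\Phi_s^u(x)\in\CR_\CP$ by definition of $\Delta$ (equation~\eqref{eq:Delta}) and the fact that the partition is flow- and jump-invariant, the inner rate coincides with $\tilde r^\I$ from proposition~\ref{prop:vel-R}, which is in $\mathrm L^\infty(J,\mathrm L^\infty(\CR_\CP))$; hence the convolution integral $\int_s^t p^\I(s,x,u)\,\tilde r^\I(u,\Delta\Phi_s^u(x),t)\,du$ is bounded by $\norm{\tilde r^\I}_{\mathcal E_\CR}\int_s^t p^\I(s,x,u)\,du\le\norm{\tilde r^\I}_{\mathcal E_\CR}$, using that $p^\I(s,x,\cdot)$ is a sub-probability density by \eqref{eq.Lambda}. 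This simultaneously proves finiteness, that $r^\I$ solves \eqref{eq:ve} (the limiting procedure is justified because the right-hand side of the truncated identities converges monotonically), and the upper bound with $M_{S,T}\defi\norm{\tilde r^\I}_{\mathcal E_\CR}$.

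The lower bound $p^\I(s,x,t)\le r^\I(s,x,t)$ is then immediate from \eqref{eq:ve} since the convolution term and $\tilde r^\I$ are non-negative. For the continuity claim under assumption~\ref{as:C0}, I would argue that in the formula $r^\I(s,x,t)=p^\I(s,x,t)+\int_s^t p^\I(s,x,u)\,\tilde r^\I(u,\Delta\Phi_s^u(x),t)\,du$ the first term $t\mapsto p^\I(s,x,t)$ is continuous on $[s,T]$ by lemma~\ref{lemma:p_C0} (here one uses assumption~\ref{as:C0} to handle continuity up to and at the explosion time $\te{s,x,\I}$, where $p^\I$ vanishes), and the integral term is continuous in $t$ by dominated convergence: the integrand is dominated by $\norm{\tilde r^\I}_{\mathcal E_\CR}\,p^\I(s,x,u)\in\mathrm L^1(du)$, and for fixed $u$ the map $t\mapsto\tilde r^\I(u,\Delta\Phi_s^u(x),t)$ is continuous by the second part of proposition~\ref{prop:vel-R}; the upper limit of integration varies continuously as well, which is absorbed into the dominated-convergence estimate.

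The main obstacle, and the step deserving the most care, is the truncation/passage-to-the-limit argument that upgrades the bounded-$h$ Markov representation to the unbounded function $\lambda$: one must check that $r^\I(u,\Delta\Phi_s^u(x),t)$ computed on $\CR_\CP$ genuinely agrees with the solution $\tilde r^\I$ of proposition~\ref{prop:vel-R} — i.e.\ that the a priori possibly-infinite object defined as $\E\lambda(X_t^{u,\cdot,\I})$ is the one selected by the fixed-point construction — rather than some larger solution. This is where one invokes that $\tilde r^\I$ is the \emph{unique} element of $\mathcal E_\CR$ solving \eqref{eq:ve} together with the monotone structure: the truncated rates $r^\I_k(s,x,t)\defi\E(\lambda\wedge k)(X_t^{s,x,\I})$ are finite, increase in $k$, each satisfies a version of \eqref{eq:ve} with $p^\I$ replaced by $(\lambda\wedge k)(\Phi_s^t(x))e^{-\Lambda^\I(s,x,t)}\le p^\I$, hence stay bounded on $\CR_\CP$ by $\norm{p^\I}_{\mathcal E_\CR}\sum_n(T-S)^n/n!<\infty$ uniformly in $k$; their monotone limit therefore lies in $\mathcal E_\CR$, solves \eqref{eq:ve}, and so equals $\tilde r^\I$ by uniqueness. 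Everything else is a routine consequence of estimates already established in propositions~\ref{prop:vel-R} and the flow lemmas.
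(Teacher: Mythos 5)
Your proposal follows essentially the same route as the paper's proof: truncate $\lambda$ (the paper uses a general bounded increasing sequence $\lambda_n\to\lambda$, you use $\lambda\wedge k$, an immaterial difference), read off from the strong Markov/renewal structure that the truncated rates satisfy an integral identity whose source term is dominated by $p^\I$, deduce a bound in $\mathcal E_\CR$ uniform in $k$ via the contracting operator $\mathcal T_0$, identify the monotone limit with $\tilde r^\I$ on $\CR_\CP$ by the uniqueness clause of proposition~\ref{prop:vel-R}, and then read off the bound \eqref{eq:boundr} and the continuity on all of $\CP$ from the convolution formula $r^\I(s,x,t)=p^\I(s,x,t)+\int_s^t p^\I(s,x,u)\,\tilde r^\I(u,\Delta\Phi_s^u(x),t)\,du$ together with lemma~\ref{lemma:p_C0} and dominated convergence. (The only slip is the displayed constant $\norm{p^\I}_{\mathcal E_\CR}\sum_n(T-S)^n/n!$, which should carry the powers $\norm{p^\I}_{\mathcal E_\CR}^{n+1}$ inside the sum; this does not affect the finiteness argument or any subsequent step.)
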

\begin{proof}
We first show that $r(s,x,t)$ is finite. To this end, we consider an increasing sequence $\lambda_n\in\mathrm L^{\infty}(\R^2,\R_+)$, independent of the second variable, such that $\lambda_n\to\lambda$ a.e. The theorem of monotone convergence implies that the increasing sequence $r^n$ is such that $r^n(s,x,t)\defi\E\lambda_n(X_t^{s,\delta_x,\I})\to r^\I(s,x,t)$ a.e. from below. By \cite{jacod_jumping_1996} and the monotony of the sequence $\lambda_n$, we obtain
\begin{multline*}
	\forall s\leq t, \quad \forall x\in \CR_\CP,\quad r^n(s,x,t) =
\lambda_n(\Phi^{t}_s(x))\,e^{-\Lambda^\I(s,x,t)}\indic_{[s,\te{s,x})}(t)	 + \int_s^t p^\I(s,x,u)r^n(u,\Delta\Phi_s^{u}(x),t)du
	\\ \leq p^\I(s,x,t) + \int_s^t p^\I(s,x,u)r^n(u,\Delta\Phi_s^{u}(x),t)du.
\end{multline*}
As $r^n(s,\cdot, t)\in\mathcal E_\CR$, proposition~\ref{prop:vel-R} implies that $r^n(s,x,t)\leq \tilde r^\I(s,x,t)<\infty$ for $x\in\CR_\CP$ and then $r^\I(s,x,t)\leq \tilde r^\I(s,x,t)$. In particular $r^\I\in\mathcal E_\CR$ which implies, by taking the limit $n\to\infty$ in the integral equation and by using uniqueness of the solution to \eqref{eq:ve} on $\CR_\CP$ that
\[
	\forall x\in\CR_\CP,\	r^\I(s,x,t)= \tilde r^\I(s,x,t).
\]
This implies that $r^\I$ is continuous in time under assumption~\ref{as:C0} on $\CR_\CP$. 

Now, we observe that \eqref{eq:ve} allows to extend $\tilde r^\I$ to $\CP$.
Then, we obtain that 
\[
		\forall s\leq t, \quad \forall x\in \CP,\quad r^n(s,x,t) \leq p^\I(s,x,t) + \int_s^t p^\I(s,x,u)r^\I(u,\Delta\Phi_s^{u}(x),t)du\defi \tilde r^\I(s,x,t),
\]
which implies that  $r^\I(s,x,t)\leq \tilde r^\I(s,x,t)<\infty$ for $x\in\CP$. As above, we conclude that $r^\I(s,x,t)= \tilde r^\I(s,x,t)$ for $x\in\CP$ and that $r^\I$ is continuous in time for $x\in\CP$ under assumption~\ref{as:C0}. At this stage, we also have the inequality
\[
	p^\I(s,x,t) \leq  r^\I(s,x,t)\leq p^\I(s,x,t)+\norm{r^\I}_{\mathcal E_\CR}
\]
which concludes the proof. 
\end{proof}

This allows to solve \eqref{eq:EDSL} for $X_0\sim\delta_x$ for $x\in\CP$.
To solve \eqref{eq:EDSL} for $X_0\sim\mu_0$  fairly  general, we need to control $\E\lambda(X_t^{s,\mu_0,\I})$ which, by \eqref{eq:boundr}, amounts to controlling $p^\I$. We use an \textit{a-priori} estimate of the jump rate.
\begin{lemma}\label{lem:apriori-rate2}
	Grant assumptions~\ref{hyp.F},\ref{as:ratepos-increasing},\ref{as:C2},\ref{as:r-lambda-F}, consider $S<T<\infty$, $\I\in C^0([S,T],\R_+)$ and a partition $\CP$.  Then for all $x\in\CP$, there is $C_\lambda>0$ such that
	\[
	\forall S\leq s\leq t\leq T,\quad r^\I(s,x,t) \leq -\frac{C_\lambda}{\lambda(\vr)}+\left(\lambda(x_1)+\frac{C_\lambda}{\lambda(\vr)}\right) e^{\lambda(\vr)(t-s)}.
	\]
	As a consequence, $\E\lambda(X_t^{s,\mu_0,\I})$ is finite provided that $\mu_0(\lambda)<\infty$ and $supp(\mu_0)\subset\CP$. In this case, it is also continuous as function of $t$.
\end{lemma}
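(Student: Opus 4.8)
The plan is to derive an autonomous differential inequality for $g(t)\defi r^\I(s,x,t)=\E\lambda(V_t^{s,x,\I})$ and to close it by Gronwall's lemma. The extended generator of the PDMP $X=(V,W)$ acting on the $C^1$ function (assumption~\ref{as:r-lambda-F}) $(v,w)\mapsto\lambda(v)$ is
\[
\mathcal L_u\lambda(v,w)=\lambda'(v)\bigl(F(v)-w+I+\I(u)\bigr)+\lambda(v)\bigl(\lambda(\vr)-\lambda(v)\bigr),
\]
the second term coming from the jump map $\Delta$ of \eqref{eq:Delta} (which resets $v$ to $\vr$) weighted by the rate $\lambda(v)$. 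Since $X$ stays in $\CP$ by proposition~\ref{prop:JMP} one has $W_u\ge w^*$, and since $\lambda'\ge0$ by assumption~\ref{as:ratepos-increasing}, the choice $\alpha\defi\max\bigl(0,\,I+\norm{\I}_\infty-w^*\bigr)\ge0$ gives $\lambda'(V_u)\bigl(F(V_u)-W_u+I+\I(u)\bigr)\le\lambda'(V_u)\bigl(F(V_u)+\alpha\bigr)$. Assumption~\ref{as:r-lambda-F} then yields the pointwise bound $\mathcal L_u\lambda(V_u,W_u)\le C_\lambda+\lambda(\vr)\lambda(V_u)$ with $C_\lambda\defi C_{\lambda,\alpha}$, a constant independent of $x,s,t$.

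Next I would apply Dynkin's formula after localization: let $\tau_n$ be the minimum of the $n$-th jump time $T_n$ and of the exit times of $\{|V|<n\}$ and $\{|W|<n\}$. On $[s,\tau_n]$ all the quantities above are bounded and only finitely many jumps occur, so the compensated jump martingale stopped at $\tau_n$ is a genuine martingale and, with $g_n(t)\defi\E\lambda(V_{t\wedge\tau_n})$,
\[
g_n(t)=\lambda(x_1)+\E\int_s^{t\wedge\tau_n}\mathcal L_u\lambda(V_u,W_u)\,du\le\lambda(x_1)+C_\lambda(t-s)+\lambda(\vr)\int_s^t g_n(u)\,du .
\]
Gronwall's lemma applied to the right-hand side (note $\lambda(\vr)>0$ by assumption~\ref{as:r-lambda-F}) gives $g_n(t)\le-\tfrac{C_\lambda}{\lambda(\vr)}+\bigl(\lambda(x_1)+\tfrac{C_\lambda}{\lambda(\vr)}\bigr)e^{\lambda(\vr)(t-s)}$. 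By proposition~\ref{prop:regularity} the process is regular and its trajectories are bounded on every $[s,T]$ a.s., so $\tau_n\uparrow\infty$ a.s.; Fatou's lemma then transfers the bound to $g(t)=\E\lambda(V_t)$, which is the claimed estimate. Alternatively, since lemma~\ref{lem:apriori-rate} already ensures $r^\I<\infty$, the same argument applied to $\lambda^2$ using assumption~\ref{as:C2} and assumption~\ref{hyp.F}-$(ii)$ shows $u\mapsto\E\lambda(V_u)^2$ is locally bounded, which makes the compensated jump martingale genuine on $[s,t]$ directly and gives the inequality without Fatou.

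For the consequences, conditioning on the initial position gives $\E\lambda(X_t^{s,\mu_0,\I})=\int_{\CP}r^\I(s,x,t)\,\mu_0(dx)$; integrating the pointwise bound against $\mu_0$ and using $\mu_0(\lambda)<\infty$ together with $\mu_0(\CP)=1$ yields $\E\lambda(X_t^{s,\mu_0,\I})\le-\tfrac{C_\lambda}{\lambda(\vr)}+\bigl(\mu_0(\lambda)+\tfrac{C_\lambda}{\lambda(\vr)}\bigr)e^{\lambda(\vr)(t-s)}<\infty$. For continuity in $t$ on $[s,T]$, lemma~\ref{lem:apriori-rate} gives (since assumption~\ref{as:C2} implies assumption~\ref{as:C0}) that $t\mapsto r^\I(s,x,t)$ is continuous for each $x$, while the bound above with $t$ replaced by $T$ is a $\mu_0$-integrable dominating function; dominated convergence then gives continuity of $t\mapsto\E\lambda(X_t^{s,\mu_0,\I})$ on $[s,T]$, hence on $[s,\infty)$ as $T$ is arbitrary.

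The main obstacle is the rigorous application of the It\^o/Dynkin formula: because the vector field is explosive and $\lambda,\lambda'$ may be unbounded, the formula cannot be used globally, one must localize, and one must check that the compensated sum of jumps---whose increments $|\lambda(\vr)-\lambda(V_{T_k-})|$ are a priori unbounded---is a true martingale before passing to the limit. This is exactly where the regularity of proposition~\ref{prop:regularity} and assumption~\ref{as:C2} come in; once the localized identity is established, closing the estimate is a one-line Gronwall argument.
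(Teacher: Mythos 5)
Your proof is correct and the core mechanism is the same as the paper's: compute the generator acting on $\lambda$, bound it via assumption~\ref{as:r-lambda-F} (using $W\geq w^*$ and $\lambda'\geq 0$), and close with Gronwall. Where you differ is in how you justify taking expectations in the It\^o/Dynkin identity. The paper proves first, via a ``Step~1'' identical in structure to lemma~\ref{lem:apriori-rate}, that $r_2^\I(s,x,t)=\E\lambda^2(X_t^{s,x,\I})$ is bounded on $\CR_\CP$ and then on $\CP$ (this is where assumption~\ref{as:C2} is actually used), which makes the compensated jump term directly integrable and its expectation equal to the compensator's expectation. You instead localize with the stopping times $\tau_n$, obtain the Gronwall bound for the stopped process, and pass to the limit by Fatou using the regularity from proposition~\ref{prop:regularity}; you then mention the paper's $\E\lambda^2$ route as a fallback. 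Both are sound; your localization argument is slightly more self-contained (it does not rely on assumption~\ref{as:C2} for the bound itself, only for the continuity claim via lemma~\ref{lem:apriori-rate}), while the paper's route avoids localization at the cost of an extra integral-equation argument for $r_2^\I$. The final part of your proof (integration against $\mu_0$ and dominated convergence for continuity) matches the paper's.

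One small point worth making explicit: in passing from $\E\int_s^{t\wedge\tau_n}\lambda(V_u)\,\rmd u$ to $\int_s^t g_n(u)\,\rmd u$ you are using $\lambda\geq 0$ so that $\lambda(V_u)\indic_{u<\tau_n}\leq\lambda(V_{u\wedge\tau_n})$; this is harmless but deserves a word, since the upper bound for the drift part of the generator is not itself the full integrand and the two sides only coincide on $\{u<\tau_n\}$.
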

\begin{proof}
\textbf{Step 1.} We shall quickly show that $r_2^\I(s,x,t)\defi\E\lambda^2(X_t^{s,x,\I})$ is finite. It is straightforward, under assumptions~\ref{hyp.F},\ref{as:ratepos-increasing},\ref{as:C2}, to adapt lemma~\ref{lemma:p_on_vr} to
\[
p_2^\I(s,x,t)\defi\lambda^2(v^\I_x(s,t))e^{-\Lambda^\I(s,x,t)}\,\indic_{[s,\te{s,x,\I})}(t)
\]
and show that it is bounded on $\CR_\CP$. The contracting property  of $\mathcal T_0$ in the proof of proposition~\ref{prop:vel-R} provides a unique solution $r_2^\I$ on $\CR_\CP$ to the equation
\[
	r^\I_2(s,x,t) = p_2^\I(s,x,u)  + \int_s^t p^\I(s,x,u)r_2^\I(u,\Delta\Phi_s^{u}(x),t)\,du,\quad s\leq t, \quad x\in \CR_\CP.
\]
One can show that this solution is finite and equal to $\E\lambda^2(X_t^{s,x,\I})$ as in the previous lemma.
Using the proof of lemma~\ref{lem:apriori-rate}, we can then  extend $r_2^\I$ to $\CP$ with the bound
\[
p_2^\I(s,x,t) \leq  r_2^\I(s,x,t)\leq  p_2^\I(s,x,t) + \norm{r_2^\I}_{\mathcal E_\CR}<\infty.
\]
\textbf{Step 2.}	
We apply the Ito formula (theorem II-32 in \cite{ikeda_stochastic_1989}). It gives that for all $S\leq s\leq t\leq T$ and $x\in\CP$, almost surely:
\begin{multline*}
 \lambda(V_t^{s,x,\I}) - \lambda(x_1) =\int_s^t\lambda'(V_u^{s,x,\I})\left[F(V_u^{s,x,\I})-W_u^{s,x,\I}+\I(u)\right] - \mathbf{M}_t^{s,x,\I}
	\\\leq \int_s^t\lambda'(V_u^{s,x,\I})\left[F(V_u^{s,x,\I})-w^*+\sup\limits_{[S,T]}\I\right]-\mathbf{M}_t^{s,x,\I}
\end{multline*}
where 
\[
\mathbf{M}_t^{s,x,\I}\defi \int_{s}^{t} \int_{\mathbb{R}_{+}} \left(\lambda\left(V_u^{s,x,\I}\right)-\lambda(\vr)\right) \mathbf{1}_{\left\{z \leq \lambda\left(V_{u-}^{s,x,\I}\right)\right\}} \mathbf{N}(d u, d z).
\]
We used that $w^*\leq W_t^{s,x,\I}$ for $x\in\CP$ and that $\lambda$ is non decreasing for the second line.
From step 1, we have $\E\int_{s}^{t} \lambda^2\left(V_u^{s,x,\I}\right) d u<\infty$ which implies (see \cite{ikeda_stochastic_1989} page 62) that

\[
\E\mathbf{M}_t^{s,x,\I} = \E\int_{s}^{t} \lambda\left(V_u^{s,x,\I}\right)\left(\lambda\left(V_u^{s,x,\I}\right)-\lambda(\vr)\right)  d u
\]
and
\[
r^\I(s,x,t) - \lambda(x_1)	\stackrel{Ass.~\ref{as:r-lambda-F}}{\leq} \int_s^tC_\lambda +\lambda(\vr)\cdot r^\I(s,x,u)du.
\]
The estimate of the lemma is then a consequence of the Gronwall's lemma.

Let us prove the continuity in time $t$. In $\E\lambda(X_t^{s,\mu_0,\I}) = \int r^\I(s,x,t)\mu_0(dx)$, the integrand is continuous in time for $x\in\CP$. Additionally, 
\[
	\forall S\leq s\leq t\leq T,\quad r^\I(s,x,t) \leq -\frac{C_\lambda}{\lambda(\vr)}+\left(\lambda(x_1)+\frac{C_\lambda}{\lambda(\vr)}\right) e^{\lambda(\vr)(T-S)}.
\]
The continuity is thus a consequence of the Lebesgue's dominated convergence theorem.
\end{proof}

\begin{theorem}\label{th:Knu}
	Grant assumptions~\ref{hyp.F},\ref{as:ratepos-increasing},\ref{as:C2},\ref{as:r-lambda-F} and consider $\I\in C^0([S,T],\R_+)$ for $S<T<\infty$.
	Let us assume that $X_0$, independent of $\mathbf N$,  has law $\mu_0$ such that $\mu_0(\lambda)<\infty$ and $supp(\mu_0)\subset\CP$ for some partition $\CP$.
	Then there is a path-wise unique solution to \eqref{eq:EDSL} defined on $[s,\infty)$ in the sense of definition~\ref{def:sol-sde-inhomo}.
	Additionally, for all $s\in[S,T]$, $t\to\E\lambda\left(X_t^{s,\mu_0,\I}\right)$ is finite and continuous on $s\leq t\leq T$.
\end{theorem}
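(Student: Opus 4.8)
The plan is to obtain $X^{s,\mu_0,\I}$ by mixing over $x\sim\mu_0$ the solutions $X^{s,x,\I}$ already built in Proposition~\ref{prop:JMP}, to check that the mixture meets every clause of Definition~\ref{def:sol-sde-inhomo}, and then to read off finiteness and continuity of the jump rate directly from Lemma~\ref{lem:apriori-rate2}.

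First I would work on a probability space carrying the Poisson measure $\mathbf N$ together with $X_0\sim\mu_0$ independent of $\mathbf N$, and run verbatim the inductive jump construction of Proposition~\ref{prop:JMP} but starting from the random point $X_0$ (set $T_0\defi s$, follow the flow $\Phi_s^{\cdot}(X_0)$ until the first atom of $\mathbf N$ below the graph of $\lambda$, apply $\Delta$, iterate). Conditionally on $X_0=x$, $\mathbf N$ is still a Poisson measure independent of the now-deterministic initial condition, so this reproduces exactly the process $X^{s,x,\I}$ of Proposition~\ref{prop:JMP}; in particular $x\mapsto\mathcal L(X^{s,x,\I})$ is a measurable kernel (the process is strong Markov), $X^{s,\mu_0,\I}$ is a well-defined càdlàg, $(\mathcal F_t^s)_{t\ge s}$-adapted process with law $\int_\CP\mathcal L(X^{s,x,\I})\,\mu_0(\rmd x)$, it is $\CP$-valued because $supp(\mu_0)\subset\CP$, and it solves~\eqref{eq:EDSL} almost surely by conditioning on $X_0$ and invoking Proposition~\ref{prop:JMP}.

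Next I would verify the remaining clause of Definition~\ref{def:sol-sde-inhomo}, namely the a.s.\ local integrability of $u\mapsto\lambda(V_u^{s,\mu_0,\I})$: conditionally on $X_0=x$ the process is regular (Proposition~\ref{prop:regularity}), so a.s.\ only finitely many jumps occur in any $[s,t]$, and on each inter-jump interval the trajectory follows the flow and, by Lemma~\ref{lemma.Lambda}, stays strictly before the deterministic explosion time, so $V^{s,\mu_0,\I}$ is bounded there; since $\lambda\in C^1$ (Assumption~\ref{as:r-lambda-F}) the map is then a.s.\ locally bounded, hence locally integrable. Path-wise uniqueness I would get by the same conditioning trick: any solution $\tilde X$ driven by $\mathbf N$ with $\tilde X_s\sim\mu_0$ independent of $\mathbf N$ solves, conditionally on $\tilde X_s=x$, equation~\eqref{eq:EDSL} with initial law $\delta_x$, hence coincides a.s.\ with $X^{s,x,\I}$ by path-wise uniqueness in Proposition~\ref{prop:JMP}; integrating over $\mu_0$ identifies it with $X^{s,\mu_0,\I}$.

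Finally, conditioning on $X_0$ and Tonelli give $\E\lambda(X_t^{s,\mu_0,\I})=\int_\CP r^\I(s,x,t)\,\mu_0(\rmd x)$, which is precisely the object bounded in Lemma~\ref{lem:apriori-rate2}: the exponential estimate there is $\mu_0$-integrable since $\mu_0(\lambda)<\infty$ (so the rate is finite), and since $t\mapsto r^\I(s,x,t)$ is continuous for each fixed $x$ (Lemma~\ref{lem:apriori-rate}, using Assumption~\ref{as:C2}$\Rightarrow$Assumption~\ref{as:C0}) while the estimate is uniform in $t\in[s,T]$, dominated convergence gives continuity of $t\mapsto\E\lambda(X_t^{s,\mu_0,\I})$ on $[s,T]$. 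I expect essentially no deep obstacle here: all the quantitative work already lives in Lemmas~\ref{lem:apriori-rate}--\ref{lem:apriori-rate2} and Proposition~\ref{prop:regularity}, and the only point needing a little care is checking the local-integrability clause for the mixed process, which is where regularity and Lemma~\ref{lemma.Lambda} come in.
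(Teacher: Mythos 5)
Your proposal matches the paper's argument essentially line by line: the paper simply cites Proposition~\ref{prop:JMP} (whose inductive construction already works with the random initial condition $X_0\sim\mu_0$, via conditioning on $X_0$) for existence, regularity and pathwise uniqueness, and then cites Lemma~\ref{lem:apriori-rate2} for the identity $\E\lambda(X_t^{s,\mu_0,\I})=\int r^\I(s,x,t)\,\mu_0(\rmd x)$, the finiteness via the exponential bound in $\lambda(x_1)$ integrated against $\mu_0(\lambda)<\infty$, and the continuity via dominated convergence. You spell out the conditioning and the local-integrability clause more carefully than the two-line proof in the text, but there is no real difference in route or content.
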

\begin{proof}
There is a unique regular solution $X^{s,\mu_0,\I}$ by proposition~\ref{prop:JMP}.
The rest of the proof is lemma~\ref{lem:apriori-rate2}.
\end{proof}

\section{Study of the nonlinear process with delays}\label{section:NL-DDE}

We now study the solutions of \eqref{eq:MKV} when $D>0$ by taking advantage of the results of the previous section.

\begin{proof}[Proof of theorem~\ref{th-existence-MKV-delay}]
For convenience, define the vector field $G(x) \defi (F(x_1)-x_2+I, x_1-x_2)$. Then \eqref{eq:MKV} reads for $t\in[0,D]$:
\[
X_t^{nl} = X_0 + \int_0^t G(X_u^{nl}) + J\cdot (\E\lambda(V^{nl}_{0}),0)\,\rmd u
+ \int_0^t\int_0^\infty
(\vr-V_{u^-}^{nl},\wb)\,
\indic_{\{z\leq \lambda(V^{nl}_{{u^-}})\}}
\,\mathbf{N}(\rmd u,\rmd z).
\]
We define the constant function $\I_1(t) \defi  \E\lambda(V^{nl}_{0})$ on $[0,D]$.
According to theorem~\ref{th:Knu}, there is a path-wise  unique solution $X^{(1)}$ to this equation such that  $t\to \E\lambda\left(X_t^{(1)}\right)$ is continuous.

Hence $t\to\E\lambda(V^{nl}_{t-D})$ is entirely determined on $[D,2D]$ and is continuous. We define $\I_2(t) \defi  \E\lambda(V^{(1)}_t)$ on $[D,2D]$, \eqref{eq:MKV} reads for $t\in[D,2D]$:
\[
X_t^{nl} = X_D + \int_D^t G(X_u^{nl}) + J\cdot (\I_1(u),0)\,\rmd u
+ \int_D^t\int_0^\infty
(\vr-V_{u^-}^{nl},\wb)\,
\indic_{\{z\leq \lambda(V^{nl}_{{u^-}})\}}
\,\mathbf{N}(\rmd u,\rmd z).
\]
According to theorem~\ref{th:Knu}, there is a path-wise unique solution $X^{(2)}$ to this equation such that  $t\to \E\lambda\left(X_t^{(2)}\right)$ is continuous. By recursion on the time intervals $[kD,(k+1)D]$, we conclude that there is a path-wise unique solution $X^{nl}$ to \eqref{eq:MKV} such that $t\to \E\lambda(V_t)$ is continuous.
\end{proof}

\section{Stationary distributions of the autonomous linear process}\label{section:DI}

In this section, we study the case where the current $\I$ is constant in time. Without loss of generality, we assume that $\I=0$.
The dynamics of the autonomous PDMP $X^{x,I}$ solution to \eqref{eq:EDSL} is completely determined by the dynamics of the enclosed Markov chain $(\bfw_{n},S_{n})_n$.
We write $\bfG$ its transition operator, for $h\in \mathrm L ^\infty(\R^2\times\R_+)$:

 \[
	\bfG_I h(x_0,s_0)\defi\int_{\mathbb R_+} p^I(t,x_0)h(\Delta\Phi^t(x_0,I),t)dt.
\]
Moreover $(\bfw_{n})_n$ is also a Markov chain with transition operator:
\begin{equation}
	\label{eq.U}
	\bfU_I h(x_0)
	\defi
	\int_{\R_+} p^I(t,x_0)h(\Delta\Phi^t(x_0,I))dt.
\end{equation}
When the context allows it, we remove the $I$ dependency in the notations. We shall also write $\bfG_I h(w_0,s_0)$ and $\bfU_I h(w_0,s_0)$ with $h\in\mathrm L ^\infty(\Ew\times \R_+)$ when the enclosed chain lies on the reset axis $\CR$, \textit{e.g.} when $X^{x,I}_0$ belongs to $\CR$.

We study the invariant distributions of \eqref{eq:EDSL} by looking at the invariant distributions $\mu^{\mathbf w,S}$ of the enclosed Markov chain. We then derive the invariant distribution of $X^{x,I}$ based on $\mu^{\mathbf w,S}$ recovering a result by Costa \cite{costa_stationary_1990} under different assumptions. We also show the absolute continuity of the invariant distribution, see \cite{benaim_qualitative_2015, locherbach_absolute_2018} for related studies and especially \cite{locherbach_absolute_2018}.

In the next section, the results are derived with locally uniform bounds in $I$ which makes them a bit technical, this is necessary for section~\ref{section:NLDI} where we need to show that $\mu^{\mathbf w,S}$ is continuous in $I$ for some weighted total variation norm on the space of Radon measures.

\subsection{Existence of invariant distribution for the enclosed chain}
In the next proposition, we derive a Doeblin estimate for the enclosed chain. This estimate is relatively easy to obtain when $\bfw_n$ is ``large'' because the flow is simple in this case: $w$ decreases between jumps. We then appeal to this case when $\bfw_n$ is ``small'' by using many successive jumps which are of constant size $\wb$ and which makes $\bfw_n$  ``large''. This justifies the need to compute $\bfG^k$.
To this end, it is convenient to define:
\[
	\psi_t(x_0) \defi  \Delta\Phi^t(x_0) = (\vr,w_{x_0}(t)+\wb)
\]
and
\[
	\psi^k_{t_{1:k}}(x_0) \defi
	\psi_{t_k}\circ\psi_{t_{k-1}}\circ\cdots\circ\psi_{t_1} (x_0)
\]
where we use the notation $t_{1:k}=(t_1,\dots,t_k)$. Hence, conditionally on $S_{1:k}=t_{1:k}$
and $X_0=x$, we have $(\vr,\bfw_{k})=\psi^k_{t_{1:k}}(x)$.
Note that for $1<\ell<k$:
\[
	\psi^k_{t_{1:k}}(x) = \psi^{k-\ell}_{t_{\ell+1:k}}(\psi^\ell_{t_{1:\ell}}(x)).
\]
Let $p_{k}(t_{1:k},x)$ be the conditional density of $S_{1:k}$ given $X_0=x$:
\begin{align*}
	p_{k}(t_{1:k},x)
	&\defi
	\P(S_{1:k} = t_{1:k}|X_0=x).
\end{align*}
Note that for $k>1$
\begin{align*}
	p_k(t_{1:k},x)
	=
	p\bigl(t_k,\psi^{k-1}_{t_{1:k-1}}(x)\bigr)\,p_{k-1}(t_{1:k-1},x)
\end{align*}
which allows to write concisely
\begin{equation*}
	\bfG^kh(x,s) = \int_{(\R_+)^k}p_k(t_{1:k},x)h\left(\psi^k_{t_{1:k}}(x),t_k\right)dt_{1:k}.
\end{equation*}

\begin{proposition}
	\label{prop-doebolin2}
	Grant assumptions~\ref{hyp.F},~\ref{as:ratepos-increasing} and assume that $\lambda$ is continuous. Let us consider a finite interval $[\underline I, \bar I]$ and a family of partitions $\CPf{\underline{I}}{\bar I}$ from lemma~\ref{lemm_partition} with $\CP^I=(x_{sep}^I,w_{23})$. Finally, assume that 
	\begin{equation}
		\int^\infty_{w_{23}}\frac{\lambda(\Vn^-(w,\bar I))}{w-\Vn^-(w,\bar I)}dw<\infty.
	\end{equation}
	There is $w_{23}$ high enough such that the following holds.
	There exist a probability distribution $\nu_D^I$ (only depending on $I$) on $\R\times\R_+$,  $k_D\in\mathbb N^*$ and $\beta_D\in(0,1)$ such that for all $h\in \mathrm L ^\infty(\Ew\times \R_+;\R_{+})$, $s_0\geq 0,\ I\in [\underline I, \bar I]$
	\begin{align}
		\label{eq.prop-doebolin2.1}
		\forall w_{0}\in[w^*, 2w_{23}],\  \bfG_I^{k_D} h(w_{0},s_0) \geq \beta_D \, \nu_D^I(h)
	\end{align}
	and
	\begin{align}
		\label{eq.prop-doebolin2.2}
		\forall w_{0}\geq 2\, w_{23},\ \bfG_I^2 h(w_{0},s_0) \geq \beta_D \,\nu_D^I(h).
	\end{align}
Furthermore, $(w_{23}+2\wb,2w_{23}+2\wb)$ is in the support of the first marginal $\tilde \nu^I_D$ of $\nu^I_D$ such that $\tilde\nu^I_D(A)\defi\nu^I_D(A\times\R_+)$.
\end{proposition}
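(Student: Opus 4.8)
The plan is to build a single minorising measure $\nu_D^I$ on $\R\times\R_+$, realise it as a uniform density lower bound for the two–step kernel $\bfG^2_I$ started anywhere in $\{w_0\geq 2w_{23}\}$, and then reach that same $\nu_D^I$ from heights $w_0\in[w^*,2w_{23}]$ by first performing enough preliminary jumps. First I would fix the family $\CPf{\underline I}{\bar I}$ of Lemma~\ref{lemm_partition} (common $w_{23},v_{34},\Ew=[w^*,\infty)$), choosing $w_{23}$ large enough to exceed every equilibrium of \eqref{eq:micro-unique} (with vanishing current, $I\in[\underline I,\bar I]$) together with $\max(\vr,\vmin,F(\vr)+\bar I)$, and choosing the separatrices $x^I_{sep}$ low enough that $w^*<\vr$. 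Then $(\vr,w_0)\in\CP^I_3$ for all $w_0\geq w_{23}$ and all such $I$, and by Theorem~\ref{th:jtb1} the flow from $(\vr,w_0)$ stays in $\CP^I_3$ with $w$ strictly decreasing; it cannot leave through $\CP^I_4$ (its first coordinate stays $\leq v_{34}$, see next paragraph) nor stay trapped (as $w_{23}$ is above all equilibria), so its $w$-coordinate decreases continuously down to $w_{23}$ in finite time.

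The crux is a \emph{uniform survival estimate}: there is $C_\Lambda<\infty$ with $\Lambda^I((\vr,w_0),t)\leq C_\Lambda$ for every $I\in[\underline I,\bar I]$, every $w_0\geq w_{23}$ and every $t$ such that the $w$-coordinate of $\Phi^t((\vr,w_0),I)$ is still $\geq w_{23}$. Writing $(v(t),w(t))=\Phi^t((\vr,w_0),I)$, I would parametrise the trajectory by its (strictly decreasing) second coordinate, so that $\Lambda^I((\vr,w_0),t)=\int_{w(t)}^{w_0}\frac{\lambda(V_{(\vr,w_0)}(p))}{p-V_{(\vr,w_0)}(p)}\,dp$ with $V_{(\vr,w_0)}$ the solution of \eqref{eq:Vx} for the current $I$, and compare $V_{(\vr,w_0)}$ with the left nullcline branch $\Vn^-(\cdot,I)$. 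The function $g(t)=v(t)-\Vn^-(w(t),I)$ is $>0$ at $t=0$, strictly decreasing while positive (there $\dot v<0$ and $-(\Vn^-)'(w,I)(v-w)\leq 0$), crosses $0$ transversally downward at some contact height $w_{\mathrm{ct}}$ --- at $g=0$ one has $\dot g=-(\Vn^-)'(w,I)\,(\Vn^-(w,I)-w)<0$ because $\Vn^-(w,I)<\vmin<w_{23}\leq w$ --- and stays $<0$ afterwards. Hence $V_{(\vr,w_0)}(p)<\Vn^-(p,I)$ on $[w_{23},w_{\mathrm{ct}}]$ and $\leq\vr$ on $[w_{\mathrm{ct}},w_0]$; since $\lambda$ is non-decreasing and $I\mapsto\Vn^-(p,I)$ is non-decreasing, the part of the integral over $[w_{23},w_{\mathrm{ct}}]$ is $\leq\int_{w_{23}}^\infty\frac{\lambda(\Vn^-(p,\bar I))}{p-\Vn^-(p,\bar I)}\,dp<\infty$ --- precisely the hypothesis --- and the part over $[w_{\mathrm{ct}},w_0]$ is $\leq\lambda(\vr)\log\frac{w_0-\vr}{w_{\mathrm{ct}}-\vr}$, which is bounded uniformly in $w_0$ because Assumption~\ref{hyp.F}-$(ii)$ (explosive $F$) stops the initial fall of $v$ at $|v|\sim w_0^{1/(2+\epsilon_F)}$, so $w_0-w_{\mathrm{ct}}=o(w_0)$. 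This is the step I expect to require the real work.

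Granting this, I establish \eqref{eq.prop-doebolin2.2}. Fix $w_0\geq 2w_{23}$. The $w$-coordinate of $\Phi^{S_1}((\vr,w_0),I)$ sweeps the fixed interval $(w_{23},2w_{23})$ for $S_1$ in a nonempty open interval $(\sigma_1,\sigma_2)$, entirely inside $\CP^I_3$, so $\bfw_1=w_{w_0}(S_1)+\wb$ ranges over $(w_{23}+\wb,2w_{23}+\wb)$ and on $(\sigma_1,\sigma_2)$ one has $p^I(S_1,(\vr,w_0))\geq\lambda_-e^{-C_\Lambda}$, where $\lambda_->0$ bounds $\lambda$ below on the compact $I$-uniform sets swept in this argument. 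From $(\vr,\bfw_1)\in\CP^I_3$ one more jump yields $\bfw_2=w_{\bfw_1}(S_2)+\wb$; by the variational equation, for a small fixed $\sigma_3>0$ the derivative $\partial_{\bfw_1}\big(w_{\bfw_1}(S_2)\big)\geq\tfrac12$ uniformly over $\bfw_1\in[w_{23}+\wb,2w_{23}+\wb]$, $S_2\in[0,\sigma_3]$, while $|v_{w_0}(S_1)-w_{w_0}(S_1)|$ is bounded above and below on $(\sigma_1,\sigma_2)$; hence at fixed $S_2$ the map $S_1\mapsto\bfw_2$ is a $C^1$ diffeomorphism with $|\partial_{S_1}\bfw_2|$ bounded away from $0$ and $\infty$ uniformly in $w_0,I$. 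Restricting $\bfG^2_Ih(w_0,s_0)=\iint p^I(S_1,(\vr,w_0))\,p^I(S_2,(\vr,\bfw_1))\,h(\bfw_2,S_2)\,dS_1\,dS_2$ to $S_1\in(\sigma_1,\sigma_2),\ S_2\in(0,\sigma_3)$ and changing variables $(S_1,S_2)\mapsto(\bfw_2,S_2)$ shows $\bfG^2_I(w_0,s_0;\cdot)\geq c\,\mathrm{Leb}|_E$ for some $c>0$ and a fixed set $E$ --- independent of $w_0$ --- whose $S_2$-slices are the intervals $\{w_{\bfw_1}(S_2)+\wb:\bfw_1\in(w_{23}+\wb,2w_{23}+\wb)\}$. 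Taking $\nu_D^I\defi\mathrm{Leb}(E)^{-1}\mathrm{Leb}|_E$ and $\beta_D\leq c\,\mathrm{Leb}(E)$ gives \eqref{eq.prop-doebolin2.2}; the union over $S_2\in(0,\sigma_3)$ of the slices above is an interval $(a,2w_{23}+2\wb)$ with $a<w_{23}+2\wb$, so $supp(\tilde\nu^I_D)\supset(w_{23}+2\wb,2w_{23}+2\wb)$, which is the final assertion.

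Finally, for $w_0\in[w^*,2w_{23}]$ let $k_D\defi m+2$. For $\delta>0$ small, as long as the chain stays in a fixed $I$-uniform compact $K\subset\CP^I$ (which holds for the first $m$ jumps provided each inter-jump time is $<\delta$: then $w$ only grows, step by step, up to $\leq 2w_{23}+m\wb$), one has $\mathbb P(S_j<\delta\mid\bfw_{j-1})\geq 1-e^{-\lambda_-^K\delta}\geq c''\delta$ and, on $\{S_j<\delta\}$, $\bfw_j=w_{\bfw_{j-1}}(S_j)+\wb\geq\bfw_{j-1}+\wb-\delta\sup_K|v-w|\geq\bfw_{j-1}+\tfrac\wb2$; so with $m\defi\lceil 2(2w_{23}-w^*)/\wb\rceil$, the event $\{S_1<\delta,\dots,S_m<\delta\}$ has probability $\geq(c''\delta)^m=:\gamma>0$ (uniformly in $w_0\in[w^*,2w_{23}]$ and $I$) and forces $\bfw_m\geq w_0+m\wb/2\geq 2w_{23}$. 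Therefore, using \eqref{eq.prop-doebolin2.2},
\[
\bfG^{k_D}_Ih(w_0,s_0)=\bfG^m_I(\bfG^2_Ih)(w_0,s_0)\geq\int_{[2w_{23},\infty)\times\R_+}\bfG^2_Ih\;d\bfG^m_I(w_0,s_0;\cdot)\geq\beta_D\,\nu^I_D(h)\,\mathbb P_{w_0,s_0}(\bfw_m\geq 2w_{23})\geq\gamma\,\beta_D\,\nu^I_D(h),
\]
which is \eqref{eq.prop-doebolin2.1} after replacing $\beta_D$ by $\gamma\beta_D$.
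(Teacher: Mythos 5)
Your overall architecture is the same as the paper's: a two-step small set $\{w_0\ge 2w_{23}\}$ with a common minorising measure obtained after one time-change, followed by a ``climbing'' argument with finitely many short jumps to reach $\{w\ge 2w_{23}\}$ from $[w^*,2w_{23}]$. The two secondary steps are sound (your Lebesgue-lower-bound construction of $\nu_D^I$ via a two-jump change of variables is a legitimate, slightly heavier alternative to the paper's push-forward measure $\nu_D^I$ of \eqref{eq:nuD}; your climbing paragraph is the same idea the paper implements through Hadamard's lemma and a compactness/continuity argument for $p_k^I$).

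The genuine gap is exactly where you flag ``the real work,'' the uniform survival estimate. Your split into $[w_{23},w_{\mathrm{ct}}]$ (below the nullcline) and $[w_{\mathrm{ct}},w_0]$ (above) mirrors the paper's $\tau_v^I\le\tau_2^I$ case, and your bound on the first piece by $\int_{w_{23}}^\infty\frac{\lambda(\Vn^-(p,\bar I))}{p-\Vn^-(p,\bar I)}\,dp$ is correct and is precisely how the hypothesis is used. But your bound on the piece above the nullcline is not justified. You claim $w_0-w_{\mathrm{ct}}=o(w_0)$ and attribute it to Assumption~\ref{hyp.F}-\fenumii; this is the wrong assumption: \fenumii\ controls $F$ on $\{v\to+\infty\}$, whereas during this phase the trajectory falls toward $v\to-\infty$ and the relevant input is \fenumi\ (the left-branch behavior of $F$, via the separatrix structure). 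Moreover, the claim $w_{\mathrm{ct}}/w_0\to 1$ is stronger than what is needed and appears to be false in general: e.g.\ when $F$ is affine near $-\infty$ (AdEx), a scaling argument on the linearised flow gives $w_{\mathrm{ct}}/w_0\to c<1$, so $w_0-w_{\mathrm{ct}}\sim (1-c)w_0$, not $o(w_0)$. What is actually needed, and true, is the weaker statement that $w_0/w_{\mathrm{ct}}$ is uniformly bounded so that $\log\frac{w_0-\vr}{w_{\mathrm{ct}}-\vr}$ stays bounded; this is exactly the content of the paper's Proposition~\ref{prop.tnbounded} (and Lemma~\ref{lemma:whinf}), which rests on a Gronwall comparison solved with the Lambert function. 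Your proposal asserts, rather than proves, the one inequality that makes the whole Doeblin estimate work; without supplying an argument of the type of Proposition~\ref{prop.tnbounded}, the proof is incomplete. (Also minor: you write $(\vr,w_0)\in\CP^I_3$ for $w_0\ge w_{23}$; with the paper's labels this is $\CP_2^I$, not $\CP_3^I$.)
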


\begin{proof}
	We recall that $w_{23}, v_{34}$ and $w^*$ are the same for all $\CP^I = (x^I_{sep}, w_{23})$.
	We first establish \eqref{eq.prop-doebolin2.2}.
	For all $w_{0}\geq 2\, w_{23}$, we consider the solution $(v^I_{w_{0}}(t),$ $w^I_{w_0}(t))$ from $(\vr,w_{0})$ and let $\tau^I_1(w_{0})$ (resp. $\tau^I_2(w_{0})$) be the first time $w_{w_{0}}$ reaches $2\,w_{23}$ (resp. $w_{23}$). We have:
	\begin{align*}
		\bfG_I^2 h(w_0,s_0) &= \int_0^\infty\int_0^\infty dt_1dt_2\ p^I(t_1,w_0)p^I(t_2,\Delta\Phi^{t_1}(w_0,I))h(\psi^2_{t_{1:2}}(w_0,I),t_2)\\
		&\geq
		\int_0^\infty\int_0^\infty dt_1dt_2\ p^I(t_1,w_0)p^I(t_2,\Delta\Phi^{t_1}(w_0,I))h(\psi^2_{t_{1:2}}(w_0,I),t_2)\indic_{[w_{23},2w_{23}]}(w^I_{w_0}(t_1)).
	\end{align*}
	We denote by $\vmin^I$ the $v$-component of the separatrix $x^I_{sep}$ when it crosses $\{w=2w_{23}\}$, $\vmin^I$ exists thanks to proposition~\ref{prop:separatrix}. We then define $\vmin = \min\limits_{I\in[\underline I, \bar I]}\vmin^I$.

	\begin{lemma}
		There is a constant $C_\Lambda>0$ such that for all $w_{0}\geq 2\, w_{23}, I\in  [\underline I, \bar I]$ and for all $t\in [\tau^I_1(w_0),\tau^I_2(w_0)]$,
		\[p^I(t,w_0)\geq K\defi \lambda(\vmin)\,\exp\left(-C_\Lambda\right)>0.\]
	\end{lemma}
	\begin{proof}
		Since $\lambda$ is non decreasing by assumption~\ref{as:ratepos-increasing}, $ \forall t\in [\tau^I_1(w_0),\tau^I_2(w_0)]$, $\lambda(v^I_{w_0}(t))\geq \lambda(\vmin)$ because the solution stays above the separatrix. Hence for $t\in \left[\tau^I_1,\tau^I_2\right]$:
		\[
			p^I(t,w_0) \geq  \lambda(\vmin)\exp\left(-\int_0^{\tau^I_{2}(w_0)} \lambda(v^I_{w_0}(u))du\right).
		\]
		We first prove that 
		\begin{equation}\label{eq:tau2bounded}
			\sup\limits_{I\in [\underline I, \bar I],\ w_0\geq 2w_{23}}\int_0^{\tau^I_{2}(w_0)} \lambda(v^I_{w_0})<\infty.
		\end{equation}
		Let $\tau^I_v(w_0)<\infty$ be the hitting time of the $v$-nullcline from $(\vr, w_0)$ with $w_0\geq w_{23}$. It is finite by lemma~\ref{lemma:whinf} if $w_{23}$ is high enough which we now assume.
		\begin{itemize}
			\item \textbf{		Case $\tau_v^I> \tau_2^I$.}
			Using the backward flow from $(v_{23}, w_{23})$, one can show that there is $\tilde w^I$, continuous function of $I$, such that $\tau_v^I> \tau_2^I$ if and only if $w_{23}\leq w_0\leq \tilde w^I$. $\tau_2^I(w_0)$ is continuous in $I
			$ and $w_0$, hence 
			\[
			\sup \limits_{I\in[\underline I,\bar I],\ w_0\in[2w_{23}, \tilde w^I],\ 2w_{23}\leq \tilde w^I}\tau_2^I<\infty.
			\] 
			This implies \eqref{eq:tau2bounded} because the integrand is bounded by $\lambda(\vr)$.
			\item \textbf{		Case $\tau_v^I\leq \tau_2^I$.}
		We have
		\[
		\int_{0}^{\tau^I_{2}(w_0)} \lambda(v^I_{w_0}) = \int_{0}^{\tau^I_v(w_0)} \lambda(v^I_{w_0}) + \int_{\tau^I_v(w_0)}^{\tau^I_{2}(w_0)} \lambda(v^I_{w_0}).
		\]
		From proposition~\ref{prop.tnbounded}, we find 
		\[
			\int_0^{\tau^I_v(w_0)} \lambda(v^I_{w_0})\leq \lambda(\vr)\sup\limits_{I\in [\underline I, \bar I],\ w_0\geq 2w_{23}}\tau^I_v(w_0)<\infty.
		\]
		The solution being away from the $w$-nullcline, we can write:
		\[
		\big(v^I_{w_0}(t),w^I_{w_0}(t)\bigr)
		=
		\big(V^I_{w_0}(w^I_{w_0}(t)),w^I_{w_0}(t)\bigr)
		\]
		where $V^I_{w_0}$ is defined in \eqref{eq:Vx}.
		In $\int_{\tau^I_v(w_0)}^{\tau^I_{2}(w_0)} \lambda(v^I_{w_0})$, the change of variable $u=w^I_{w_0}(t)$:
		\[
		\rmd u
		=
		\big(v^I_{w_0}(t)-w^I_{w_0}(t)\bigr)\, \rmd t
		=
		\big(V^I_{w_0}(w^I_{w_0}(t))-w^I_{w_0}(t)\bigr)\, \rmd t
		\]
		gives:
		\[
		\int_{\tau^I_v(w_0)}^{\tau^I_2(w_0)} \lambda(v^I_{w_0})
		=
		\int_{2w_{23}}^{w^I_{w_0}(\tau^I_v(w_0))} \frac{\lambda(V^I_{w_0}(u))}{u-V^I_{w_0}(u)}\,\rmd u.
		\]
		Also, for $t\in \left[\tau^I_v(w_0),\tau^I_2(w_0)\right]$, the solution lies below the $v$-nullcline, hence
		\[
		\int_{\tau^I_v(w_0)}^{\tau^I_2(w_0)} \lambda(v_{w_0})\leq  \int_{2w_{23}}^{\infty} \frac{\lambda(\Vn^-(u,I))}{u-\Vn^-(u,I)}\,\rmd u\leq  \int_{w_{23}}^{\infty} \frac{\lambda(\Vn^-(u,\bar I))}{u-\Vn^-(u,\bar I)}\,\rmd u<\infty
		\]
		where the before last inequality stems from $\lambda$ non decreasing and $I\to \Vn^-(u,I)$ increasing.
		This proves \eqref{eq:tau2bounded}.
		\end{itemize}				
		We thus have proved that \[\forall w_{0}\geq 2\, w_{23}, I\in  [\underline I, \bar I], t\in [\tau^I_2(w_0),\tau^I_1(w_0)],\quad p^I(t,w_0)\geq \lambda(\vmin)\,
		\exp\left(
		-C_\Lambda
		\right)>0\]
		for some $C_\Lambda>0$.
	\end{proof}
	We can now lower bound $\bfG_I^2h$ using the change of variable $u=w^I_{w_0}(t_1)$:
	\begin{align*}
		\bfG_I^2 h(w_0,s_0) &\geq K \int_{\tau^I_1(w_0)}^{\tau^I_2(w_0)} dt_1\int_0^\infty dt_2\ p^I(t_2,\Delta\Phi^{t_1}(w_0,I))h(\psi^2_{t_{1:2}}(w_0,I),t_2)\\
		&=K
		\int_0^\infty dt_2\int_{w_{23}}^{2w_{23}} h(\Delta\Phi^{t_2}(\Delta(\vr,u),I),t_2)\frac{p^I(t_2,\Delta(\vr,u))}{u-V^I_{w_0}(u)}du\\
		&\geq K
		\int_0^\infty dt_2\int_{w_{23}}^{2w_{23}} h(\Delta\Phi^{t_2}(\Delta(\vr,u),I),t_2)\frac{p^I(t_2,\Delta(\vr,u))}{d}du
	\end{align*}
	where $d$ is the distance of $(v_{min},2w_{23})$ to the $w$-nullcline. 
	If we introduce the probability distribution 
	
	\begin{equation}\label{eq:nuD}
	\nu_D^I(h) \defi \int_{w_{23}}^{2w_{23}}\frac{du}{w_{23}}\int_0^\infty p^I(t_2,(\vr,u+\wb))h(\Delta\Phi^{t_2}((\vr,u+\wb),I),t_2)dt_2,
	\end{equation}
	we then have proved \eqref{eq.prop-doebolin2.2}. From the flow property, one obtains that $(w_{23}+2\wb,2w_{23}+2\wb)\subset\text{supp}(\tilde\nu_D^I)$.

	We now prove \eqref{eq.prop-doebolin2.1}. The space $\Ew\times \R_+$ is invariant by the Markov Chain $(\mathbf w_n^I,S_n^I)_n$. Let $k$ be the smallest integer such that $w^*+ k\,\wb>2\, w_{23}$.
	We look at $\bfG_I^{k+2}h(w_0,s_0)$ and try first to lower bound the terms involving $p^I(t,w)$. For $\epsilon>0$ small enough such that the $t_i$ are smaller than the deterministic explosion times:
	\[
		\forall w_0\in\left[w^*,2w_{23}\right],\forall s_0\geq 0,\quad \bfG_I^{k +2}h(w_0,s_0) \geq
		\int_{[0,\epsilon]^{k}}\rmd t_{1:k} \; \; p_k^I(t_{1:k},w_0)\,
		\bfG_I^2 h\bigl(\psi^{k}_{t_{1:k}}(w_0,I),t_k).
	\]
	
	\begin{lemma}\label{lemma:hadamard}
		There is $\epsilon>0$ such that for all $(t_i)_{i=1,\cdots,k}\in[0,\epsilon]^k$, $w_0\in[w^*,2w_{23}]$ and $I\in[\underline{I},\bar I]$, we have \[\langle\psi^{k}_{t_{1:k}}(w_0,I),\begin{bmatrix} 0 \\ 1 \end{bmatrix}\rangle>2w_{23}.\]
	\end{lemma}
\begin{proof}
	The existence of $\epsilon$ such that $\psi^{k}_{t_{1:k}}(w_0,I)$ is well defined is a consequence of the Cauchy-Lipschitz theorem around $(\vr,w_0+l\wb)$ for $w_0\in[w^*,2w_{23}]$ and $l=1,\cdots,k$.
	
	Since $\psi^{k}_{t_{1:k}}$ is $C^1$ with respect to the $t_i$s, $w_0$ and $I$ using Hadamard's lemma , we can write for $((t_i)_i,w_0,I)\in [0,\epsilon]^k\times[w^*,2w_{23}]\times[\underline{I},\bar I]$
	\[
		\langle\psi^{k}_{t_{1:k}}(w_0,I),\begin{bmatrix} 0 \\ 1 \end{bmatrix}\rangle = \sum\limits_{i=1}^kt_ih_i(t_{1:k},w_0,I)+w_0+k\wb.
	\]
	for some continuous functions $h_i$.
	It follows that
	\[
		\langle\psi^{k}_{t_{1:k}}(w_0,I),\begin{bmatrix} 0 \\ 1 \end{bmatrix}\rangle\geq w^*+k\wb - \epsilon\sum\limits_{i=1}^k\norm{h_i}_\infty >2w_{23}
	\]
	where the last inequality is for $\epsilon$ small enough.
\end{proof}
From the continuity of $\lambda$, $(t_{1:k},w_0,I)\to p_k^I(t_{1:k},w_0)$ reaches its minimum $K_1>0$ on $[0,\epsilon]^{k}\times [w^*,2w_{23}]\times[\underline{I},\bar I]$.
Hence, we find that:
	\[
	\forall I\in[\underline{I},\bar I], \quad\bfG_I^{k +2}h(w_0,s_0) \geq K_1 \int_{[0,\epsilon]^{k}} \rmd t_{1:k}\ \bfG_I^2 h\bigl(\psi^{k}_{t_{1:k}}(w_0,I),t_k).
	\]
	Using the lemma~\ref{lemma:hadamard}, we can appeal to the previous case and find:
	\[
		\forall w_0\in\left[w^*,2w_{23}\right],\ \bfG_I^{ k +2}h(w_0,s_0) \geq K_1\epsilon^k\beta_D\nu_D^I(h)
	\]
	which concludes the proof.
\end{proof}

We show that the Markov chain $(\bfw_n)_n$ is positive Harris recurrent.

\begin{theorem}
	\label{thm:existence-uniq-inv2}
	Grant the assumptions of proposition~\ref{prop-doebolin2}.
	Then $\bfG^{2k_D}_I$ satisfies a global Doeblin condition on $\Ew\times\R_+$.
	We write $\mu^{\mathbf w,S}_I$ its unique invariant distribution. 
	There are $\alpha \in(0,1)$ and $C>0$ independent of $I$ such that if $(w_0,S_0)$ has law $\nu_0$ with $supp(\nu_0)\subset \Ew\times\R_+$,
	\[
		\forall I\in[\underline I, \bar I],\quad\norm{\bfG^{*n}_I\nu_0-\mu^{\mathbf w,S}_I}_{TV}\leq C\alpha^n\norm{\nu_0-\mu^{\mathbf w,S}_I}_{TV}.
	\]
\end{theorem}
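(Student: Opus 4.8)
The plan is to derive the global Doeblin condition for $\bfG_I^{2k_D}$ from the two-piece estimate of proposition~\ref{prop-doebolin2}, and then invoke the standard Harris/Doeblin contraction theorem (as in \cite{canizo_harris-type_2023}) to obtain the geometric convergence in total variation. The key point is that proposition~\ref{prop-doebolin2} already gives a minorization on \emph{two} different regions of $\Ew\times\R_+$ — on $[w^*,2w_{23}]\times\R_+$ after $k_D$ steps and on $[2w_{23},\infty)\times\R_+$ after $2$ steps — with the \emph{same} lower-bounding measure $\nu_D^I$ (up to the constant $\beta_D$). To merge these into a single statement valid after a fixed number of steps, I would iterate: starting from any $(w_0,s_0)\in\Ew\times\R_+$, if $w_0\geq 2w_{23}$ then after $2$ steps $\bfG_I^2 h(w_0,s_0)\geq\beta_D\nu_D^I(h)$; composing with $\bfG_I^{2k_D-2}$ (a positivity-preserving sub-Markov operator applied to the nonnegative function $h$, but here one should rather push $\nu_D^I$ \emph{forward}) and using that $\text{supp}(\tilde\nu_D^I)$ meets $[w^*,2w_{23}]$ — more precisely $(w_{23}+2\wb,2w_{23}+2\wb)\subset\text{supp}(\tilde\nu_D^I)$ — one can chain into the first estimate. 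Concretely, I would show $\bfG_I^{2k_D}h(w_0,s_0)\geq \beta_D'\,\tilde\nu_D^I(h)$ for \emph{all} $(w_0,s_0)\in\Ew\times\R_+$, with $\beta_D'\in(0,1)$ and a probability measure $\tilde\nu_D^I$ both independent of the starting point, by splitting on whether $w_0\leq 2w_{23}$ or not and using the semigroup property $\bfG_I^{2k_D}=\bfG_I^{k_D}\bfG_I^{k_D}$ together with the fact that after one application of either estimate the chain is distributed according to a measure charging $[w^*,2w_{23}]$.

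Once the global Doeblin minorization $\bfG_I^{2k_D}h\geq \beta_D'\,\eta_I(h)$ holds uniformly (with $\beta_D'$ and the contraction rate \emph{independent of} $I\in[\underline I,\bar I]$, which is exactly what the locally-uniform phrasing of proposition~\ref{prop-doebolin2} provides), the conclusion is the classical consequence: for any two probability measures $\nu_0,\nu_1$ supported on $\Ew\times\R_+$,
\[
\norm{\bfG_I^{*2k_D}\nu_0-\bfG_I^{*2k_D}\nu_1}_{TV}\leq (1-\beta_D')\norm{\nu_0-\nu_1}_{TV}.
\]
Applying this with $\nu_1=\mu^{\mathbf w,S}_I$ (invariant), iterating over blocks of $2k_D$ steps, and absorbing the at-most-$2k_D$ intermediate steps into the constant $C$, yields
\[
\norm{\bfG_I^{*n}\nu_0-\mu^{\mathbf w,S}_I}_{TV}\leq C\alpha^n\norm{\nu_0-\mu^{\mathbf w,S}_I}_{TV}
\]
with $\alpha=(1-\beta_D')^{1/(2k_D)}\in(0,1)$ and $C=(1-\beta_D')^{-1}$, both independent of $I$. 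Existence and uniqueness of $\mu^{\mathbf w,S}_I$ then follows from completeness of the space of probability measures under $\norm{\cdot}_{TV}$ and the Banach fixed point theorem applied to $\bfG_I^{*2k_D}$ (a contraction on that complete metric space), or equivalently from the general Harris theorem.

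I expect the main obstacle to be the bookkeeping in the first paragraph: turning the \emph{two-region} minorization (different numbers of steps on the two regions) into a \emph{single} uniform minorization after a fixed number $2k_D$ of steps. The delicate point is that the measure $\nu_D^I$ produced by either estimate must itself charge the ``good'' region $[w^*,2w_{23}]$ so that a second application of an estimate is licit; this is why proposition~\ref{prop-doebolin2} carefully records that $(w_{23}+2\wb,2w_{23}+2\wb)\subset\text{supp}(\tilde\nu_D^I)$, and I would lean on this together with the continuity in $I$ to keep the constant $\beta_D'$ bounded away from $0$ uniformly on $[\underline I,\bar I]$. The remaining steps (the contraction estimate and its iteration) are entirely standard and I would simply cite \cite{canizo_harris-type_2023}.
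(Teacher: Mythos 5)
Your approach is essentially the paper's: merge the two-region minorization from Proposition~\ref{prop-doebolin2} into a single global minorization after $2k_D$ steps using the support property of $\nu_D^I$, then invoke the standard Doeblin contraction on $\norm{\cdot}_{TV}$. One small but useful difference is the chaining region: the paper always chains through the \emph{large} set $\tilde\CP_2:=[2w_{23},\infty)\times\R_+$ (iterating $G_I^2$ and restricting to $\tilde\CP_2$ after each pair of steps, giving $G_I^{2k_D}\geq \beta_D^{k_D}\,\nu_D^I(\tilde\CP_2)^{\,k_D-1}\,\nu_D^I$ from the big region, and $G_I^{k_D}\geq\beta_D\nu_D^I$ followed by $\tilde\CP_2$-restricted iterations from the small one), whereas you propose to chain through the small region $[w^*,2w_{23}]\times\R_+$. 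The paper's choice is better for two reasons: $\nu_D^I$ automatically charges $\tilde\CP_2$ because the support interval $(w_{23}+2\wb,\,2w_{23}+2\wb)$ always contains a piece above $2w_{23}$, while charging $[w^*,2w_{23}]$ requires in addition that $2\wb<w_{23}$ (achievable by enlarging $w_{23}$, but an extra arrangement); and it largely sidesteps the parity/step-count bookkeeping you flag as an obstacle, since the small-region $k_D$-step estimate is never invoked twice in a row. The uniformity in $I$ of the minorant constant is then handled exactly as you indicate: one shows $I\mapsto\nu_D^I(\tilde\CP_2)$ is continuous and positive on the compact interval $[\underline I,\bar I]$ (via dominated convergence and lemma~\ref{lemma:Utau}), hence bounded below by a positive constant independent of $I$. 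The passage from the global Doeblin condition to the geometric TV contraction, with $\alpha$ and $C$ independent of $I$, is then the classical fixed-point/Harris argument you describe (\cite{canizo_harris-type_2023}).
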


\begin{proof}
	Let us show that $\bfG^{2k_{D}}_I$ satisfies a global Doeblin condition. We write $G_I$ the transition measure of the Markov chain $(\bfw_n,S_n)_n$. We have by proposition~\ref{prop-doebolin2} for $B\in\mathcal B(\Ew\times \R_+)$, $$ \forall w_{0}\geq 2\, w_{23},s_0\geq 0,\ G^2((w_0,s_0),B)\geq \beta_D \nu_D(B).$$ We define $\tilde \CP_2 \defi [2w_{23},\infty)\times\mathbb R_+$. We then find
	for all $w_{0}\geq 2\, w_{23},\ s_0\geq 0$
	\begin{equation*}
		G_I^4((w_0,s_0),B) \geq \int_{\tilde \CP_2} G_I^2((w_0,s_0),dw,ds)G_I^2((w,s),B)\geq \beta_D \nu^I_D(B) G_I^2((w_0,s_0),\tilde \CP_2)\geq \beta_D^2\nu^I_D(\tilde\CP_2)\nu^I_D(B).
	\end{equation*}
	It follows that
	\begin{equation*}
		G_I^{2k_{D}}((w_0,s_0),B) \geq \beta^{1+k_{D}}_D\nu^I_D(\tilde\CP_2)^{k_{D}}\nu^I_D(B).
	\end{equation*}
	We can lower bound 
	\[I\to\nu^I_D(\tilde\CP_2) = \int_{w_{23}}^{2w_{23}}\frac{du}{w_{23}}\mathbf U_Ih((\vr,u+\wb)),\quad h(x_2) = \mathbf 1_{[2w_{23},\infty)}(x_2)\]
	 by continuity using Lebesgue's dominated convergence theorem and lemma~\ref{lemma:Utau}.  This lower bound is positive by proposition~\ref{prop-doebolin2}.
	Hence, combining with the case $w_{0}\leq 2\, w_{23},\ s_0\geq0$, we find that there is a constant $\beta_1>0$ independent of $I$ such that
	\[\forall w_0\geq w^*,\ s_0\geq0,\  G_I^{2k_{D}}((w_0,s_0),B) \geq \beta_1\nu_D^I(B). \]
	Thus, $G_I^{2k_{D}}$ satisfies a global Doeblin condition from which the theorem follows.
\end{proof}

\noindent It implies that $(\bfw_n)_n$ is geometric ergodic as well, we write $\muW_I$ its unique invariant distribution.
In order to estimate the tail properties of $\muW_I$, we rely on a, locally uniform in $I$, Lyapunov function.


\begin{lemma}\label{lem:er}
	Grant the assumptions of proposition~\ref{prop-doebolin2}. Then there are $r>0, K_L\geq0$ and $\gamma_L\in(0,1)$ such that $V\defi e^{r\cdot}$ satisfies
	\begin{equation}\label{eq:lyap}
		\forall w_0\in \Ew, \ \forall I\in[\underline I, \bar I],\quad \mU_I V(w_0)\leq \gamma_L V(w_0) + K_L.
	\end{equation}
	As a consequence $\muW_I(e^{r\cdot})<\infty$ whence $\muW_I(w,\infty)=O(e^{-rw})$.
\end{lemma}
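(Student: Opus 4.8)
The plan is to prove \eqref{eq:lyap} by a direct computation on the operator $\mU_I$, isolating the régime of large $w_0$ (where the flow on the reset line pulls $w$ down strongly and $\lambda$ stays bounded) from that of bounded $w_0$ (where everything is controlled by Corollary~\ref{coro:wbound}), and then to deduce the tail statement by the standard iteration/limit argument. I would start from the observation that, with $V(w)=e^{rw}$, the reset value following $(\vr,w_0)$ is $\bfw_1=w^I_{w_0}(S_1)+\wb$ with $S_1$ the first jump time (a.s.\ finite by Lemma~\ref{lemma.Lambda}), so that by \eqref{eq.U}
$\mU_I V(w_0)=e^{r\wb}\,\E\!\left[e^{r\,w^I_{w_0}(S_1)}\right]$, the expectation being over $S_1$ with density $p^I(\cdot,(\vr,w_0))$.

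The geometric input I would extract from Section~\ref{section:ode}, \emph{uniformly in} $I\in[\underline I,\bar I]$, is the following: there is a level $w_\star\ge w_{23}$, which may be chosen as large as we wish, such that for every $w_0\ge w_\star$ the trajectory issued from $(\vr,w_0)$ stays in $\CP^I_2$ with $v^I_{w_0}(t)\le\vr$ until the first time $\tau=\tau^I(w_0)<\infty$ at which $w^I_{w_0}$ returns to the level $w_\star$, while for $t\ge\tau$ one has $w^I_{w_0}(t)\le C_1$ for a constant $C_1=C_1(w_\star,\bar I)$ (this last part being Corollary~\ref{coro:wbound}, or its symbolic-dynamics proof restarted from $(v^I_{w_0}(\tau),w_\star)$). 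On $[0,\tau]$, Grönwall applied to $\dot w=v-w\le\vr-w$ then gives the decay $w^I_{w_0}(t)\le\vr+(w_0-\vr)e^{-t}$, and monotonicity of $\lambda$ (Assumption~\ref{as:ratepos-increasing}) gives $\lambda(v^I_{w_0}(t))\le\lambda(\vr)$, whence, by \eqref{eq.Lambda}, $\P(S_1\le u\wedge\tau)\le\lambda(\vr)u$ for all $u\ge0$.

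With this in hand I would fix a small $t_1>0$ and split $\E[e^{r\,w^I_{w_0}(S_1)}]$ over $\{S_1\le t_1\wedge\tau\}$, $\{t_1<S_1\le\tau\}$, $\{S_1>\tau\}$, on which $w^I_{w_0}(S_1)$ is respectively $\le w_0$, $\le\vr+(w_0-\vr)e^{-t_1}$, and $\le C_1$. Using $\P(S_1\le t_1\wedge\tau)\le\lambda(\vr)t_1$ and, for $w_0\ge w_\star$, the bound $e^{r(w_0-\vr)e^{-t_1}}\le e^{|r\vr|}e^{-rw_\star(1-e^{-t_1})}e^{rw_0}$, this yields $\mU_I V(w_0)\le e^{r\wb}\bigl(\lambda(\vr)t_1+\epsilon_1\bigr)e^{rw_0}+e^{r\wb+rC_1}$ with $\epsilon_1\to0$ as $w_\star\to\infty$. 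Choosing in order $t_1$ with $\lambda(\vr)t_1$ small, then $r>0$ small (so $e^{r\wb}$ close to $1$), then $w_\star$ large (so $\epsilon_1$ small, and so the geometry above holds, enlarging $w_{23}$ if needed), makes $\gamma_L:=e^{r\wb}(\lambda(\vr)t_1+\epsilon_1)<1$; for $w^*\le w_0<w_\star$, Corollary~\ref{coro:wbound} bounds $\mU_I V(w_0)$ by the constant $e^{r\wb}e^{r(C+w_\star)}$. Taking $K_L$ to be the larger of the two constants gives \eqref{eq:lyap}, all constants being $I$-uniform since the inputs from Section~\ref{section:ode} are. The consequence is then routine: iterating \eqref{eq:lyap} from $\delta_{w^*}$ bounds $\mU_I^{\,n}V(w^*)$ by $B:=V(w^*)+K_L/(1-\gamma_L)$ for all $n$; since $\mU_I^{*n}\delta_{w^*}\to\muW_I$ in total variation (Theorem~\ref{thm:existence-uniq-inv2}) one gets $\muW_I(V\wedge M)\le B$ for every $M$, hence $\muW_I(e^{r\cdot})\le B$ by monotone convergence, and finally $\muW_I(w,\infty)\le e^{-rw}\muW_I(e^{r\cdot})=O(e^{-rw})$ by Markov's inequality.

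The main obstacle I anticipate is the geometric claim of the second paragraph, in particular making it precise and $I$-uniform that from a very large $w_0$ the trajectory stays below $\{v=\vr\}$ with $w$ decaying exponentially until $w$ returns to a bounded level. The subtlety is that the left boundary of $\CP^I$ (the separatrix) and the left $v$-nullcline branch both run to $v=-\infty$ as $w\to\infty$, so one must rule out that the trajectory re-crosses $\{v=\vr\}$ from the left before $w$ has dropped to an $I$-independent bounded level; this is exactly where strict convexity of $F$ and the fact that $\Vn^-(w,I)<\vr$ for $w$ large are used, leaning on Lemma~\ref{lemma:whinf} and Theorem~\ref{th:jtb1}. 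Everything past that point is bookkeeping with Grönwall and a three-way split of an expectation.
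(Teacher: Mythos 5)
Your strategy is genuinely different from the paper's, and the difference matters because the paper's proof is both simpler and already has its hard estimate in hand. The paper's proof is a two‑term split of $\mU_I V(w_0)$ for $w_0\ge w_{23}$ at the deterministic exit time $\tau_3(w_0,I)$ from $\CP_2^I$: on $[0,\tau_3]$ one uses only that $w$ decreases (so $V(w_{w_0}(t))\le V(w_0)$), on $[\tau_3,t_\infty)$ one uses $w\le w_{max}$ from Corollary~\ref{coro:wbound} (so $V\le V(w_{max})$). The Lyapunov constant is then $\gamma_L=e^{r\wb}\sup_{I,w_0>w_{23}}\P(T_1^{w_0,I}<\tau_3)$, and the whole proof reduces to showing this supremum is strictly below $1$, equivalently $\sup_{I,w_0}\int_0^{\tau_3}\lambda(v^I_{w_0})<\infty$. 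Crucially, that supremum was already established in the proof of Proposition~\ref{prop-doebolin2} (it is the bound \eqref{eq:tau2bounded}, which uses hypothesis \eqref{eq:Vnullhyp}). The paper therefore does not need any exponential decay of $w$, any small‑time bound on the jump probability, or any claim that $v$ stays below $\vr$ for an appreciable duration.

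Your three‑way split $\{S_1\le t_1\wedge\tau\},\{t_1<S_1\le\tau\},\{S_1>\tau\}$ tries to avoid this analytic input by pure geometry, but it leans on a geometric statement that does not hold. You need $v^I_{w_0}(t)\le\vr$ (a) to justify $\P(S_1\le t_1)\le\lambda(\vr)t_1$ via $\lambda$ non‑decreasing, and (b) to run Gr\"onwall $\dot w\le\vr-w$ on $[0,t_1]$. In its strong form you claim it holds until $w$ returns to a bounded level $w_\star$; this is false for large $w_0$: the first‑return map to the reset line sends $(\vr,w_0)$ to $(\vr,w_1)$ with $w_1$ comparable to $A\,w_0$ (this is exactly what Proposition~\ref{prop.tnbounded} establishes for the nullcline crossing, $w_n\le A w_0$, and the subsequent return to $\{v=\vr\}$ does not drag $w$ down to a level uniformly bounded in $w_0$), so the trajectory re‑crosses $\{v=\vr\}$ while $w$ is still of order $w_0$. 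Even the weaker statement you actually use — that $v^I_{w_0}(t)\le\vr$ for all $t\in[0,t_1]$ uniformly in $w_0\ge w_\star$ and $I$ — is not supplied by Lemma~\ref{lemma:whinf} or Theorem~\ref{th:jtb1}, neither of which gives a uniform positive lower bound on the first‑return time $\tau_\CR$ to the reset line, and Proposition~\ref{prop.tnbounded} only bounds $\tau^I_v$ from \emph{above}. Without that lower bound, both steps (a) and (b) in your first two regions are unjustified, and there is no way to keep the constant in your third region ($w\le C_1$ after $\tau$) uniform either, since Corollary~\ref{coro:wbound} starts from the reset line whereas your restart point is interior. The tail‑estimate paragraph at the end of your argument is fine and essentially matches the paper's appeal to \cite{meyn_stability_1992}; the real gap is the geometric control feeding your $\gamma_L$, and the simplest repair is to replace your first two regions by the single estimate $\P(T_1<\tau_3)\le 1-\alpha$, which is exactly what the paper imports from Proposition~\ref{prop-doebolin2}.
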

\begin{proof}
	We first prove \eqref{eq:lyap}.
	We call $\tau_3(w_0,I)$ the time it takes for a solution from $(\vr,w_0)\in\CP_2^I$ to reach the set $\CP_3^I$.
	For $w_0 \geq w_{23}$ and using the monotony of $V$:
	\begin{multline*}
		\mU_I V(w_0)
		=
		e^{r\wb}\int_0^{\tau_3(w_0,I)}V(w_{w_0}^I(t))p^I(dt,w_0)+e^{r\wb}\int_{\tau_3(w_0,I)}^{\te{w_0,I}} V(w^I_{w_0}(t))p^I(dt,w_0)
		\\
		\leq e^{r\wb}V(w_0) \P\left(T_1^{w_0,I}<\tau_3(w_0,I)\right)+e^{r\wb}V(w_{max})
		\leq \gamma_L V(w_0)+K_L
	\end{multline*}
	where
	\[w_{max}\defi\sup\limits_{I\in[\underline I, \bar I],\ x\in \CP_3^I\cap\CP_4^I,\ t<\te{x,I}}w^I_x(t)<\infty,\quad K_L\defi e^{r\wb}V(w_{max})\] 
	and
	\[\gamma_L \defi e^{r\wb}\sup\limits_{I\in[\underline I, \bar I],\ w_0>w_{23}}\P\left(T_1^{w_0,I}<\tau_3(w_0,I)\right).\]
	Note that $w_{max}$ is finite thanks to theorem~\ref{th:jtb1}, see proof of corollary~\ref{coro:wbound}.
	We want to show that we can find $r>0$ such that $\gamma_L\in(0,1)$. This is possible provided that the supremum in $\gamma_L$ is strictly smaller than $1$ or that
	\[
	\sup_{I\in[\underline I, \bar I],\ w_0>w_{23}} \int_0^{\tau_3(w_0,I)}\lambda(v^I_{w_0}(s))\,\rmd s < \infty.
	\]
    This was shown in the proof of proposition~\ref{prop-doebolin2}.  From the bound
	\[
	\forall w_0\in[w^*, w_{23}],\ \forall I\in[\underline I, \bar I],\ \mU_IV\leq e^{r\wb}V(w_{max}),
	\]
	we conclude that \eqref{eq:lyap} is true.
	We then have
	\[
	\forall w_0\in \Ew,\ \forall I\in[\underline I, \bar I],\ \mU_I V(w_0)\leq V(w_0) -(1-\gamma_L)V(w_0) + K_L.
	\]
	The conclusion $\muW_I(V)<\infty$ follows from \cite{meyn_stability_1992}[theorem 14.3.7].
	Finally:
	\[
	\int_w^\infty \muW_I = 	\int_w^\infty \frac{V(x)}{V(x)}\muW_I(dx) \leq e^{-r w}\muW_I(V)
	\]
	which concludes the proof of the lemma.
	
\end{proof}

\begin{lemma}
	\label{lemma:ET1}
	Grant assumptions~\ref{hyp.F}, \ref{as:ratepos-increasing}. Let us consider a finite interval $[\underline I, \bar I]$ and a family of partitions $\CPf{\underline{I}}{\bar I}$ from lemma~\ref{lemm_partition}.  Then there is a constant $M>0$ such that
	\begin{equation}\label{eq:T1x}
		\forall I \in [\underline I, \bar I], \quad\forall x\in\CP_2^I\cup\CP_3^I,\quad \E\left(T^{x,I}_1\right) \leq M + \log\left(\frac{x_2-\vr}{w_{23}-\vr}\right)\indic_{[w_{23},\infty)}(x_2).
	\end{equation}
	Further assume that $\int^\infty_{w_{23}}\frac{\lambda(\Vn^-(w,\bar I))}{w-\Vn^-(w, \bar I)}dw<\infty$ and that $\lambda$ is continuous, then
	\[\E_{\muW_I}(T_1^{x,I})<\infty.\]
\end{lemma}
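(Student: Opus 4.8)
The plan is to establish \eqref{eq:T1x} first, since the bound on $\E_{\muW_I}(T_1^{x,I})$ will follow by integrating against $\muW_I$ and using the tail decay already obtained in Lemma~\ref{lem:er}. To bound $\E(T_1^{x,I})$ for $x\in\CP_2^I\cup\CP_3^I$, I would use the tail formula $\E(T_1^{x,I})=\int_0^\infty \P(T_1^{x,I}>u)\,du = \int_0^\infty e^{-\Lambda^I(x,u)}\,du$, and split the integral at the deterministic hitting time $\tau_3(x,I)$ of $\CP_3^I$ (which is the relevant ``slow'' phase). On the interval $[\tau_3(x,I),\te{x,I})$, the proof of Proposition~\ref{prop-doebolin2} (specifically the bound \eqref{eq:tau2bounded}, locally uniform in $I$) shows that $\Lambda^I$ accumulates at a rate bounded below, so the contribution $\int_{\tau_3}^\infty e^{-\Lambda^I(x,u)}du$ is bounded by a constant independent of $I$ and $x$. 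The remaining piece $\int_0^{\tau_3(x,I)}e^{-\Lambda^I(x,u)}du \le \tau_3(x,I)$, so the task reduces to bounding $\tau_3(x,I)$ — the time it takes the flow from $x=(\vr,x_2)\in\CP_2^I$ (the worst case is $x$ on the reset line at height $x_2$) to reach $\CP_3^I$, i.e.\ to drop from $w=x_2$ down to $w=w_{23}$.

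For this deterministic estimate, in $\CP_2^I$ we have $\dot w = v-w$ with $v \le \vr$ (below the reset line the $v$-component is already where it needs to be, and the trajectory stays to the left), hence $\dot w \le \vr - w$, which gives $w_x(t) - \vr \le (x_2-\vr)e^{-t}$. Solving for the first time this quantity equals $w_{23}-\vr$ yields $\tau_3(x,I) \le \log\!\bigl(\tfrac{x_2-\vr}{w_{23}-\vr}\bigr)$ when $x_2\ge w_{23}$, and $\tau_3(x,I)$ is bounded by a uniform constant when $x_2\le w_{23}$ (using continuity of the flow in $I$ and compactness of $[\underline I,\bar I]$, and the fact that $\CP_3^I\cap\CP_2^I$-trajectories take bounded time; for $x\in\CP_3^I$ already, $\tau_3=0$). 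Absorbing the $[\tau_3,\infty)$-contribution and the $x_2\le w_{23}$ case into a single constant $M$ gives \eqref{eq:T1x}. One subtlety to handle carefully: ensuring the lower bound on $\dot{(-w)}$ is genuinely uniform in $I$ near the boundary $\{w=w_{23}\}$, but since $w_{23}$ and the partition structure are chosen once (Lemma~\ref{lemm_partition}) and $\vr$ is fixed, the inequality $\dot w\le \vr-w$ holds on all of $\CP_2^I$ simultaneously, so uniformity is automatic.

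For the second claim, write $\E_{\muW_I}(T_1^{\cdot,I}) = \int \E(T_1^{x,I})\,\muW_I(dx)$, where $\muW_I$ is supported on the reset line $\CR_\CP$, parametrized by $x_2\in\Ew=[w^*,\infty)$. By \eqref{eq:T1x}, $\E(T_1^{x,I}) \le M + \log^+\!\bigl(\tfrac{x_2-\vr}{w_{23}-\vr}\bigr)$, and since $\log^+(\tfrac{x_2-\vr}{w_{23}-\vr}) = o(x_2)$ as $x_2\to\infty$, integrability follows immediately from $\muW_I(e^{r\cdot})<\infty$ for some $r>0$ (Lemma~\ref{lem:er}): indeed $\int \log^+(c\,x_2)\,\muW_I(dx_2) \le C_r \muW_I(e^{r\cdot}) < \infty$. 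This requires $\lambda$ continuous and the hypothesis \eqref{eq:Vnullhyp}, which are exactly the standing assumptions of Proposition~\ref{prop-doebolin2} and Lemma~\ref{lem:er} invoked here.

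I expect the main obstacle to be the deterministic bound on $\tau_3(x,I)$ with the precise logarithmic constant $\log\!\bigl(\tfrac{x_2-\vr}{w_{23}-\vr}\bigr)$ and genuine uniformity in $I$ over $[\underline I,\bar I]$ — in particular confirming that on all of $\CP_2^I$ one has $v\le\vr$ so that the linear comparison $\dot w\le \vr-w$ is valid (this uses that $\CP_2^I$ lies to the left of the reset line and above the $w$-nullcline, which must be read off from the partition definition), and patching this with the bounded-time estimate for the remaining regions into one clean constant $M$.
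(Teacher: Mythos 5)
Your overall plan — split at the deterministic hitting time $\tau_3(x,I)$, bound the slow $\CP_2^I$ phase by $\log\bigl(\tfrac{x_2-\vr}{w_{23}-\vr}\bigr)$ via $\dot w\le\vr-w$, bound the post-$\tau_3$ tail by a uniform constant, then integrate against $\muW_I$ using the exponential tail from Lemma~\ref{lem:er} — is exactly the paper's argument, and parts (ii) and the final integration step are correct.

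However, your justification for the uniform tail bound $\int_{\tau_3}^\infty e^{-\Lambda^I(x,u)}\,du\le C$ is wrong: you invoke \eqref{eq:tau2bounded} from the proof of Proposition~\ref{prop-doebolin2}, but that estimate goes in the opposite direction (it is an \emph{upper} bound $\sup_{I,w_0}\int_0^{\tau^I_2}\lambda(v^I_{w_0})<\infty$, i.e.\ it shows the cumulative hazard is not too large during the $\CP_2^I$ descent), whereas what you need here is that the hazard grows \emph{at least} linearly once the trajectory enters $\CP_3^I\cup\CP_4^I$. Moreover, Proposition~\ref{prop-doebolin2} needs continuity of $\lambda$ and \eqref{eq:Vnullhyp}, which are \emph{not} among the hypotheses for part (i) of the lemma (they are added only for the second claim), so using it there would be a logical gap. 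The correct and much more elementary observation is: for $x\in\CP_3^I$ the trajectory stays above the separatrix, so $v_x(t)\ge v_{23}^I\ge\vmin\defi\min_{I\in[\underline I,\bar I]}v_{23}^I$ for all $t\ge 0$ (and this persists in $\CP_4^I$ since $v_{34}=w_{23}>v_{23}^I$); since $\lambda$ is non-decreasing (Assumption~\ref{as:ratepos-increasing}), $\Lambda^I(x,t)\ge\lambda(\vmin)\,t$, whence $\E(T_1^{x,I})\le\int_0^\infty e^{-\lambda(\vmin)t}\,dt=1/\lambda(\vmin)$, a constant uniform in $I$ and $x\in\CP_3^I$. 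With this correction, the rest of your argument goes through and matches the paper.
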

\begin{proof}
	Let $I\in [\underline I, \bar I]$.
	Note that $\E\left(T^{x,I}_1\right) = \int_0^{\te{x,I}}e^{-\Lambda^I(x,t)}\,\rmd t$. We first look at the case $x\in\CP_3^I$.
	Either the trajectory from $x$ remains in $\CP_3^I$ or it explodes through $\CP_4^I$. In all cases
	\[\E\left(T^{x,I}_1\right) \leq \int_0^{\te{x,I}} e^{-\lambda(v_{23}^I)t} dt \leq \frac{1}{\lambda(v_{23}^I)}\leq \frac{1}{\lambda(v_{min})}\]
	where $\vmin\defi \min\limits_{I\in [\underline I, \bar I]}\vmin^I$.
	When $x$ belong to $\CP_2^I$, we find that $\E\left(T^{x,I}_1\right)  \leq \tau_3(x,I)+\E T_1^{{\Phi^{\tau_3}(x)}}\,\indic_{T_1^{x}>\tau_3(x,I)}$ where $\tau_3(x,I)$ is the time it takes to reach $\CP_3^I$ from $x$.  We note that $\E T_1^{{\Phi^{\tau_3}(x)}}\,\indic_{T_1^{x}>\tau_3(x,I)}$ is bounded by $\frac{1}{\lambda(v_{23})}$. Also, in $\CP_2^I$, $\dot w \leq \vr-w$ from which it follows that $\tau_3(x,I)\leq \log\left(\frac{x_2-\vr}{w_{23}-\vr}\right)$. We thus have proved that
	\[ \forall x\in\CP_2^I\cup\CP_3^I, \quad\forall I \in [\underline I, \bar I],\quad \E\left(T^{x,I}_1\right) \leq \frac{1}{\lambda(v_{min})} + \log\left(\frac{x_2-\vr}{w_{23}-\vr}\right)\indic_{[w_{23},\infty)}(w_0). \]
	It follows from lemma~\ref{lem:er} that $\E_{\muW_I}(T_1^{x,I})<\infty$ because $e^{-r\cdot}\log(\cdot-\vr)$ is bounded on $[w_{23},\infty)$.
\end{proof}

\subsection{Properties of the invariant distribution of the enclosed chain}

If $(\vr, \vr)$ is an equilibrium of \eqref{eq:micro-unique}, it gives the transition kernel $\mU_I((\vr, \vr), \cdot) = \delta_{(\vr,\vr+\wb)}$. In order to prove that the transition measure has a density, 
we remove the case where an equilibrium belongs to the reset line $\CR$ from our analysis.

\begin{proposition}\label{prop:K}
	Grant assumptions~\ref{hyp.F},\ref{as:ratepos-increasing}.
	For all $I\in\Ireg$, let $\CP^I$ be a partition. For $w_0\in\mathbf E_w^I$, there is a measurable application $K_I:\Ew^I\times\Ew^I\to\R_+$ such that
	for all $h\in\mathrm L^{\infty}(\Ew^I, \mathbb R^+)$:
	\[
	\mU_I h(w_0) = \int_{\Ew }K_I(w_0,w)h(w+\wb)dw.
	\]
	Moreover, $K_I$ satisfies the  following properties:
	\begin{enumerate}
		\item $K_I(w_0,\cdot)\in \mathrm L^{1}(\Ew^I,\R_+ )$
		\item for all $w_0\in \Ew^I$, $\int_{\Ew^I}K_I(w_0,\cdot)=1$
		\item for all $w_0$ large enough, $\forall\epsilon > 0$, $K_I(w_0,w_0+\epsilon) = 0$.
	\end{enumerate}
\end{proposition}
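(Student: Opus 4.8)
The plan is to obtain the kernel $K_I$ by an explicit change of variables in the definition \eqref{eq.U} of $\mU_I$, turning the integral over the jump time $t$ into an integral over the value $w$ reached by the flow just before the jump. Fix $I\in\Ireg$ and a partition $\CP^I$, and fix $w_0\in\Ew^I$. Starting from $(\vr,w_0)\in\CR$, the trajectory $t\mapsto w^I_{w_0}(t)$ of the $w$-component is, by Theorem~\ref{th:jtb1}, either eventually trapped in $\CP_3^I$ or it escapes through $\CP_4^I$ and blows up in finite time $\te{w_0,I}$ with $w$ staying finite. In either case the map $t\mapsto w^I_{w_0}(t)$ is $C^1$ on $[0,\te{w_0,I})$, but it need not be monotone: $w$ decreases while the trajectory is above the $w$-nullcline and increases below it. The key structural point, which I would isolate first, is that away from the $w$-nullcline the derivative $\dot w^I_{w_0}(t)=v^I_{w_0}(t)-w^I_{w_0}(t)$ does not vanish, so the trajectory decomposes into finitely many (in fact at most two, given the symbolic dynamics $\CP_2\to\CP_3\to\CP_4$ established in Theorem~\ref{th:jtb1}) time intervals on each of which $t\mapsto w^I_{w_0}(t)$ is a $C^1$ diffeomorphism onto its image. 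On each such monotone branch $[a,b]$ I perform the substitution $w=w^I_{w_0}(t)$, $\rmd w=(v^I_{w_0}(t)-w^I_{w_0}(t))\,\rmd t$, which gives
\[
\int_a^b p^I(t,w_0)\,h(\Delta\Phi^t(w_0,I))\,\rmd t
= \int_{w^I_{w_0}(a)}^{w^I_{w_0}(b)} \frac{p^I\bigl(t(w),w_0\bigr)}{\bigl|v^I_{w_0}(t(w))-w\bigr|}\,h(w+\wb)\,\rmd w,
\]
since on the branch $\Delta\Phi^{t}(w_0,I)=(\vr,w^I_{w_0}(t)+\wb)$ depends on $t$ only through $w^I_{w_0}(t)$. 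Summing the (finitely many) branches and defining $K_I(w_0,w)$ to be the resulting sum of densities $\sum_{\text{branches}} p^I(t_j(w),w_0)/|v^I_{w_0}(t_j(w))-w|$, with the understanding that a given $w$ may be hit on more than one branch, yields the claimed representation $\mU_I h(w_0)=\int_{\Ew^I}K_I(w_0,w)h(w+\wb)\,\rmd w$. Measurability of $K_I$ in $(w_0,w)$ follows from the joint continuity of the flow in initial condition and time and the implicit-function description of the branch inverses $t_j(w)$.

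Property~(i), $K_I(w_0,\cdot)\in\mathrm L^1$, is then immediate: integrating $K_I(w_0,\cdot)$ over $\Ew^I$ undoes the change of variables and returns $\int_0^{\te{w_0,I}}p^I(t,w_0)\,\rmd t=\mU_I\mathbf 1(w_0)$, which equals $1$ because, by assumption~\ref{as:ratepos-increasing} and Lemma~\ref{lemma.Lambda}, $T_1<\tee$ almost surely, i.e.\ $\Lambda^I(w_0,t)\to\infty$ as $t\uparrow\te{w_0,I}$ so that $\int_0^{\te{w_0,I}}\lambda(\Phi^t(w_0,I))e^{-\Lambda^I(w_0,t)}\,\rmd t=1$. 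This simultaneously gives property~(ii). For property~(iii): I use Theorem~\ref{th:jtb1} again — taking $w_{23}$ as in the partition, if $w_0$ is large enough then $(\vr,w_0)\in\CP_2^I$ lies strictly above the $w$-nullcline and the $v$-nullcline, so along the trajectory $w^I_{w_0}$ is strictly decreasing until it reaches $\CP_3^I$ (and thereafter it is bounded by $w_{23}<w_0$); hence the trajectory never returns to any level $w_0+\epsilon>w_0$, so no branch passes through $w_0+\epsilon$ and $K_I(w_0,w_0+\epsilon)=0$.

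The main obstacle is the non-monotonicity of $t\mapsto w^I_{w_0}(t)$ and the need to control the behaviour near the $w$-nullcline, where $\dot w$ vanishes and the change of variables formally develops an integrable singularity $1/|v^I_{w_0}-w|$. Here I would argue that the trajectory crosses the $w$-nullcline transversally — at a crossing point one has $v=w$ but $\dot v=F(v)-w+I\neq 0$ since the point is not simultaneously on the $v$-nullcline (this is where one would need the crossing to avoid the intersection of the two nullclines; in the relevant region of $\CP^I$ the two nullclines do not meet, by the construction of the partition) — so $w^I_{w_0}(t)$ has a nondegenerate critical point there, the local inverse behaves like a square root, and the putative singularity of $K_I(w_0,\cdot)$ is of order $(w-w_c)^{-1/2}$, hence integrable. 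Thus the finitely-many-branches decomposition is genuine and $K_I(w_0,\cdot)\in\mathrm L^1$, completing the argument. I would also remark that when $I\notin\Ireg$ the point $(\vr,\vr)$ can be an equilibrium on $\CR$, the trajectory then never leaves it, $\mU_I$ degenerates to a Dirac mass, and no such density exists — which is exactly why the hypothesis $I\in\Ireg$ is imposed.
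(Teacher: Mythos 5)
Your approach is the same as the paper's: rewrite the jump-time integral in \eqref{eq.U} as an integral in $w$ by substituting $w=w^I_{w_0}(t)$ on the monotone pieces of the trajectory between successive crossings of the $w$-nullcline, obtaining an explicit sum of densities. The identification of $\Ireg$ as the hypothesis that forbids $(\vr,\vr)$ being an equilibrium, the normalization argument (i)--(ii) using that $p^I(\rmd t,w_0)$ is a probability measure, and the decreasing-then-bounded argument for (iii), all match the paper's reasoning.

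There is, however, one genuine gap. You assert that the trajectory decomposes into \emph{finitely many} (``in fact at most two'') monotone branches, appealing to the symbolic dynamics $\CP_2\to\CP_3\to\CP_4$. This is not correct in the generality of the hypotheses: the proposition is stated for all $I\in\Ireg$, and $\Ireg$ is strictly larger than $\Iempty$. For $I\in\Ireg\setminus\Iempty$ the vector field \eqref{eq:micro-unique} has equilibria, and by construction these all lie in $\CP_3^I$; a trajectory trapped in $\CP_3^I$ (the case $(v)$ of Theorem~\ref{th:jtb1}) can spiral about a focus and cross the $w$-nullcline \emph{infinitely} many times, so that $N_h(w_0,I)=\infty$. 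The symbolic dynamics bounds how often the trajectory passes from one $\CP_i$ to another, but not how often it crosses the $w$-nullcline \emph{inside} $\CP_3^I$. With infinitely many branches, neither the finite sum defining $K_I$ nor the appeal to square-root integrability at each individual crossing suffices: one needs a convergence argument for the full series. The paper handles this precisely by defining the increasing sequence of hitting times $\tau_n$ (allowing $N_h=\infty$), writing $K_I=\sum_{n\geq 0}K_{I,n}$, observing that every partial sum $K^\ell=\sum_{n\leq\ell}K_{I,n}$ satisfies $\int K^\ell(w_0,\cdot)\leq 1$ because $p^I(\rmd t,w_0)$ is a probability on $[0,\te{w_0,I})$, and then invoking Beppo Levi's monotone convergence theorem to conclude $K_I(w_0,\cdot)\in\mathrm L^1$ with $\int K_I(w_0,\cdot)=1$. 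To repair your argument you should drop the ``at most two'' claim, allow the branch decomposition to be countable, and replace the pointwise square-root estimate by this global $\mathrm L^1$ domination via the probability normalization. Your square-root observation at a transversal $w$-nullcline crossing is a correct local fact, but it is neither needed nor sufficient for the series convergence, and the paper does not use it. A minor separate slip: in your argument for (iii), after entering $\CP_4^I$ the variable $w$ \emph{increases} and may exceed $w_{23}$; what is true (by the estimate in the proof of Corollary~\ref{coro:wbound}) is that it stays below $w_{23}$ plus a fixed constant independent of $w_0$, which is enough to conclude for $w_0$ large, but your statement ``thereafter it is bounded by $w_{23}$'' is slightly off.
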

\begin{proof}
	We consider $I\in\Ireg$, $h\in\mathrm L^{\infty}(\Ew^I, \R_+ )$ and $w_0\in \Ew^I$. We introduce the increasing sequence of deterministic hitting times  $(\tau_n(w_0,I))_{n\geq  1}$ of the $w$-nullcline from $(\vr,w_0)$ with $\tau_0(w_0,I)\defi 0$ and denote by $N_h(w_0,I)$ the number of hitting times. If $N_h(w_0,I)$ is finite, we define $\tau_{N_h(w_0,I)+1}(w_0,I)\defi \te{w_0,I}$.
	Finally, we write $w^{(n)}(w_0,I)$ the value of the $w$-component at the hitting time $\tau_n$ and define $w^{(0)}(w_0,I)\defi w_0$. When $N_h$ is finite, we define $w^{(N_h+1)}(w_0,I)\defi \lim\limits_{t\to t_\infty}w_{w_0}(t,I)$. We find
	\[
	\mU_I h(w_0) = \sum\limits_{n=0}^{N_h(w_0,I)}\int_{\tau_n(w_0,I)}^{\tau_{n+1}(w_0,I)} p^I(t,w_0)h(w^I_{w_0}(t)+\wb)dt.
	\]
	We want to perform the change of variables $w=w^I_{w_0}(t)$ which is legitimate when $(\vr, w_0)$ is not an equilibrium point - \textit{e.g.} $I\in\Ireg$ - and when $w\neq w^{(n)}(w_0,I)$ for all $n$. We then write the trajectory from $(\vr,w_0)$ for $t\in(\tau_n,\tau_{n+1})$ as 
	\[
		(V^{(n)}(w^I_{w_0}(t)), w^I_{w_0}(t))
	\]
	where $V^{(n)}$ is the solution of \eqref{eq:Vx} with initial condition $V^{(n)}\left(\frac{w^{(n)}+w^{(n+1)}}{2},I\right)=v^{(n/2)}$ where $v^{(n/2)}$ is such that $\left(v^{(n/2)},\frac{w^{(n)}+w^{(n+1)}}{2}\right)$ is the unique point on the solution from $(\vr,w_0)$ for $t\in(\tau_n,\tau_{n+1})$. We use this particular initial condition so that $V^{(n)}$ is defined on the interval $(w^{(n)}, w^{(n+1)})$.
	
	We thus find that the previous integral equals to 
	\[
		\int_{w^{(n)}}^{w^{(n+1)}}K_{I,n}(w_0,w)h(w+\wb)dw
	\]
	where
	\[
	K_{I,n}(w_0,w)  \defi \frac{e^{-\Lambda(w_0,\tau_n)}	 }{|w-V^{(n)}(w,I)|}\lambda\left(V^{(n)}(w,I)\right)\exp\left(-\int_{w_n}^{w}\frac{\lambda(V^{(n)}(u,I))}{|u-V^{(n)}(u,I)|}du\right)\mathbf 1_{A_n}(w)
	\]
	with $A_n\defi \left(w^{(n)},w^{(n+1)}\right)$ if $w^{(n)}\leq w^{(n+1)}$ and $\left(w^{(n+1)},w^{(n)}\right)$ otherwise. 
	Let us then define the measurable application:
	\[
	K_I(w_0,w) \defi \sum\limits_{n=0}^{N_h(w_0,I)}K_{I,n}(w_0,w).
	\]
	We now show that $K_I(w_0,\cdot)\in \mathrm L^{1}(\Ew^I )$. To this end, we define the non-decreasing sequence $(K^{\ell}(w_0,\cdot))_{\ell}$ in $\mathrm L^{1}(\Ew^I)$ where $K^{\ell}(w_0,w)\defi \sum\limits_{n=0}^{N_h(w_0,I)\wedge\ell}K_{I,n}(w_0,w)$. We have $\sup\limits_\ell\int K^\ell(w_0,\cdot)\leq 1$. From the Beppo Levi's monotone convergence theorem, $K^{\ell}(w_0,\cdot)$ converges to $K_{I}(w_0,\cdot)<\infty$ \textit{a.e.}, $K_I(w_0,\cdot)\in \mathrm L^{1}(\Ew^I)$ and $\norm{K^\ell(w_0,\cdot)-K_{I}(w_0,\cdot)}_1\to0$.
	The proof of the first two properties (i)-(ii) is straightforward because $p^I(dt,(\vr,w_0))$ is a probability measure. For property (iii), we observe that $w_{w_0}(t)$ decreases when $w_0\geq w_{23}$ and no trajectory from $\CP_4$ contributes to $K_I$.
\end{proof}

\begin{lemma}\label{lemma:pw}
	Grant the assumptions of proposition~\ref{prop-doebolin2} and proposition~\ref{prop:K}.
	Then, the invariant distribution $\muW_I$ of $(\bfw_n)_n$ has density $\pW_I\in\mathrm L^1(\Ew^I, \mathbb R^+ )$ with respect to the Lebesgue measure for all $I\in\Ireg$. It satisfies
	\begin{equation}\label{eq:integralequation}
		\forall u\in\Ew^I, \quad \pW_I(u+\wb) = \int_{\Ew^I}K_I(w_0,u)\pW_I(w_0)dw_0.
	\end{equation}
\end{lemma}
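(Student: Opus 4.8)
The plan is to show that $\muW_I$ has a density by a bootstrapping argument on the integral equation, exploiting the structure of the transition operator $\mU_I$ established in proposition~\ref{prop:K}. First I would recall that, by theorem~\ref{thm:existence-uniq-inv2}, $(\bfw_n)_n$ admits a unique invariant distribution $\muW_I$ satisfying $\muW_I = \mU_I^*\muW_I$, i.e.\ for all bounded measurable $h$,
\[
	\int_{\Ew^I} h(w)\,\muW_I(dw) = \int_{\Ew^I}\mU_I h(w_0)\,\muW_I(dw_0)
	= \int_{\Ew^I}\left(\int_{\Ew^I}K_I(w_0,w)h(w+\wb)\,dw\right)\muW_I(dw_0).
\]
Applying Fubini (justified by property (i) of proposition~\ref{prop:K}, which gives $K_I(w_0,\cdot)\in\mathrm L^1$, together with the fact that $K_I\geq 0$ and $\int K_I(w_0,\cdot)=1$, so the double integral of $|h|\cdot K_I$ against $\muW_I$ is bounded by $\norm{h}_\infty$), one obtains that for every bounded measurable $h$,
\[
	\int_{\Ew^I}h(w)\,\muW_I(dw) = \int_{\Ew^I}h(w+\wb)\left(\int_{\Ew^I}K_I(w_0,w)\,\muW_I(dw_0)\right)dw.
\]
This identifies the pushforward of $\muW_I$ under the shift $w\mapsto w+\wb$ with the measure having Lebesgue density $w\mapsto \int_{\Ew^I}K_I(w_0,w)\,\muW_I(dw_0)$; since the shift is a bijection of $[w^*,\infty)$ onto $[w^*+\wb,\infty)$, this already shows that the restriction of $\muW_I$ to $[w^*+\wb,\infty)$ is absolutely continuous with a Lebesgue density, and yields the integral equation \eqref{eq:integralequation} on that range.

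The remaining, and main, obstacle is to upgrade this to absolute continuity of $\muW_I$ on the full state space $\Ew^I = [w^*,\infty)$, i.e.\ to rule out an atom or singular part concentrated on the small interval $[w^*, w^*+\wb)$. For this I would iterate: writing $\Phi^{(j)}$ for the $j$-th pushforward under the shift, the same computation gives that the restriction of $\muW_I$ to $[w^*+j\wb,\infty)$ has a density for every $j\geq 1$; but this does not by itself control $[w^*,w^*+\wb)$. The correct argument is that, by invariance, $\muW_I = (\mU_I^*)^n\muW_I$ for every $n$, and iterating the kernel representation of proposition~\ref{prop:K} shows that $(\mU_I^*)^n\muW_I$ restricted to $[w^*,\infty)$ is the image under $n$ successive shifts-plus-smoothing of $\muW_I$; since each application of $\mU_I$ pushes mass up by exactly $\wb$ and then spreads it via $K_I$ over an interval, any putative singular part of $\muW_I$ would have to be supported, for every $n$, inside $[w^*, w^*+\wb)$ and also be the $\mU_I^*$-image of a singular part — but the $\mU_I^*$-image of any measure lands (after the obligatory $+\wb$ shift) in $[w^*+\wb,\infty)$, where we have just shown everything is absolutely continuous. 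Hence the singular part must vanish, and $\muW_I \ll \mathrm{Leb}$ on all of $\Ew^I$ with density $\pW_I(w) \defi \int_{\Ew^I}K_I(w_0,w-\wb)\,\muW_I(dw_0)$ for $w\geq w^*+\wb$, extended by the same formula (now with argument in the appropriate range) on $[w^*, w^*+\wb)$; that $\pW_I\in\mathrm L^1$ and integrates to $1$ follows from $\int_{\Ew^I}K_I(w_0,\cdot)=1$ and Tonelli.

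The step I expect to be delicate is precisely this last ``no singular part on $[w^*,w^*+\wb)$'' argument: one must be careful that $\mU_I^*$ genuinely maps \emph{all} of $\mathcal P(\Ew^I)$ (not merely the absolutely continuous part) into measures that are absolutely continuous on $[w^*+\wb,\infty)$ — which is exactly the content of the kernel identity $\mU_I h(w_0)=\int K_I(w_0,w)h(w+\wb)\,dw$ valid for \emph{every} starting point $w_0\in\Ew^I$ in proposition~\ref{prop:K}, including the excluded-equilibrium subtlety handled there by the hypothesis $I\in\Ireg$ — and then to note that, by uniqueness of the invariant measure, $\muW_I$ is its own $\mU_I^*$-image, so it inherits this absolute continuity everywhere except possibly on the bottom slab, which is then killed by one more application of invariance. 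Once absolute continuity is secured, \eqref{eq:integralequation} is just the disintegrated form of $\muW_I=\mU_I^*\muW_I$ read off via the change of variables $u = w-\wb$.
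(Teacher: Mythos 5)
Your core approach — write the invariance identity $\muW_I = \mU_I^*\muW_I$, plug in the kernel representation $\mU_I h(w_0)=\int K_I(w_0,w)h(w+\wb)\,dw$ from proposition~\ref{prop:K}, and exchange the order of integration via Fubini--Tonelli to read off a density — is exactly the paper's proof. You are also right to flag that the change of variables $u=w-\wb$ only directly produces a density on $[w^*+\wb,\infty)$; the paper's one-line conclusion ``this proves that $\muW_I$ has a density'' silently relies on the same observation you make, so you have identified a real gap in the exposition and filled it.

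Where you overshoot is in \emph{how} you close that gap. You deploy an $n$-fold iteration of $\mU_I^*$, a discussion of the Hahn--Jordan singular part, and a ``pushes mass up by $\wb$ then spreads it'' argument. None of this is needed: the single invariance identity
\[
	\muW_I(h) = \int_{\Ew^I}\left[\int_{\Ew^I} K_I(w_0,w)\,\muW_I(dw_0)\right]h(w+\wb)\,dw,
	\qquad h\in\mathrm L^\infty(\Ew^I,\R_+),
\]
already gives $\muW_I\bigl([w^*,w^*+\wb)\bigr)=0$ for free, because if $h$ is supported in $[w^*,w^*+\wb)$ then $h(w+\wb)\equiv 0$ for every $w\in\Ew^I=[w^*,\infty)$, so the right-hand side vanishes. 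Thus $\muW_I$ is carried by $[w^*+\wb,\infty)$, where the identity exhibits the density $\pW_I(w)\defi\int_{\Ew^I}K_I(w_0,w-\wb)\,\muW_I(dw_0)$; on $[w^*,w^*+\wb)$ one simply sets $\pW_I=0$ rather than ``extending the formula'' (which would require evaluating $K_I(w_0,\cdot)$ outside its domain). Your argument arrives at the same place but the detour through iterated pushforwards and singular-part bookkeeping obscures a one-line fact. With that simplification, your proof and the paper's coincide.
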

\begin{proof}
By  theorem~\ref{thm:existence-uniq-inv2}, $\muW$ exists and is $\sigma$-finite. 
Consider $h\in \mathrm L^{\infty}(\mathbf E_w^I, \R_+)$, we apply the Fubini-Tonelli theorem and obtain
\[
\muW_I(h) = \int\left[\int K_I(w_0,w)h(w+\wb)dw\right] \muW_I(dw_0)= \int\left[\int K_I(w_0,w)\muW_I(dw_0)\right]h(w+\wb)dw
\]
where the last integral in brackets is a measurable mapping.
This proves that $\muW_I$ has a density. The rest of the lemma is straightforward.
\end{proof}

We now derive an integral equation to estimate the density $\pW_I$.
\begin{corollary}
	Let us define $w_4^{I,\infty} \defi \lim\limits_{t\to t_\infty}w_{(v_{34},w_{23})}(t;I)$ which is finite by theorem~\ref{th:jtb1}.
	The integral equation \eqref{eq:integralequation} reads
		\begin{equation}\label{eq:pwr}
	\forall w> \max(w_{23},w_4^{I,\infty}),\quad \pW_I(w+\wb) = \int_{w}^\infty \pW_I(w_0)\,r(w_0,w)  \rmd w_0
	\end{equation}
with
\[
\forall w\geq w_0,\quad r(w_0,w)\defi\frac{\lambda(V_{w_0}(w))}{w-V_{w_0}(w)}\exp\left(-\int_w^{w_0}\frac{\lambda(V_{w_0}(u))}{u-V_{w_0}(u)}du\right).
\]
\end{corollary}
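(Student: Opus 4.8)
The plan is to unwind the integral equation \eqref{eq:integralequation} of Lemma~\ref{lemma:pw}, namely $\pW_I(u+\wb)=\int_{\Ew^I}K_I(w_0,u)\,\pW_I(w_0)\,dw_0$, for the specific values $u=w>\max(w_{23},w_4^{I,\infty})$, by pinning down exactly which $w_0$ and which portion of the trajectory from $(\vr,w_0)$ can contribute to $K_I(w_0,w)$. Recall from the proof of Proposition~\ref{prop:K} that $K_I(w_0,\cdot)=\sum_{n=0}^{N_h(w_0,I)}K_{I,n}(w_0,\cdot)$, where $K_{I,n}(w_0,\cdot)$ is supported on the interval $A_n$ of $w$-values swept by $t\mapsto w^I_{w_0}(t)$ between its $n$-th and $(n{+}1)$-th crossings of the $w$-nullcline; in particular $K_I(w_0,\cdot)$ vanishes outside the range of $t\mapsto w^I_{w_0}(t)$.

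First I would localise the $w_0$-support. Using Theorem~\ref{th:jtb1} together with the construction of $\CP^I$, one checks that along any trajectory issued from a reset point $(\vr,w_0)$ the $w$-coordinate stays $\le\max(w_0,w_4^{I,\infty})$: by Theorem~\ref{th:jtb1} the symbolic dynamics is constrained ($\CP^I_2$ leads to $\CP^I_1$ or $\CP^I_3$, $\CP^I_1$ leads to $\CP^I_3$ or $\CP^I_4$, $\CP^I_3$ only leads to $\CP^I_4$, and $\CP^I_4$ is absorbing); $w$ decreases while in $\CP^I_2$ and in $\CP^I_1$, it stays $\le w_{23}$ throughout $\CP^I_3$, and while in $\CP^I_4$ the comparison principle for the scalar ODE \eqref{eq:Wx} against the trajectory through $(v_{34},w_{23})$ keeps it $\le w_4^{I,\infty}$. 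Hence, for $w>\max(w_{23},w_4^{I,\infty})$ and $w_0<w$ we get $K_I(w_0,w)=0$, so \eqref{eq:integralequation} reduces to $\pW_I(w+\wb)=\int_w^\infty K_I(w_0,w)\,\pW_I(w_0)\,dw_0$.

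Next, for $w_0>w$ I would show that only the first segment contributes. Since $w_0>w>w_{23}$, the point $(\vr,w_0)$ lies in $\CP^I_2$, where $t\mapsto w^I_{w_0}(t)$ is $C^1$, strictly decreasing from $w_0$ to $w_{23}$, and the trajectory stays strictly above the $w$-nullcline; the $w$-nullcline is first met only after the trajectory has entered $\CP^I_3$, at a level $w^{(1)}\le w_{23}$, so the value $w\in(w_{23},w_0)$ is attained exactly once and this attainment lies inside $[0,\tau_1)$. After $\tau_1$ the trajectory is confined to $\CP^I_3\cup\CP^I_4$ (Theorem~\ref{th:jtb1}(v) and absorption of $\CP^I_4$), where $w^I_{w_0}\le\max(w_{23},w_4^{I,\infty})<w$; therefore $K_{I,n}(w_0,w)=0$ for $n\ge1$ and $K_I(w_0,w)=K_{I,0}(w_0,w)$ with $w\in A_0=(w^{(1)},w_0)$.

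Finally I would evaluate $K_{I,0}(w_0,w)$ by change of variables on $[0,\tau_1)$. There $v^I_{w_0}(t)=V_{w_0}(w^I_{w_0}(t))$, with $V_{w_0}$ the solution of \eqref{eq:Vx} satisfying $V_{w_0}(w_0)=\vr$, and $V_{w_0}(w)<w$ so that $w-V_{w_0}(w)>0$. Writing the $n=0$ term of $\mathbf U_Ih(w_0)$ as $\int_0^{\tau_1}\lambda(v^I_{w_0}(t))\,e^{-\int_0^t\lambda(v^I_{w_0}(s))\,ds}\,h(w^I_{w_0}(t)+\wb)\,dt$ and substituting $w=w^I_{w_0}(t)$ (so $dt=-dw/(w-V_{w_0}(w))$ and $\int_0^t\lambda(v^I_{w_0})=\int_{w^I_{w_0}(t)}^{w_0}\frac{\lambda(V_{w_0}(u))}{u-V_{w_0}(u)}\,du$) identifies $K_{I,0}(w_0,w)$ with $r(w_0,w)$ for $w\in(w^{(1)},w_0)\supset(\max(w_{23},w_4^{I,\infty}),w_0)$; plugging this back into the reduced equation yields \eqref{eq:pwr}. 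The delicate point is the first step, i.e. the partition-geometry verification that the $w$-coordinate of a trajectory from a reset point never exceeds $\max(w_0,w_4^{I,\infty})$, in particular the barrier argument in $\CP^I_4$; the change of variables and the elimination of the higher segments are routine bookkeeping.
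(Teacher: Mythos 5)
Your proof follows essentially the same route as the paper's terse argument: first localise the contribution to $w_0\geq w$ by observing that the $w$-coordinate along a trajectory issued from a reset point cannot exceed $\max(w_0,w_4^{I,\infty})$, then identify the surviving kernel with $r(w_0,w)$ by the change of variable $w=w^I_{w_0}(t)$ on the first inter--$w$-nullcline-crossing segment, exactly the adaptation of the proof of proposition~\ref{prop:K} that the paper leaves implicit. The one spot where your justification is not quite right is the claim that $w$ decreases while in $\CP^I_1$: since $w_{23}$ is chosen above any equilibrium one has $\Vn^+(w,I)<w$ for $w\geq w_{23}$, so the $w$-nullcline $\{v=w\}$ cuts through $\CP^I_1$ and $\dot w>0$ on the part of $\CP^I_1$ with $v>w$. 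This is harmless because a trajectory from a reset point $(\vr,w_0)$ with $w_0\geq w_{23}$ never enters $\CP^I_1$ at all: in $\CP^I_2$ both components decrease, so the boundary shared with $\CP^I_1$ (the right branch of the $v$-nullcline) cannot be crossed; this is precisely the symbolic dynamics $\CP^I_2\cap\CR\to\CP^I_3\to\CP^I_4$ recorded in the proof of corollary~\ref{coro:wbound}. With that correction, your barrier argument in $\CP^I_4$ and the change-of-variable computation of $K_{I,0}$ are sound and match what the paper has in mind.
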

\begin{proof}
		We consider the invariant distribution $\muW_I$ and $w> \max(w_{23},w_4^{I,\infty})$. Given the vector field, there are two ways to reach $w$ from $(\vr,w_0)\in\CR_\CP$.
		Either it is reached from $\CP_2$ or it is reached from below in $\CP_4$. The second possibility is incompatible with $w> \max(w_{23},w_4^{I,\infty})$. Hence:
		\[
				\pW_I(w+\wb) = \int_{w}^\infty K_I(w_0,w)\pW_I(w_0)dw_0.		
		\]
		It is now straightforward to adapt the proof of proposition~\ref{prop:K} to get \eqref{eq:pwr}.
\end{proof}

\begin{proposition}	\label{prop:mu-support}
	Grant the assumptions of proposition~\ref{prop-doebolin2} and proposition~\ref{prop:K}.
	The invariant distribution $\muW_I$ has density $\pW_I$ with respect to the Lebesgue measure for $I\in\Ireg$. Further there is $M>0$ depending on $I$ such that $\pW_I$ is continuous on $[M,\infty)$ with  asymptotic behavior
	\[ \pW_I(u) =o\left(\frac1u\right).\]
	Finally, $\pW_I$ is not compactly supported and $\pW_I$ is positive on $[M,\infty)$.
\end{proposition}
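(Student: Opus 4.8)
The plan is to read every assertion off the integral equation \eqref{eq:pwr}, using three ingredients: a crude pointwise bound on the kernel $r$, the exponential tail estimate of lemma~\ref{lem:er}, and the global Doeblin bound of theorem~\ref{thm:existence-uniq-inv2}. Set $M'\defi\max(w_{23},w_4^{I,\infty})$, the threshold appearing in \eqref{eq:pwr}; I will show that continuity, the decay $\pW_I=o(1/u)$, non‑compact support and strict positivity all hold on $(M'+\wb,\infty)$, so the statement follows for any $M>M'+\wb$. Existence of the density $\pW_I\in\mathrm L^1(\Ew^I)$ for $I\in\Ireg$ and the equation \eqref{eq:pwr} (valid for a.e.\ $w>M'$) are already in hand by lemma~\ref{lemma:pw} and the preceding corollary.

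First I would record the kernel bound. For $w>M'$ the value $w$ of the $w$-component of the flow from $(\vr,w_0)$ is attained only while the trajectory lies in $\CP_2^I$ — this is precisely the observation behind \eqref{eq:pwr} — and there $\dot v=F(v)-w+I<0$ with $v(0)=\vr$, so $V_{w_0}(w)\le\vr$. Since $\lambda$ is nondecreasing and the exponential factor in $r$ is $\le1$, this gives $0<r(w_0,w)\le\lambda(\vr)/(w-\vr)$ for all $w_0>w>M'$ (and $w-\vr>0$ because $w_{23}>\vr$). Plugging this into \eqref{eq:pwr} yields, for a.e.\ $u>M'$,
\[
	(u+\wb)\,\pW_I(u+\wb)\;\le\;\frac{(u+\wb)\,\lambda(\vr)}{u-\vr}\,\muW_I\big((u,\infty)\big),
\]
whose right‑hand side tends to $0$ as $u\to\infty$ because $\muW_I((u,\infty))=O(e^{-ru})$ by lemma~\ref{lem:er}; hence $\pW_I(u)=o(1/u)$ (in fact $\pW_I(u)=O(e^{-ru})$, which is stronger).

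For continuity I would fix $a>M'$ and apply dominated convergence to the right‑hand side of \eqref{eq:pwr} on $\{w\ge a\}$: the integrand $w_0\mapsto\pW_I(w_0)\,r(w_0,w)\indic_{\{w_0>w\}}$ is bounded by $\tfrac{\lambda(\vr)}{a-\vr}\pW_I(w_0)\in\mathrm L^1$ uniformly in $w\ge a$, and for each $w_0$ the map $w\mapsto r(w_0,w)\indic_{\{w_0>w\}}$ is continuous off the single point $w=w_0$, using that $w\mapsto V_{w_0}(w)$ is $C^1$ (smooth dependence on initial data, assumption~\ref{hyp.F}) and $\lambda$ is continuous. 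Thus the right‑hand side of \eqref{eq:pwr} is continuous on $[a,\infty)$, so $\pW_I$ admits a continuous version on $(M'+\wb,\infty)$, and \eqref{eq:pwr} then holds for \emph{every} $w>M'$ (both sides being continuous).

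The last and most delicate step is non‑compact support and positivity. Integrating the global Doeblin bound of theorem~\ref{thm:existence-uniq-inv2} against the invariant measure $\mu^{\mathbf w,S}_I$ gives $\mu^{\mathbf w,S}_I\ge\beta_1\nu_D^I$, hence $\muW_I\ge\beta_1\tilde\nu_D^I$; since $[w_{23}+2\wb,2w_{23}+2\wb]\subset\mathrm{supp}(\tilde\nu_D^I)$ by proposition~\ref{prop-doebolin2}, the essential supremum $B_0$ of $\mathrm{supp}(\pW_I)$ satisfies $B_0\ge2w_{23}+2\wb$. Because $w_{23}$ is taken large, $w_4^{I,\infty}\le w_{23}+\int_{v_{34}}^\infty\tfrac{u-w^*}{F(u)-u+\underline I}\,\rmd u<2w_{23}+2\wb$ (the integral is finite by assumption~\ref{hyp.F}$(ii)$ and vanishes as $w_{23}\to\infty$, cf.\ the proof of corollary~\ref{coro:wbound}), so $M'<B_0$; the point of choosing $w_{23}$ large is precisely to place the Doeblin ``seed'' interval above the threshold $M'$ where \eqref{eq:pwr} is available. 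If $B_0<\infty$, then for every $w$ in the nonempty interval $(\max(M',B_0-\wb),B_0)$ equation \eqref{eq:pwr} gives $\pW_I(w+\wb)=\int_w^{B_0}\pW_I(w_0)r(w_0,w)\,\rmd w_0>0$ (as $r>0$ there and $\pW_I>0$ on a positive‑measure subset of $(w,B_0)$, $B_0$ being the essential supremum of the support), which by continuity forces $\pW_I>0$ on a nonempty open subinterval of $(B_0,\infty)$ — a contradiction. Hence $B_0=\infty$, so $\pW_I$ is not compactly supported and $\muW_I((w,\infty))>0$ for all $w$; feeding this back into \eqref{eq:pwr} (now valid for all $w>M'$) together with $r>0$ on $(w,\infty)$ gives $\pW_I(w+\wb)>0$ for all $w>M'$, i.e.\ $\pW_I>0$ on $(M'+\wb,\infty)$. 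The main obstacle is exactly this bootstrap: one needs the seed from the Doeblin estimate to sit in the regime where the invariance equation takes the simple form \eqref{eq:pwr}, and one needs the continuity obtained above to upgrade the a.e.\ identity \eqref{eq:pwr} into a pointwise one before concluding strict positivity everywhere.
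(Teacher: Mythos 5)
Your proof is correct and, for the density existence, the kernel bound $r(w_0,w)\le\lambda(\vr)/(w-\vr)$, the continuity via dominated convergence on \eqref{eq:pwr}, and the tail estimate $\pW_I(u+\wb)\le\tfrac{\lambda(\vr)}{u-\vr}\,\muW_I((u,\infty))=o(1/u)$, it follows the paper's own argument essentially step by step (the paper fixes the slightly larger threshold $w_{23}+w_4^{I,\infty}$ rather than $\max(w_{23},w_4^{I,\infty})$, but since the statement only asks for ``some $M$'' this is immaterial). Your observation that the bound actually yields $O(e^{-ru})$ decay is correct and stronger than what the proposition asserts.

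Where you genuinely diverge from the paper is in the non‑compact support and positivity step. The paper's argument is shorter but relies on an external degree of freedom: it invokes the Doeblin seed $(w_{23}+2\wb,2w_{23}+2\wb)\subset\mathrm{supp}(\tilde\nu_D^I)\subset\mathrm{supp}(\muW_I)$ together with the fact that $w_{23}$ can be chosen arbitrarily large (while $\muW_I$ itself is independent of that choice), which immediately pushes the support to infinity and gives a.e.\ positivity; no further use of \eqref{eq:pwr} is needed. You instead fix one partition once and for all and run a self‑contained bootstrap: the Doeblin seed places the essential supremum $B_0$ of the support above $M'$, and then, should $B_0$ be finite, \eqref{eq:pwr} (valid pointwise thanks to the continuity you established first) propagates strict positivity across $B_0$ by an increment $\wb$, a contradiction; feeding $B_0=\infty$ back into \eqref{eq:pwr} yields pointwise positivity on $(M'+\wb,\infty)$. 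The trade‑off is that your route is a bit longer and requires the continuity step to precede the support step, but it buys two things: it does not implicitly rely on the invariant measure being insensitive to the choice of partition, and it delivers pointwise (not merely a.e.) positivity on the half‑line. Both arguments are valid; yours is the more robust one. The only small prerequisite you invoke, namely $w_4^{I,\infty}<2w_{23}+2\wb$, is indeed guaranteed because $v_{34}=w_{23}$ and $\int_{v_{34}}^\infty(u-w^*)/(F(u)-u+\underline I)\,\rmd u\to0$ as $w_{23}\to\infty$ by assumption~\ref{hyp.F}$(ii)$, consistent with the ``$w_{23}$ high enough'' already demanded in proposition~\ref{prop-doebolin2}.
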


\begin{proof}
	From theorem~\ref{thm:existence-uniq-inv2}, $(\bfw_n)_n$ has a unique invariant distribution $\muW$. From lemma~\ref{lemma:pw}, this distribution has density $\pW\in\mathrm L^1_+(\Ew )$ with respect to the Lebesgue measure.
	Owing to the flow properties in $\CP_2$, one gets
	
	\[
	\forall w_0\geq w,\ r(w_0,w)\leq\frac{\lambda(\vr)}{w-\vr}\leq \frac{\lambda(\vr)}{w_{23}-\vr}.
	\]
	Let us show that $\pW$ is continuous on $(w_{23}+w_4^{I,\infty},\infty)$. We fix $w>  w_{23}+w_4^{I,\infty}$ and consider $u>0$ small enough. Then using \eqref{eq:pwr} :
	\begin{equation*}
		|\pW(w+u+\wb)-\pW(w+\wb)| \leq \int_{w+u}^\infty \pW( w_0 )|r( w_0 ,w+u)-r( w_0 ,w)| \rmd w_0+ \int_{w}^{w+u} \pW( w_0 )r( w_0 ,w)\rmd w_0.
	\end{equation*}
	The last term tends to zero as $u$ tends to zero because $\pW$ is integrable and $r$ is bounded. 
	The first term tends to zero by Lebesgue's dominated convergence theorem. 
	The case $u<0$ is similar.
	This shows that $\pW$ is continuous.
	 The asymptotic behavior comes from 
	 \[\pW(u+\wb)\leq \frac{\lambda(\vr)}{u-\vr}\int_u^\infty \pW.\]
From proposition~\ref{prop-doebolin2}: $(w_{23}+2\wb,2w_{23}+2\wb)\subset\text{supp}(\tilde\nu_D)\subset\text{supp}(\muW)$ and $\muW$ does not have compact support because $w_{23}$ can be chosen arbitrarily large. It also implies that $\pW(w)$ is positive for $w\geq M$.
\end{proof}

\begin{proof}[Proof of theorem~\ref{th-mai-results-muW}]
	The proof is a consequence of theorem~\ref{thm:existence-uniq-inv2}, proposition~\ref{prop:mu-support} and lemma~\ref{lem:er}.
\end{proof}	

\subsection{Existence of invariant distribution for $X$}

In the next result, we identify the invariant distributions of the solution $X$ to \eqref{eq:EDSL} by proving that it is ergodic using the Harris recurrence property of the enclosed chain.

\begin{proof}[Proof of theorem~\ref{thm:existence-uniq-inv-c0}]
	From lemma~\ref{lemma:ET1}, $\mu^{inv}$ defined in \eqref{eq:muinv} is a probability measure. By  theorem~\ref{thm:existence-uniq-inv2}, we have existence of a unique invariant distribution for the Markov chain $(\bfw_n, S_n)_n$ which is given by $\mu^{\bfw,S}(dw,ds) = p(s,w)\mu^{\bfw}(dw)ds$. We then consider $h\in\mathrm L^{\infty}(\Ew\times\R_+;\R_+)$. We note that for $t>0$
	\[
	\frac{1}{T_{N_t+1}}\int_0^{T_{N_t}}h(X_s)ds\leq \frac1t\int_0^th(X_s)ds\leq \frac{1}{T_{N_t}}\int_0^{T_{N_t+1}}h(X_s)ds.
	\]
	where $N_t$ is the number of jumps before time $t$; it is finite by proposition~\ref{prop:regularity}.
	From theorem~\ref{thm:existence-uniq-inv2}, $(\bfw_n, S_n)_n$ is Harris recurrent and thus ergodic (\cite[theorem~4.3~p.140]{revuz_markov_1984}) on $\Ew\times\R_+$. It follows that $T_n\to\infty$ \textit{a.s.} Also: $T_n/T_{n+1}\to1 $ \textit{a.s.} Indeed, from the ergodicity of $(\bfw_n,S_n)_n$, we find  $T_n/n\to\E_{\muW}(T_1)<\infty$ \textit{a.s.} Therefore, to study the ergodicity of X, it is enough to focus on $\frac{1}{T_{N_t+1}}\int_0^{T_{N_t+1}}h(X_s)ds$.
	
	We have $H:(w,s)\to\int_{0}^{s}h(\Phi^{r}(w)) dr\in\mathrm L^{1}_+(\mu^{\bfw,S})$ because $\mu^{\bf w,S}(H)\leq \norm{h}_\infty \E_{\muW}(T_1) <\infty$.
	\[
	\int_0^{T_{N_t+1}}h(X_r)dr = \sum\limits_{k=0}^{N_t}\int_{T_k}^{T_{k+1}}h(X_r)dr = \sum\limits_{k=0}^{N_t} \int_{0}^{S_k}h(\Phi^{r}(X_{T_k^+})) dr = \sum\limits_{k=0}^{N_t} H(X_{T_k^+}, S_k).
	\]
	From proposition~\ref{prop:regularity}, $N_t\to\infty$ \textit{a.s.} Hence, from the ergodicity of $(\bfw_n,S_n)_n$, we find
	\[
	\frac{1}{N_t+1}\int_0^{T_{N_t+1}}h(X_r)dr  \to \int\mu^{\bfw,S}(dw,ds) \int_{0}^{s}h(\Phi^{r}(w)) dr\quad\ \textit{a.s.}
	\]
	It follows that
	\[
	\frac1t\int_0^th(X_s)ds \to	\mu^{inv}(h), \quad \textit{a.s.}
	\]
	which is then true for $h\in\mathrm L^{\infty}(\Ew\times\R_+;\R)$. Then, X is ergodic and has a unique invariant measure. 
	The fact that $\CP_1$ does not belong to the support of $\mu^{inv}$ is a consequence of \eqref{eq:muinv} and the fact that $\CP_1$ is not reachable from $\CR$ by the deterministic flow.
	
\end{proof}

\section{Nonlinear invariant distributions}\label{section:NLDI}
We study the nonlinear invariant distributions of the SDE \eqref{eq:MKV}. We thus fix all parameters and write for $\I$ constant $\muW_\I$ (resp.  $\muW_I$) the invariant distribution for the external current $I+\I$ (resp. $I$).
 
\begin{lemma}\label{lemma:kstat}
	Under the assumptions of theorem~\ref{thm:existence-uniq-inv-c0}, any current $\I$ solution of $\I=J\mu^{inv}_\I(\lambda )$ associated to a (nonlinear) stationary distribution is solution of
	\begin{equation}\label{eq:lndi}
	\I \cdot\E_{\muW_\I}(T_1) = J.
	\end{equation}
\end{lemma}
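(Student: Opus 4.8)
The plan is to evaluate the ergodic representation \eqref{eq:muinv} of theorem~\ref{thm:existence-uniq-inv-c0} at the test function $h=\lambda$ and to recognise the resulting time integral as the expectation of a standard exponential random variable; inserting the value of $\mu^{inv}_\I(\lambda)$ into the assumed relation $\I=J\mu^{inv}_\I(\lambda)$ then gives \eqref{eq:lndi} at once. Here $\mu^{inv}_\I$ denotes the invariant distribution of \eqref{eq:EDSL} for the constant external current $I+\I$, which exists and is given by \eqref{eq:muinv} (with $\muW$ replaced by $\muW_\I$, and $T_1$, $\Phi^s$ referring to that current) by theorem~\ref{thm:existence-uniq-inv-c0} applied to $I+\I$.

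First I would extend \eqref{eq:muinv}, which is established for bounded observables, to the non-negative but unbounded function $\lambda$, namely
\[
\mu^{inv}_\I(\lambda)=\frac{1}{\E_{\muW_\I}(T_1)}\,\E_{\muW_\I}\!\int_0^{T_1}\lambda\circ\Phi^s\,ds ,
\]
which follows by applying \eqref{eq:muinv} to the bounded truncations $\lambda\wedge n$ and letting $n\to\infty$, the monotone convergence theorem handling the limit on both sides.

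The crux is that, for each starting point $x=(\vr,w_0)\in\CR_\CP$, one has $\int_0^{T_1}\lambda(\Phi^s(x))\,ds=\Lambda^\I(x,T_1)$ (using $T_1<\te{x,\I}$ a.s.), and the random variable $\Lambda^\I(x,T_1)$ is $\mathrm{Exp}(1)$-distributed. Indeed, $\P(T_1>t\mid X_0=x)=e^{-\Lambda^\I(x,t)}$ by \eqref{eq.Lambda}, while the proof of lemma~\ref{lemma.Lambda} together with assumption~\ref{as:ratepos-increasing} shows that $t\mapsto\Lambda^\I(x,t)$ is continuous, strictly increasing, and maps $[0,\te{x,\I})$ onto $[0,\infty)$; hence $\P(\Lambda^\I(x,T_1)\ge u)=\P\bigl(T_1\ge(\Lambda^\I(x,\cdot))^{-1}(u)\bigr)=e^{-u}$ for all $u\ge0$, so $\E[\Lambda^\I(x,T_1)]=1$. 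Taking expectations, first over the jump time $T_1$ for fixed $w_0$ and then over $w_0\sim\muW_\I$ (a probability measure), gives $\E_{\muW_\I}\int_0^{T_1}\lambda\circ\Phi^s\,ds=1$, whence
\[
\mu^{inv}_\I(\lambda)=\frac{1}{\E_{\muW_\I}(T_1)} ,
\]
a finite positive number since $\E_{\muW_\I}(T_1)<\infty$ by lemma~\ref{lemma:ET1} (the integrability condition \eqref{eq:Vnullhyp} being in force under the assumptions of theorem~\ref{thm:existence-uniq-inv-c0}). Substituting into $\I=J\mu^{inv}_\I(\lambda)$ yields $\I\cdot\E_{\muW_\I}(T_1)=J$.

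I do not anticipate a serious difficulty: the only point needing care is the surjectivity of $t\mapsto\Lambda^\I(x,t)$ onto $[0,\infty)$, which is exactly what the dichotomy ``$v_x$ bounded / $v_x$ explosive'' in the proof of lemma~\ref{lemma.Lambda} establishes, so it reduces to a citation rather than new work, and the truncation bookkeeping for the unbounded observable $\lambda$ is routine.
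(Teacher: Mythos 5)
Your proof is correct and takes essentially the same approach as the paper. Both hinge on the ergodic representation \eqref{eq:muinv} applied to $h=\lambda$ and on showing $\E_{\muW_\I}\int_0^{T_1}\lambda\circ\Phi^s\,ds=1$; the paper does this by Fubini (rewriting the inner expectation as $\int_0^\infty e^{-\Lambda(0,(\vr,w),u)}\lambda(\Phi^u(\vr,w))\,du$, which is the integral of the density $p^\I$ and hence equals $1$), whereas you phrase the identical computation distributionally as $\int_0^{T_1}\lambda\circ\Phi^s\,ds=\Lambda^\I(x,T_1)\sim\mathrm{Exp}(1)$. Your explicit truncation step to extend \eqref{eq:muinv} from bounded observables to the unbounded $\lambda$ is a small technicality the paper leaves implicit, and your justification of surjectivity of $t\mapsto\Lambda^\I(x,t)$ via lemma~\ref{lemma.Lambda} is the right citation.
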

\begin{proof}
If the solution $X$ of the SDE \eqref{eq:EDSL} is stationary, we find that $\I =J\mu^{inv}_\I(\lambda )$ or $\I\cdot \E_{\muW_\I}(T_1) = J \E_{\muW_\I}\int_0^{T_1} \lambda(X_s)ds$. The lemma then follows from
\[
\E_{\muW_\I}\int_0^{T_1} \lambda\circ\Phi^s\,ds = \E_{\muW_\I} \int_0^\infty \mathbf 1(u\leq T_1)\ \lambda\circ\Phi^u du = \int \mu_\I(dw)\int_0^\infty \lambda(\Phi^t(\vr,w))e^{-\Lambda(0,(\vr,w),t)}dt = 1.
\]
\end{proof}
\noindent
We re-write equation \eqref{eq:lndi} as:
\[J=\I\int \muW_\I(dw) \int_0^{\tee^\I} e^{-\Lambda^\I(t,(\vr,w))} dt  \]
and define $T_\I$ on $\CR$ by
\[
T_\I(w) \defi  \int_0^{\tee^\I(w)}e^{-\Lambda^\I(t,(\vr,w))} dt=\E\left(T_1^{(\vr,w)}\right)<\infty.
\]
We note that $T_\I(w)<\tee^\I(w)$. To show existence of invariant distributions, we use the intermediate value theorem which relies on the continuity of the following mapping on $\R_+$
\begin{equation*}
\I\to\muW_\I(T_\I).
\end{equation*}
To prove continuity, as $T_\I$ is unbounded, it is not enough to show the continuity of $\I\to\muW_\I$ for the total variation norm since it is (only) the dual norm of $\mathrm L^\infty(\Ew )$. But since $T_\I$ is bounded by an exponential thanks to lemma~\ref{lemma:ET1}, this hints at using a weighted norm as we detail now.

We denote by $\mathcal M(\Ew )$ the Banach space of bounded Radon measures on $\Ew $ with the strong topology associated to the total variation norm:

\[
\|\mu\|\defi\sup \left\{|\mu(h)| \st h \in \mathrm L^\infty(\Ew),\|h\|_\infty \leq 1\right\}.
\]
We follow the notations of \cite{canizo_harris-type_2023} and write $\mathcal N$ (resp. $\mathcal P$) the subspace of $\mathcal M(\Ew )$ with zero mass (resp. of probability measures).
We denote by $V$ the Lyapunov function $V:\Ew  \to [1,\infty)$ defined by  $V(w) = e^{r(w-w^*)}$ and we introduce the weighted total variation norm on $\mathcal M(\Ew )$ defined by
\[
\norm{\mu}_V \defi \int V|\mu|.
\]
We note that $\norm{\mu}_V=\sup\limits_{\norm{h}_{\mathrm L^\infty}\leq 1}|\mu(h V)|$.
We denote by $\mathcal M_V$ the space $(\mathcal M(\Ew ), \norm{\cdot}_V)$.
We shall prove this continuity result in the quite restrictive case where $I\in \Iempty$ (see definition~\ref{def:I0}).

\begin{proposition}\label{prop:di-c0}
 	Grant the assumptions of proposition~\ref{prop-doebolin2}. The invariant distribution $\mu_I^\bfw$ is a continuous function of $I\in\Iempty$ in $\mathcal M_V$.
\end{proposition}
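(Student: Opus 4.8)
The plan is to exhibit, for each compact interval $[\underline I,\bar I]$, a uniform Doeblin/Lyapunov pair for the chain $(\bfw_n)_n$ and then invoke the quantitative perturbation theory for such chains (the Harris-type estimates of \cite{canizo_harris-type_2023}) to reduce the continuity of $I\mapsto\mu_I^\bfw$ in $\mathcal M_V$ to the continuity of $I\mapsto\mU_I$ as an operator on the weighted space. From Theorem~\ref{thm:existence-uniq-inv2} and Lemma~\ref{lem:er} we already know that $\bfU_I$ (equivalently $\bfG_I^{2k_D}$) satisfies a global Doeblin condition with constants $\beta_1, k_D$ independent of $I\in[\underline I,\bar I]$, and that $V=e^{r(\cdot-w^*)}$ is a common Lyapunov function: $\mU_I V\le \gamma_L V+K_L$ with $\gamma_L\in(0,1)$, $K_L$ both uniform in $I$. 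By the standard Harris theorem in the form used in \cite{canizo_harris-type_2023}, this gives a uniform spectral gap: there are $C\ge 1$, $\rho\in(0,1)$, independent of $I$, with $\norm{\mU_I^{*n}\nu}_V\le C\rho^n\norm{\nu}_V$ for all $\nu\in\mathcal N$ (zero-mass measures), $n\ge 0$.

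Next I would prove the map $I\mapsto \mU_I$ is continuous from $\Iempty\cap[\underline I,\bar I]$ into $\mathcal L(\mathcal M_V)$, the bounded operators on $\mathcal M_V$ (equivalently, acting on the predual $C_V=\{h : h/V\in L^\infty\}$). Fix $I_0\in\Iempty$; since $\Iempty$ is open we may take a small interval around it on which no equilibrium of \eqref{eq:micro-unique} with $\I=0$ exists, so that for $w_0\in\CR_\CP$ the flow $\Phi^t(\vr,w_0;I)$ is genuinely moving and the hitting-time construction of Proposition~\ref{prop:K} is valid. For a test function $h$ with $\norm{h/V}_\infty\le 1$,
\[
\bigl|\mU_{I}h(w_0)-\mU_{I_0}h(w_0)\bigr|
= \Bigl|\int_0^\infty \bigl(p^{I}(t,w_0)h(\psi_t^{I}(w_0))-p^{I_0}(t,w_0)h(\psi_t^{I_0}(w_0))\bigr)\,dt\Bigr|,
\]
where $\psi_t^I(w_0)=(\vr,w^I_{w_0}(t)+\wb)$. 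The integrand is controlled using three ingredients: (a) joint continuity of $(t,w_0,I)\mapsto (w^I_{w_0}(t), p^I(t,w_0))$, from $C^1$ dependence of the ODE flow on parameters and continuity of $\lambda$; (b) a uniform exponential tail in $t$ for $p^I(t,w_0)$ after the trajectory has entered $\CP_3$ or $\CP_4$ — exactly the estimates $\E(T_1^{x,I})\le M+\log\bigl(\tfrac{x_2-\vr}{w_{23}-\vr}\bigr)$ of Lemma~\ref{lemma:ET1} together with the bound $\sup_{I,w_0}\int_0^{\tau_3}\lambda(v^I_{w_0})<\infty$ from the proof of Proposition~\ref{prop-doebolin2}, which give $p^I(t,w_0)\le C e^{-c t}$ for $t$ beyond an $I$-uniform threshold; and (c) the bound $|h(\psi^I_t(w_0))|\le V(w^I_{w_0}(t)+\wb)\le e^{r\wb}V(w_0)$ since $w$ is decreasing in $\CP_2$ and bounded by $w_{max}$ afterwards (Lemma~\ref{lem:er}), so that the $V$-weight is absorbed. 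Splitting the $t$-integral at the threshold, applying dominated convergence on the finite part and the uniform tail on the remainder, and then dividing by $V(w_0)$, yields $\norm{\mU_I-\mU_{I_0}}_{\mathcal L(\mathcal M_V)}\to 0$ as $I\to I_0$.

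Finally, combine the two pieces by the usual telescoping/resolvent argument: writing $\mu_I^\bfw$ and $\mu_{I_0}^\bfw$ for the respective invariant probabilities (which lie in the unit ball of $\mathcal M_V$ by Lemma~\ref{lem:er}), one has
\[
\mu_I^\bfw-\mu_{I_0}^\bfw
= \bigl(\mU_I^{*}\bigr)^{n}\mu_I^\bfw - \bigl(\mU_{I_0}^{*}\bigr)^{n}\mu_{I_0}^\bfw
= \sum_{k=0}^{n-1}\bigl(\mU_{I_0}^{*}\bigr)^{k}\bigl(\mU_I^{*}-\mU_{I_0}^{*}\bigr)\bigl(\mU_I^{*}\bigr)^{n-1-k}\mu_I^\bfw + \bigl(\mU_{I_0}^{*}\bigr)^{n}\bigl(\mu_I^\bfw-\mu_{I_0}^\bfw\bigr).
\]
The difference $(\mU_I^*-\mU_{I_0}^*)(\mU_I^*)^{n-1-k}\mu_I^\bfw$ has zero mass, so applying the uniform $V$-gap to each $(\mU_{I_0}^*)^k$ gives $\norm{\mu_I^\bfw-\mu_{I_0}^\bfw}_V\le \frac{C}{1-\rho}\norm{\mU_I-\mU_{I_0}}_{\mathcal L(\mathcal M_V)}\cdot\sup_{m}\norm{(\mU_I^*)^m\mu_I^\bfw}_V + C\rho^{n}\norm{\mu_I^\bfw-\mu_{I_0}^\bfw}_V$; letting $n\to\infty$ kills the last term (as $\rho<1$) and $I\to I_0$ kills the first. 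Hence $I\mapsto\mu_I^\bfw$ is continuous at $I_0$ in $\mathcal M_V$, and since $I_0\in\Iempty$ was arbitrary this proves the proposition. The main obstacle is step~(b): obtaining the exponential-in-$t$ decay of $p^I(t,w_0)$ with rate and threshold uniform over both $I\in[\underline I,\bar I]$ and $w_0\in\Ew$ — near $w_0$ small the trajectory may dwell in $\CP_2$ for a time comparable to $\log(w_0-\vr)$, which is where the restriction $I\in\Iempty$ (no equilibrium, hence the flow cannot stall) and the uniform integral bound $\sup\int_0^{\tau_3}\lambda(v^I_{w_0})<\infty$ are essential.
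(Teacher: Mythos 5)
Your overall strategy---uniform Doeblin plus uniform Lyapunov, then a perturbation argument reducing continuity of $I\mapsto\mu_I^\bfw$ to continuity of the transition operator---is the same as the paper's. The gap is in your middle step: you claim \emph{operator norm} continuity of $I\mapsto\mU_I$ in $\mathcal L(\mathcal M_V)$ (equivalently, on the predual $C_V$), but the dominated-convergence argument you describe only yields pointwise continuity in $w_0$. Concretely, the argument controls, for each fixed $w_0$, the quantity $g_I(w_0)\defi\int_{\Ew}\lvert K_I(w_0,w)-K_{I_0}(w_0,w)\rvert V(w+\wb)\,dw$ and shows $g_I(w_0)\to 0$ with the $I$-uniform domination $g_I(w_0)\leq 2(\gamma_L V(w_0)+K_L)$. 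Dividing by $V(w_0)$ gives $g_I(w_0)/V(w_0)\leq 2\gamma_L+2K_L/V(w_0)$, which is bounded but does \emph{not} tend to zero as an upper bound; there is no uniformity in $w_0$, and indeed as $w_0\to\infty$ the time the trajectory spends in $\CP_2$ grows like $\log(w_0-\vr)$, so nothing forces $\sup_{w_0}g_I(w_0)/V(w_0)\to 0$. You flag this obstacle at the end, but the sketch does not resolve it, and I do not believe it can be resolved at this level of generality.

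The fix is to weaken the requirement to \emph{strong} continuity at a single measure. The paper proves $\norm{\mU_I^*\nu_D^0-\mU_{I_0}^*\nu_D^0}_V\to 0$ by integrating $g_I$ against the fixed measure $\nu_D^0$ (which has $V\in L^1(\nu_D^0)$) and applying Lebesgue, then feeds this into a Neumann series for $\mu_I^\bfw$. Your telescoping argument can be made to work with the same relaxation, but your present version puts $\mU_{I_0}^*$ on the left and $\mU_I^*$ on the right, which produces the term $(\mU_I^*-\mU_{I_0}^*)\mu_I^\bfw$ involving the \emph{moving} measure $\mu_I^\bfw$ and thus does need operator-norm control. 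Reverse the order: using invariance of $\mu_{I_0}^\bfw$ write $\mu_I^\bfw-\mu_{I_0}^\bfw=(\mU_I^*)(\mu_I^\bfw-\mu_{I_0}^\bfw)+(\mU_I^*-\mU_{I_0}^*)\mu_{I_0}^\bfw$, iterate to
\[
\mu_I^\bfw-\mu_{I_0}^\bfw=\sum_{k=0}^{n-1}(\mU_I^*)^k(\mU_I^*-\mU_{I_0}^*)\mu_{I_0}^\bfw+(\mU_I^*)^n(\mu_I^\bfw-\mu_{I_0}^\bfw),
\]
apply the $I$-uniform $V$-spectral gap to the zero-mass measures on the right, and let $n\to\infty$ to get $\norm{\mu_I^\bfw-\mu_{I_0}^\bfw}_V\leq\tfrac{C}{1-\rho}\norm{(\mU_I^*-\mU_{I_0}^*)\mu_{I_0}^\bfw}_V$. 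The right-hand side goes to zero because $V\in L^1(\mu_{I_0}^\bfw)$ (Lemma~\ref{lem:er}) and you only need the a.e.\ convergence $g_I(w_0)\to 0$ (the paper's Lemma~\ref{lemma:KiC0}, built on Proposition~\ref{prop:KC0}, which is where the restriction $I\in\Iempty$ enters) dominated by $2(\gamma_L V+K_L)\in L^1(\mu_{I_0}^\bfw)$. With that correction your route is valid and is in fact slightly cleaner than the paper's Neumann-series presentation; without it, the operator-norm-continuity claim is a genuine unproved step.
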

\begin{proof}
	We adapt the proof of Harris theorem from \cite{canizo_harris-type_2023} and  \cite{lacroix_chaines_2002}.
	We consider $ [\underline{I},\bar I]\subset\Iempty$ and a family of partitions $(\CP^I)_{I\in{[\underline{I},\bar I]}}$ from lemma~\ref{lemm_partition}.
	We introduce the mapping $F$ on $\mathcal M(\Ew )\times [\underline{I},\bar I]$ defined by \[F(\mu, I)\defi (\mU_I^*)^{2k_D}\mu\] where $\mU_I^*$ is the adjoint of $\mU_I$.
	
	Let $\vertiii{\mu}_V \defi \norm{\mu}+\beta\norm{\mu}_V$ be a norm for some $\beta>0$ to be adjusted later. Note that $\norm{\cdot}_V$ and $\vertiii{\cdot}_V$ are equivalent because $V\geq 1$.
	\begin{lemma}
	$F(\cdot, I)$ is contracting on $(\mathcal P, \vertiii{\cdot}_V)$ with Lipschitz constant $p\in(0,1)$ independent of $I$.		
	\end{lemma}
	\begin{proof}
	\textbf{Step 1.} $F(\cdot, I)$ leaves $\mathcal M_V$ invariant. From lemma~\ref{lem:er}, we have that $\forall I\in [0,\bar I]$, $\mU_I V\leq \gamma_L V+K_L$ where the Lyapunov constant $\gamma_L\in(0,1)$ and $K_L\geq0$ are independent of $I$. We then have
	\[\mU_I^{2k_D} V\leq \gamma_L^{2k_D} V+\sum_{n=0}^{2k_D-1}\gamma_L^nK_L :=\gamma_L^{2k_D} V+\tilde K_L.\]
	This implies that (using the Hahn-Jordan decomposition of $\mu$) \[\forall\mu\in\mathcal M_V,\quad \norm{(\mU_I^*)^{2k_D}\mu}_V\leq \gamma_L^{2k_D} \norm{\mu}_V+\tilde K_L\norm{\mu}\] which proves the point.

	\textbf{Step 2.} $F(\cdot, I)$ is contracting. Proposition~\ref{prop-doebolin2} and the proof of theorem 2.1 in \cite{canizo_harris-type_2023} imply that 
	\[
		\forall\nu\in\mathcal N,\ \norm{(\mU_I^*)^{2k_D}\nu}\leq \gamma_H\norm{\nu}
	\] for $\gamma_H\defi 1-\beta_D\in(0,1)$. This implies that 
	\[
		\forall\nu\in\mathcal N\cap\mathcal M_V,\quad\vertiii{(\mU_I^*)^{2k_D}\nu}_V\leq (\gamma_H+\beta \tilde K_L)\norm{\nu}+\beta\gamma_L^{2k_D}\norm{\nu}_V\leq \gamma \vertiii{\nu}_V 
	\] 
	where $\gamma=\max(\gamma_H+\beta\tilde K_L, \beta\gamma_L^{2k_D})$ can be made smaller than $1$ by decreasing $\beta$. 
	\end{proof}
	
	This reproves that there is a unique invariant distribution $\muW_I$ in $(\mathcal P, \vertiii{\cdot}_V)$ which can be expressed as
	\begin{lemma}
		\[
		\muW_I = \nu_D^0+\sum\limits_{n=0}^\infty (\mU_I^*)^{2k_Dn}\left((\mU_I^*)^{2k_D}\nu_D^0-\nu_D^0\right).
		\]	
	\end{lemma}
	\begin{proof}
	The invariant distribution is written $\muW_I = \nu_D^0+\rho^I$ with $\rho^I\in\mathcal N\cap\mathcal M_V$ as $\nu_D^0\in\mathcal M_V$. It follows that $(Id -(\mU_I^*)^{2k_D})\rho^I = (\mU_I^*)^{2k_D}\nu_D^0-\nu_D^0$ which implies the lemma by the contracting property.
	\end{proof}  
	We now look at the continuity of the mapping $I\to\muW_I$ in $(\mathcal P(\Ew), \vertiii{\cdot}_V)$. 
	Using the previous absolutely  convergent series, the continuity is a consequence of the continuity of $I\to \mU_I^*\nu_D^0$ (see lemma~\ref{lemma:acv}).
	We consider a sequence $(I_n)_n$ in $[\underline{I},\bar I]$ which converges to $I$. We note that
	\[
	\vertiii{\mU_{I_n}^*\nu_D^0 - \mU_I^*\nu_D^0}_V\leq C\norm{\mU_{I_n}^*\nu_D^0 - \mU_I^*\nu_D^0}_V
	\]
	by equivalence of the norms.
	We then consider $h\in\mathrm L^{\infty}(\mathbf E_w)$ and get from proposition~\ref{prop:K} (because $I\in\Ireg$):
	\begin{multline*}
			\lvert \nu_D^0(\mU_{I_n}Vh - \mU_{I}Vh)\rvert \leq \|h\|_\infty\cdot \int \nu_D^0(dw_0)\int_{\Ew }\lvert K_I(w_0,w)-K_{I_n}(w_0,w)\rvert V(w+\wb)dw 
			\\\defi  \|h\|_\infty\cdot\langle \nu_D^0, \lvert K_I - K_{I_n}\lvert\cdot V(\cdot+\wb)\rangle.
	\end{multline*}
	Hence, we have shown that
	\begin{equation}\label{eq:c0mu}
	\norm{\mU_{I_n}^*\nu_D^0 - \mU_I^*\nu_D^0}_V\leq  \langle \nu_D^0, \lvert K_I - K_{I_n}\lvert\cdot V(\cdot+\wb)\rangle
	\end{equation}
 and  $I\to\muW_I$ is continuous if we are able to show that the right hand side of \eqref{eq:c0mu} converges to zero as $n\to\infty$.
 	\begin{lemma}\label{lemma:KiC0}
		For almost all $w_0\in \Ew $, $\int_{\Ew }\lvert K_{I_n}(w_0,w)-K_{I}(w_0,w)\rvert V(w+\wb)dw$ converges to zero as $n\to\infty$.
	\end{lemma}
	\begin{proof}
		The lemma is a direct consequence of the Brezis-Lieb theorem~\ref{th-brezis-lieb}.
	
		\textbf{Step 1.}
		By proposition~\ref{prop:KC0}, the integrand $K_{I_n}(w_0,w)$ converges  to $K_{I}(w_0,w)$ \textit{a.e.} when $I\in\mathbf  I_\emptyset$.
	
		\textbf{Step 2.} We have $K_I\cdot V(\cdot+\wb)=\mU_{I}V<\infty$ by lemma~\ref{lem:er}.
		
		\textbf{Step 3.} We show that $\mU_{I_n}V$ converges to $\mU_{I}V$ \textit{a.e.}.
		Indeed:
		\[
			\mU_{I_n}V(w_0)=\int_0^\infty V\left(\wb+\hat w^{I_n}_{w_0}(\tau)\right)e^{-\tau}d\tau
		\]
		by lemma~\ref{lemma:Utau} where $\hat w$ is a time change of $w$. 
	 The integrand is continuous in $I_n$.
	 From corollary~\ref{coro:wbound}, we know that there is $C>0$ such that for all $n$ large enough and $\forall\tau\geq0$, $\hat w^{I_n}_{w_0}(\tau)\leq C+|w_0|$, this provides a domination of the integrand using the monotony of $V$. This proves that $\mU_{I_n}V$ converges to $\mU_{I}V$ \textit{a.e.} by Lebesgue's theorem and concludes the lemma.
	\end{proof}

	Let us define $g_n(w_0) \defi \int_{\Ew }\lvert K_I(w_0,w)-K_{I_n}(w_0,w)\rvert V(w+\wb)dw$. The previous lemma shows that $g_n$ converges to zero \textit{a.e.}. By lemma~\ref{lem:er}, for $n$ large enough, $g_n(w_0)\leq 2(\gamma_LV(w_0)+K_L)$. Hence, by Lebesgue's dominated convergence theorem, the right hand side of \eqref{eq:c0mu}, which equals $\int g_n d\nu_D^0$, converges to zero. We thus have proved that the mapping $I\to\muW_I$ is continuous.
\end{proof}

\begin{remark}
Let us comment on the restrictive assumption in proposition~\ref{prop:di-c0} that $I$ must belong to $\Iempty$.  
The restrictive assumption stems from our estimate of $\norm{\mU_{I_n}^*\nu_D^0 - \mU_I^*\nu_D^0}_V$.

This comes from the proof of proposition~\ref{prop:KC0} which relies on the fact that deterministic solutions are nearby for similar parameter values, a property called structural stability \cite{hale_ordinary_1980}. However, when there are stationary points, this is not necessarily the case. Indeed, one can prove that the one with largest membrane potential is a saddle point for which there are nearby solutions which explode in finite times and other which converge to periodic solutions.
\end{remark}

\begin{proof}[Proof of theorem~\ref{thm-nlDI}]
	We consider $I\in\Iempty$. For a constant $\I\geq 0$, $I+\I\in \Iempty$.
	We define $f:\R_+\ni \I\to \muW_\I(T_\I)$, which depends on $I$, and consider a non-negative sequence $\I_n\to \I_0$. 
	One then finds \[\lvert f(\I_n)-f(\I)\rvert\leq \lvert \muW_{\I_n}(T_{\I_n}) -\muW_{\I}(T_{\I_n})\rvert+\lvert\muW_\I(T_{\I_n}-T_\I)\rvert.\] As $T_{\I_n}=O( V+1)$ by \eqref{eq:T1x}, the first term converges to zero by  proposition~\ref{prop:di-c0}. 
	We then note that the mean jump time $T_{\I_n}$ converges to $T_\I$ a.e. by lemma~\ref{lem-TkC0}.
	Using the same domination \eqref{eq:T1x} and Lebesgue's dominated convergence theorem, one finds that the second term tends to zero as well. Hence, $f$ is continuous.

	We now study the equation $J = \I \cdot f(\I)$ for finding the invariant measures. By the intermediate value theorem, there is a solution $\I$ to this equation when $J$ is small enough, \textit{i.e.}, belongs to $[0, \sup\limits_{\I\geq0}\I f(\I)]$.
\end{proof}

\section{Applications}\label{section:applications}
We give an application of the previous results to the AdExp model \cite{brette_adaptive_2005} for which $F(v) = e^v-av$ with $a>0$. In order to have existence of a separatrix (see lemma~\ref{lemma:separatrix}), we need $a>3$: this allows to define partitions $\CP$. In order to have a unique solution to the mean field equations \eqref{eq:MKV} using theorem~\ref{th-existence-MKV-delay}, we need to choose the rate function $\lambda$. For example, the choice $\lambda(v) = e^{x+K}$ with $K\geq \log(2)$ satisfies assumptions~\ref{as:ratepos-increasing},~\ref{as:C2},~\ref{as:r-lambda-F}.
For the stationary distribution of the isolated neuron \eqref{eq:EDSL} using theorem~\ref{th-mai-results-muW}, we look at the nullcline $V_{null}$ which solves \[F(V_{null}(w))-w+I=0.\]
For $w$ large, the part of the $v$-nullcline $V_{null}^-$ which is on the left of the minimum of the nullcline (for $v=0$) has asymptotic behavior
\[
V_{null}^-(w)\underset{+\infty}{\sim} -\frac{w}{a}
\]
which shows that the assumptions of theorems~\ref{th-mai-results-muW},~\ref{thm:existence-uniq-inv-c0} are satisfied. For the existence of invariant distributions for \eqref{eq:MKV}, we look numerically at a situation where \eqref{eq:micro-unique} has two equilibria whence $I\notin\Iempty$ (we cannot apply theorem~\ref{thm-nlDI}). We solve numerically $\I =J\mu^{inv}_\I(\lambda )$ using the numerical scheme of \cite{aymard_mean-field_2019} and numerical continuation \cite{veltz_bifurcationkitjl_2020} based on the Newton algorithm. Figure~\ref{fig:nldi} suggests that theorem~\ref{thm-nlDI} should be valid for $I\notin\Iempty$. The numerical scheme does not allow to draw conclusions regarding the boundedness of $\I/\mu^{inv}_\I(\lambda )$ for $\I$ large.

\begin{figure}[ht!]
	\centering
	\includegraphics[width=0.9\textwidth]{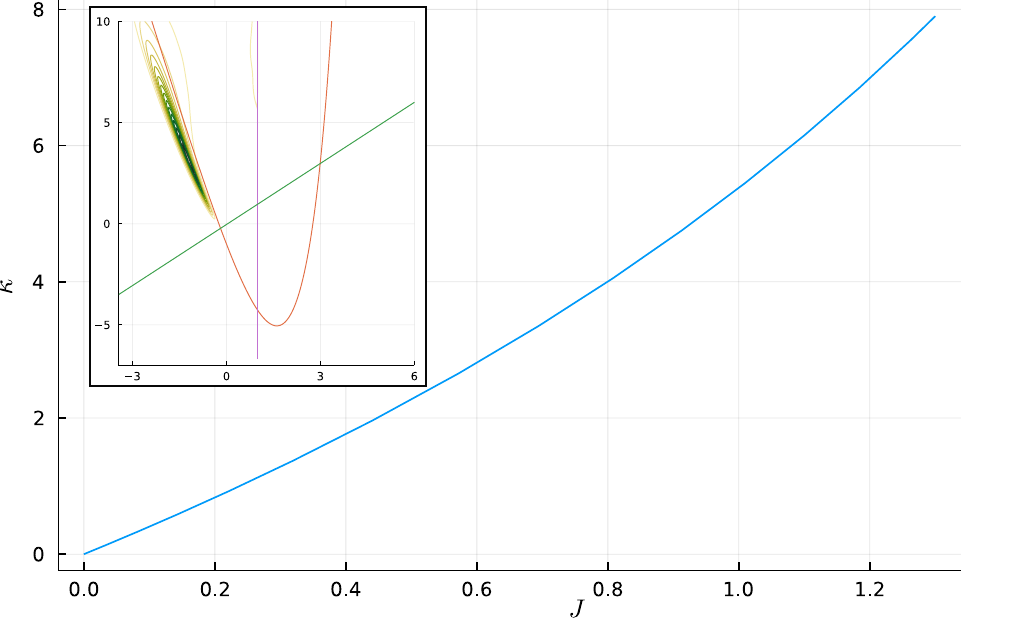}
	\caption{Plot of the solution $\I$ of $\I =J\mu^{inv}_\I(\lambda )$ as function of $J$. Inset: contour plot of the invariant distribution when $J=0$. Parameters: $F(v) = e^v-5v-2$, $\vr=1$, $\wb = 2.5$ and $\lambda(v) = e^{v+2}$. Numerical grid $[-7,8]\times[-10,30]$ with $1000\times 1000$ unknowns.}
	\label{fig:nldi}
\end{figure}

\section{Acknowledgments}
The author thanks Etienne Tanré and Quentin Cormier for fruitful discussions.
This work was supported by the grant ANR ChaMaNe, ANR-19-CE40-0024 and by the European Union’s Horizon 2020 Framework Programme for Research and Innovation under the Specific Grant Agreement No. 945539 (Human Brain Project SGA2, SGA3).

\appendix
\renewcommand{\theequation}{\thesection.\arabic{equation}}
\renewcommand{\thelem}{\thesection.\arabic{lem}}
\section{Functional analysis}
\begin{lemma}\label{lemma:acv}
	Let us consider a mapping $T_I\in\mathcal L(E)$ where $E$ is a Banach space and I belong to $\mathcal I$, an open ball of the reals. Assume that $\mathcal I\ni I\to T_I\cdot x_0$ is continuous for some $x_0\in E$. Further assume that there is $n\in\mathbb N^*$ such that for  for all $I\in \mathcal I$, $\norm{T_I^n}_{\mathcal L(E)}\leq\gamma<1$. Then $I\to (Id - T_I^n)^{-1}x_0$ is continuous.
\end{lemma}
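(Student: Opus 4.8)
The plan is to represent the resolvent by a Neumann series that converges uniformly in $I$ and then to verify term-by-term continuity. Since $\norm{T_I^n}_{\mathcal L(E)}\leq\gamma<1$ for every $I\in\mathcal I$, the operator $\mathrm{Id}-T_I^n$ is boundedly invertible, $(\mathrm{Id}-T_I^n)^{-1}=\sum_{k\geq 0}T_I^{nk}$, and $\norm{(\mathrm{Id}-T_I^n)^{-1}}_{\mathcal L(E)}\leq(1-\gamma)^{-1}$ uniformly in $I$. Applying this to $x_0$ and using $\norm{T_I^{nk}x_0}\leq\gamma^{k}\norm{x_0}$, the series $\sum_{k\geq 0}T_I^{nk}x_0$ converges in $E$ uniformly with respect to $I\in\mathcal I$; since a uniform limit of continuous $E$-valued maps is continuous, it suffices to show that $I\mapsto T_I^{m}x_0$ is continuous for each $m\in\mathbb N$.

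I would prove this by induction on $m$, the cases $m=0$ and $m=1$ being immediate. For the step $m\to m+1$, fix $I_0\in\mathcal I$ and write
\[
T_I^{m+1}x_0-T_{I_0}^{m+1}x_0
= T_I\bigl(T_I^{m}x_0-T_{I_0}^{m}x_0\bigr)+(T_I-T_{I_0})\bigl(T_{I_0}^{m}x_0\bigr).
\]
The first term is bounded by $\norm{T_I}_{\mathcal L(E)}\,\norm{T_I^{m}x_0-T_{I_0}^{m}x_0}$, which tends to $0$ as $I\to I_0$ by the induction hypothesis, provided $\norm{T_I}_{\mathcal L(E)}$ stays bounded near $I_0$; this boundedness follows from the continuity of $I\mapsto T_I y$ via the uniform boundedness principle, the family $\{T_I\}$ being pointwise bounded on compact subsets of $\mathcal I$. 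The second term tends to $0$ by the continuity of $I\mapsto T_I y$ evaluated at the fixed vector $y=T_{I_0}^{m}x_0$. This closes the induction, so every partial sum $\sum_{k=0}^{N}T_I^{nk}x_0$ is continuous in $I$ and the claim follows from the uniform convergence established above.

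The main obstacle is the inductive step: it uses the continuity of $I\mapsto T_I y$ not merely at $x_0$, but at the finitely many iterates $T_{I_0}^{\ell}x_0$, $\ell\leq m$; this is what in fact holds in the applications, where $I\mapsto T_I y$ is continuous for every $y\in E$. One must also keep track of the uniform bound on $\norm{T_I}_{\mathcal L(E)}$, since the low powers $T_I^{j}$ with $j<n$ have operator norm that need not be below $1$ and would otherwise spoil the estimate.
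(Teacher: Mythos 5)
You follow the same route as the paper --- Neumann series $\sum_{k\ge 0}T_I^{nk}x_0$, the uniform-in-$I$ geometric tail bound $\|T_I^{nk}x_0\|\le\gamma^{k}\|x_0\|$, and then term-by-term continuity --- but whereas the paper's proof is the single line ``the series is absolutely convergent which implies the lemma,'' you actually try to check the term-by-term continuity and in doing so you have put your finger on a genuine defect in the lemma as stated. Continuity of $I\mapsto T_Ix_0$ at the one vector $x_0$ does not propagate to $I\mapsto T_I^2x_0$: your decomposition
\[
T_I^2x_0-T_{I_0}^2x_0=T_I\bigl(T_Ix_0-T_{I_0}x_0\bigr)+(T_I-T_{I_0})T_{I_0}x_0
\]
forces one to control $I\mapsto T_I y$ at the shifted vector $y=T_{I_0}x_0$ and to bound $\|T_I\|$ locally, and neither is supplied by the stated hypothesis (note also that $\|T_I^n\|\le\gamma$ gives no bound on $\|T_I\|$ when $n\ge 2$). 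The lemma is in fact false as written: in $E=\R^2$ with $x_0=(1,0)$, $n=1$, and
\[
T_I=\begin{pmatrix}0 & c(I)\\ 1/2 & 0\end{pmatrix},\qquad |c(I)|\le\tfrac12,\quad c\ \text{discontinuous},
\]
one has $\|T_I\|\le 1/2$ and $T_Ix_0=(0,1/2)$ constant in $I$, yet $(Id-T_I)^{-1}x_0=\frac{1}{1-c(I)/2}\,(1,1/2)$ is discontinuous. Your proposed repair --- assume continuity of $I\mapsto T_I y$ for every $y\in E$, deduce local boundedness of $\|T_I\|$ by the uniform boundedness principle on compact subintervals, and then close the induction --- is correct, and it is precisely the hypothesis that the application actually verifies: the estimate \eqref{eq:c0mu} together with lemma~\ref{lemma:KiC0} shows $\mU_{I_n}^*\mu\to\mU_I^*\mu$ in $\mathcal M_V$ for any $\mu\in\mathcal M_V$, not only for $\nu_D^0$. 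So your argument is sound; the appendix lemma should be restated with the strong-operator-continuity hypothesis that you identify, and the paper's terse proof should be expanded along the lines you give.
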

\begin{proof}
	The inverse is well defined with expression
	\[
	y_I \defi (Id - T_I^n)^{-1}x_0 = \sum\limits_{p\geq 0} T_I^{p\cdot n}x_0.
	\]
	The series is absolutely convergent which implies the lemma.
\end{proof}
\begin{theorem}[Brezis-Lieb (\cite{brezis_relation_1983} remark (ii))]\label{th-brezis-lieb}
	Let $(E, \Sigma, \mu)$ be a measure space and consider a sequence of measurable mappings $f_n$.
	Assume that $f_n\to f$ \textit{a.e.}, that $\norm{f}_1<\infty$ and $\norm{f_n}_1\to\norm{f}_1$. Then $f_n\to f$ in $\mathrm L^1(E, \Sigma, \mu)$.
\end{theorem}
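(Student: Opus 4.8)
The plan is to obtain Theorem~\ref{th-brezis-lieb} from Fatou's lemma applied to a suitably chosen nonnegative sequence, with no appeal to a dominating function. The starting observation is the pointwise triangle inequality $\lvert f_n - f\rvert \le \lvert f_n\rvert + \lvert f\rvert$, which holds $\mu$-almost everywhere and makes the measurable functions $g_n \defi \lvert f_n\rvert + \lvert f\rvert - \lvert f_n - f\rvert$ nonnegative. Since $f_n\to f$ \textit{a.e.} and $t\mapsto\lvert t\rvert$ is continuous, one also has $g_n\to 2\lvert f\rvert$ \textit{a.e.}

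First I would apply Fatou's lemma to $(g_n)_n$, which gives
\[
2\norm{f}_1 = \int_E \liminf_n g_n\, d\mu \le \liminf_n \int_E g_n\, d\mu = \liminf_n\Bigl(\norm{f_n}_1 + \norm{f}_1 - \norm{f_n-f}_1\Bigr),
\]
where splitting the integral of $g_n$ is legitimate because each of $\lvert f\rvert$, $\lvert f_n\rvert$ (integrable since $\norm{f_n}_1\to\norm{f}_1<\infty$) and $\lvert f_n-f\rvert\le\lvert f_n\rvert+\lvert f\rvert$ is integrable. Then I would use $\norm{f_n}_1\to\norm{f}_1$ to rewrite the last $\liminf$ as $2\norm{f}_1 - \limsup_n\norm{f_n-f}_1$. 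Combining and subtracting $2\norm{f}_1$ — which is allowed precisely because $\norm{f}_1<\infty$ — yields $\limsup_n\norm{f_n-f}_1\le 0$, i.e. $\norm{f_n-f}_1\to 0$.

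The only delicate point is the bookkeeping: one must track $\liminf$ versus $\limsup$ correctly and make sure every quantity that is subtracted (namely $2\norm{f}_1$, and the split of $\int_E g_n\,d\mu$) is finite, which is exactly where the two hypotheses $\norm{f}_1<\infty$ and $\norm{f_n}_1\to\norm{f}_1$ enter — the latter guaranteeing in particular $\norm{f_n}_1<\infty$ for $n$ large, hence integrability of $g_n$. There is no other obstacle; the content of the statement is that $\mathrm L^1$-convergence can be recovered from a.e.\ convergence plus convergence of norms \emph{without} a dominating function, and this is precisely what makes it applicable in Lemma~\ref{lemma:KiC0}, where the a.e.\ convergence $K_{I_n}(w_0,\cdot)\to K_I(w_0,\cdot)$ holds together with $\norm{K_{I_n}(w_0,\cdot)V(\cdot+\wb)}_1 = \mU_{I_n} V(w_0)\to \mU_I V(w_0) = \norm{K_I(w_0,\cdot)V(\cdot+\wb)}_1$, but no uniform-in-$n$ dominating function is available.
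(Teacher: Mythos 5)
Your proof is correct. Note that the paper does not actually prove Theorem~\ref{th-brezis-lieb}: it states it with a citation to Brezis--Lieb's 1983 paper (their remark~(ii)) and relies on it as a black box, so there is no ``paper proof'' to compare against line by line. The argument you give --- setting $g_n := \lvert f_n\rvert + \lvert f\rvert - \lvert f_n - f\rvert \ge 0$, applying Fatou to $(g_n)_n$, and cancelling the finite quantity $2\norm{f}_1$ to get $\limsup_n \norm{f_n-f}_1 \le 0$ --- is the classical Riesz--Scheffé argument, and it is genuinely different from what Brezis and Lieb do in the cited reference. Their paper establishes the stronger \emph{Brezis--Lieb lemma}: under a.e.\ convergence and a uniform $L^p$ bound, $\norm{f_n}_p^p - \norm{f_n-f}_p^p \to \norm{f}_p^p$, proved by a more involved truncation device (introducing an auxiliary family $W_{\epsilon,n}$ and sending $\epsilon\to 0$ after Fatou). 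The statement in the paper is then obtained as a corollary in the $p=1$ case when one additionally assumes $\norm{f_n}_1 \to \norm{f}_1$. Your direct route buys simplicity and self-containedness --- one application of Fatou and some bookkeeping --- at the cost of not yielding the more general Brezis--Lieb decomposition; but that generality is never used in Lemma~\ref{lemma:KiC0}, where only the $L^1$ conclusion is invoked, so your proof is perfectly adequate for the paper's purposes. Two small remarks on the bookkeeping, both of which you flag correctly: the hypothesis $\norm{f_n}_1 \to \norm{f}_1 < \infty$ gives $\norm{f_n}_1 < \infty$ only for $n$ large, which is all that is needed since the conclusion concerns the tail of the sequence; and the cancellation of $2\norm{f}_1$ from both sides of the Fatou inequality is legitimate precisely because $\norm{f}_1 < \infty$, which is why that hypothesis cannot be dropped.
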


\section{ODE}
\begin{lemma}
	\label{lemma:separatrix}
	Grant assumption~\ref{hyp.F}-$(i)$. For $\I=0$, there are a constant $\alpha_-<0$ and a point $x_n\defi (v_-, v_-)\in\R^2$ on the $w$-nullcline such that:
	\[
	B_{\alpha_-,x_n}
	=
	\bigl\{(v,w)\in\R^2 \st  v\le v_-,\ w\le\alpha_-(v-v_-)+v_- \bigr\}
	\]
	is backward flow invariant, i.e. $\Phi^{-t}(B_{\alpha_-,x_n})\subset B_{\alpha_-,x_n}$ for all $t\geq 0$. 
\end{lemma}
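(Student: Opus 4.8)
The plan is to restate the claim as \emph{forward} invariance of the closed set $B:=B_{\alpha_-,x_n}$ under the vector field $-G$, where $G(v,w):=(F(v)-w+I,\ v-w)$ is the right–hand side of \eqref{eq:micro-unique} with $\I=0$; this is exactly what is wanted, since $t\mapsto\Phi^{-t}(x_0)$ solves $\dot y=-G(y)$. I would then invoke a standard flow–invariance criterion (Nagumo's theorem for a closed set): $B$ is positively invariant under $-G$ as soon as, at every boundary point, $-G$ does not point strictly out of $B$. Here $B$ is the intersection of the two half-planes $\{v\le v_-\}$ and $\{w\le\alpha_-(v-v_-)+v_-\}$, so $\partial B$ is the union of the vertical ray $R_1:=\{v=v_-,\ w\le v_-\}$, the slanted ray $R_2:=\{w=\alpha_-(v-v_-)+v_-,\ v\le v_-\}$, and the corner $x_n=(v_-,v_-)$, which automatically lies on the $w$-nullcline $\{v=w\}$. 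It suffices to check the inward condition on $R_1$ and $R_2$, the corner being covered because there the requirement is just the conjunction of those two conditions evaluated at $x_n$.

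I would take $\alpha_-=-1$, so that $R_2$ is the part of the line $v+w=2v_-$ with $v\le v_-$. On $R_1$ the outward normal is $(1,0)$ and $-G(v_-,w)\cdot(1,0)=-F(v_-)+w-I\le -F(v_-)+v_--I$ since $w\le v_-$, which is $\le 0$ provided
\[
F(v_-)-v_-+I\ \geq\ 0 .
\]
On $R_2$ the outward normal is $(1,1)$ and, substituting $w=2v_--v$,
\[
-G(v,w)\cdot(1,1)=-F(v)+2w-v-I=-F(v)-3v+4v_--I ,
\]
so the condition becomes $F(v)+3v\ge 4v_--I$ for all $v\le v_-$. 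This is the step that uses assumption~\ref{hyp.F}-$(i)$: since $F'$ is increasing (strict convexity) and $\lim_{-\infty}F'<-3$, there is $v_0$ with $F'(v)<-3$, hence $\tfrac{d}{dv}\bigl(F(v)+3v\bigr)<0$, for all $v<v_0$; thus $v\mapsto F(v)+3v$ is decreasing on $(-\infty,v_0]$, and if $v_-<v_0$ then $F(v)+3v\ge F(v_-)+3v_-$ for $v\le v_-$, reducing the $R_2$-condition to the same inequality $F(v_-)-v_-+I\ge 0$.

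Finally, to realise this I would take $v_-$ very negative: on $(-\infty,v_0]$ one has $\tfrac{d}{dv}\bigl(F(v)-v+I\bigr)=F'(v)-1<-4$, so $F(v)-v+I$ is decreasing there and, integrating this bound, $F(v)-v+I\to+\infty$ as $v\to-\infty$; pick $v_-<v_0$ with $F(v_-)-v_-+I>0$. With $\alpha_-=-1<0$ and this $v_-$, both boundary conditions hold (strictly, which moreover shows that $B$ lies strictly below the $v$-nullcline, as used in proposition~\ref{prop:separatrix}), so $B_{\alpha_-,x_n}$ is backward-flow invariant. I expect the only real subtleties to be bookkeeping the orientations of the boundary normals and seeing why the threshold is $-3$: for a general slope $\alpha_-=-a$ the $R_2$-condition carries the factor $a+1+\tfrac1a$, which is $\ge 3$ by AM--GM with equality at $a=1$, so $\lim_{-\infty}F'<-3$ is precisely what lets some choice of $\alpha_-$ work; the remaining estimates are routine one-dimensional calculus.
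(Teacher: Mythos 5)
Your proof is correct and takes essentially the same approach as the paper: check that the backward vector field points inward at each boundary ray of the wedge $B_{\alpha_-,x_n}$ and choose $v_-$ far enough to the left so that both half-plane conditions reduce to the single inequality $F(v_-)-v_-+I\ge 0$. The only cosmetic differences are that you fix $\alpha_-=-1$ at the outset and deduce the inequality on the slanted ray from the monotonicity of $v\mapsto F(v)+3v$ on $(-\infty,v_0]$, whereas the paper keeps $\alpha_-$ general and uses the convexity tangent-line bound $F(v)\ge F(v^*)+F'(v^*)(v-v^*)$ before optimizing the slope (arriving at the same threshold $-3$); both arguments hinge on $\lim_{-\infty}F'<-3$ in the same way, and your AM--GM remark is exactly why $\alpha_-=-1$ is the optimal choice.
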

\begin{proof}
	We adapt the proof of theorem 3.1 in \cite{touboul_spiking_2009}.
	To prove that $\Phi^{-t}$ leaves $B_{\alpha_-,x_n}$, $B$ in short, invariant, we determine under which conditions on $\alpha_-$ and $v_-$, the flow $\Phi^{-t}$ is inward for all $x\in\partial B$. The vector field associated with the backward flow is:
	$
	\Gb(x) \defi
	\bigl(-F(v)+w-I ,\, -v+w \bigr)
	$
	and the flow is inward at $x\in\partial B$ if and only if:
	\[
	\zeta \defi \vec n(x) \cdot \Gb(x) \geq 0
	\]
	where $\vec n(x)$ is an inward normal vector to the boundary at $x$.

	\smallskip

	The boundary $\partial B$ is composed of two segments: the first one is $\{v_-\}\times(-\infty,v_-]$, the second one is $\{(v,w)\,|\,  v\leq v_-,\ w=\alpha_-(v-v_-)+v_- \}$, see figure~\ref{fig:invariant.sets}.
	If $x$ belongs to the first segment for which $\vec n=(-1,0)$, we have:
	$\zeta
	=  F(v)-w +I$;
	if $x$ belongs to second segment for which $\vec n = (\alpha_-,-1)$, we have:
	$
	\zeta
	=
	\alpha_-\,(-F(v)+w-I) - (-v+w)$.
	So, we get the following two conditions for the  backward flow to be inward:
	\begin{align*}
		(i)  &\ \forall (v,w)\textrm{ such that } v=v_-,\,w\leq v_-:
		&& F(v)-w +I\geq 0,
		\\
		(ii) &\ \forall (v,w)\textrm{ such that } v\leq v_-,\,w = \alpha_-\,(v-v_-)+v_-:
		&& \alpha_-\,(-F(v)+w-I) - (-v+w)\geq 0.
	\end{align*}
	
	If $F(v_-)+I\geq v_-$ then condition $(i)$ is fulfilled: this amounts to choosing $(v_-,v_-)$ below the v-nullcline.  For the condition $(ii)$, plugging $w = \alpha_-\,(v-v_-)+v_-$ in the last inequality gives:
	\[
	(\alpha_-^2-\alpha_-+1)\,v-\alpha_-\,F(v)
	\geq (\alpha_-^2-\alpha_-)\,v_-+(1-\alpha_-)\,v_-+\alpha_-\,I.
	\]
	As $F$ is convex, one has $F(v)\geq F(v^*)+F'(v^*)(v-v^*)$ for arbitrary $v^*$. For $\alpha_-<0$, a sufficient condition is 
	\[
(\alpha_-^2-\alpha_-+1)\,v-\alpha_- (F(v^*)+F'(v^*)(v-v^*))
\geq (\alpha_-^2-\alpha_-) v_-+(1-\alpha_-) v_-+\alpha_- I.
\]
To have a decreasing slope, one needs $\alpha_-^2-\alpha_-+1-\alpha_- F'(v^*)<0$ which is possible if $F'(v^*)<-3$. Thanks to assumption~\ref{hyp.F}-$(i)$, one can choose $v^*$ accordingly. We then chose $v_-=v^*$ and check the condition for $v=v_-$, it gives
\[
-\alpha_-(F(v_-)+I-v_-)\geq 0
\]
which is satisfied by condition $(ii)$. The proof is complete.
\end{proof}
\begin{figure}[h!]
	\includegraphics[width=0.55\textwidth]{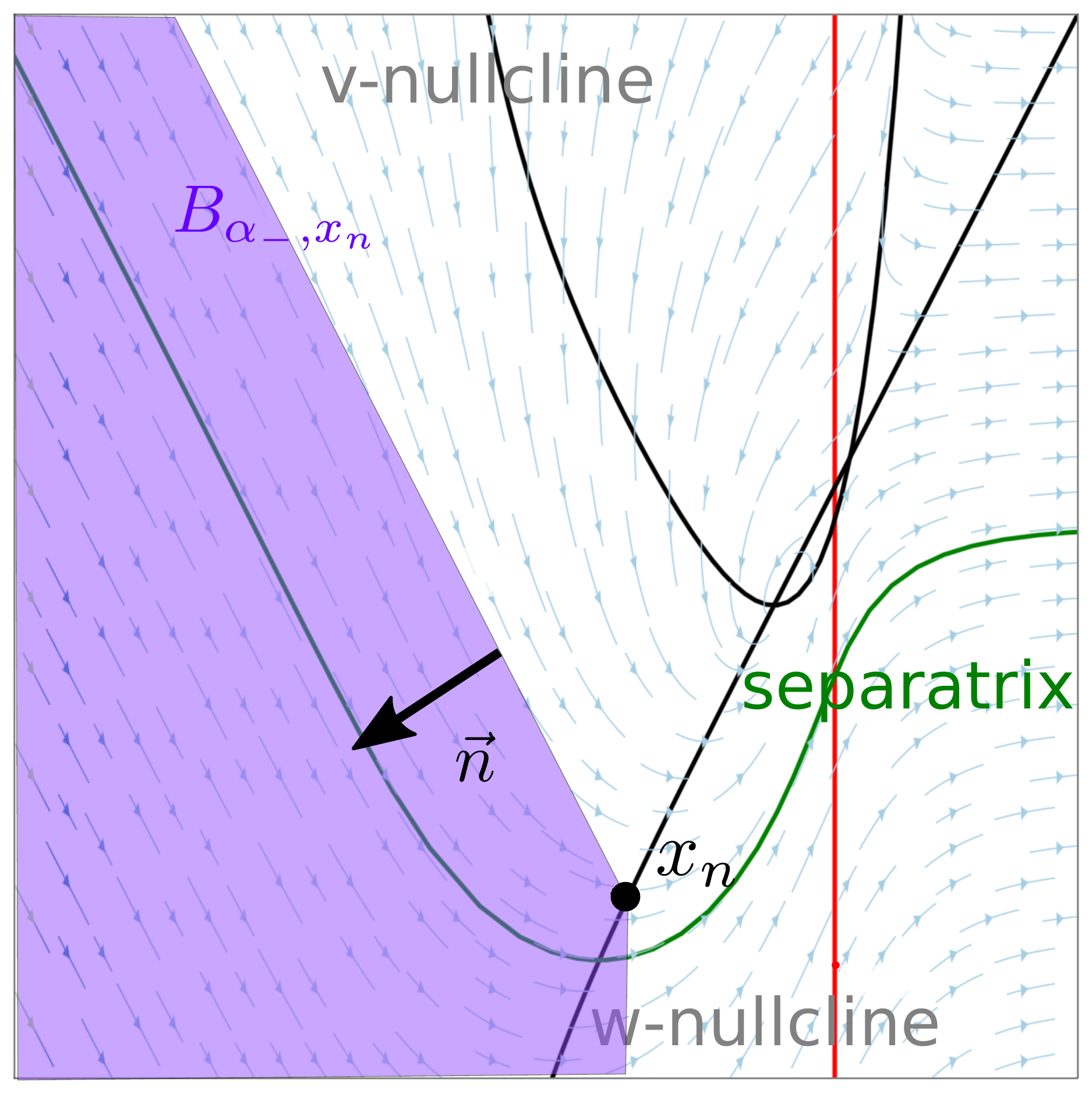}
	\caption{Drawing of the nullclines and different invariant sets involved in the proof of lemma~\ref{lemma:separatrix}.}
	\label{fig:invariant.sets}
\end{figure}

\begin{lemma}\label{lemma:whinf}
	Grant assumption~\ref{hyp.F} and consider a finite interval $[\underline I, \bar I]$ with a family of partitions $\CPf{\underline{I}}{\bar I}$ by lemma~\ref{lemm_partition} written $\mathcal P^I=(x_{sep}^I,w_{23})$. There is $w_{23}$ large enough such that the following holds for $\I=0$.
	\begin{enumerate}
		\item The solution from $(\vr,w_0)\in\CP_2^I$ hits the $v$-nullcline in finite time, we denote by $(v_n^I(w_0),w_n^I(w_0))$ the first intersection point.
		\item There is $\epsilon>0$ such that for all $w_0\geq w_{23},\ I\in [\underline I, \bar I]$: \[w_0-w_n^I(w_0)\geq \epsilon.\]
	\end{enumerate}
\end{lemma}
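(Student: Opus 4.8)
The plan is to follow the solution $(v(t),w(t))=\Phi^{t}(\vr,w_0)$ of \eqref{eq:micro-unique} with $\I=0$ starting from $(\vr,w_0)\in\CP_2^I$, and to control the first time $\tau_v=\tau_v^I(w_0)\in(0,+\infty]$ at which it meets the $v$-nullcline. Since $(\vr,w_0)\in\CP_2^I$, the starting point lies strictly above the $v$-nullcline and strictly between its two branches, $\Vn^-(w_0,I)<\vr<\Vn^+(w_0,I)$; hence on $[0,\tau_v)$ one has $\dot v=F(v)-w+I<0$, so $v$ is strictly decreasing with $v\le\vr$, the solution can reach the nullcline only on the left branch $\Vn^-$, and $\dot w=v-w\le\vr-w$, whence $w(t)\le\vr+(w_0-\vr)e^{-t}\le w_0$. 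By Theorem~\ref{th:jtb1}, as long as it has not met the nullcline the solution stays in $\CP_2^I$ and then crosses $\{w=w_{23}\}$ into $\CP_3^I$ (it cannot reach $\CP_1^I$, which would need $\dot v\ge 0$ on $\Vn^+$, nor leave $\CP_3^I$), and it obeys $w\ge\min F+I$; so on $[0,\tau_v)$ the solution is confined to the bounded region $\CP_2^I\cup\CP_3^I$, and in particular $v$ is bounded below since $F(v)<w-I\le w_0-I$.

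To prove (i) I rule out $\tau_v=+\infty$. There is an easy case: if \eqref{eq:micro-unique} has no equilibrium (e.g.\ $I\in\Iempty$), then $m_\ast\defi\min_{v}(F(v)-v+I)>0$, while $w-v>0$ on $[0,\tau_v)$ (else $v\ge w$ forces $F(v)-v+I<0$ for $v$ between the branches, impossible here) and $\frac{d}{dt}(w-v)=-(F(v)-v+I)\le-m_\ast$, so the positive quantity $w-v$ must vanish by time $(w_0-\vr)/m_\ast$; hence $\tau_v\le(w_0-\vr)/m_\ast<\infty$. In general, if $\tau_v=+\infty$ then $v$ decreases to a finite limit $v_\infty$, whence $\dot w=v-w$ forces $w\to v_\infty$ and, in the limit of $\dot v=F(v)-w+I$, $F(v_\infty)-v_\infty+I=0$: the solution converges to an equilibrium from above the $v$-nullcline, without ever crossing it. Strict convexity of $F$ yields at most two equilibria, and when two exist exactly one — that of smaller potential, where $F'<1$, hence $\det=1-F'>0$ and $\mathrm{trace}=F'-1<0$ — is linearly stable, the other being a saddle on $\Vn^+$; so the limit point is the stable one. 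If it lies on $\Vn^+$ it belongs to $\CP_1^I$, which is excluded since the solution never enters $\CP_1^I$. If it lies on $\Vn^-$ it belongs to $\CP_3^I$; this point is fixed, independent of the partition, and the descent estimate below shows the solution meets $\Vn^-$ at a height exceeding any prescribed value once $w_{23}$ is large, hence before it could approach this equilibrium — a contradiction. Excluding precisely this last possibility — an orbit limiting onto a stable node on $\Vn^-$ without crossing it — is the delicate point; everything else reduces to comparison of scalar ODEs.

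The quantitative descent, which also yields (ii): on $(0,\tau_v)$ and away from the $w$-nullcline, $v$ as a function of $w$ (the map $V_x$ of \eqref{eq:Vx}) satisfies $\frac{dv}{dw}=\frac{(w-I)-F(v)}{\,w-v\,}$, with numerator in $[0,(w-I)-\min F]$ (the orbit being above $\Vn^-$) and, while $v<w$ — which holds whenever $w$ is large, since then $v\le\vr<w$ — denominator $\ge w-\vr$; thus for $w$ large one has $F(v)=o(w)$ away from $\Vn^-(w,I)$, so $\frac{dv}{dw}$ stays $\le 1+C'/(w-\vr)$. Integrating this against the endpoint relation $F(v(\tau_v))=w(\tau_v)-I$ — which, since $F(v)\sim -Lv$ as $v\to-\infty$ with $L\defi-\lim_{-\infty}F'>3$, forces $-v(\tau_v)\gtrsim (w(\tau_v)-I)/L$ — pins the meeting height to $w_n^I(w_0)\le\theta\,w_0+C$, for some fixed $\theta\in(0,1)$ (close to $L/(L+1)$, after absorbing a logarithmic term) and $C$ uniform in $I\in[\underline I,\bar I]$. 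This simultaneously forces $\tau_v<\infty$ (finishing (i) in all cases), places the meeting point above any equilibrium once $w_{23}$ is large enough (so the exclusion above holds), and gives $w_0-w_n^I(w_0)\ge(1-\theta)w_0-C\ge(1-\theta)w_{23}-C=:\epsilon>0$ once $w_{23}$ is chosen large — which is exactly (ii). The main effort will be to make the approximation $\frac{dv}{dw}\approx 1$ rigorous and uniform in $I$: controlling the logarithmic correction, the tangential approach to $\Vn^-$ (where $\frac{dv}{dw}\to 0$), and any stretch where $v$ nears $w$ in the bounded-$w$ regime (handled by continuity of $(w_0,I)\mapsto w_n^I(w_0)$ on compacts together with part (i)).
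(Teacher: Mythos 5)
The key gap is in your proof of part (i) when equilibria exist. You acknowledge that excluding an orbit limiting onto a stable node on $\Vn^-$ without crossing is "the delicate point," and you then claim the quantitative descent estimate resolves it because "the solution meets $\Vn^-$ at a height exceeding any prescribed value once $w_{23}$ is large." But this is circular: the endpoint relation $F(v(\tau_v))=w(\tau_v)-I$ that drives your descent estimate \emph{presupposes} $\tau_v<\infty$, i.e., that the orbit does cross $\Vn^-$. The upper bound $\frac{dv}{dw}\le 1+C'/(w-\vr)$ alone gives a lower bound on the orbit $v(w)\ge\vr-(w_0-w)-C'\log(\ldots)$, which shows where a crossing \emph{cannot} occur (above some threshold $w^*\approx\frac{L}{L+1}w_0$), but it does not produce a crossing; for that you would also need a lower bound on $\frac{dv}{dw}$, which degenerates as the orbit approaches the nullcline (the numerator $(w-I)-F(v)$ tends to zero there), which is exactly the scenario you are trying to rule out. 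Without that, the contradiction never closes.

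The paper's route avoids this entirely and is worth noting as the cleaner path: for (i), flow \emph{backward} from the point $(\Vn^-(w_{23},I),w_{23})$ on the left nullcline branch; above that branch the backward vector field points up-right, so the backward orbit reaches the reset line $\{v=\vr\}$ at some height $\tilde w_{23}\ge w_{23}$ in finite time (one then takes $\tilde w_{23}$, maximized over $I$ in the compact interval, as the new $w_{23}$). For any $w_0\ge \tilde w_{23}$, the forward orbit from $(\vr,w_0)$ sits to the left of the orbit from $(\vr,\tilde w_{23})$ at each fixed height $w$ (by uniqueness/non-crossing); since the latter reaches $\Vn^-$ at height $w_{23}$, the former must already be on or left of $\Vn^-$ at that height, hence has crossed. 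This exploits the freedom in the lemma statement to enlarge $w_{23}$ directly, without any case distinction on equilibria. For (ii), the paper does not integrate globally down to the nullcline at all; it uses a purely local estimate of the $w$-drop while the orbit traverses the slab $\vr-\epsilon\le v\le\vr$ (bounding $\frac{dW_x}{dv}$ from below there), giving a uniform $\epsilon$ immediately without any asymptotics of $F$ or Lambert-type computations. Your global estimate $w_n\le\theta w_0+C$ with $\theta\in(0,1)$ is plausibly correct and even stronger, but it is both harder to make rigorous (the logarithmic correction, the tangential approach near $\Vn^-$) and unnecessary for the stated conclusion.
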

\begin{proof}
	\textbf{	Step 1.} The backward flow from $(v_{23}^I, w_{23})$ reaches the reset line in finite time $\tau_\CR^I$ and all points above on the left part of the nullcline do the same. Indeed, $t\to v_{(v_{23}^I, w_{23})}(-t)$ and $t\to w_{(v_{23}^I, w_{23})}(-t)$ are strictly increasing functions on $\R_+$. Hence, the hitting time $\tau^I_v(\vr, w_0)$ of the $v$-nullcline from $(\vr,w_0)$ is finite when $w_0\geq \tilde w_{23}$ with
	\[
		\tilde w_{23} = \max\limits_{I\in [\underline I, \bar I]} w_{(v_{23}^I, w_{23})}(-\tau_\CR^I)\geq w_{23}.
	\]
	We can take $\tilde w_{23}$ as new common boundary $\CP_2^I\cap\CP_3^I$. We denote it by $w_{23}$ here after.	
	
	\textbf{	Step 2.}
	We consider $\epsilon>0$ such that $\vr-\epsilon>\max\limits_{I}v_{23}^I$ (see figure~\ref{fig:setA}), By assumption~\ref{hyp.F}, the set $\mathbf L\defi \{(\vr-\epsilon,w_0),\ w_0\geq w_{23}\}$ is included in $\CP_{2}^I$. We denote by $(\vr-\epsilon,w^I_h(w_0))$ the first point on $\mathbf L$ that is hit by the trajectory from $(\vr,w_0)\in\CP_2^I$. We obviously have $w^I_n(w_0)\leq w^I_h(w_0)$ so to prove the lemma, it suffices to lower bound $w_0-w^I_h(w_0)$. For $\vr-\epsilon\leq v\leq \vr$, one finds:
\[
	\frac{d}{dv} W_{x}(v)=\frac{W_{x}(v)-v}{W_{x}(v)-F(v)-I}\geq \frac{w_h^I-\vr}{w_0-\underline F},
	\]
	where $x:=(\vr-\epsilon, w_h^I(w_0))$ and $\underline F \defi  \min\limits_{[\vr-\epsilon,\vr]}F+\underline  I$. Gronwall's lemma then gives 
\[
w_0 = W_x(\vr)\geq \tilde W_x(\vr) = w_h^I(w_0) + \epsilon\frac{w_h^I-\vr}{w_0-\underline F}
\]
and thus
\[
w_0-w_h^I(w_0)\geq  \epsilon\frac{w_h^I-\vr}{w_0-\underline F}
\]
or 
\[
\epsilon\frac{w_0-\vr}{w_0-\underline F} \leq \left(w_0-w_h^I(w_0)\right)\times\left[1+\epsilon\frac{1}{w_0-\underline F}\right] \Rightarrow C\epsilon\leq w_0-w_h^I(w_0)
\]
for some $C>0$ by optimizing the factors on $w_0$. This proves the lemma.
\end{proof}

\begin{proposition}\label{prop.tnbounded}
	Grant assumption~\ref{hyp.F} and let us consider a finite interval $[\underline I, \bar I]$ with a family of partitions $\CPf{\underline{I}}{\bar I}$ as in lemma~\ref{lemma:whinf}. Assume that $\I=0$.
	The hitting time $\tau^I_v(\vr, w_0)$ of the $v$-nullcline from $(\vr,w_0)\in\CP_2^I$ is bounded: there is $M>0$ such that \[\forall w_0\geq w_{23},\forall I\in[\underline{I}, \overline{I}]\ \tau^I_v(\vr, w_0)\leq M.\]
\end{proposition}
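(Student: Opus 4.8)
The plan is to reduce the bound to a \emph{linear} lower estimate on the $w$-coordinate at which the trajectory meets the $v$-nullcline, and then to obtain that estimate by a rescaling argument. Work with a family of partitions from lemma~\ref{lemm_partition}, with $w_{23}$ large (to be fixed at the end) so that lemma~\ref{lemma:whinf} applies. Fix $w_0\ge w_{23}$ and $I\in[\underline I,\bar I]$ and let $(v(t),w(t))$ solve~\eqref{eq:micro-unique} with $\I=0$ from $(\vr,w_0)$; by lemma~\ref{lemma:whinf} it crosses the left branch $\Vn^-$ of the $v$-nullcline at a finite time $\tau^I_v(\vr,w_0)$, at a point $(v^I_n(w_0),w^I_n(w_0))$. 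Since $(\vr,w_0)$ lies strictly above the $v$-nullcline, one has $F(v)-w+I<0$, hence $\dot v<0$, on $[0,\tau^I_v(\vr,w_0))$; in particular $v(t)\le\vr$ there, so $\dot w=v-w\le\vr-w$.

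The key reduction is: it suffices to produce $W>0$ and $c'\in(0,1)$, independent of $w_0$ and $I$, with
\[
w^I_n(w_0)\ \ge\ c'\,w_0\qquad\text{for all }w_0\ge W,\ I\in[\underline I,\bar I].
\]
Indeed, choosing then $w_{23}\ge\max(W,\vr/c'+1)$, for every $w_0\ge w_{23}$ one has $w(t)\ge w^I_n(w_0)\ge c'w_{23}>\vr$ on $[0,\tau^I_v(\vr,w_0)]$, so Gronwall's lemma gives $w(t)-\vr\le(w_0-\vr)e^{-t}$ and therefore
\[
\tau^I_v(\vr,w_0)\ \le\ \log\frac{w_0-\vr}{w^I_n(w_0)-\vr}\ \le\ \log\frac{w_0-\vr}{c'w_0-\vr},
\]
whose right-hand side, as a function of $w_0\in[w_{23},\infty)$, is continuous with finite limit $\log(1/c')$ at $+\infty$, hence bounded by the desired $M$.

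Proving the linear lower bound on $w^I_n(w_0)$ is the hard part. The point is that for $w_0$ large the trajectory leaves any fixed half-plane $\{v\ge v_1\}$ in time $O(1/w_0)$ — there $|\dot v|=w-I-F(v)$ is of order $w_0$, as $F$ is bounded on $[v_1,\vr]$ — and then, at the scale $w_0$, evolves essentially according to a fixed linear vector field. Concretely, I would substitute $(v,w)=(w_0\tilde v,w_0\tilde w)$ and use $F(v)/v\to\ell\defi\lim_{-\infty}F'<-3$ as $v\to-\infty$: the rescaled dynamics converges as $w_0\to\infty$, uniformly in $I\in[\underline I,\bar I]$, to $\dot{\tilde v}=\ell\tilde v-\tilde w,\ \dot{\tilde w}=\tilde v-\tilde w$ with initial datum $(0,1)$, while $\Vn^-$ rescales to the line $\{\tilde w=\ell\tilde v\}$, which is exactly the $\dot{\tilde v}=0$ nullcline of the limit system. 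The matrix $\left(\begin{smallmatrix}\ell&-1\\1&-1\end{smallmatrix}\right)$ has two real negative eigenvalues (its discriminant $(\ell+1)^2-4$ being positive precisely because $|\ell+1|>2$), so the limit trajectory, which starts in $\{\tilde w>\ell\tilde v\}$ and tends to the origin along the slow eigendirection — lying in $\{\tilde w<\ell\tilde v\}$ for $\tilde v<0$ — must cross that line at a finite time $t_*>0$, at a point with $\tilde w(t_*)>0$. A continuous–dependence estimate for~\eqref{eq:micro-unique} with respect to its right-hand side then upgrades this to $w^I_n(w_0)\ge\tfrac12\tilde w(t_*)\,w_0$ for all $w_0$ beyond some $W$, uniformly in $I$, which is the displayed bound. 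The main delicacy will be making the rescaled limit and the continuous dependence uniform in $I$, and controlling the nonlinear error $F(v)-\ell v$ (which is only $o(|v|)$, not bounded) over the relevant range of $v$.
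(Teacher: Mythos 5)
Your first reduction is correct and matches the paper exactly: both bound $\tau^I_v(\vr,w_0)$ by $\log\frac{w_0-\vr}{w^I_n(w_0)-\vr}$ via $\dot w\le\vr-w$, so the whole problem becomes producing a lower bound $w^I_n(w_0)\ge c'\,w_0$ uniform in $I$ and in $w_0$ large. The two proofs part ways only in how this linear lower bound is obtained, and there your rescaling argument has a genuine gap.

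You set $\ell\defi\lim_{-\infty}F'$ and use $F(v)/v\to\ell$ together with the substitution $(v,w)=(w_0\tilde v,w_0\tilde w)$ to pass to a fixed linear limit system. This \emph{presupposes} $\ell$ finite. But assumption~\ref{hyp.F}-$(i)$ only demands $\lim_{-\infty}F'<-3$, which includes $\lim_{-\infty}F'=-\infty$: the quartic model $F(v)=v^4+2av$, explicitly listed in the paper and satisfying all the hypotheses, has $F'(v)=4v^3+2a\to-\infty$. In that case $F(v)/v\to-\infty$, the rescaled right-hand side $F(w_0\tilde v)/w_0$ blows up for any fixed $\tilde v<0$, and the left branch of the $v$-nullcline satisfies $|\Vn^-(w)|/w\to0$, so under your rescaling both the nullcline and the initial datum $(\vr/w_0,1)$ collapse onto $\{\tilde v=0\}$ — the limit is degenerate and gives no information about $\tilde w$ at the crossing. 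So the argument as written does not cover a model the paper is specifically designed to include. (A second, smaller issue is the one you flag yourself: even when $\ell$ is finite, $F(v)-\ell v$ may be unbounded below, and you do not actually close the continuous-dependence estimate; you only point out that it is "the main delicacy".)

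The paper sidesteps both difficulties by working with the orbit equation \eqref{eq:Wx} rather than rescaling. It compares $W_{x^I_\epsilon}$ with the solution $\tilde W$ of the autonomous scalar ODE $\tilde W'=(\tilde W-v^I_n)/(\tilde W-w^I_n)$, solves that explicitly via the Lambert $\mathcal W_{-1}$ function \eqref{eq:B29}, and then only needs the single soft input that $|v^I_n|/w^I_n=|v^I_n|/(F(v^I_n)+I)$ stays bounded as $w^I_n\to\infty$ — which follows from convexity of $F$ alone, with no reference to the asymptotic slope $\ell$, finite or not. This is what makes the paper's estimate uniform over the whole class allowed by assumption~\ref{hyp.F}. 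If you want to keep your blow-up strategy, you would have to replace the single homogeneous rescaling by one adapted to the growth of $F$ at $-\infty$ (something like $v=\Vn^-(w_0)\tilde v$, $w=w_0\tilde w$), and argue separately; the paper's comparison-ODE route avoids having to tune the rescaling to $F$ at all.
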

\begin{proof}
	We denote by $(v^I_n(w_0),w^I_n(w_0))$ the first point on the $v$-nullcline that is hit by the trajectory from $(\vr,w_0)\in\CP_2^I$. 

	We start by showing how  $w_0$ and $w^I_n$ control the hitting time $\tau^I_v(w_0)$. For $t\in[0,\tau^I_v(w_0)]$, we have  $\dot w_x(t)\leq \vr-w_x(t)$ with initial condition $x\in\CP_2^I\cap\CR$. Indeed, the $v$ component of the trajectory decreases above the $v$-nullcline. Using Gronwall's lemma, it implies that $\tau^I_v(\vr,w_0)\leq\log\left(\frac{w_0-\vr}{w^I_n-\vr}\right)\leq\frac{w_0-w^I_n}{w^I_n-\vr}$. Hence, to bound $\tau^I_v$, we shall control the ratio. 

	We now estimate $w^I_n$. Away from the $v$-nullcline, we can write $(v_{x}(t),w_{x}(t)) = (v_{x}(t),W_x(v_{x}(t))) $ where $W_x$ solves \eqref{eq:Wx}:
	\begin{equation}\label{eq:lemtau}
		\frac{dW_x}{dv}(v)=\frac{W_x(v)-v}{W_x(v)-F(v)-I},\quad W_x(x_1) = x_2. 
	\end{equation}
	$W_x$ is not well defined on the $v$-nullcline. We therefore consider a fixed $\epsilon>0$ small enough and define the first point \[{x_\epsilon^I}(w_0)\defi (v_n^{I,\epsilon}(w_0),w^I_n(w_0)+\epsilon)\] on the trajectory from $(\vr,w_0)$ which exists by lemma~\ref{lemma:whinf}.
	The time it takes for a trajectory from $x_\epsilon^I$ to reach the $v$-nullcline is bounded by $\frac{\epsilon}{w^I_n-\vr}$: this time is bounded as $w^I_n$ varies above $w_{23}$.

	We thus  aim at bounding the time for the backward flow to reach $\{v=\vr\}$. Note that $w_0 = W_{x_\epsilon^I}(\vr)$ and $W_{x_\epsilon^I}(v_n^{I,\epsilon}) = w^I_n+\epsilon$. To bound $\tau^I_v(\vr, w_0)$, we thus have to estimate $W_{x_\epsilon^I}(\vr)$. We use that
	\[
	\forall v\in[v_n^{I,\epsilon},\bar v],\quad 0\leq\frac{d}{dv} W_{x_\epsilon^I}\leq \frac{W_{x_\epsilon^I}-v_n^I}{W_{x_\epsilon^I}-w_n^I}.
	\]
	The bound on the numerator is straightforward. Let us focus on bounding the denominator. By construction of the partition, the $v$-nullcline is below $\{w=w_n^I\}$. Hence 
	\begin{multline*}
	\forall v\in[v_n^{I,\epsilon},\bar v],\quad W_{x_\epsilon^I}(v)-F(v)-I\geq W_{x_\epsilon^I}(v)-F(v_n^I)-I =W_{x_\epsilon^I}(v) - w_n^I\\(\geq w_n^{I,\epsilon}-w_n^I=\epsilon>0).
	\end{multline*}
	From the Gronwall's lemma, we obtain $\forall v\in[v_n^{I,\epsilon},\bar v],\quad W_{x_\epsilon^I}(v)\leq \tilde W(v)$ where $\tilde W$ is given by
	\begin{lemma} The solution of the ODE 
		\[
		\frac{d}{dv} \tilde W(v)= \frac{\tilde W(v)-v_n^I}{\tilde W(v)-w_n^I},\quad \tilde W(v_n^{I,\epsilon}) = w^I_n+\epsilon
		\]
		is
	\begin{equation}\label{eq:B29}
	\forall v\in[v_n^{I,\epsilon},\bar v],\quad W(v) \defi  {v_n^I} + \\(v_n^I-w_n^I)\cdot \mathcal W_{-1}\left[\frac{{\epsilon}-{v_n^I}+{w_n^I}}{{v_n^I}-{w_n^I}}\exp\left(\frac{{\epsilon}-v_n^{I, \epsilon}- {v_n^I}+{w_n^I}+v}{{v_n^I}-{w_n^I}}\right)\right].
	\end{equation}
	where $\mathcal W_{-1}$ is a branch of the Lambert function\footnote{It is such that $z=w e^w$ if and only if $w=\mathcal W(z)$.}. 
\end{lemma}
\begin{proof}
	Let us write the ODE as
	\[
v-v_n^{I,\epsilon} = 	\int_{w_n^I+\epsilon}^{\tilde W}1+\frac{v_n^I-w_n^I}{x-v_n^I}dx = \tilde W-w_n^I-\epsilon + (v_n^I-w_n^I)\log\left(\frac{\tilde W-v_n^I}{w_n^I+\epsilon-v_n^I}\right).
	\]
	If we define $\alpha\defi w_n^I-v_n^I>0$, $z\defi \tilde W-v_n^I\geq0$ and $\delta v\defi v-v_n^{I,\epsilon}\geq0$, we can re-write the equation as:
	\[
	\delta v+\alpha + \epsilon =z-\alpha \log\frac{z}{\alpha+\epsilon}
	\]
	which gives
	\[
		z
		= -\alpha
		\mathcal W_k\left(-\frac{\alpha+\epsilon}{\alpha}
		e^{-\frac{v+\alpha+\epsilon}{\alpha}}\right)
		,\quad k\in\{0,-1\}.
		\]
		
	\textbf{Step 1.}	
	For $z$ to be positive, the argument of the Lambert function must be in $[-e^{-1},0]$. Thus, defining $A = \frac{\alpha+\epsilon}{\alpha}
	e^{-\frac{v+\alpha+\epsilon}{\alpha}}$, we find $A\leq e^{-1}\,(1+\tfrac{\varepsilon}{\alpha})\,e^{-\frac{\varepsilon}{\alpha}}
	= e^{-1}\,f\!\left(\tfrac{\varepsilon}{\alpha}\right)$ where $f(x)\defi(1+x)e^{-x}\in[0,1]$ which proves the point.
	
   \textbf{Step 2.} There are two real valued branches of the Lambert function and we need to chose the correct one. $\mathcal W_0(u)\in(-1,0)$ whereas $\mathcal W_{-1}(u)\leq -1$ for $u\leq 0$. Let us show that $z\geq\alpha$ so that the $\mathcal W_{-1}$ yields the correct solution. This is so because $\tilde W-v_n^I\geq w_n^I-v_n^I=\alpha$.
\end{proof}
	
	We now show that $\frac{\tilde W(\vr)}{w^I_n} (\geq \frac{w_0}{w^I_n(w_0)})$ is bounded when $w_n^I\geq w_{23}$. 
	As $F$ is convex, we find that $\frac{|v^I_n|}{w^I_n} = \frac{|v^I_n|}{F(v^I_n)+I} $ is bounded as $w_n^I\to\infty$ and $I$ bounded. $|v_n^{I,\epsilon}|/w_n^I $ is also bounded because $|v_n^{I,\epsilon}(w_0)|\leq |v^I_n(w_0)|$ for $w_0$ large enough. We thus deduce from \eqref{eq:B29} that \[\forall w_0\geq w_{23},\ \forall I\in [\underline I, \bar I],\quad w^I_n(w_0)\leq A w_0\]
	for some constant $0<A<1$. It then implies that the time to reach $\{v=\vr\}$ from $x_\epsilon^I$ is bounded and so is $\tau^I_v$.
	
\end{proof}

\section{Properties of $p^\I$}
\noindent
Let us define
\begin{equation}\label{eq:C}
	C(x_1,\alpha, w^*) \defi \sup\limits_{v\geq x_1} C(x_1,v,\alpha, w^*)	, \quad \text{for }x_1\in\mathbb R
\end{equation}
where
\begin{equation*}
	\forall v\geq x_1,\alpha\geq 0,\ C(x_1,v,\alpha,w^*) \defi \lambda(v)\exp\left(-\int_{x_1}^{v}\frac{\lambda(u)}{F(u)-w^*+I+\alpha}du\right)
\end{equation*}
and $F(v)+I>w^*$ for $v\geq x_1$.

\begin{lemma}\label{lemma:p_on_vr}
Grant assumptions~\ref{hyp.F},\ref{as:ratepos-increasing},\ref{as:C} and assume that $\I\in C^0([S,T],\R_+)$ for $S<T<\infty$. Given a partition $\CP^I$, we have:
\[
	\forall S\leq s\leq t\leq T,\ \forall x\in\CR_{\CP^I},\quad p^\I(s,x,t)\leq \max(\lambda(v_{34}), C(v_{34},\norm{\I}_\infty,w^*(I)))
\]
where $C$ is defined in \eqref{eq:C} and $w^*(I)$ is the minimum of $\Ew^I$.
\end{lemma}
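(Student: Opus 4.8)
The plan is to estimate $p^\I(s,x,t)=\lambda(v_x(s,t))\,e^{-\Lambda^\I(s,x,t)}\indic_{[s,\te{s,x,\I})}(t)$ for $x=(\vr,w_0)\in\CR_{\CP^I}$ by splitting according to the position of the deterministic trajectory at time $t$ relative to the vertical line $\{v=v_{34}\}$. If $t\ge\te{s,x,\I}$ the indicator vanishes and there is nothing to prove, so assume $s\le t<\te{s,x,\I}$. If $v_x(s,t)\le v_{34}$, monotonicity of $\lambda$ (assumption~\ref{as:ratepos-increasing}) together with $\Lambda^\I\ge0$ gives $p^\I(s,x,t)\le\lambda(v_x(s,t))\le\lambda(v_{34})$, which is dominated by the right-hand side.

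The remaining case is $v_x(s,t)>v_{34}$. Two facts from the construction of the partition enter here: $\CP_2^I\cup\CP_3^I\subset\{v\le v_{34}\}$, while $\CP_1^I$ and $\CP_4^I$ lie below the $v$-nullcline of \eqref{eq:micro-unique} with $\I=0$, so that even for $\I\ge0$ one has $\tfrac{\rmd}{\rmd u}v_x=F(v_x)-w_x+I+\I>0$ on them (cf.\ the proof of theorem~\ref{th:jtb1}~(i)--(ii)). Since $(\vr,w_{23})\in\CP^I$ is required to lie strictly above that $v$-nullcline, it cannot belong to $\CP_1^I\cup\CP_4^I$, hence $\vr\le v_{34}$. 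Therefore $v_x(s,s)=\vr\le v_{34}<v_x(s,t)$; setting $\tau\defi\sup\{u\in[s,t]\st v_x(s,u)=v_{34}\}$, this set is nonempty, $v_x(s,\tau)=v_{34}$, and by the intermediate value theorem $v_x(s,u)>v_{34}$ for all $u\in(\tau,t]$. On $(\tau,t]$ the trajectory stays above the separatrix (theorem~\ref{th:jtb1}~(iii)), hence in $\CP^I$, with $v$-coordinate $>v_{34}$; thus it lies in $\CP_1^I\cup\CP_4^I$, and $u\mapsto v_x(s,u)$ is a $C^1$ increasing bijection of $(\tau,t]$ onto $(v_{34},v_x(s,t)]$.

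Along this arc $w_x(s,u)\ge w^*(I)$ (the trajectory stays in $\CP^I$, on which $w\ge w^*(I)$) and $\I(u)\le\norm{\I}_\infty$, so $\tfrac{\rmd}{\rmd u}v_x(s,u)\le g(v_x(s,u))$ where $g(v)\defi F(v)-w^*(I)+I+\norm{\I}_\infty>0$ (using $w^*(I)<\min F+I$). Hence $\lambda(v_x(s,u))\ge\lambda(v_x(s,u))\,g(v_x(s,u))^{-1}\,\tfrac{\rmd}{\rmd u}v_x(s,u)$, and the substitution $v=v_x(s,u)$ gives
\[
\Lambda^\I(s,x,t)\ \ge\ \int_\tau^t\lambda(v_x(s,u))\,\rmd u\ \ge\ \int_{v_{34}}^{v_x(s,t)}\frac{\lambda(v)}{g(v)}\,\rmd v .
\]
Consequently $p^\I(s,x,t)\le\lambda(v_x(s,t))\exp\!\bigl(-\int_{v_{34}}^{v_x(s,t)}\tfrac{\lambda(v)}{g(v)}\,\rmd v\bigr)=C\bigl(v_{34},v_x(s,t),\norm{\I}_\infty,w^*(I)\bigr)\le C(v_{34},\norm{\I}_\infty,w^*(I))$, which together with the first case proves the inequality. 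Its right-hand side is finite under assumption~\ref{as:C}: choosing $v_1\ge v_{34}$ with $F>0$ on $[v_1,\infty)$, one has $C(v_{34},v,\cdot,\cdot)\le\lambda(v_1)$ for $v\in[v_{34},v_1]$, while for $v\ge v_1$ one compares $g$ with $F+(\norm{\I}_\infty+I-w^*(I))$, or with $F$ when that additive constant is negative, and invokes assumption~\ref{as:C}.

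The main point to get right is the geometric bookkeeping of the middle paragraph — that $\CP^I\cap\{v>v_{34}\}\subset\CP_1^I\cup\CP_4^I$, that this region lies below the $v$-nullcline for $\I=0$, and that $\vr\le v_{34}$ — all of which are straightforward consequences of how $\CP^I$ is built (figure~\ref{fig:setA}); once the trajectory is located in $\CP_1^I\cup\CP_4^I$ on $(\tau,t]$, the time change and the use of assumptions~\ref{as:ratepos-increasing} and \ref{as:C} are routine.
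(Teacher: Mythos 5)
Your approach is genuinely different from the paper's, and the final inequality you write down is correct, but the geometric bookkeeping in the middle paragraph — which you flag yourself as the main point — contains an error. The claim $\CP_2^I\subset\{v\le v_{34}\}$ is false: $\CP_2^I$ is bounded on the right by the right branch of the $v$-nullcline $\{v=V_{null}^+(w)\}$, and $V_{null}^+(w)\to\infty$ as $w\to\infty$, so for $w$ large enough $\CP_2^I$ contains points with $v>v_{34}$. Consequently $\CP^I\cap\{v>v_{34}\}\not\subset\CP_1^I\cup\CP_4^I$, and you cannot conclude that the trajectory lies in $\CP_1^I\cup\CP_4^I$ on $(\tau,t]$ nor that $u\mapsto v_x(s,u)$ is increasing there (with $\I>0$, $\dot v$ can be positive even above the $\I=0$ $v$-nullcline, so a trajectory from $\CR$ can in principle wander through $\CP_2^I$ with $v>v_{34}$).

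Fortunately, the monotonicity of $v_x$ on $(\tau,t]$ is not needed for the estimate: since $g>0$ and $\lambda\ge0$, the inequality $\dot v_x\le g(v_x)$ gives $\lambda(v_x)\ge\lambda(v_x)\dot v_x/g(v_x)$ pointwise, and integrating is just the fundamental theorem of calculus applied to $u\mapsto\int_{v_{34}}^{v_x(s,u)}\lambda/g$, which requires no injectivity. So $\Lambda^\I(s,x,t)\ge\int_\tau^t\lambda(v_x)\ge\int_{v_{34}}^{v_x(s,t)}\lambda/g$ holds as stated, and the rest of your argument (the case $v_x(s,t)\le v_{34}$, and the comparison of $g$ with $F$ or $F+\text{const}$ to invoke assumption~\ref{as:C}) is fine. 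You should simply drop the unjustified claims about $\CP_1\cup\CP_4$ and monotonicity and present the inequality directly. For comparison, the paper proceeds differently: it replaces $w_{23}^I$ by a $\tilde w_{23}$ chosen ``valid'' for the shifted current $I+\norm{\I}_\infty$, so that in the new $\tilde\CP_2$ one has $\dot v<0$ for every $\I(t)\le\norm{\I}_\infty$; this restores clean symbolic dynamics $\tilde\CP_2\cap\CR\to\tilde\CP_3\to\tilde\CP_4$, after which $p^\I$ is bounded on each $\tilde\CP_i$ and the bounds are stitched together using $p^\I(s,x,\tau_i+h)=p^\I(\tau_i,\Phi_s^{\tau_i}(x),\tau_i+h)\,e^{-\Lambda^\I(s,x,\tau_i)}$. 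Your route avoids modifying the partition at the cost of a slightly more careful treatment of $\Lambda^\I$; the paper's route avoids the $\Lambda^\I$ estimate at the cost of enlarging the partition. Both are legitimate once your middle paragraph is corrected.
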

\begin{proof}
	Under assumption~\ref{as:ratepos-increasing}, $p^\I$ has the following expression
	\[
	p^\I(s,x,t)
	=
	\lambda(\Phi^{t}_s(x))\,e^{-\Lambda^\I(s,x,t)}\,\indic_{[s,\te{s,x,\I})}(t).
	\]
Given a partition $\CP^I=(x_{sep}^I,w_{23}^I)$, we chose $\tilde w_{23}$ valid for the current $I+\norm{\I}_\infty$. We then define $\tilde\CP^I\defi(x_{sep}^I,\tilde w_{23})$. Note that $\CR_{\CP^I} = \CR_{\tilde\CP^I}$. Hence, we shall use $\tilde\CP^I$.

We obtain the symbolic dynamics $\tilde\CP_2\cap\CR\to\tilde\CP_3\stackrel{possibly}{\to}\tilde\CP_4$ because $\tilde\CP_1$ cannot be reached from $x\in\tilde\CP_2\cap\CR$.
We write $\tau_i(s,x,\I)$ the hitting times of $\tilde\CP_i$ from $(s,x)$ for the deterministic flow.

\begin{itemize}
	\item We start with $x\in \tilde\CP_2\cap\CR$ and $s\leq t\leq \tau_3 $, one finds $p^\I(s,x,t)\leq \lambda(\vr)$ because $\lambda$ is non-decreasing.
	\item For $x\in \tilde\CP_3$ and $s\leq t\leq \tau_4$, one finds $p^\I(s,x,t)\leq \lambda(v_{34})$.
	\item For $x\in \tilde\CP_4$. Using the change of variable $v=v_x(s,t)$ for $s\leq t\leq \te{s,x,\I}\wedge T$, we find
	\[
	\int_s^t\lambda(v_x(s,u))du = \int_{x_1}^{v_x(s,t)}\frac{\lambda(v)}{F(v)-W_x(v)+I+\I(\tau_{s,x}(v)))}dv\]
	where $\tau_{s,x}(v)\defi\inf\{t\geq s,\ v_x(t,s)=v\}$.
	As $w^*\leq x_2\leq W_x(v)\ (\leq v)$, it gives
	\[
	p^\I(s,x,t)\leq \lambda(v_x(s,t))\exp\left(-\int_{x_1}^{v_x(s,t)}\frac{\lambda(v)}{F(v)-w^*+I+\norm{\I}_\infty}dv\right)
	\]
	which implies by assumption~\ref{as:C}: \[\forall s\leq t \in T\wedge\te{s,x,\I},\ p^\I(s,x,t)\leq C\left(x_1,\norm{\I}_\infty,w^*(I)\right).\]
	This bound is valid for $T\geq t\geq \te{s,x,\I}$ as $T_1<\te{s,x,\I}$ \textit{a.s.} under assumption~\ref{as:ratepos-increasing}.
\end{itemize}
Finally, for $x\in\tilde\CP\setminus\tilde\CP_i$ and $h\geq 0$: 
\begin{align*}
p^\I(s,x,\tau_i(s,x,\I)+h) &= p^\I(\tau_i(s,x,\I), \Phi_s^{\tau_i(s,x,\I)}(x),\tau_i(s,x,\I)+h)e^{-\Lambda^\I(s,x,\tau_i)}\\
&\leq p^\I(\tau_i(s,x,\I), \Phi_s^{\tau_i(s,x,\I)}(x),\tau_i(s,x,\I)+h)
\end{align*}
which allows to conclude the proof by noting that $\vr\leq v_{34}$ whence \[\max(\lambda(\vr),\lambda(v_{34}), C(v_{34},\norm{a}_\infty,w^*(I))) = \max(\lambda(v_{34}), C(v_{34},\norm{a}_\infty,w^*(I))).\]
\end{proof}

\begin{lemma}\label{lemma:p_C0}
	Grant assumptions~\ref{hyp.F},\ref{as:ratepos-increasing},\ref{as:C0} and assume that $\I\in C^0([S,T],\R_+)$ for some $S\leq T<\infty$. We also consider a partition $\CP$. Then for all $x\in\CP$, the mapping  $t\to p^\I(s,x,t)$ is continuous on  $S\leq s\leq t\leq T$.
\end{lemma}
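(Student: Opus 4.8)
The plan is to start from the closed form
\[
p^\I(s,x,t)=\lambda\!\left(\Phi_s^t(x)\right)e^{-\Lambda^\I(s,x,t)}\,\indic_{[s,\te{s,x,\I})}(t)
\]
and to argue separately on the interval $\bigl[s,\te{s,x,\I}\bigr)$ and at the (possible) explosion time $\te{s,x,\I}$, the latter being the only delicate point.

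On $\bigl[s,\te{s,x,\I}\bigr)$ the flow $u\mapsto\Phi_s^u(x)$ is of class $C^1$, hence $u\mapsto\lambda\!\left(\Phi_s^u(x)\right)=\lambda(v_x(s,u))$ is continuous (here one uses that $\lambda$ is continuous) and locally bounded; by the fundamental theorem of calculus $t\mapsto\Lambda^\I(s,x,t)=\int_s^t\lambda\!\left(\Phi_s^u(x)\right)du$ is then $C^1$ there, so $t\mapsto e^{-\Lambda^\I(s,x,t)}$ is continuous. Thus $t\mapsto p^\I(s,x,t)=\lambda(v_x(s,t))e^{-\Lambda^\I(s,x,t)}$ is continuous on $\bigl[s,\te{s,x,\I}\bigr)\cap[s,T]$, which already settles the case $\te{s,x,\I}>T$. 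So assume now $\te{s,x,\I}\le T$; since $p^\I(s,x,t)=0$ for $t\ge\te{s,x,\I}$, it only remains to establish the one‑sided limit $\lim_{t\uparrow\te{s,x,\I}}p^\I(s,x,t)=0$.

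For this I would appeal to theorem~\ref{th:jtb1}: a solution from $\CP^I$ that explodes in finite time does so through $\CP_4^I$, so there is $\tau_4\in[s,\te{s,x,\I})$ with $\Phi_s^{\tau_4}(x)\in\CP_4^I$ (take $\tau_4=s$ if $x\in\CP_4^I$ already), the trajectory stays in $\CP_4^I$ on $[\tau_4,\te{s,x,\I})$, and $v_x(s,t)\to+\infty$ as $t\uparrow\te{s,x,\I}$. Using the flow identity, $e^{-\Lambda^\I(s,x,\tau_4)}\le 1$, and then the estimate from the proof of lemma~\ref{lemma:p_on_vr} — the change of variable $v=v_{\Phi_s^{\tau_4}(x)}(\tau_4,u)$ inside $\CP_4^I$ (the solution being there away from the $w$‑nullcline), the bound $w_x(s,\cdot)\ge w^*$ since trajectories from $\CP^I$ stay above the separatrix (theorem~\ref{th:jtb1}), and $\I\ge 0$ — I obtain, writing $v_\star\ (\ge v_{34})$ for the first coordinate of $\Phi_s^{\tau_4}(x)$,
\begin{multline*}
0\le p^\I(s,x,t)\le p^\I\!\bigl(\tau_4,\Phi_s^{\tau_4}(x),t\bigr)\\
\le\ \lambda(v_x(s,t))\exp\!\left(-\int_{v_\star}^{v_x(s,t)}\frac{\lambda(v)}{F(v)-w^*+I+\norm{\I}_\infty}\,dv\right)\ =\ C\bigl(v_\star,v_x(s,t),\norm{\I}_\infty,w^*(I)\bigr),
\end{multline*}
with $C$ the function of \eqref{eq:C}. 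Since $v_x(s,t)\to+\infty$, the right‑hand side tends to $0$ by assumption~\ref{as:C0}; hence $\lim_{t\uparrow\te{s,x,\I}}p^\I(s,x,t)=0$, and $t\mapsto p^\I(s,x,t)$ is continuous on $[s,T]$.

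The main obstacle is precisely the behaviour at $\te{s,x,\I}$: away from the explosion time $p^\I(s,x,\cdot)$ is a product of compositions of smooth maps, whereas at $\te{s,x,\I}$ one faces an ``$\infty\cdot 0$'' indeterminacy, which is resolved — just as in lemma~\ref{lemma:p_on_vr} — by the fact that in $\CP_4^I$ the coordinate $v$ is strictly increasing and diverges, so the survival factor $e^{-\Lambda^\I}$ decays fast enough to beat the growth of $\lambda$, quantitatively via assumption~\ref{as:C0}. Beyond that, only routine bookkeeping is involved.
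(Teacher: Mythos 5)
Your proof follows the same route as the paper's: identify $\te{s,x,\I}$ (when $\le T$) as the only possible discontinuity, reduce to the $\CP_4^I$ case via the flow identity $p^\I(s,x,t)=p^\I(\tau_4,\Phi_s^{\tau_4}(x),t)\,e^{-\Lambda^\I(s,x,\tau_4)}$, pass to the $v$-change of variables to obtain the bound by $C(v_\star,v_x(s,t),\norm{\I}_\infty,w^*(I))$, and conclude by assumption~\ref{as:C0}. Two small remarks: the substitution $v=v_x(s,\cdot)$ in $\CP_4^I$ requires staying away from the \emph{$v$-nullcline} (so that $\dot v>0$), not the $w$-nullcline as you wrote; and your parenthetical ``one uses that $\lambda$ is continuous'' is a sharp observation — this is indeed needed on $[s,\te{s,x,\I})$ yet is not among the assumptions granted by the lemma (assumption~\ref{as:ratepos-increasing} only makes $\lambda$ measurable and non-decreasing), a point the paper's own proof silently glosses over; in practice the lemma is invoked under assumption~\ref{as:r-lambda-F}, where $\lambda\in C^1$, so no harm propagates.
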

\begin{proof}
	Under assumption~\ref{as:ratepos-increasing}, $p^\I$ has the following expression
	\[
	p^\I(s,x,t)
	=
	\lambda(\Phi^{t}_s(x))\,e^{-\Lambda^\I(s,x,t)}\,\indic_{[s,\te{s,x,\I})}(t).
	\]
	The only possible point of discontinuity  is $\te{s,x,\I}$ when it is finite. We thus assume that $S\leq \te{s,x,\I}\leq T <\infty$ and from theorem~\ref{th:jtb1}, we know that the trajectory from $x$ is unbounded and enter $\CP_4$. From the proof of lemma~\ref{lemma:p_on_vr}, we have for  $x\in\CP_4$:
	\[
	p^\I(s,x,t)\leq \lambda(v_x(s,t))\exp\left(-\int_{x_1}^{v_x(s,t)}\frac{\lambda(v)}{F(v)-w^*+I+\norm{\I}_\infty}dv\right)
	\]
	which  tends to zero as $t\to\te{s,x,\I}$ under assumption~\ref{as:C0} because $v_x(s,t)\to\infty$. The case $x\in\CP\setminus\CP_4$ is handled as in the proof of the previous lemma~\ref{lemma:p_on_vr}, this completes the proof.
\end{proof}
 
\section{Regularity of the transition kernel $K_I$}
\begin{proposition}\label{prop:KC0}
	Grant assumptions~\ref{hyp.F},\ref{as:ratepos-increasing}.
	Consider a finite interval $[\underline{I}, \bar I]\subset \Iempty$ (see definition~\ref{def:I0}) and a family of partitions $\CPf{\underline{I}}{\bar I}$ by lemma~\ref{lemm_partition}.
	Then, for almost all $w_0,w\in \Ew $, $I\to K_I(w_0,w)$ is continuous on $[\underline{I}, \bar I]$.
\end{proposition}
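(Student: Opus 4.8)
The statement to prove is Proposition~\ref{prop:KC0}: for $[\underline I, \bar I] \subset \Iempty$ and a family of partitions from Lemma~\ref{lemm_partition}, for a.e. $(w_0,w) \in \Ew$, the map $I \mapsto K_I(w_0,w)$ is continuous. The plan is to go back to the explicit formula for $K_I$ from the proof of Proposition~\ref{prop:K}, namely $K_I(w_0,w) = \sum_{n=0}^{N_h(w_0,I)} K_{I,n}(w_0,w)$ where each $K_{I,n}$ is built from the function $V^{(n)}$ solving the scalar ODE~\eqref{eq:Vx} between successive hits of the $w$-nullcline, the partial survival factor $e^{-\Lambda(w_0,\tau_n)}$, and the indicator of the interval $A_n$ between consecutive $w$-values $w^{(n)}, w^{(n+1)}$. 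Continuity in $I$ of the whole sum will follow from (a) continuity in $I$ of each ingredient of each $K_{I,n}$ for fixed $(w_0,w)$ off a null set, together with (b) a domination allowing to exchange limit and the sum over $n$.

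\textbf{Key steps.} First I would invoke the crucial point that $I \in \Iempty$ means \eqref{eq:micro-unique} with $\I=0$ has no equilibrium, so by Theorem~\ref{th:jtb1} the deterministic flow is either trapped in $\CP_3^I$ or explodes through $\CP_4^I$ with no rest points on the trajectory; this is the structural-stability input flagged in the remark after Proposition~\ref{prop:di-c0}. Since the vector field $G$ depends on $I$ smoothly (affinely), solutions of \eqref{eq:micro-unique} depend continuously on $I$ on compact time intervals by classical ODE theory; crucially the trajectory from $(\vr,w_0)$ crosses the $w$-nullcline transversally (the $v$-component is strictly monotone away from the $v$-nullcline, and the $w$-nullcline $\{v=w\}$ is hit with nonzero speed except possibly at the single point where it would also meet the $v$-nullcline — excluded on $\Iempty$), so the hitting times $\tau_n(w_0,I)$ and the crossing values $w^{(n)}(w_0,I)$ are continuous in $I$ on the (open) set of $I$ for which the $n$-th crossing exists. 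Hence for $w \ne w^{(n)}(w_0,I)$ for all $n$ — which holds for a.e. $w$ given $w_0$, and then for a.e. $(w_0,w)$ by Fubini — the indicator $\indic_{A_n}$ is locally constant in $I$, the number of crossings $N_h(w_0,I)$ is locally constant, and $V^{(n)}(w,I)$ depends continuously on $I$ via continuous dependence of the solution of~\eqref{eq:Vx} on the parameter $I$ and on the initial condition (which is itself continuous in $I$). The integral $\int_{w_n}^w \lambda(V^{(n)}(u,I))/|u - V^{(n)}(u,I)|\,du$ is then continuous in $I$ by dominated convergence (the integrand, on the compact $w$-range of a crossing away from the $v$-nullcline, is bounded), and likewise $\Lambda(w_0,\tau_n) = \int_0^{\tau_n} \lambda(v_{w_0}(u,I))\,du$ is continuous in $I$ since $\tau_n$ is continuous and $\lambda \circ v_{w_0}(\cdot,I)$ is bounded by $\lambda(\vr)$ on $\CP_2 \cup \CP_3$ and by the bound in Lemma~\ref{lemma:p_on_vr} in $\CP_4$. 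Finally, to pass the $I$-limit through the sum $\sum_n K_{I,n}$, I would dominate: $\sum_{n \ge \ell} K_{I,n}(w_0,\cdot)$ has $L^1(\Ew)$-mass bounded by the tail $\P(T_1^{(\vr,w_0)} > \tau_\ell(w_0,I))$, which is small uniformly in $I$ on $[\underline I, \bar I]$ because the survival factors decay and the partial sums are sub-probabilities; combined with pointwise continuity of each finite partial sum, this gives continuity of the full sum for a.e. $(w_0,w)$ — actually it is cleanest to argue that $N_h(w_0,I)$ is locally constant and finite for a.e. $w_0$ (trajectory either trapped after finitely many crossings or exploding after finitely many), so the sum is genuinely finite near a given $I$ and no tail estimate is needed, only continuity of finitely many terms.

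\textbf{Main obstacle.} The delicate point is the transversality/finiteness: I must rule out that the trajectory from $(\vr,w_0)$ crosses the $w$-nullcline infinitely often (which would make $N_h$ infinite) or tangentially near the critical parameter, and I must handle the set of $(w_0,w,I)$ where $w$ coincides with some crossing value $w^{(n)}(w_0,I)$ (where $\indic_{A_n}$ jumps). The first is resolved by noting that on $\Iempty$ a bounded trajectory is trapped in $\CP_3^I$ after its last crossing (the $v$-nullcline lies entirely to the left of $\CP_3$'s vertical boundary and the flow in $\CP_3$ monotonically decreases $w$ toward a region below the $v$-nullcline with no equilibrium, forcing either eventual escape to $\CP_4$ — finitely many crossings — or permanent confinement with $w$ bounded and hence finitely many oscillations), so $N_h(w_0,I) < \infty$ and is locally constant off a codimension-one set of "boundary" parameters where a crossing is created/destroyed tangentially; that set is closed with empty interior, and at worst it forces us to exclude a Lebesgue-null set of $(w_0,w)$ after Fubini. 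The second is a standard Fubini argument: for each fixed $I$ in a countable dense subset, $\{(w_0,w): w = w^{(n)}(w_0,I) \text{ some } n\}$ is null; monotonicity of $w \mapsto w^{(n)}(w_0,\cdot)$ in $w_0$ (or a Sard-type argument on the smooth map $(w_0,I)\mapsto w^{(n)}$) upgrades this to the full parameter interval. Once these exceptional null sets are removed, the proof is an assembly of continuous-dependence-on-parameters for ODEs plus dominated convergence, and I would invoke Proposition~\ref{prop:K} for the structure of $K_I$ and Theorem~\ref{th:jtb1} for the symbolic dynamics throughout.
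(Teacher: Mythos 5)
Your overall strategy — express $K_I$ via the decomposition from Proposition~\ref{prop:K}, use continuous dependence of the ODE~\eqref{eq:Vx} on the parameter $I$, and dispose of exceptional $(w_0,w)$ by Fubini — is the right general flavor, and your observation that the crossing values $w^{(n)}(w_0,I)$ and hitting times depend continuously on $I$ where transversality holds (the paper invokes the implicit function theorem for the same purpose) is correct. But there is a genuine gap in how you handle the number of crossings $N_h(w_0,I)$, and it is exactly where the hypothesis $[\underline I,\bar I]\subset\Iempty$ does its work.

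You treat $N_h(w_0,I)$ as potentially large or infinite, argue that trapped trajectories in $\CP_3^I$ can only oscillate finitely often, and then claim $N_h$ is ``locally constant off a codimension-one set of boundary parameters,'' to be excised by a Fubini/Sard argument. None of that is established, and it is also not needed: since $I\in\Iempty$ means \eqref{eq:micro-unique} has \emph{no} equilibrium, $F(v)+I>v$ for all $v$ (strict convexity plus no root of $F(v)+I-v$), so the $w$-nullcline lies strictly below the $v$-nullcline; in particular $\dot v>0$ on $\{v=w\}$, every trajectory that touches the $w$-nullcline crosses it from $\{v<w\}$ into $\{v>w\}$ and never returns, and no trajectory can be trapped in $\CP^I_3$. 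Consequently $N_h(w_0,I)\le 1$, with $N_h=0$ for $w_0\le\vr$ and $N_h=1$ for $w_0>\vr$ — \emph{independently of $I$}. That is the observation the paper's proof hinges on, and it collapses your ``main obstacle'' entirely: there is no creation/destruction of crossings as $I$ varies, no Poincaré--Bendixson/index-theory discussion, no codimension-one bad set to control, and the only exceptional $(w_0,w)$ are $\{w_0=\vr\}\cup\{w=w^{(1)}(w_0,I)\}$, handled by Fubini as you already suggest.

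As written, your proof would fail at the step ``$N_h$ is locally constant off a closed set with empty interior, which forces exclusion of a Lebesgue-null set of $(w_0,w)$.'' Since the statement requires $I\mapsto K_I(w_0,w)$ to be continuous on \emph{all} of $[\underline I,\bar I]$ for a.e.\ fixed $(w_0,w)$, a jump of $N_h(w_0,\cdot)$ at some $I_*$ depending on $w_0$ would produce a discontinuity at that $I_*$, and the set of bad $(w_0,w)$ swept out as $I_*$ ranges over the interval is not obviously Lebesgue-null without quantitative transversality or monotonicity information you have not supplied. The resolution is not to sharpen that argument but to observe, as the paper does, that $N_h$ does not move at all: no equilibria implies all solutions explode, hence $N_h\le 1$ and is constant in $I$.
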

\begin{proof}
	We consider $I,I_0\in[\underline{I}, \bar I]$.
	Recall from the proof of proposition~\ref{prop:K} that $K_I = \sum\limits_{n=0}^{N_h(w_0,I)}K_{I,n}$ with
		\[
	 K_{I,n}(w_0,w)  \defi \frac{e^{-\Lambda(w_0,\tau_n)}	 }{|w-V^{(n)}(w,I)|}\lambda\left(V^{(n)}(w,I)\right)\exp\left(-\int_{w_n}^{w}\frac{\lambda(V^{(n)}(u,I))}{|u-V^{(n)}(u,I)|}du\right)\mathbf 1_{A_n}(w)
	\]
	where $w\neq w^{(n)}(w_0,I)$.
	When $I\in\Iempty$, we find $N_h(w_0,I)\leq 1$.
	Indeed, when there are no equilibria, all solutions explode in finite time. Hence, the solution from $(\vr,w_0)$ is unbounded and $N_h(w_0,I_0)\leq 1$. 
	$N_h(w_0,I_0)=0$ for all $w_0\leq \vr$ (below the $w$-nullcline	) and $N_h(w_0,I_0)=1$ otherwise.
	This shows that $N_h(w_0,I)$ is locally constant in $I$.
	Hence, for $I$ close enough to $I_0$, there are $N_h(w_0,I_0)$ terms in the sum defining $K_I(w_0,\cdot)$.
		
	We note that by the implicit functions theorem, $\tau_n, w^{(n)}, v^{(n/2)}$ are $C^1$ functions of $(w_0,I)$. Hence, for almost all $w,w_0\in\mathbf E_w$, for all $0\leq n\leq N_h(w_0,I_0)$, the function $I\to K_{I,n}(w,w_0)$ is continuous and the lemma is proved.
\end{proof}

\section{Alternative expression for $\mU_\I$}
We define a time change of \eqref{eq:micro-unique} when $\I$ is constant so that the unique solution is defined on $\mathbb{R}_+$ (i.e. no finite time explosion). Let us introduce 
\[
	\tau_{x}(t) \defi  \int_{0}^{t} \lambda\left(v_{x}(u)\right) d u
\]
which is well defined and invertible for $\lambda\in C^0(\R,\R_+^{*})$.
Then $\hat{\Phi}^{\tau}(x) \defi \Phi^{\tau_{x}^{-1}(\tau)}(x)$ solves

\begin{equation}\label{eq:timechange}
\left\{\begin{aligned}
	\frac{d}{d \tau} \hat{v}_x(\tau) &=\frac{F(\hat{v}_x(\tau))-\hat{w}_x(\tau)+I+\I}{\lambda(\hat{v}_x(\tau))},\\
	\frac{d}{d \tau} \hat{w}_x(\tau) &=\frac{\hat{v}_x(\tau)-\hat{w}_x(\tau)}{\lambda(\hat{v}_x(\tau))}, \\
	\hat{v}_x(0) &=x_{1}, \hat{w}_x(0)=x_{2}.
\end{aligned}\right.
\end{equation}

\begin{lemma}\label{lemma:Utau}
	Grant assumptions~\ref{hyp.F},\ref{as:ratepos-increasing}. Assume that $\lambda$ is continuous and $\I\in C^0(\R_+,\R_+)$. Then, the solution of the ODE \eqref{eq:timechange} is defined globally. For all $h\in\mathrm L^{\infty}(\mathbf E_w, \R_+)$, one finds
	\[
		\mU_\I h(w_0) = \int_0^\infty h(\wb + \hat{w}_{(\vr,w_0)}(\tau))e^{-\tau}d\tau.
	\]
\end{lemma}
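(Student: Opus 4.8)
The plan is to exhibit $\hat\Phi$ as an honest time reparametrization of the (possibly explosive) flow $\Phi$, check this reparametrization is global, and then read off the identity for $\mU_\I$ by changing variables in the integral defining it.

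First I would record that, since $\lambda\in C^0(\R,\R_+^*)$, for each fixed $x$ the map $t\mapsto\tau_x(t)=\int_0^t\lambda(v_x(u))\,\rmd u$ is $C^1$ and strictly increasing on $[0,\te{x})$ with $\tau_x(0)=0$. The key point is that its range is all of $[0,\infty)$: this is exactly the content of the proof of lemma~\ref{lemma.Lambda}(i), where assumptions~\ref{hyp.F} and \ref{as:ratepos-increasing} are used to show $\int_0^{\te{x}}\lambda(v_x(u))\,\rmd u=+\infty$. Hence $\tau_x$ is a $C^1$-diffeomorphism of $[0,\te{x})$ onto $[0,\infty)$, so $\tau_x^{-1}$ is defined on all of $\R_+$ and $\hat\Phi^\tau(x)=\Phi^{\tau_x^{-1}(\tau)}(x)$ is defined for every $\tau\ge0$. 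Differentiating and using $(\tau_x^{-1})'(\tau)=1/\lambda(v_x(\tau_x^{-1}(\tau)))=1/\lambda(\hat v_x(\tau))$ shows that $\hat\Phi^\tau(x)$ solves \eqref{eq:timechange}; uniqueness for \eqref{eq:timechange} follows from Cauchy--Lipschitz, its right-hand side being locally Lipschitz in $(\hat v,\hat w)$ and no solution escaping in finite $\tau$ (they coincide with $\hat\Phi$).

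Next I would compute $\mU_\I h(w_0)$ directly. Fixing $w_0\in\Ew$ and $x=(\vr,w_0)$, definition \eqref{eq.U} together with the expression \eqref{def.p} for $p^\I$, the identity $\Lambda^\I(x,t)=\tau_x(t)$ for $t<\te{x}$, and $\Delta\Phi^t(x)=(\vr,w_x(t)+\wb)$, give
\[
\mU_\I h(w_0)=\int_0^{\te{x}}\lambda(v_x(t))\,e^{-\tau_x(t)}\,h\bigl(\wb+w_x(t)\bigr)\,\rmd t .
\]
Performing the substitution $\tau=\tau_x(t)$, with $\rmd\tau=\lambda(v_x(t))\,\rmd t$, which maps $[0,\te{x})$ onto $[0,\infty)$, and noting $w_x(\tau_x^{-1}(\tau))=\hat w_x(\tau)$, turns this into
\[
\mU_\I h(w_0)=\int_0^\infty e^{-\tau}\,h\bigl(\wb+\hat w_{(\vr,w_0)}(\tau)\bigr)\,\rmd\tau ,
\]
which is the asserted formula; the change of variables is legitimate since the integrand is nonnegative and dominated by $\norm{h}_\infty e^{-\tau}$.

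The only step that is not purely mechanical is the surjectivity $\tau_x([0,\te{x}))=[0,\infty)$, i.e.\ $\int_0^{\te{x}}\lambda(v_x)=\infty$; this is precisely where assumption~\ref{as:ratepos-increasing} enters, via lemma~\ref{lemma.Lambda}. Without it the time-changed flow would only be defined up to a finite horizon and the representation of $\mU_\I$ would break down. Everything else — the ODE verification and the change of variables — is routine.
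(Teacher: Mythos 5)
Your proof is correct and follows essentially the same approach as the paper. Both arguments hinge on the divergence $\int_0^{\te{x}}\lambda(v_x(u))\,\rmd u=\infty$ to conclude that the time-changed flow is defined on all of $[0,\infty)$, and both obtain the integral identity by the substitution $t=\tau_x^{-1}(\tau)$. The organizational difference is minor: you get the divergence by citing the computation inside the proof of lemma~\ref{lemma.Lambda}~(i) and observing that $\tau_x$ is then a strictly increasing $C^1$ bijection of $[0,\te{x})$ onto $[0,\infty)$, whereas the paper re-derives it via a Gronwall comparison in $\CP_4$ for the time-changed ODE; the underlying estimate (controlling the denominator $F(v)-W_x(v)+I+\I$ by $F(v)-w^*+I+\I$ and invoking assumption~\ref{as:ratepos-increasing}) is the same. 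One small wobble: you invoke Cauchy--Lipschitz for uniqueness of \eqref{eq:timechange}, but $\lambda$ is only assumed continuous in this lemma, so the right-hand side is merely continuous in $\hat v$ and local Lipschitzness is not available. This step is not actually needed — the lemma concerns the specific trajectory $\hat\Phi^\tau(x)=\Phi^{\tau_x^{-1}(\tau)}(x)$, and the formula for $\mU_\I$ follows purely from the change of variable, so you can simply drop the uniqueness remark.
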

\begin{proof}
	The solution is a time change of $(v_x(t), w_x(t))$. By contradiction, if a trajectory explodes in finite time, one has 
	\[
	\frac{d}{d \tau} \hat{v}_x(\tau) \leq\frac{F(\hat{v}_x(\tau))-w^*+I+\I}{\lambda(\hat{v}_x(\tau))}
	\]
	in $\CP_4$.
	By Gronwall's lemma, $\hat{v}_x(\tau)$ is bounded by the solution of 
	\[
	\frac{d}{d \tau} \hat{\mathbf v}(\tau)=\frac{F(\hat{\mathbf v}(\tau))-w^*+I+\I}{\lambda(\hat{\mathbf v}(\tau))}
	\]
	which is defined globally by assumption~\ref{as:ratepos-increasing} and this provides a contradiction.
	The second part follows from the change of variable $t=\tau_x^{-1}(\tau)$.
\end{proof}

\begin{lemma}\label{lem-TkC0}
	Grant assumptions~\ref{hyp.F},\ref{as:ratepos-increasing} and assume that $\lambda$ is continuous.
	Then, for all $x\in \CR_\CP$, $I\to \mathbb E_x(T_1^I)$ is continuous.
\end{lemma}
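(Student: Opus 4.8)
The plan is to turn $\E_x(T_1^I)$ into an integral over the fixed domain $[0,\infty)$ on which Lebesgue's dominated convergence theorem can be applied in the parameter $I$. Since $T_1^{x,I}<\te{x,I}$ a.s.\ (Lemma~\ref{lemma.Lambda}), one has $\E(T_1^{x,I})=\int_0^{\te{x,I}}\mathbb P(T_1^{x,I}>t)\,dt=\int_0^{\te{x,I}}e^{-\Lambda^I(x,t)}\,dt$, and $\Lambda^I(x,t)=\tau^I_x(t)$ with $\tau^I_x(t)\defi\int_0^t\lambda(v^I_x(u))\,du$. By Assumption~\ref{as:ratepos-increasing} — precisely the divergence computation carried out in the proof of Lemma~\ref{lemma.Lambda} (bounded orbit: $\int_0^\infty\lambda(v^I_x)=+\infty$; explosion through $\CP_4$: $\int_{v_{34}}^\infty\frac{\lambda}{F-W_x+I}=+\infty$) — the map $\tau^I_x$ is a $C^1$ increasing bijection from $[0,\te{x,I})$ onto $[0,\infty)$. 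Changing variables $s=\tau^I_x(t)$ and recalling the time-changed flow $\hat\Phi^s(x)=\Phi^{(\tau^I_x)^{-1}(s)}(x)$ of \eqref{eq:timechange} with $\I=0$, which is globally defined by Lemma~\ref{lemma:Utau}, this yields \[\E(T_1^{x,I})=\int_0^{\infty}\frac{e^{-s}}{\lambda\bigl(\hat v^I_x(s)\bigr)}\,ds,\] where $\hat v^I_x$ denotes the $v$-component of $\hat\Phi^\cdot(x)$.

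Next I would fix a compact interval $[\underline I,\bar I]$ together with a family of partitions from Lemma~\ref{lemm_partition}, so that $w_{23},v_{34},w^*$ (hence $\Ew$ and $\CR_{\CP}$) are independent of $I$, and check the hypotheses of dominated convergence along a sequence $I_n\to I$ in $[\underline I,\bar I]$. For the pointwise convergence: for each fixed $s\ge0$, $I\mapsto\hat v^I_x(s)$ is continuous, being a reparametrization (by $\tau^I_x$, a continuous strictly increasing family in $I$, so with continuous inverse in $I$) of the flow of the $C^1$ system \eqref{eq:micro-unique}, which depends continuously on the parameter $I$; since $\lambda\in C^0(\R,\R_+^*)$, the integrand $e^{-s}/\lambda(\hat v^I_x(s))$ is continuous in $I$ for every $s$.

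The only substantial point is the domination, namely producing a lower bound $\hat v^I_x(s)\ge m$ valid for all $s\ge0$ and all $I\in[\underline I,\bar I]$; once this is available, $\lambda(\hat v^I_x(s))\ge\lambda(m)>0$ ($\lambda$ non decreasing and positive) furnishes the dominating function $e^{-s}/\lambda(m)\in\mathrm L^1([0,\infty))$. For the bound I would argue as follows: the orbit through $x\in\CR_{\CP}$ stays in $\CP^I$ above the separatrix $x^I_{sep}$ (Theorem~\ref{th:jtb1}~(iii)), while its $w$-component stays in $[w^*,C+w_0]$ for all time (Corollary~\ref{coro:wbound} with $\I=0$, where $C$ depends only on the partition family); since $\lim_{-\infty}w^I_{sep}=+\infty$ (Proposition~\ref{prop:separatrix}), the portion of $\CP^I$ contained in the horizontal strip $\{w^*\le w\le C+w_0\}$ is bounded, hence its $v$-coordinate is bounded below. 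Because all the separatrices $x^I_{sep}$ are produced by the backward flow of \eqref{eq:micro-unique} from the single common set $B_{\alpha_-,x_n}$ (proof of Lemma~\ref{lemm_partition}) and the flow depends continuously on $I$, this lower bound can be chosen uniform in $I\in[\underline I,\bar I]$.

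With pointwise continuity and an $I$-uniform integrable dominating function, dominated convergence applied to the integral representation above gives continuity of $I\mapsto\E(T_1^{x,I})$ on $[\underline I,\bar I]$, and since $[\underline I,\bar I]$ is arbitrary, on all of $\R$. I expect the uniform-in-$I$ lower bound on $v$ along the orbit (the domination step) to be the main obstacle; the time change and the dominated-convergence argument itself are routine.
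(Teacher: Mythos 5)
Your proof follows essentially the same route as the paper's: cast $\E_x(T_1^I)$ via the time change of Lemma~\ref{lemma:Utau} as $\int_0^\infty e^{-\tau}/\lambda(\hat v_x^I(\tau))\,d\tau$, note pointwise continuity in $I$, and dominate using the separatrix together with the partition family of Lemma~\ref{lemm_partition}. The only difference is cosmetic: for the lower bound on $\hat v^I_x$, you route through Corollary~\ref{coro:wbound} to cap $w$ by $C+w_0$ before invoking the separatrix, whereas the paper simply splits the orbit by region, uses that $w$ decreases in $\CP_2^I$ to get $\hat v\geq V_{sep}(x_2,I)$ there, and uses $\hat v\geq v_{23}^I$ in $\CP_3^I\cup\CP_4^I$ — both yield the same uniform-in-$I$ bound $\lambda(\hat v_x^I)\geq\alpha>0$. (One small wording slip: the strip $\{w^*\le w\le C+w_0\}\cap\CP^I$ is not bounded, since $\CP_4^I$ is unbounded to the right; what is true and all you need is that its $v$-coordinate is bounded below, by the separatrix.)
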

\begin{proof}
	Using lemma~\ref{lemma:Utau}, one obtains the following formula which removes the problematic $\tee$:
	\[
	T_I(x) = \int_0^\infty \frac{e^{-\tau}}{\lambda(\hat v_x^I(\tau))}d\tau.
	\]
	The integrand is continuous in $I$ for all $\tau.$
	We now consider an open bounded interval $B$ around $I_0$ and a family of partitions $(\CP^I)_{I\in B}$ from lemma~\ref{lemm_partition}. For all $x\in \CP_3^I$ and $\tau\geq 0$, one has $\lambda(\hat v_x^I(\tau))\geq \lambda(v_{23}^I)$. As the solutions are above the separatrix, one also finds that
	\[
		\lambda(\hat v_x^I(\tau))\geq \min(\lambda(V_{sep}(x_2,I)),\lambda(v^I_{23}))\stackrel{by\ continuity}{\geq}\alpha>0
	\]
	for some $\alpha$.
	The provides the domination for the use of Lebesgue's dominated convergence theorem which proves the lemma.
\end{proof}

\printbibliography

\end{document}